\documentclass[12pt]{amsart}

\usepackage{amsmath,amssymb,amsxtra,anysize,times,tikz,rotating,combelow,xcolor}
\usepackage{wrapfig}
\usepackage[all,cmtip]{xy}
\usepackage{amscd}
\usepackage{stackrel}
\usepackage{extpfeil}
\marginsize{1in}{1in}{1in}{1in}

\numberwithin{equation}{section}

\newtheorem{theorem}{Theorem}[section]

\newtheorem{lemma}[theorem]{Lemma}

\newtheorem{assumption}[theorem]{Assumption}
\newtheorem{proposition}[theorem]{Proposition}
\newtheorem{conjecture}[theorem]{Conjecture}

\newtheorem{corollary}[theorem]{Corollary}

\newtheorem*{Conjecture}{Conjecture \ref{conj:1}}
\theoremstyle{definition}
\newtheorem{definition}[theorem]{Definition}

\theoremstyle{remark}
\newtheorem{remark}[theorem]{Remark}
\newtheorem{example}[theorem]{Example}

\DeclareMathOperator{\Id}{Id}

\DeclareMathOperator{\FHilb}{FHilb}

\DeclareMathOperator{\Span}{Span}

\DeclareMathOperator{\Tr}{Tr}
\DeclareMathOperator{\Hom}{Hom}
\DeclareMathOperator{\Ext}{Ext}
\DeclareMathOperator{\RHom}{{RHom}}

\DeclareMathOperator{\HHH}{HHH}

\DeclareMathOperator{\Cone}{Cone}
\renewcommand{\AA}{\mathbb{A}}
\newcommand{\CC}{\mathbb{C}}
\newcommand{\NN}{\mathbb{N}}
\newcommand{\PP}{\mathbb{P}}
\newcommand{\ZZ}{\mathbb{Z}}

\newcommand{\CB}{\mathcal{B}}
\newcommand{\CA}{\mathcal{A}}
\newcommand{\cat}{\mathcal{C}}
\newcommand{\bcat}{\overline{\cat}}
\renewcommand{\CD}{\mathcal{D}}
\newcommand{\bCD}{\overline{\CD}}
\newcommand{\CE}{\mathcal{E}}

\newcommand{\CL}{\mathcal{L}}
\newcommand{\CM}{\mathcal{M}}
\newcommand{\CN}{\mathcal{N}}
\newcommand{\CO}{\mathcal{O}}
\newcommand{\CP}{\mathcal{P}}
\newcommand{\CF}{\mathcal{F}}
\newcommand{\CQ}{\mathcal{Q}}
\newcommand{\CT}{\mathcal{T}}
\newcommand{\CV}{\mathcal{V}}
\newcommand{\CW}{\mathcal{W}}
\newcommand{\BZ}{\mathbb{Z}}

\newcommand{\bFT}{\overline{\FT}}
\newcommand{\FT}{\mathbf{FT}}
\renewcommand{\1}{\mathbf{1}}

\newcommand{\coh}{\mathrm{Coh}}
\newcommand{\qcoh}{\mathrm{QCoh}}

\newcommand{\cone}{\mathrm{Cone}}
\newcommand{\cyc}{\text{cyc}}
\newcommand{\fl}{\text{Fl}}
\newcommand{\End}{\mathrm{End}}
\newcommand{\FH}{\mathrm{FHilb}}
\renewcommand{\H}{\mathrm{Hilb}}
\newcommand{\oFH}{\mathring{\FH}}
\newcommand{\oH}{\mathring{\H}}
\newcommand{\he}{\mathrm{h.e.}}
\newcommand{\qis}{\mathrm{q.i.s.}}

\newcommand{\point}{\mathrm{point}}
\newcommand{\spec}{\mathrm{Spec}}
\newcommand{\proj}{\mathrm{Proj}}
\newcommand{\supp}{\mathrm{supp}}

\newcommand{\SBim}{\mathrm{SBim}}
\renewcommand{\star}{* \in \{\CC^2, \CC, \point\}}
\newcommand{\torus}{\CC^{*} \times \CC^*}

\newcommand{\fg}{\mathfrak{g}}
\newcommand{\fb}{\mathfrak{b}}

\newcommand{\fm}{\mathfrak{m}}
\newcommand{\fn}{\mathfrak{n}}
\newcommand{\fz}{\mathfrak{z}}
\newcommand{\ofm}{\mathring{\fm}}
\newcommand{\sq}{\square}
\newcommand{\bsq}{\blacksquare}

\newcommand{\oCA}{\mathring{\CA}}

\newcommand{\bA}{\overline{A}}
\newcommand{\bZ}{\overline{Z}}

\newcommand{\Ker}{\text{Ker}}
\newcommand{\Tan}{\text{Tan}}
\newcommand{\fsl}{\mathfrak{sl}}
\newcommand{\fgl}{\mathfrak{gl}}

\newcommand{\dg}{\text{dg}}
\newcommand{\edg}{\emph{dg}}
\newcommand{\bFH}{\overline{\FH}}
\newcommand{\bCT}{\overline{\CT}}
\newcommand{\Rred}{\overline{R}}
\newcommand{\Bred}{\overline{B}}

\newcommand{\tot}{\mathrm{Tot}}
\newcommand{\Adj}{\mathrm{Adj}}
\newcommand{\adj}{\mathrm{adj}}

\newcommand{\expdim}{\mathrm{exp \ dim \ }}
\renewcommand{\he}{\text{h.e.}}
\newcommand{\ehe}{\emph{h.e.}}
\newcommand{\HOM}{\text{HOMFLY--PT}}

\author{Eugene Gorsky}
\author{Andrei Negu\cb t}
\author{Jacob Rasmussen}
\title{Flag Hilbert schemes, colored projectors and Khovanov-Rozansky homology}

\begin{document}

\begin{abstract}

We construct a categorification of the maximal commutative subalgebra of the type $A$ Hecke algebra. Specifically, we propose a monoidal functor from the (symmetric) monoidal category of coherent sheaves on the flag Hilbert scheme to the (non-symmetric) monoidal category of Soergel bimodules. The adjoint of this functor allows one to match the Hochschild homology of any braid with the Euler characteristic of a sheaf on the flag Hilbert scheme. The categorified Jones-Wenzl projectors studied by Abel, Elias and Hogancamp are idempotents in the category of Soergel bimodules, and they correspond to the renormalized Koszul complexes of the torus fixed points on the flag Hilbert scheme. As a consequence, we conjecture that the endomorphism algebras of the categorified projectors correspond to the dg algebras of functions on affine charts of the flag Hilbert schemes. We define a family of differentials $d_{N}$ on these dg algebras and conjecture that their homology matches that of the $\fgl_N$  projectors, generalizing earlier conjectures of the first and third authors with Oblomkov and Shende.
\end{abstract}


\maketitle

\section{Introduction}

\subsection{} It has been slightly more than ten years since Khovanov and Rozansky defined a triply-graded homology theory \(\HHH\) categorifying the HOMFLY-PT polynomial \cite{KhR2}. We have learned a lot about the structure of this invariant in the intervening time, but there is much that remains mysterious. In \cite{GCatalan}, the third author conjectured  a relation between \(\HHH\) of the \((n,n+1)\) torus knot and the $q,t$-Catalan numbers studied by Haiman and Garsia \cite{GarsiaHaimanCatalan, Hai}. A key feature of this conjecture is that it relates \(\HHH(T(n,n+1))\) to the cohomology of a particular sheaf on the Hilbert scheme of \(n\) points in \(\CC^2\). This idea was developed further in \cite{GORS}, and later in \cite{GN}, which identified the sheaves which should correspond to arbitrary torus knots \(T(m,n)\). This paper grew out of our attempts to understand whether  \(\HHH\) of any closed \(n\)-strand braid in the solid torus can be described as the cohomology of some element of the derived category of coherent sheaves on the Hilbert scheme. 

We conjecture that this is indeed the case (Conjecture~\ref{conj:1} below). More importantly, we introduce a   mechanism which we hope can be used to prove it. Two ideas play an important role in our construction. The first (already present in \cite{GN}) is that one should use the flag Hilbert scheme rather than the usual Hilbert scheme. The second is the notion of categorical diagonalization introduced by Elias and Hogancamp in \cite{EH}.  In Theorem~\ref{th:1}, we give a geometric characterization of categorical diagonalization in terms of the bounded derived category of sheaves on projective spaces.
Using this formulation, we show that Conjecture~\ref{conj:1} would follow from some very specific facts about the Rouquier complex of certain braids. 
Finally, as an application of our ideas, we describe how the homology of colored Jones-Wenzl projectors is related to the local rings at fixed points of the natural torus action on the flag Hilbert scheme. 

\subsection{}
\label{sub:intro hecke}

Recall the Hecke algebra $H_n$ of type $A_n$, whose objects can be perceived as isotopy classes of braids on $n$ strands modulo the relation:
$$
\left(\sigma_k - q^{\frac 12} \right)\left(\sigma_k + q^{-\frac 12} \right) = 0 
$$ 
where $\sigma_k$ denotes a single crossing between the $k$ and $(k+1)$--th strands. The product in the Hecke algebra corresponds to stacking braids on top of each other, from which the non-commutativity of $H_n$ is manifest. Ocneanu constructed a collection of linear maps:
\begin{equation}
\label{eqn:jo}
\chi : \bigsqcup_{n=0}^\infty H_n \rightarrow \CC(a,q)
\end{equation}
which is uniquely determined by the fact that $\forall \ \sigma,\sigma' \in H_n$ we have $\chi(\sigma \sigma') = \chi(\sigma' \sigma)$, and:
\begin{equation}
\label{eqn:jo2}
\chi(i(\sigma)) = \chi(\sigma) \cdot \frac{1-a}{q^{\frac 12} - q^{-\frac 12}}, \qquad
\chi(i(\sigma) \sigma_n) = \chi(\sigma), \qquad \chi(i(\sigma) \sigma_n^{-1}) = \chi(\sigma) \cdot a
\end{equation}
where $i(\sigma) \in H_{n+1}$ is the braid obtained by adding a single free strand to the right of $\sigma$. Jones (\cite{Jones}, \cite{HOMFLY}) showed that the map \eqref{eqn:jo} is an invariant of the closure $\overline{\sigma}$ of the braid:
\begin{equation}
\label{eqn:jones}
\HOM(\overline{\sigma}) = \chi(\sigma)
\end{equation}
which in fact coincides with the well-known HOMFLY-PT knot invariant. The map $\chi$ factors through a maximal commutative subalgebra \(C_n\):
\begin{equation}
\label{eqn:subalgebra}
C_n \stackrel{\iota^*}\hookrightarrow H_n \stackrel{\iota_*}\longrightarrow C_n \qquad \text{by which we mean that} \qquad \chi: H_n \stackrel{\iota_*}\longrightarrow C_n \stackrel{\int}\longrightarrow \CC(a,q)
\end{equation}
for some linear map $\int$ that will be explained later. As a vector space, the commutative algebra $C_n$ is spanned by the Jones-Wenzl projectors to irreducible subrepresentations of the regular representation of $H_n$. As such, $\dim C_n$ equals the number of standard Young tableaux of size $n$, while $\dim H_n = n!$. Alternatively, one can describe $C_n$ in terms of the \textbf{twists}:
\begin{equation}
\label{eqn:fulltwist}
\FT_k=(\sigma_1\cdots\sigma_{k-1})^{k}
\end{equation}
for all $k\in \{1,...,n\}$. Note that $\FT_1 = \1$, while $\FT_n$ is central in the braid group. The fact that $\FT_1,...,\FT_n$ generate a maximal commutative algebra (precisely our $C_n$) is well-known.

\subsection{}
\label{sub:intro categorification}

The Hecke algebra admits a well-known categorification, namely the monoidal category:
$$
(\SBim_n, \otimes_R) \ \leadsto \ K(\SBim_n) = H_n
$$
of certain bimodules over $R = \CC[x_1,...,x_n]$ called \textbf{Soergel bimodules} (see \cite{Soergel},\cite{Rou}). This category admits three gradings:

\begin{itemize}

\item the {\bf internal grading} given by considering graded bimodules with respect to $\deg x_i = 1$. We write $q$ for the variable that keeps track of this grading.

\item the {\bf homological grading} that arises from chain complexes in the homotopy category $K^b(\SBim_n)$. We write $s$ for the variable that keeps track of this grading.

\item the {\bf Hochschild grading} that appears when considering $D^b(\SBim_n)$, namely the closure of $\SBim_n$ in $D^b(R\text{--mod--}R)$. We write $a$ for the corresponding variable. 

\end{itemize}

Khovanov (\cite{Kh}) used the above structure to construct the functor:
\begin{equation}
\label{eqn:hhh}
\HHH:K^b(D^b(\SBim_n)) \longrightarrow \text{triply graded vector spaces}
\end{equation}
such that:
$$
\text{the Poincar\'e polynomial of }\HHH(\sigma) = \sum_{i,j,k=0}^\infty q^i s^j a^k \cdot \dim \HHH(\sigma)_{i,j,k}
$$
only depends on $\overline{\sigma}$ and specializes to \eqref{eqn:jones} when we substitute $s \mapsto -1$ and $a \mapsto -a$. One of the main goals of this paper is to construct a geometric version of the functor \eqref{eqn:hhh}, by categorifying the maximal commutative subalgebra $C_n$ and the maps of \eqref{eqn:subalgebra}. The natural place to look is the category of coherent sheaves on an algebraic space. In our case, the appropriate choice will be the \textbf{flag Hilbert scheme} $\FH_n(\CC)$ which parametrizes full flags of ideals:
$$
I_n \subset ... \subset I_1 \subset I_0 = \CC[x,y]
$$
such that each successive inclusion has colength $1$ and is supported on the line $\{y=0\}$. For every $k\in \{1,...,n\}$, there is a tautological rank $k$ vector bundle:
\begin{equation}
\label{eqn:def tautological}
\CT_k \ \ \text{ on } \ \ \FH_n(\CC), \qquad \CT_k|_{I_n \subset ... \subset I_1 \subset I_0} = \CC[x,y]/I_k
\end{equation}
which is naturally equivariant with respect to the action:
$$
\torus \curvearrowright \FH_n(\CC) \qquad \text{with }\textbf{equivariant parameters} \quad q \text{ and }t
$$
that is induced by the standard action $\torus \curvearrowright \CC \times \CC$. These parameters are related to the gradings on the category of Soergel bimodules via:
\begin{equation}
\label{eqn:matching}
s = -\sqrt{qt}
\end{equation}
In Subsection \ref{sub:dg scheme} we will introduce a certain dg version of the flag Hilbert scheme, denoted by $\FH_n^{\dg}(\CC)$, which is rigorously speaking a sheaf of dg algebras over $\FH_n(\CC)$. Our main conjecture is the following: \\



\begin{conjecture}
\label{conj:1}

There exists a pair of adjoint functors which preserve the $q$ and $t$ gradings:
\begin{equation}
\label{eqn:conj1}
K^b(\SBim_n) \xtofrom[\iota^*]{\iota_*} D^b\left(\coh_{\torus} \left( \FH_n^\edg(\CC) \right) \right)
\end{equation}
where $\iota^*$ is monoidal and fully faithful. Furthermore, we have: 
\begin{equation}
\label{eqn:addendum}
\FT_k \quad \xtofrom[\iota_*]{\iota^*} \quad ( \det \CT_k ) \otimes \CO_{\FH^\edg_n(\CC)}
\end{equation}
for all $k\in \{1,...,n\}$. Moreover, the map $\HHH$ of \eqref{eqn:hhh} factors as:
\begin{equation}
\label{eqn:hhh factors}
\HHH: K^b(\SBim_n) \stackrel{\iota_*}\longrightarrow D^b\left(\coh_{\torus} \left( \FH_n^\edg(\CC) \right) \right) \stackrel{\int}\longrightarrow \text{3-graded vector spaces}
\end{equation}
where $\int$ refers to the derived push-forward map in equivariant cohomology. \\

\end{conjecture}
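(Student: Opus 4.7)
The plan is to construct the functor $\iota^*$ by categorifying the classical identification of $C_n$ with functions on the torus-fixed points of $\FH_n(\CC)$. Those fixed points are indexed by standard Young tableaux $T$ of size $n$; on the Hecke side, $C_n$ has a basis of Jones--Wenzl projectors $p_T$; and Abel--Elias--Hogancamp produced idempotents $P_T \in K^b(\SBim_n)$ categorifying them. I would declare $\iota^*(\CO_{p_T}) := P_T$ on skyscraper sheaves and $\iota^*(\det \CT_k) := \FT_k$ on line bundles. The compatibility of these assignments is precisely what categorical diagonalization delivers: Theorem~\ref{th:1} says that a Rouquier complex acts on $K^b(\SBim_n)$ with spectrum matching the torus weights of a line bundle on projective space, and applied iteratively to $\FT_1, \ldots, \FT_n$ this recovers the $P_T$ as joint eigenobjects.

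First I would apply Theorem~\ref{th:1} to the commuting family of full twists $\FT_1, \ldots, \FT_n$ acting on $K^b(\SBim_n)$ by $\otimes_R$. Decategorification gives the classical diagonalization of $C_n$ by $p_T$, so the joint eigenobjects produced by iterated categorical diagonalization must agree with the Abel--Elias--Hogancamp projectors. Using the dg model $\FH^\edg_n(\CC)$ of Subsection~\ref{sub:dg scheme}, I would then define $\iota^*$ on the generating collection $\det \CT_1, \ldots, \det \CT_n$ together with Koszul resolutions of fixed points, and extend by monoidality and $q,t$-equivariance. Monoidality on tensor products of $\det \CT_k$ follows from commutativity of the $\FT_k$ inside $C_n$, and the identification \eqref{eqn:addendum} is built in by construction.

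The main obstacle will be full faithfulness of $\iota^*$. What needs to be shown is that for any $\CF, \CH \in D^b(\coh_\torus(\FH^\edg_n(\CC)))$ the natural map
\[
\RHom_{\FH^\edg_n(\CC)}(\CF, \CH) \longrightarrow \RHom_{\SBim_n}(\iota^* \CF, \iota^* \CH)
\]
is a quasi-isomorphism. Using Koszul resolutions of the fixed points and dévissage, this reduces to the case $\CF = \CH = \CO_{p_T}$, i.e.\ to identifying the derived endomorphism dg algebra of $P_T$ with the local ring at $p_T$ inside $\FH^\edg_n(\CC)$. This is precisely the content of the separate conjecture formulated in the abstract concerning endomorphism algebras of categorified projectors, and establishing it will require matching two delicate objects: the tangent complex of $\FH^\edg_n$ at $p_T$, accessible through the explicit dg presentation of the flag Hilbert scheme, and the derived endomorphism dg algebra of $P_T$, controlled through the cellular structure of the Abel--Elias--Hogancamp projectors. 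Verifying this comparison in each fixed-point chart, and gluing the local results along the torus-equivariant cover, is where the real combinatorial work lies.

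Finally, the factorization \eqref{eqn:hhh factors} should follow by adjunction once $\iota^*$ and $\iota_*$ are in place. Using the identification $\HHH(\sigma) = \RHom_{D^b(\SBim_n)}(\1, \sigma)$ together with $\iota^*(\CO_{\FH^\edg_n(\CC)}) = \1$ (forced by the case $k = 0$ of \eqref{eqn:addendum}), adjunction converts the right-hand side into $\RHom_{\FH^\edg_n(\CC)}(\CO, \iota_* \sigma)$, which is the derived pushforward to a point, i.e.\ the integration map $\int$ appearing in \eqref{eqn:subalgebra}.
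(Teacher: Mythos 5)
Your proposal takes a genuinely different route from the paper, but it contains a gap that is worth naming concretely. The core issue is that the assignment $\iota^*(\CO_{p_T}) := P_T$ cannot be part of a monoidal functor: the skyscraper sheaf $\CO_{p_T}$ is only a \emph{quasi}-idempotent in $D^b(\coh_\torus(\FH_n^\edg(\CC)))$, satisfying $\CO_{p_T} \otimes \CO_{p_T} \cong \CO_{p_T} \otimes \wedge^\bullet\bigl(\Tan_{p_T}\FH_n^\edg(\CC)\bigr)$, whereas the Abel--Elias--Hogancamp projectors $P_T$ are honest idempotents, $P_T \otimes P_T \cong P_T$. A monoidal $\iota^*$ would force $P_T \cong P_T \otimes \iota^*(\wedge^\bullet\Tan_{p_T})$, which is false in general. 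The paper is careful about exactly this: what should correspond to $P_T$ is not $\CO_{p_T}$ but the \emph{renormalized} object $\CP_T \text{``}=\text{''} [\CO_{I_T} / \wedge^\bullet\Tan_{I_T}]$, which only makes sense in a suitable completion of $\coh_\torus(\FH_n^\edg(\CC))$. Relatedly, the $P_T$ themselves live in a homological completion $K(\overline{\SBim_n})$, not in $K^b(\SBim_n)$, so the assignment does not even land in the category appearing in \eqref{eqn:conj1}. Moreover, ``extend by monoidality and $q,t$-equivariance'' is not a construction: line bundles $\det\CT_k$ and skyscrapers do not generate the monoidal triangulated category freely, and you would need to specify $\iota^*$ on all morphisms and then verify coherence, which is where the actual work lies.

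The paper does not attempt a direct construction from fixed-point data. Instead it proceeds \emph{inductively} along the projective tower $\FH_{n+1}^\edg = \PP\CE_n^\vee$: given the functors at stage $n$, one forms the composites $\iota_* = \iota_{n*}\circ\Tr$ and $\iota^* = I\circ\iota_n^*$ targeting $D^b(\coh(\FH_n^\edg\times\CC))$, and then invokes Proposition~\ref{prop:lift} to factor these through $D^b(\coh(\FH_{n+1}^\edg))$. The obstruction to factoring is exactly the content of Conjecture~\ref{conj:main}: that $S^k E_n \cong \Tr(L_{n+1}^k)$ for the object $E_n = \Tr(L_{n+1})$, and that the relevant Koszul complex is acyclic. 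Categorical diagonalization enters, but at the level of a single line bundle per tower step, rather than as simultaneous diagonalization by all the $\FT_k$ at once. Your reduction of full faithfulness to Conjecture~\ref{conj:homfly} is also not a closed loop: that is itself a conjecture in the paper, derived as a \emph{consequence} of Conjecture~\ref{conj:1} rather than an independent input, so invoking it here would be circular. If you want to pursue your fixed-point approach, you would need to (a) work in the completed categories on both sides throughout, (b) replace skyscrapers by the renormalized $\CP_T$, and (c) supply an independent argument for the local comparison of dg endomorphism algebras that does not presuppose the functor you are trying to build.
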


\begin{remark}
\label{rem:a grading}

To account for the $a$ grading in \eqref{eqn:conj1} and \eqref{eqn:hhh factors}, we conjecture that one can lift the setup of Conjecture \ref{conj:1} to functors:
\begin{equation}
\label{eqn:conj2}
K^b(D^b(\SBim_n)) \xtofrom[\widetilde{\iota}^*]{\widetilde{\iota}_*} D^b\left(\coh_{\torus} \left( \tot_{\FH_n^\dg(\CC)} \CT_n[1] \right) \right)
\end{equation}
which preserve the $q,t$ and $a$ gradings, defined by: 
\begin{equation}
\label{eqn:iota wedge}
\widetilde{\iota}_*(\sigma) = \iota_*(\sigma) \otimes \wedge^\bullet \CT_n^\vee
\end{equation}
where $a$ keeps track of the exterior degree in the right hand side. With this in mind, we note that the target of the map $\int$ from \eqref{eqn:hhh factors} can be lifted to quadruply graded vector spaces, since we may separate the derived category grading on $\FH_n^\dg(\CC)$ from the exterior grading $a$.

\end{remark}

\subsection{}
\label{sub:intro homology}

Besides the fact that the category $D^b(\coh_{\torus} (\FH_n^\dg(\CC)))$ and the functors $\iota_*$, $\iota^*$ categorify \eqref{eqn:subalgebra}, one of the main applications of Conjecture \ref{conj:1} is a geometric incarnation of Khovanov's Hochschild homology functor. Indeed, since $\SBim_n$ is a categorification of the Hecke algebra, to any braid $\sigma$ one may associate a homonymous object $\sigma \in K^b(\SBim_n)$ (see Section \ref{sec:Sbim} for an overview). Therefore, we have:
\begin{equation}
\label{eq:HOMFLY homology}
\HHH(\sigma) = \int_{\FH_n^\dg(\CC)} \CB(\sigma) \otimes \wedge^\bullet \CT_n^\vee \qquad \text{where} \qquad \CB(\sigma) :=\iota_*(\sigma)
\end{equation}
is the sheaf on the dg scheme $\FH_n^\dg(\CC)$ that our construction associates to the braid $\sigma$. We tensor with $\wedge^\bullet \CT_n^\vee$ as in Remark \ref{rem:a grading} in order to pick up the $a$ grading on $\HHH(\sigma)$ (if we had not taken this tensor product, we would recover $\HHH(\sigma)|_{a=0}$). While it is difficult to describe at the moment the sheaves $\CB(\sigma)$ for arbitrary braids $\sigma$, properties \eqref{eqn:addendum} and the projection formula \eqref{eqn:projection} imply that:
$$
\CB\left(\prod_{k=1}^n \FT_k^{a_k}\right) = \bigotimes_{k=1}^n ( \det \CT_k)^{\otimes a_k}
$$
Therefore, \eqref{eq:HOMFLY homology} immediately implies the following Corollary for all products of twists:




\begin{corollary}
\label{cor:1}
For all $(a_1,\ldots,a_n) \in \ZZ^n$, let us consider the twist braid
$\sigma=\prod_{k} \FT^{a_k}_k$. Assuming Conjecture \ref{conj:1}, the HOMFLY-PT homology of the closure of $\sigma$ is given by:
\begin{equation}
\label{eqn:hhh twist}
\emph{HHH}(\sigma) = \int_{\FH_n^{\edg}(\CC)} \bigotimes_{k=1}^n(\det \CT_k)^{\otimes a_k}\bigotimes \wedge^{\bullet}\CT_n^{\vee} 
\end{equation}
where the integral denotes the derived equivariant pushforward to a point. 
\end{corollary}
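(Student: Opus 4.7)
The plan is to derive the formula directly from Conjecture~\ref{conj:1}. By \eqref{eq:HOMFLY homology}, we have $\HHH(\sigma) = \int_{\FH_n^{\edg}(\CC)} \CB(\sigma) \otimes \wedge^\bullet \CT_n^\vee$ for every $\sigma \in K^b(\SBim_n)$, where $\CB(\sigma) = \iota_*(\sigma)$. It therefore suffices to identify $\CB(\sigma)$ in the special case $\sigma = \prod_{k=1}^n \FT_k^{a_k}$, and substitute.

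The main step combines two features of the adjunction $(\iota^*, \iota_*)$ from Conjecture~\ref{conj:1}. First, $\iota^*$ is strong monoidal, so it sends tensor products to tensor products and invertible objects (such as line bundles) to invertible objects; together with the image identification \eqref{eqn:addendum} asserting $\iota^*(\det\CT_k) \cong \FT_k$, this yields
$$\iota^*\!\left(\bigotimes_{k=1}^n (\det \CT_k)^{\otimes a_k}\right) \;\cong\; \prod_{k=1}^n \FT_k^{a_k} \;=\; \sigma \qquad \text{in } K^b(\SBim_n),$$
for every $(a_1,\ldots,a_n) \in \ZZ^n$. Second, $\iota^*$ is fully faithful, so the unit of the adjunction is an isomorphism and $\iota_*\iota^* \cong \mathrm{Id}$ on $D^b(\coh_{\torus}(\FH_n^{\edg}(\CC)))$. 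Applying $\iota_*$ to the displayed formula therefore gives
$$\CB(\sigma) \;=\; \iota_*(\sigma) \;\cong\; \bigotimes_{k=1}^n (\det \CT_k)^{\otimes a_k}.$$

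An equivalent route, closer in spirit to how one would argue without invoking full faithfulness abstractly, is to iterate the projection formula $\iota_*(X \otimes \iota^*Y) \cong \iota_*(X) \otimes Y$, starting from $X = \1$, whose image $\iota_*(\1) = \CO_{\FH_n^{\edg}(\CC)}$ is the unit on the geometric side. Either way, substituting the formula for $\CB(\sigma)$ into \eqref{eq:HOMFLY homology} reproduces exactly \eqref{eqn:hhh twist}.

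There is no genuine obstacle here: the statement is a bookkeeping consequence of the functorial properties packaged into Conjecture~\ref{conj:1}, which is where all of the real content lives. The only subtlety worth flagging is the extension to $a_k < 0$, which relies on the fact that a strong monoidal functor preserves dualizable objects together with their inverses; this is automatic since $\FT_k$ is invertible in the Hecke algebra and $\det \CT_k$ is an invertible line bundle on $\FH_n^{\edg}(\CC)$.
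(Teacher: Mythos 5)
Your argument is correct, and it matches the informal derivation the paper gives in Section~1.4 just before stating the Corollary: there the authors observe that monoidality of $\iota^*$, the identification $\FT_k = \iota^*(\det\CT_k)$, and the projection formula combined with $\iota_*\1 = \CO$ yield $\CB\bigl(\prod_k \FT_k^{a_k}\bigr) = \bigotimes_k (\det\CT_k)^{\otimes a_k}$, whence the formula follows by substitution into \eqref{eq:HOMFLY homology}. Your observation that full faithfulness of $\iota^*$ (as the left adjoint) gives $\iota_*\iota^*\cong\mathrm{Id}$ is an equally valid shortcut to the same identification.

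The formal Proof of Corollary~\ref{cor:1} in Section~3.6, however, takes a genuinely different route. Rather than first computing the sheaf $\CB(\sigma)$ and invoking \eqref{eq:HOMFLY homology}, it works directly with the definition $\HHH(\sigma) = \RHom_{K^b(\SBim_n)}(\1,\sigma)$. It then (i) swaps this to $\RHom(\sigma^{-1},\1)$ via Corollary~\ref{cor:inv 1} (biadjointness of tensoring by an invertible object), (ii) explicitly invokes Conjecture~\ref{conj:a grading} to trade $\RHom$ for $\Hom$ against $\wedge^\bullet T_n^\vee$, (iii) applies the adjunction $\iota^* \dashv \iota_*$ together with $\iota_*\1 = \CO$ to land in $\RHom_{\FH_n^\dg}(-,\CO)$, and (iv) closes with a dualization. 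The payoff of that route is that it flags precisely which additional conjectural input beyond Conjecture~\ref{conj:1} is being consumed --- namely Conjecture~\ref{conj:a grading}, which is responsible for the $a$-grading --- whereas your argument absorbs that input silently by treating \eqref{eq:HOMFLY homology} as already available. Since the paper itself presents \eqref{eq:HOMFLY homology} that way in the introduction, this is not a gap, but it is worth being aware that the statement ``assuming Conjecture~\ref{conj:1}'' is being read to include the $a$-grading lift of Remark~\ref{rem:a grading}/Conjecture~\ref{conj:a grading}; your proof uses it just as much as the paper's does, only less visibly.
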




When the \(a_i\) are sufficiently positive, we expect that the higher cohomology of the sheaf appearing in the the right-hand side of \eqref{eqn:hhh twist} should vanish. If this is the case, the right-hand side of \eqref{eqn:hhh twist} can be computed using  the Thomason localization formula as in \cite{GN} to give:
\begin{equation}
\label{eqn:magic formula}
\HHH(\sigma) =(1-q)^{-n}\sum_{T}\prod_{i=1}^{n}\frac{z_i^{a_i+\ldots+a_n}(1+az_i^{-1})}{1-z_i^{-1}} \prod_{1\leq i<j \leq n} \zeta \left(\frac {z_i}{z_j}\right)
\end{equation}
where the sum goes over all standard tableaux $T$ of size $n$, the variable $z_i$ denotes the $(q,t)$--content of the box labeled $i$ in each such tableau $T$, and:
$$
\zeta(x)=\frac{(1-x)(1-qtx)}{(1-qx)(1-tx)}.
$$
We will explain how to obtain \eqref{eqn:magic formula} in Section \ref{sec:local}, when we discuss the equivariant structure of the flag Hilbert scheme. In Section \ref{sub:knots}, we will explain how to amend Corollary \ref{cor:1} to account for torus knot braids rather than pure braids. Once we will do this, Corollary \ref{cor:1} gives a generalization of one of the main conjectures of \cite{GN} (which dealt with the case when $\sigma$ is a torus knot braid).

\subsection{}

Since $\HHH(\sigma)$ only depends on the closure $\overline{\sigma}$, formula \eqref{eq:HOMFLY homology} might suggest that the coherent sheaf $\CB(\sigma)$ actually only depends on $\overline{\sigma}$. While this cannot be strictly speaking true (after all, $\CB(\sigma)$ lives on $\FH_n^\dg(\CC)$ where $n$ is the number of strands of the braid), we may consider the natural map from the flag Hilbert scheme to the usual Hilbert scheme of $n$ points on $\CC^2$:
$$
\FH_n^\dg(\CC) \stackrel{\nu}\longrightarrow \H_n
$$
\begin{equation}
\label{eqn:map to hilb}
\left( I_n \subset ... \subset I_0 \right) \mapsto I_n
\end{equation}
The composition:
$$
K^b(\SBim_n) \xrightarrow{\iota_*} D^b \left(\coh_{\torus} \left(\FH_n^\dg(\CC) \right) \right) \xrightarrow{\nu_*} D^b \left(\coh_{\torus} \left(\H_n \right) \right)
$$
associates to a braid $\sigma$ a complex of sheaves:
\begin{equation}
\label{eqn:sheaf on hilb}
\CF(\sigma) = \nu_*(\CB(\sigma))
\end{equation}
We may tensor this complex with $\wedge^\bullet \CT_n^\vee$ as in Remark \ref{rem:a grading} if we also wish to encode the $a$ grading. This is the object we conjecture gives rise to the geometrization of \eqref{eqn:jo}.



\begin{conjecture}
\label{conj:2}
The objects $\CF(\sigma)$ satisfy the following properties:
\begin{equation}
\label{eqn:nakajima 0}
\CF(\sigma \sigma') \cong \CF(\sigma' \sigma)
\end{equation}
for all braids $\sigma$ and $\sigma'$ on $n$ strands, and:
\begin{equation}
\label{eqn:nakajima 1}
\CF(i(\sigma)) \ = \ \alpha\left(\CF(\sigma)\right)
\end{equation}
where:
\begin{equation}
\label{eqn:nakajima}
\alpha:D^b\left(\coh_{\CC^* \times \CC^*} \left(\H_n \right)\right) \longrightarrow D^b\left(\coh_{\CC^* \times \CC^*} \left(\H_{n+1} \right)\right)
\end{equation}
denotes the simple correspondence of Nakajima and Grojnowski (as in Subsection \ref{sub:nakajima}). 


\end{conjecture}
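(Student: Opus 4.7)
The plan is to deduce both cyclicity \eqref{eqn:nakajima 0} and the stabilization identity \eqref{eqn:nakajima 1} from Conjecture \ref{conj:1}, by combining the adjunction $(\iota^*,\iota_*)$ with the symmetric monoidal structure of $D^b(\coh_{\torus}(\H_n))$. The two parts are rather different in flavor: the first is a categorified version of the cyclic property of the Ocneanu--Markov trace, while the second is a geometric reformulation of Markov stabilization.

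For \eqref{eqn:nakajima 0}, the key input from Conjecture \ref{conj:1} is that $\iota^*$ is monoidal and fully faithful, so that $\iota_*\iota^*\cong \Id$, yielding a projection formula
\begin{equation*}
\iota_*(\iota^*\CE\otimes \sigma)\;\cong\;\CE\otimes \iota_*(\sigma),\qquad \CE\in D^b(\coh_{\torus}(\FH_n^\dg(\CC))),\ \sigma\in K^b(\SBim_n).
\end{equation*}
When one of $\sigma,\sigma'$ lies in the image of $\iota^*$ (for instance a product of twists $\FT_k$, cf.\ \eqref{eqn:addendum}), cyclicity is immediate already on $\FH_n^\dg(\CC)$, since $\otimes$ there is symmetric. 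To reach the general case I would argue that $\nu_*\circ\iota_*$ factors through the horizontal trace $\HH_0(\SBim_n)$: the target $D^b(\coh_{\torus}(\H_n))$ is symmetric monoidal, and $\nu_*\iota_*$ should be precisely the categorification of the Markov trace underlying \eqref{eqn:hhh factors}. Once $\nu_*\iota_*$ is identified with such a trace functor, \eqref{eqn:nakajima 0} follows from the universal property of the cocenter.

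For \eqref{eqn:nakajima 1}, I would use the natural forgetful morphism $\pi_{n+1}\colon \FH_{n+1}^\dg(\CC)\to \FH_n^\dg(\CC)$ dropping the last ideal $I_{n+1}$. The pair $(\nu^{(n+1)},\pi_{n+1})$ defines a correspondence from $\H_n$ to $\H_{n+1}$ whose image in $\H_n\times \H_{n+1}$ is the nested Hilbert scheme $Z$ underlying the simple correspondence $\alpha$. Since adding a free strand $i\colon \SBim_n\hookrightarrow \SBim_{n+1}$ is generated by the monoidal unit on the extra strand, Conjecture \ref{conj:1} should provide a natural isomorphism
\begin{equation*}
\iota_*^{(n+1)}(i(\sigma))\;\cong\;\pi_{n+1}^{*}\bigl(\iota_*^{(n)}(\sigma)\bigr)\otimes \CL
\end{equation*}
for an explicit equivariant line bundle $\CL$ capturing the trivial braiding with the $(n{+}1)$-st strand. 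Pushing down along $\nu^{(n+1)}$ and applying base change around the correspondence square then yields $\CF(i(\sigma))\cong \alpha(\CF(\sigma))$.

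The main obstacle is \eqref{eqn:nakajima 0}: even granting Conjecture \ref{conj:1}, the projection formula only yields cyclicity when one argument lies in the image of $\iota^*$. Extending to arbitrary $\sigma,\sigma'$ requires either constructing explicit cyclic homotopies $\iota_*(\sigma\sigma')\to \iota_*(\sigma'\sigma)$ from a bar-type resolution of $\SBim_n$, or verifying the identity on the Rouquier generators $\sigma_i^{\pm 1}$ and propagating it through the braid relations; either route demands fine control of the sheaves $\CB(\sigma_i)$ on $\FH_n^\dg(\CC)$. Part \eqref{eqn:nakajima 1}, by contrast, should reduce largely to a base-change computation once the twist $\CL$ is pinned down.
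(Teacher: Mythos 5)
The statement you are addressing is a conjecture, so the paper does not supply a proof; it does, however, give an argument for \eqref{eqn:nakajima 1} in Subsection~\ref{sub:nakajima}, conditional on Corollary~\ref{cor:markov 1} (itself a consequence of Conjectures~\ref{conj:1} and~\ref{conj:main}). Your treatment of \eqref{eqn:nakajima 1} is in the same spirit as the paper's, but two points should be sharpened. First, the paper's Corollary~\ref{cor:markov 1} asserts the clean identity $\CB(i(\sigma))=\pi^*(\CB(\sigma))$ with \emph{no} line-bundle twist; your mysterious $\CL$ ``capturing the trivial braiding'' must be $\CO$, and leaving it unidentified breaks the conclusion, since the Nakajima functor is $\alpha=p_{2*}p_1^*$ with no twist. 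Second, the step you call ``base change around the correspondence square'' is exactly where the geometry bites: the square comparing $\FH_{n+1}^{\dg}\to\FH_n^{\dg}$, $\FH_{n+1}^{\dg}\to\H_{n,n+1}$, $\H_{n,n+1}\to\H_n$ and $\FH_n^{\dg}\to\H_n$ is cartesian \emph{only} after the replacement of $\FH_n$ by $\FH_n^{\dg}$, as the paper emphasizes; this is the real content of the derivation, not the formal push-pull.

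Your assessment of \eqref{eqn:nakajima 0} is essentially correct, and the paper offers no proof either. The naive projection-formula argument genuinely fails at the level of $\FH_n^{\dg}(\CC)$: by Corollary~\ref{cor:support rouquier}, $\CB(\sigma\sigma')$ and $\CB(\sigma'\sigma)$ are supported on the fixed loci of $w_{\sigma\sigma'}$ and $w_{\sigma'\sigma}$, which are merely conjugate under $w_\sigma$ rather than equal. This is precisely why the conjecture is stated for $\CF=\nu_*\CB$ on $\H_n$ (where the permutation action is killed by the forgetful map $\nu$) and not for $\CB$ on the flag Hilbert scheme. Your proposal to factor $\nu_*\iota_*$ through a horizontal-trace functor is a plausible route but is not an argument the paper carries out, and you are right to flag it as the main unresolved obstacle.
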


For any braid $\sigma$, the Euler characteristic of $\CF(\sigma)$ at $t = \frac 1q$ coincides with $\chi(\sigma)$ of \eqref{eqn:jo}. 

\begin{remark}
\label{rem:oblomkov rozansky}

While the present paper was being written, Oblomkov and Rozansky (\cite{OR}) independently gave an alternative construction of objects very similar to $\CB(\sigma)$ and $\CF(\sigma)$, although in a very different presentation. Specifically, their construction associates to any braid an object in the category of matrix factorizations, which descends to an object on the commuting variety. The authors then show that the corresponding object is actually supported on the Hilbert scheme. We strongly suspect that their objects coincide with ours, and hope that the connection will be elucidated in the near future.

\end{remark}

\subsection{}
\label{sub:intro proof}

We show that  Conjecture \ref{conj:1} would follow from certain computations in the Soergel category, which we believe may be proved using the techniques developed in an upcoming paper of Elias and Hogancamp (see \cite{EH1} for a special case). In the present paper, we develop the geometric machinery necessary to prove such results. Specifically, we outline a strategy for constructing  the functors $\iota^*,\iota_*$ with equation \eqref{eqn:addendum} in mind. The starting point for us is to reinterpret geometrically a concept introduced by Elias and Hogancamp under the name of {\bf categorical diagonalization} (\cite{EH}). 
Suppose that $\cat$ is a graded monoidal category with monoidal unit $\mathbf{1}$, and $F$ is an object in the homotopy category $K^b(\cat)$.
Elias and Hogancamp call $F$ diagonalizable if there exist grading shifts $\lambda_0,...,\lambda_n$ and morphisms: 
$$
\alpha_i:\lambda_i \cdot \1\to F, \qquad i=0,\ldots,n
$$
satisfying certain conditions (see Definitions \ref{def:EH1} and \ref{def:EH2}). Under these conditions, it is proved in \cite{EH} that there exist objects $P_i \in K(\bcat)$ (a certain completion, whose relation with the original category $K^b(\cat)$ is analogous to the relation between the categories of left unbounded chain complexes and bounded chain complexes) such that tensoring $\Id_{P_i}$ with $\alpha_i$ yields an isomorphism:
\begin{equation}
\label{eqn:eigen0}
\lambda_i \cdot P_i \cong F \otimes P_i, \qquad i=0,\ldots,n
\end{equation}
It is natural to call the $P_i$ {\bf eigenobjects} of $F$ and the $\lambda_i$ the {\bf eigenvalues} of $F$. The maps $\alpha_i$ are called the {\bf eigenmaps} for $F$, and they are a particular feature of the categorical setting. Under mild assumptions on $\cat$ and $F$, we show the following: \\

\begin{theorem}
\label{th:1}
An object $F\in \cat$ is diagonalizable in the sense of \cite{EH} if and only if there is a pair of adjoint functors: 
$$
K^b(\cat) \xtofrom[\iota^*]{\iota_*} D^b(\coh(\PP^n_{A})),
$$
where $A=\End_{\cat}(\1).$ If the category $\cat$ is graded and the maps $\alpha_i$ preserve the grading, then $\iota^*$ and $\iota_*$ can be lifted to the equivariant derived category:
$$
K^b(\cat) \xtofrom[\iota^*]{\iota_*} D^b(\coh_{T}(\PP^n_{A})),
$$
where $T$ is a torus acting on $\PP^n$ with weights prescribed by the eigenvalues of $F$. \\
\end{theorem}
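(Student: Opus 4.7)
The plan is to exploit Beilinson's description of $D^b(\coh(\PP^n_A))$ via the full exceptional collection $\CO, \CO(1), \ldots, \CO(n)$, whose endomorphism algebra over $A$ is the Beilinson quiver algebra generated by $n+1$ parallel coordinate arrows $u_0, \ldots, u_n$. By derived Morita, specifying an adjoint pair between $D^b(\coh(\PP^n_A))$ and $K^b(\cat)$ is essentially equivalent to specifying a representation of this quiver algebra inside $K^b(\cat)$, i.e.\ a sequence of objects $F^{(0)}, F^{(1)}, \ldots, F^{(n)}$ together with $n+1$ parallel maps $F^{(k)} \to F^{(k+1)}$ playing the role of the $u_i$.

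For the forward direction, suppose $F$ is diagonalizable with eigenmaps $\alpha_i \colon \lambda_i \cdot \1 \to F$. I would define $\iota^*$ on the exceptional collection by $\iota^*(\CO(k)) := F^{\otimes k}$ and send the coordinate arrow $u_i \colon \CO(k) \to \CO(k+1)$ to the morphism $F^{\otimes k} \otimes \alpha_i$. The single nontrivial compatibility is that $\iota^*$ must send the Koszul complex of $(u_0, \ldots, u_n)$ --- which is acyclic on $\PP^n_A$ since these sections have no common zero --- to an acyclic complex in $K^b(\cat)$. Its image is precisely the iterated convolution $\cone(\alpha_0) \otimes \cdots \otimes \cone(\alpha_n)$, whose vanishing is exactly the categorified Cayley--Hamilton axiom imposed in the definition of diagonalization. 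The right adjoint $\iota_*$ then exists by derived Morita and admits the explicit description $\iota_*(X) = \RHom_{\cat}(\bigoplus_k F^{\otimes k}, X)$, regarded as a module over the Beilinson algebra and hence as an object of $D^b(\coh(\PP^n_A))$.

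For the converse, one sets $F := \iota^*(\CO(1))$, pulls back the tautological sections $u_i$ of $\CO(1)$ to obtain candidate eigenmaps $\alpha_i \colon \1 \to F$, and applies $\iota^*$ to the acyclic Koszul complex on $\PP^n_A$ to recover the vanishing convolution required by Elias--Hogancamp. The equivariant refinement is immediate once one equips $\PP^n_A = \Proj A[u_0, \ldots, u_n]$ with the $T$-action of weight $\lambda_i$ on $u_i$: the fibres of $\CO(1)$ at the $n+1$ coordinate fixed points then realise the prescribed eigenvalues, and distinctness of these weights supplies the spectral-separation part of the EH axioms automatically. The main obstacle I anticipate is the bookkeeping required in the non-symmetric monoidal setting: the Beilinson quiver relations (commutativity of the $u_i$) must be interpreted up to coherent higher homotopies, and promoting the derived-Morita description to a functor landing in the \emph{bounded} homotopy category $K^b(\cat)$ requires showing that the Beilinson resolutions produce finite complexes of objects of $\cat$, which ultimately rests on having only $n+1$ eigenvalues but must be verified with care in the chain-level model.
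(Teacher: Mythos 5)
Your proposal is correct and takes essentially the same approach as the paper. The paper's proof (sketched in Example~\ref{ex:projective}, Section~\ref{sub:proj}, and formalized as Lemma~\ref{lem:universal}) defines $\iota^*$ and $\iota_*$ via the $\proj$ construction over the graded ring $R = A[z_0,\ldots,z_n]$, sending the coordinate $z_i$ to the eigenmap $\alpha_i$, identifies the same obstruction (the pulled-back Koszul complex of the origin must be contractible, which is exactly $\bigotimes_i \cone(\alpha_i) \simeq 0$), and likewise invokes Beilinson's description of $D^b(\coh(\PP^n_A))$ as a homotopy category of complexes built from $\CO(0),\ldots,\CO(n)$ to establish sufficiency. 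Your "derived Morita / Beilinson quiver" formulation is the same argument phrased from the tilting side rather than from the graded-module side; the two are equivalent. One small point: the concern you raise at the end about commutativity of the arrows $u_i$ in the non-symmetric monoidal setting is real, and the paper handles it by simply postulating Assumption~\ref{ass:commute}, namely that $\bigoplus_k\Hom_{K(\cat)}(\1,F^k)$ is a commutative graded algebra, rather than working with higher homotopies.
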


Furthermore, the following result of Elias-Hogancamp provides one of the first proved facts about our conjectural connection between $\SBim_n$ and $\FHilb^\dg_n(\CC)$. 

\begin{theorem}[\cite{EH}]
The full twist $\FT_n$ is diagonalizable in $\SBim_n$, and its eigenvalues agree with the equivariant weights of $\det \CT_n$ at fixed points.
\end{theorem}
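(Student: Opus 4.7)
The plan is to decompose the statement into two parts: the diagonalizability of $\FT_n$, which is the main theorem of \cite{EH}, and the identification of its eigenvalues with the equivariant weights of $\det \CT_n$ at the $\torus$-fixed points of $\FH_n(\CC)$.

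For the first part, I would invoke the categorical diagonalization machinery of Elias and Hogancamp \cite{EH} as a black box. Their central construction produces, for each standard Young tableau $T$ of size $n$, an explicit eigenmap $\alpha_T : \lambda_T \cdot \1 \to \FT_n$ in $K^b(\SBim_n)$, where the scalar shift $\lambda_T$ takes the product form $\prod_{\square \in T} c(\square)$ with $c(\square)$ the $(q,t)$-content of the box $\square$. They verify that these maps satisfy the hypotheses of Definitions \ref{def:EH1} and \ref{def:EH2}, which amounts to a distinctness-of-eigenvalues condition together with a cone-vanishing condition. From the geometric side, this is the only input I need.

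For the second part, I would describe the $\torus$-fixed points of $\FH_n(\CC)$ and compute the equivariant weight of $\det \CT_n$ at each. Since the $\torus$-action comes from the diagonal action on $\CC^2$, only monomial ideals are fixed, and the full-flag condition on the chain $I_n \subset \cdots \subset I_0$ forces the fixed points to be in bijection with standard Young tableaux $T$ of size $n$: the tableau $T$ records the order in which monomials are successively added to the quotient. At such a fixed point, the fiber of $\CT_n$ acquires a $\torus$-eigenbasis given by the monomials $x^i y^j$ indexed by the cells $(i,j) \in T$, each contributing weight $q^i t^j$. Taking the top exterior power yields
\[
\wt\bigl(\det \CT_n|_T\bigr) = \prod_{(i,j) \in T} q^i t^j = \prod_{\square \in T} c(\square) = \lambda_T,
\]
matching the Elias-Hogancamp eigenvalues under the identification \eqref{eqn:matching}.

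The main obstacle is entirely on the categorical side and is precisely the content of \cite{EH}: producing the eigenmaps $\alpha_T$ and verifying the cone-vanishing condition of Definition \ref{def:EH2} requires a delicate inductive construction using categorified Jones-Wenzl projectors together with careful bookkeeping of the grading shifts. By contrast, once the eigenvalues are known in closed form, the geometric side of the comparison is the direct fixed-point calculation above, and the agreement with $\lambda_T$ is essentially forced by the fact that both quantities only see the multiset of monomial weights in $\CC[x,y]/I_n(T)$.
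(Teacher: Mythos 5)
The paper does not prove this theorem; it cites it from \cite{EH} and moves on, so there is no internal argument to compare your proof against. Given that, your decomposition is the only reasonable one: take the diagonalizability of $\FT_n$ as a black-box from \cite{EH} and then verify by direct computation that the equivariant weight of $\det \CT_n$ at each $\torus$-fixed point of $\FH_n(\CC)$ reproduces the eigenvalues. Your fixed-point analysis is correct: a $\torus$-fixed flag of ideals $I_n \subset \cdots \subset I_0$ corresponds to a standard Young tableau $T$, the fiber $\CT_n|_{I_T} = \CC[x,y]/I_n$ has monomial eigenbasis $\{x^a y^b\}_{(a,b)\in\lambda}$ where $\lambda$ is the shape of $T$, and $\det \CT_n|_{I_T}$ therefore has weight $\prod_{(a,b)\in\lambda} q^a t^b$, which under the identification $s = -\sqrt{qt}$ matches the grading shift of the Elias--Hogancamp eigenmap.

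One imprecision worth flagging: you describe \cite{EH} as producing an eigenmap $\alpha_T$ for each standard Young tableau $T$. Since $\FT_n$ is central, its Hecke-algebra eigenvalue (and hence the categorical grading shift) depends only on the shape $\lambda$ of $T$, so the eigenmaps for $\FT_n$ alone are indexed by partitions of $n$, not tableaux. The tableau-indexed projectors $P_T$ discussed later in the paper arise from simultaneously diagonalizing the whole family $\FT_1, \ldots, \FT_n$, one step at a time, not from $\FT_n$ alone. This slip does not damage your conclusion, because $\lambda_T$ depends only on the shape and the set of values $\{\wt(\det\CT_n|_{I_T})\}_T$ likewise only sees shapes with multiplicity, but it is worth stating the eigenmap indexing correctly: the multiplicity of a given eigenvalue (the number of standard tableaux of a fixed shape) is reflected in the number of fixed points sharing the same $\det\CT_n$-weight, not in a repetition of eigenmaps.
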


The flag Hilbert scheme is more complicated than a projective space, but it turns out to be presented by a tower of projective fibrations. More precisely, the fibers of the natural projection:
$$
\FHilb_n(\CC)\to \FHilb_{n-1}(\CC) \times \CC, \quad (I_n \subset ... \subset I_0) \mapsto (I_{n-1}\subset ... \subset I_0) \times \text{supp}(I_{n-1}/I_n)
$$
are projective spaces. They are rather badly behaved, but we will show in Section \ref{sub:dg scheme} that the corresponding map on the level of our dg schemes:
$$
\pi_n:\FHilb^\dg_n(\CC)\to \FHilb^\dg_{n-1}(\CC) \times \CC
$$
is the projectivization of a two-step complex of vector bundles. The strategy we propose is to use a relative version of Theorem \ref{th:1} (developed in Section \ref{sec:geometry}) in order to construct a commutative tower of functors:
\begin{equation}
\label{eq:tower}
\includegraphics{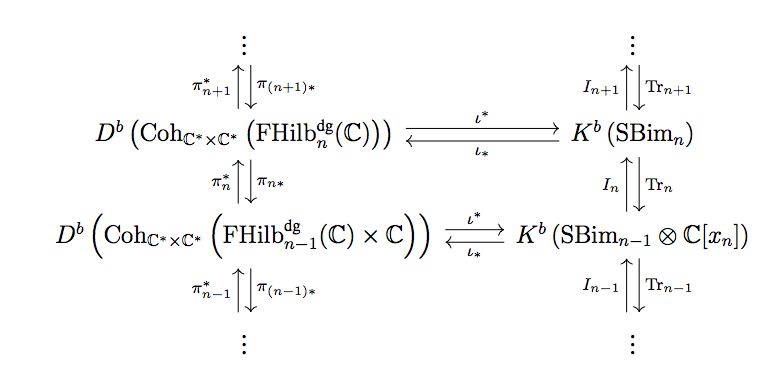}
\end{equation}
Here $I_n:\SBim_{n-1} \otimes \CC[x_{n}] \to \SBim_{n}$ denotes the natural full embedding of categories, while $\Tr_n:\SBim_{n}\to \SBim_{n-1} \otimes \CC[x_{n}]$ is the partial trace map of \cite{Hog} (see Subsection \ref{sub:trace} for details, as well as an overview of the construction of its derived version). We prove that the existence of the horizontal functors in \eqref{eq:tower} is equivalent to the computation of $\Tr_n(\FT_n^{\otimes k})$ for all integers $k$ (see \ref{conj:main} below), together with certain compatibility conditions that must be checked. Assuming these computations, we show how Conjecture \ref{conj:1} follows.

\subsection{} Conjecture \ref{conj:1} implies very explicit facts about the existence of various morphisms and extensions between the twists $\FT_k$ in the Soergel category. The easiest of these conjectures involves the objects $L_k:=\FT_k\otimes \FT_{k-1}^{-1} \in K^b(\SBim_n)$ for all $k\in \{1,...,n\}$:

\begin{conjecture}
\label{conj:T soergel}
There exist objects $T_n,...,T_1 \in K^b(\SBim_n)$ and morphisms $T_n \rightarrow T_{n-1} \rightarrow ... \rightarrow T_1$, which satisfy:
\begin{equation}
\label{eq:def t}
L_k \cong [T_k \rightarrow T_{k-1}]
\end{equation}
for all $k \in \{1,...,n\}$. Furthermore, there exist two commuting morphisms: 
$$
X:q T_k \to T_k \qquad Y: \frac {s^2}q T_k \to T_k
$$
which commute: $[X,Y]=0$ and are compatible with the isomorphisms \eqref{eq:def t}. Moreover, $X|_{L_k}$ is multiplication by the element $x_k \in R$ and $Y|_{L_k} = 0$.
\end{conjecture}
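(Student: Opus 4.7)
The conjecture is best read as a categorification of the short exact sequence $0 \to \CT_{k-1} \to \CT_k \to \CT_k/\CT_{k-1} \to 0$ on $\FH_n^\edg(\CC)$, combined with the identification $L_k \leftrightarrow \det\CT_k \otimes (\det\CT_{k-1})^{-1} = \CT_k/\CT_{k-1}$ predicted by \eqref{eqn:addendum}. Geometrically $X$ and $Y$ are multiplication by the coordinates $x, y$ on the tautological bundle $\CT_k$: they commute because $\CC[x,y]$ is commutative, and since the support point of $\CT_k/\CT_{k-1}$ lies on $\{y = 0\}$ we have $X|_{L_k} = x_k \cdot \Id$ and $Y|_{L_k} = 0$. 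My plan is to prove the statement directly in $K^b(\SBim_n)$, without assuming Conjecture~\ref{conj:1}.

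The starting point is the braid-group identity $\FT_k = \FT_{k-1} \cdot J_k$ with $J_k := \sigma_{k-1}\cdots\sigma_1 \cdot \sigma_1 \cdots \sigma_{k-1}$, which follows from iterated braid moves (e.g.\ $(\sigma_1\sigma_2)^3 = \sigma_1^2 \cdot \sigma_2\sigma_1^2\sigma_2$ for $k=3$); this gives $L_k \cong J_k$ in $K^b(\SBim_n)$. I would then construct the objects $T_k$ inductively, setting $T_1 = R$ and defining $T_k$ via a Gaussian-elimination simplification of the Rouquier complex of $J_k$, arranged so that the result is a genuine 2-term complex $[T_k \to T_{k-1}]$. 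The compatibility morphisms $T_k \to T_{k-1}$ should then be canonical, arising from the embedding of the Rouquier complex of $J_{k-1}$ inside that of $J_k$ as a subcomplex, so that the entire tower $T_n \to \cdots \to T_1$ is assembled from these identifications. The $X$-action is straightforward: multiplication by $x_k \in R$ commutes with every differential in the Bott--Samelson formalism, has degree $q$, and by construction descends to multiplication by $x_k$ on the cofiber $L_k$.

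The key difficulty is $Y$: a chain-level endomorphism of $T_k$ of degree $t = s^2/q$, nilpotent, strictly commuting with $X$, and vanishing after restriction to $L_k$. The natural candidate is a null-homotopy for $y \cdot \Id$ obtained after passing to a cofibrant replacement of $T_k$; geometrically this is the Koszul differential that cuts $\FH_n^\edg(\CC)$ out of a larger smooth dg ambient space, and algebraically it resembles the ``dot'' actions used by Hogancamp to categorify Young idempotents. The main obstacle is thus the construction of $Y$ together with the strict verification---not just up to homotopy---of the relations $[X, Y] = 0$ and $Y|_{L_k} = 0$. The $Y$-action encodes genuinely new structure on the Soergel side, morally the nilpotent part of the ideal $I_k$ in the $y$-direction, and I expect its construction to require either working in the curved or $A_\infty$-enriched setting of Elias--Hogancamp, or explicit control of $\Tr_n(\FT_n^{\pm 1})$ and the higher partial traces flagged in Subsection~\ref{sub:intro proof}.
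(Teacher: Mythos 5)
This statement is a \emph{conjecture} in the paper, not a theorem: the authors offer no proof of it, only the observation that it would follow from Conjecture~\ref{conj:1} by setting $T_k := \iota^*(\CT_k)$, applying $\iota^*$ to the exact triangle $\CL_k \to \CT_k \to \CT_{k-1}$, and transporting the universal endomorphisms $X,Y$ of the tautological bundle across $\iota^*$. There is therefore no paper proof to match your attempt against.

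Your geometric reading of the conjecture is correct and agrees with the authors' intended derivation from Conjecture~\ref{conj:1}: $L_k \leftrightarrow \CL_k = \det\CT_k \otimes (\det\CT_{k-1})^{-1}$ sits in the triangle $\CL_k \to \CT_k \to \CT_{k-1}$, $X$ comes from $q\,\CT_k \to \CT_k$, $Y$ from $t\,\CT_k \to \CT_k$, and $Y|_{\CL_k} = 0$ because $Y$ is strictly lower triangular on $\FH_n(\CC)$. Your braid identity $\FT_k = \FT_{k-1}\cdot\sigma_{k-1}\cdots\sigma_1\sigma_1\cdots\sigma_{k-1}$ is also correct (e.g.\ $L_3 = \sigma_2\sigma_1^2\sigma_2$), and the Gaussian-elimination strategy for producing a two-term model $[T_k \to T_{k-1}]$ of the Rouquier complex of $L_k$, with $T_1 = R$ and compatibility maps coming from the inclusion of Rouquier subcomplexes, is a plausible way to produce the underlying tower.

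The genuine gap, which you yourself flag, is the $Y$-action: a strict chain-level endomorphism of degree $t=s^2/q$ that genuinely commutes with $X$ (not just up to homotopy) and vanishes on $L_k$. Nothing in the Bott--Samelson or Rouquier formalism hands this to you; in the ADHM presentation it is the strictly lower-triangular part of the matrix $Y$, which only exists after one has already identified $T_k$ with a tautological object on the flag Hilbert scheme, and on the Soergel side this would require precisely the computations of Conjecture~\ref{conj:main} (a)--(c) (or some equivalent control of $\Tr_n(\FT_n^{\otimes k})$) that the authors propose as the bridge to Conjecture~\ref{conj:1}. Absent such control, your argument establishes the existence of the tower $T_n \to \cdots \to T_1$ and of $X$, but not of $Y$, and $Y$ is the genuinely new content of the conjecture. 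You should therefore present this as a conditional or heuristic derivation rather than a proof; the statement remains open for exactly the reason you identify.
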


Various matrix elements of products of $X$ and $Y$ can be used to construct morphisms between various $L_k$. See Conjecture \ref{conj:main} for more conjectures of similar kind.

\subsection{} An important role in the geometry of flag Hilbert schemes is played by torus fixed points:
$$
\FH_n(\CC)^{\torus} = \{ I_T \}_{T \text{ is a standard Young tableau of size }n}
$$
While the flag Hilbert scheme is badly behaved, the dg scheme $\FH_n^\dg(\CC)$ is by definition a local complete intersection. As such, the skyscraper sheaves at the torus fixed points are quasi-idempotents in the derived category of coherent sheaves on $\FH_n^\dg(\CC)$:
$$
\CO_{I_T} \otimes \CO_{I_T}\cong  \CO_{I_T} \otimes \wedge^\bullet \left( \text{Tan}_{I_T} \left(\FH_n^\dg(\CC) \right) \right)
$$
where $\text{Tan}$ denotes the tangent bundle (which makes sense for a local complete intersection as a complex of vector bundles). Inspired by the constructions of Elias--Hogancamp (\cite{EH}), we make sense of the objects:
$$
\CP_T \ ``=" \ \left[ \frac {\CO_{I_T}}{\wedge^\bullet \text{Tan}_{I_T} \left( \FH_n^\dg(\CC) \right)} \right] \ \in \ \text{a certain extension of }\coh_{\torus} \left(\FH_n^\dg(\CC) \right)
$$
and conjecture that the functor $\iota^*$ sends this object to the categorified Jones--Wenzl projector:
\begin{equation}
\iota^*\left(\CP_T\right) = P_{T} 
\end{equation}
These projectors are among the main actors of \cite{EH}, where the authors construct them inductively as eigenobjects for the full twists $\FT_n$ following the categorical diagonalization procedure described in \eqref{eqn:eigen0}. In the present paper, we exhibit an affine covering of the flag Hilbert scheme:
$$
\FH_n(\CC) = \bigcup_T \oFH_T(\CC)
$$
If we restrict the structure sheaf $\CO_{\FH_n^\dg(\CC)}$ to these open pieces, we obtain dg algebras:
$$
\CA_T(\CC) = \Gamma \left(\oFH_T(\CC), \CO_{\FH_n^\dg(\CC)} \right)
$$ 
We expect that these dg algebras 
coincide with the endomorphism algebras of the categorified Hecke algebra idempotent indexed by the standard Young tableau $T$, as in the following conjecture.

\begin{conjecture}
\label{conj:homfly}
The 
endomorphism algebra of the categorified Jones-Wenzl projector $P_{T}$ is isomorphic as an algebra to:
\begin{equation}
\label{eqn:homfly}
\End(P_{T}) =  \CA_T(\CC) \otimes \left( \wedge^{\bullet}\CT_n^{\vee}|_{\oFH_T(\CC)} \right)
\end{equation}
\end{conjecture}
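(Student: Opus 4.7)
The plan is to use Conjecture \ref{conj:1} to transfer the computation of $\End(P_T)$ from $K^b(\SBim_n)$ to $D^b(\coh_\torus(\FH_n^\dg(\CC)))$, and then to perform a local calculation on the affine chart $\oFH_T(\CC)$. By Conjecture \ref{conj:1} together with the lift from Remark \ref{rem:a grading}, the functor $\iota^*$ (respectively $\wiota^*$) is monoidal and fully faithful, and the expected identification $\iota^*(\CP_T)=P_T$ motivating the present statement reduces the claim to the geometric isomorphism
$$
\End_{D^b(\coh_\torus(\FH_n^\dg(\CC)))}(\CP_T) \; \cong \; \CA_T(\CC),
$$
after accounting for the exterior factor $\wedge^\bullet \CT_n^\vee|_{\oFH_T(\CC)}$ on both sides via \eqref{eqn:iota wedge}. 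The first technical step is to make rigorous sense of $\CP_T=[\CO_{I_T}/\wedge^\bullet \Tan_{I_T}(\FH_n^\dg(\CC))]$ as an honest object in an appropriate enlargement of $D^b(\coh_\torus(\FH_n^\dg(\CC)))$, analogous to the Elias--Hogancamp completion $K(\bcat)$ from Subsection \ref{sub:intro proof}; the quasi-idempotency $\CO_{I_T}\otimes \CO_{I_T}\cong \CO_{I_T}\otimes \wedge^\bullet \Tan_{I_T}$ suggests realizing $\CP_T$ as the colimit of an inductive system that formally inverts the $\wedge^\bullet \Tan_{I_T}$ factor, mirroring the Elias--Hogancamp infinite convolution.

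Next I would prove $\iota^*(\CP_T)\cong P_T$ by induction on $n$, matching the construction of $P_T$ in \cite{EH} against the tower of projective fibrations \eqref{eq:tower}. At each step the relative form of Theorem \ref{th:1}, applied to $\pi_k:\FH_k^\dg(\CC)\to \FH_{k-1}^\dg(\CC)\times \CC$, realizes the categorical diagonalization of the partial full twist as the choice of a distinguished torus-fixed section of $\pi_k$; iterating through $\pi_1,\dots,\pi_n$ picks out exactly the fixed point $I_T$ indexed by the standard Young tableau $T$. With this identification, $\End(\CP_T)$ becomes a local problem on the chart $\oFH_T(\CC)$, since $\CP_T$ is supported scheme-theoretically at $I_T$, which lies in the unique chart of the covering $\FH_n(\CC)=\bigcup_{T'}\oFH_{T'}(\CC)$ containing it. The Koszul-dual effect of the renormalization converts $\Ext^*(\CO_{I_T},\CO_{I_T})\cong \wedge^\bullet \Tan_{I_T}$ into the sections of the dg structure sheaf on that chart, namely $\Gamma(\oFH_T(\CC), \CO_{\FH_n^\dg(\CC)})=\CA_T(\CC)$, and the additional tensoring with $\wedge^\bullet \CT_n^\vee|_{\oFH_T(\CC)}$ produces the right-hand side of \eqref{eqn:homfly}.

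The main obstacle is the rigorous definition of the enlargement of $D^b(\coh_\torus(\FH_n^\dg(\CC)))$ in which $\CP_T$ lives, together with the proof that $\iota^*$ sends it to the Elias--Hogancamp projector $P_T$. Both constructions instantiate categorical diagonalization, so showing they agree under $\iota^*$ should reduce via Theorem \ref{th:1} to step-by-step compatibility along the tower \eqref{eq:tower}; this in turn rests on the still-conjectural computations of $\Tr_n(\FT_n^{\otimes k})$ and their variants described in Subsection \ref{sub:intro proof}. A secondary subtlety is to verify that the tangent weights of $\FH_n^\dg(\CC)$ at the fixed point $I_T$ match the $(q,t)$-gradings via \eqref{eqn:matching}, so that all gradings agree on both sides of \eqref{eqn:homfly}.
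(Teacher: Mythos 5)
The statement you were asked to prove is Conjecture \ref{conj:homfly}, which the paper leaves open; there is no proof in the paper to compare against. What the paper does establish is the conjecture in the two extremal cases $T=(n)$ and $T=(1,\dots,1)$ (Theorem \ref{thm:symm antisymm}, by matching the geometric formulas in Examples \ref{ex:Sym n} and \ref{ex:Lambda n} against the Soergel-side computations of \cite{Hog} and \cite{AbHog}), and at the level of Euler characteristics (Theorem \ref{thm:hecke projector}, via Corollary \ref{cor:decat ps}). Your sketch is therefore a proposal for a conditional proof strategy, not a proof — which you correctly flag — so the appropriate comparison is with the paper's own heuristic mechanism in Sections \ref{sec:equiv} and \ref{sec:local}.

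Your outline does track that mechanism. The concrete geometric input you gesture at as a ``Koszul-dual effect of the renormalization'' is made precise in the paper by Proposition \ref{open vs proj}: the idempotent $\CP_i$ on $\PP^n_A$ is quasi-isomorphic to the pushforward of (the symmetric algebra of the conormal of) the Bia\l ynicki--Birula stratum through $p_i$, so $\End(\CP_i)$ is literally the coordinate ring of that stratum, not an abstract Koszul dual of $\Ext^\bullet(\CO_{p_i},\CO_{p_i})$. The relative version (Proposition \ref{prop:decomposition projective}) iterated through the tower $\pi_k : \FH_k^\dg(\CC) \to \FH_{k-1}^\dg(\CC)\times\CC$ is then what would identify $\End(\CP_T)$ with the chart ring, and Proposition \ref{prop:chart} is what pins down that chart ring concretely. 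Replacing your vaguer formulation by this would strengthen the sketch: the projector is the pushforward of the structure sheaf of an attracting stratum, whose endomorphisms \emph{are} functions.

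Two further cautions. First, the paper's explicit chart $\oFH_T$ in Section \ref{sub:localcoord} is deliberately an open subscheme of the ``honest'' chart $\FH_T$ of Definition \ref{def:chart}, chosen so the matrix entries can be solved in the localization; any proof would need to control whether the endomorphism ring sees $\CA_T(\CC)$, its localization $\oCA_T(\CC)$, or a common completion, and argue that the distinction does not affect \eqref{eqn:homfly}. Second, the conditional chain you lay out (Conjecture \ref{conj:1} $\Rightarrow$ $\iota^*(\CP_T)\cong P_T$ $\Rightarrow$ \eqref{eqn:homfly}) cannot shortcut the dependence on Conjecture \ref{conj:main}: even the proved cases of Theorem \ref{thm:symm antisymm} do not pass through the conjectural functor $\iota^*$ but compare the two sides directly, and your inductive step invoking the commutative tower \eqref{eq:tower} relies on the same unproved computations of $\Tr_n(\FT_n^{\otimes k})$ (Conjecture \ref{conj:main} (a)--(c)). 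These are the genuine gaps, and you name most of them; the one substantive omission is the $\CA_T$ versus $\oCA_T$ issue above.
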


Note that $\CT_n^{\vee}$ is a trivial rank $n$ vector bundle on the affine chart $\oFH_T(\CC)$, and so the exterior power that appears in \eqref{eqn:homfly} is free on $n$ odd generators, whose equivariant weights match the inverse $q,t$--weights of the boxes in the Young tableau $T$. Following recent results of Abel and Hogancamp \cite{AbHog,Hog}, we prove \eqref{eqn:homfly} in the two extremal cases, corresponding to the symmetric and anti--symmetric projectors:
 
\begin{theorem}
\label{thm:symm antisymm}
If $T=(n)$ or $(1,\ldots,1)$ then the endomorphis algebra
of the resulting projector is isomorphic to the right hand side of \eqref{eqn:homfly}. Explicitly:
\begin{equation}
\label{eqn:antisym}
\End(P_{(n)})\simeq  \frac {\CC[x_1,\ldots,x_n, y_{i,j}]_{i>j}}{y_{i,j}(x_i-x_j) - (y_{i-1,j} - y_{i,j+1})} \otimes \wedge^{\bullet}(\xi_1,\ldots,\xi_n)
\end{equation}
where $\deg x_i = q$, $\deg y_{i,j} = t q^{j-i}$ and $\deg \xi_i = a q^{1-i}$, while:
\begin{equation}
\label{eqn:sym}
\End(P_{(1,...,1)})\simeq \CC[u_1,\ldots,u_n]\otimes \wedge^{\bullet}(\xi_1,\ldots,\xi_n)
\end{equation}
where $\deg u_i = q t^{1-i}$ and $\deg \xi_i =a t^{1-i}$.  
\end{theorem}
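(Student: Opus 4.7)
The plan is to analyze the two extremal torus fixed points of $\FH_n^\dg(\CC)$ separately, explicitly describe the affine chart $\oFH_T(\CC)$ together with the restriction of $\CT_n^\vee$ there, and finally match the resulting algebra with the explicit presentations of $\End(P_{(n)})$ and $\End(P_{(1,\ldots,1)})$ established in \cite{AbHog, Hog}.

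For the column tableau $T = (1,\ldots,1)$, the associated monomial flag is $I_k = (x, y^k)$, so $\CT_k$ has $\torus$-equivariant basis $\{1, y, \ldots, y^{k-1}\}$ with weights $1, t, \ldots, t^{k-1}$, producing exterior generators $\xi_i$ of weight $at^{1-i}$ in $\wedge^\bullet \CT_n^\vee$. A universal deformation of the flag in a formal neighborhood of the fixed point can be written as $I_k = (x - p_k(y),\, q_k(y))$ with $q_k$ monic of degree $k$ in $y$ and $p_k$ of degree $< k$, and compatibility with the nesting reduces the free parameters to $n$ scalars $u_1, \ldots, u_n$ of respective weights $qt^{1-k}$. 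The chart is therefore smooth, the dg structure is trivial, and $\CA_T(\CC) = \CC[u_1, \ldots, u_n]$; combining with Hogancamp's explicit presentation of $\End(P_{(1,\ldots,1)})$ in \cite{Hog} yields \eqref{eqn:sym}.

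For the row tableau $T = (n)$, the fixed point flag is $I_k = (x^k, y)$, so $\CT_k$ is spanned by $\{1, x, \ldots, x^{k-1}\}$ of weights $1, q, \ldots, q^{k-1}$ and $\xi_i$ has weight $aq^{1-i}$. This fixed point is genuinely singular: the flag Hilbert scheme is non-reduced and non-lci there, and the dg enhancement of Subsection~\ref{sub:dg scheme} performs real work. The idea is to write a universal deformation as $I_k = (f_k(x),\, y - h_k(x))$, use the roots $x_1, \ldots, x_n$ of $f_n$ (generically distinct on the chart) to diagonalize the action of $x$, and introduce $y_{i,j}$ as the coefficients of $h_k$ in this eigenbasis. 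A direct bookkeeping calculation then shows that the condition that each $I_k$ be closed under multiplication by $x$ and $y$, together with the inclusion $I_{k+1} \subset I_k$, produces exactly the ladder relations $y_{i,j}(x_i - x_j) = y_{i-1,j} - y_{i,j+1}$. These defining equations form a regular sequence in the polynomial ring on $\{x_i, y_{i,j}\}$, so the dg enhancement imposes no further higher Koszul corrections and $\CA_T(\CC)$ has the claimed presentation. Matching weights and comparing with the Abel--Hogancamp description of $\End(P_{(n)})$ from \cite{AbHog} then gives \eqref{eqn:antisym}.

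The main obstacle is the row case: both the derivation of the ladder relations from the ideal-closure conditions at the singular fixed point, and the verification that the dg enhancement produces no extra syzygies, require a careful computation with the universal flag and its deformation theory. The column case, by contrast, is essentially formal once the chart is recognized as smooth of dimension $n$.
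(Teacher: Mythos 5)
Your overall strategy — compute the local dg algebra $\oCA_T(\CC)$ at each extremal fixed point, restrict $\wedge^\bullet\CT_n^\vee$ there, and match the result with the presentations of $\End(P_T)$ established in \cite{Hog} and \cite{AbHog} — is the same as the paper's, which deduces the theorem from the explicit matrix-model charts computed in Examples \ref{ex:symmetric}, \ref{ex:antisymmetric}, \ref{ex:Sym n} and \ref{ex:Lambda n}. Your column case goes through: nilpotency of $Y$ forces $q_k(y)=y^k$, the nesting leaves only the $n$ coefficients of $p_n$ as free coordinates, and the chart is smooth affine $n$-space, which matches Example~\ref{ex:Sym n}.

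The row case, however, has a genuine gap. Writing $I_k = (f_k(x),\,y - h_k(x))$ is fine near $I_{(n)}$, but the proposal to ``diagonalize $x$ using the roots of $f_n$'' cannot produce the ladder relations. On the locus where the roots $x_1,\ldots,x_n$ are distinct, the condition that each $I_{k-1}/I_k$ be supported on $\{y=0\}$ forces $h_k(x_k)=0$, hence $h_n(x_k)=0$ for all $k$, and since $\deg h_n < n$ this gives $h_n\equiv 0$ and so $Y=0$. The eigenbasis description therefore only sees the $n$-dimensional component $\{Y=0\}$ of the chart and misses entirely the components on which the $y_{i,j}$ are nonzero; those are precisely the locus where eigenvalues collide and the eigenbasis degenerates. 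The paper instead uses the basis of Definition~\ref{def:chart} (equivalently the matrix form of Example~\ref{ex:Lambda n}, with $X$ in Jordan-like form having ones on the subdiagonal and $x_i$ on the diagonal), which is defined on all of $\oFH_{(n)}(\CC)$; the relations $y_{i,j}(x_i-x_j) = y_{i-1,j}-y_{i,j+1}$ then fall out immediately as the entries of $[X,Y]=0$. Separately, your write-up is internally inconsistent: you first assert the chart at $I_{(n)}$ is ``non-reduced and non-lci'' so that ``the dg enhancement performs real work,'' and then conclude the relations ``form a regular sequence'' so ``the dg enhancement imposes no further higher Koszul corrections.'' These cannot both hold. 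The second statement is the correct one — it is precisely what makes the right-hand side of \eqref{eqn:antisym} an ordinary graded ring rather than a genuinely dg algebra — and should replace the first.
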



As further evidence for Conjecture \ref{conj:homfly}, we prove that it holds at the decategorified level.

\begin{theorem}
\label{thm:hecke projector}
For all standard Young tableaux $T$ , the Euler characteristic of the algebra:
$$
\oCA_T(\CC) \otimes \left( \wedge^{\bullet}\CT_n^{\vee}|_{\oFH_T(\CC)} \right)
$$
equals the Markov trace of the Hecke idempotent $p_{\lambda}$, where \(\lambda\) is the partition associated to \(T\). 
\end{theorem}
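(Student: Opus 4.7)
The plan is to compute the equivariant Euler characteristic on the left-hand side directly from the local geometry of $\FH_n^{\dg}(\CC)$ at the torus fixed point $I_T$, produce a closed-form rational function in $q$, $t$, $a$ and the $(q,t)$-contents $z_1,\dots,z_n$ of the boxes of $T$, and match this expression against the classical formula for the Markov trace of $p_\lambda$.

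The first step relies on the iterated projectivization
$$\pi_k:\FH_k^{\dg}(\CC)\to \FH_{k-1}^{\dg}(\CC)\times \CC$$
from Subsection~\ref{sub:dg scheme}. Since each $\pi_k$ is the projectivization of a two-term complex of vector bundles and the fixed point $I_T$ is singled out by a choice of torus-fixed line in each successive fiber, the restriction $\CO_{\FH_n^{\dg}(\CC)}|_{\oFH_T(\CC)}$ is presented as an iterated Koszul complex over a polynomial ring. This realizes the equivariant character of $\oCA_T(\CC)$ as a Koszul ratio
$$\chi(\oCA_T(\CC))=\frac{\prod_j(1-w_j^{\mathrm{obs}})}{\prod_i(1-w_i^{\mathrm{tan}})},$$
with $w_j^{\mathrm{obs}}$ and $w_i^{\mathrm{tan}}$ the obstruction and tangent weights at $I_T$, each a monomial in $q,t$ and the $z_k$.

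In the second step I would carry out the telescoping: the weights contributed by $\pi_k$ at $I_T$ involve pairs $(z_k, z_\ell)$ for boxes $\ell$ preceding $k$ in $T$, and their rearrangement over $k=1,\dots,n$ yields
\begin{equation*}
\chi(\oCA_T(\CC))=\frac{1}{(1-q)^n}\prod_{i=1}^{n}\frac{1}{1-z_i^{-1}}\prod_{1\le i<j\le n}\zeta\!\left(\frac{z_i}{z_j}\right).
\end{equation*}
Since $\CT_n^{\vee}|_{\oFH_T(\CC)}$ is free of rank $n$ with equivariant weights $z_1^{-1},\dots,z_n^{-1}$, the factor $\wedge^{\bullet}\CT_n^{\vee}|_{\oFH_T(\CC)}$ multiplies the above by $\prod_{i=1}^n(1+az_i^{-1})$, producing exactly the $T$-summand on the right-hand side of \eqref{eqn:magic formula} when $a_1=\cdots=a_n=0$.

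Finally, I would match this $T$-summand with the Markov trace of $p_\lambda$. The sum over all standard tableaux of size $n$ of these local contributions is, by Thomason localization, the HOMFLY-PT polynomial of the $n$-component unlink; decomposing the latter by the primitive idempotents of the Hecke algebra $H_n$ then identifies each individual $T$-summand with the Markov trace of the corresponding $p_\lambda$ attached to the shape $\lambda$ of $T$, a quantity depending only on $\lambda$ by cyclicity. The principal obstacle is the telescoping in the second step: the dg structure contributes many tangent and obstruction weights whose cancellation must be verified to produce exactly the $\zeta$-product form, and in particular to confirm that the dependence on $T$ collapses to a dependence on $\lambda$ alone.
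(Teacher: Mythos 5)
Your Steps 1--3, which derive the $\zeta$-product expression for the local character of $\oCA_T(\CC)$ and tensor with the exterior algebra, essentially re-prove Proposition~\ref{prop:poincare}; this is correct and is also the starting point of the paper's argument. The divergence, and the gap, is in your final step. You argue that summing the $T$-summands over all standard Young tableaux and applying Thomason localization reproduces the HOMFLY-PT polynomial of the unlink, and then that ``decomposing the latter by the primitive idempotents of $H_n$'' identifies each individual $T$-summand with the Markov trace of the corresponding $p_\lambda$. This last identification does not follow from the equality of sums: the two families indexed by SYTs happen to have the same total, but that alone does not pin down a term-by-term match. You also assert that the local $T$-summand depends only on the shape $\lambda(T)$ ``by cyclicity,'' but cyclicity is a property of the Markov trace on the Hecke-algebra side; the analogous $\lambda$-only dependence on the geometric side is not automatic and is precisely what must be established.

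What is missing is the direct computation that closes the loop. The paper's proof (via Corollary~\ref{cor:decat ps}) specializes the $\zeta$-product to $t = q^{-1}$ and shows, by an explicit telescoping of hook factors, that
$$
P(\oCA_T(\CC))\big|_{t=q^{-1}} = \prod_{\square\in\lambda}\frac{1}{1-q^{h(\square)}},
$$
which together with the exterior-algebra factor yields the hook-content formula $\prod_{\square\in\lambda}\frac{1-aq^{c(\square)}}{1-q^{h(\square)}}$; the last step is then a citation of the classical identification of this product with the Markov trace of $p_\lambda$. Note that the specialization $t=q^{-1}$ (equivalently $s\mapsto -1$) is what ``Euler characteristic'' means in the statement, and your proposal never performs it. So the weight-telescoping argument you defer as the ``principal obstacle'' needs to be carried out, and then the result compared directly with the classical Markov-trace formula; the global localization-plus-decomposition route does not substitute for either.
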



\subsection{}
\label{sub:reduced}

One can easily modify the above constructions to describe the {\em reduced} HOMFLY-PT homology. Indeed, it is proven in \cite{RasDiff} that the HOMFLY-PT homology of any braid is a free module over the homology of the unknot, which is isomorphic to a free algebra in one even and one odd variable. Let us explain how these variables arise from the geometry. First, define the {\em reduced} flag Hilbert scheme $\bFH_n(\CC)$ as the subscheme in $\FH_n(\CC)$ cut out by the equation 
$$
\Tr(X)=x_1+\ldots+x_n=0.
$$  
It is not hard to see that there is an isomorphism:
\begin{equation}
\label{eqn:reduction}
r:\FH_n(\CC)\to \bFH_n (\CC)\times \CC 
\end{equation}
 We will denote two components of this isomorphism by $r_1$ and $r_2$. As a result, the homology of any sheaf on $\FH_n(\CC)$ is a free module over the polynomial ring in one (even) variable. To identify the odd variable, remark that $\CT_n$ has a nowhere vanishing section given by the polynomial $1 \in \CC[x,y]$. It is not hard to see that this section splits, so we may write:
$$
\CT_n\simeq \CO\oplus \bCT_n \quad \Longrightarrow \quad \CT^{\vee}_n\simeq \CO\oplus \bCT^{\vee}_n \quad \Longrightarrow \quad \wedge^{\bullet}\CT^{\vee}_n\simeq \wedge^{\bullet}(\xi)\otimes \wedge^{\bullet}\bCT^{\vee}_n
$$
To sum up, we get the following corollary analogous to Corollary \ref{cor:1}:

\begin{corollary}
\label{conj:reduced}
Assuming Conjecture~\ref{conj:1}, the reduced HOMFLY-PT homology of any object $\sigma \in K^b(\SBim_n)$ is:
$$
\HHH^{\emph{red}}(\sigma) \cong \int_{\bFH^\edg_n(\CC)} (r_1 \circ \iota)_{*} (\sigma) \otimes \wedge^{\bullet}\bCT^{\vee}_n.
$$
\end{corollary}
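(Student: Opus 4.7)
The plan is to derive the corollary by applying Conjecture~\ref{conj:1} and then splitting off the tensor factor corresponding to the unknot homology, as licensed by the freeness theorem of \cite{RasDiff}. First, from \eqref{eq:HOMFLY homology} we have
\[
\HHH(\sigma)=\int_{\FH_n^\edg(\CC)}\iota_*(\sigma)\otimes\wedge^\bullet\CT_n^\vee.
\]
I would transport this integration through the isomorphism $r:\FH_n(\CC)\to \bFH_n(\CC)\times\CC$ of \eqref{eqn:reduction}, assuming that it lifts $\torus$-equivariantly to the dg enhancement so that $\FH_n^\edg(\CC)\cong \bFH_n^\edg(\CC)\times \CC$, and then apply the decomposition $\wedge^\bullet\CT_n^\vee\cong \wedge^\bullet\bCT_n^\vee \otimes \wedge^\bullet(\xi)$ induced by the canonical section $1\in \CC[x,y]$ of $\CT_n$.

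Applying the projection formula along the projection $\bFH_n^\edg(\CC)\times\CC \to \bFH_n^\edg(\CC)$ then yields
\[
\HHH(\sigma)\cong \wedge^\bullet(\xi)\otimes \int_{\bFH_n^\edg(\CC)}(r_1\circ\iota)_*(\sigma)\otimes \wedge^\bullet\bCT_n^\vee,
\]
in which the push-forward $(r_1)_* = (p_1)_* \circ r_*$ along the $\CC$-fibration produces a built-in free $\CC[z]$-action with $z=x_1+\ldots+x_n$ of weight $q$. Together, the factors $\wedge^\bullet(\xi)$ and $\CC[z]$ reconstruct the unknot homology $\HHH(\bigcirc)\simeq \CC[z]\otimes\wedge^\bullet(\xi)$, consistent with the freeness theorem of \cite{RasDiff} which asserts that $\HHH(\sigma)$ is free over $\HHH(\bigcirc)$. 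Identifying $\HHH^{\emph{red}}(\sigma)$ as the complement of this free $\HHH(\bigcirc)$-factor in the display above yields the stated formula.

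The main obstacle is to verify that the isomorphism $r$ of \eqref{eqn:reduction} lifts $\torus$-equivariantly to the dg enhancement, and that the splitting $\CT_n\simeq \CO\oplus \bCT_n$ is compatible with the dg structure inherited by the restriction of $\CT_n$. Both should follow from an explicit inspection of the local defining equations of $\FH_n^\edg(\CC)$ set up in Subsection~\ref{sub:dg scheme}, showing in particular that $x_1+\ldots+x_n$ and the trivializing section $1\in \CC[x,y]$ of $\CT_n$ appear as free parameters that do not enter the dg relations. Granted these geometric verifications, the corollary becomes a formal consequence of Conjecture~\ref{conj:1} together with Rasmussen's freeness theorem.
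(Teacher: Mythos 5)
Your argument follows the same strategy as the paper's discussion in Subsection~\ref{sub:reduced} (the paper does not give a separate proof; the Corollary is a ``to sum up'' after that discussion), and your identification of the two ingredients --- the isomorphism $r$ of \eqref{eqn:reduction} and the splitting $\wedge^\bullet\CT_n^\vee\simeq\wedge^\bullet(\xi)\otimes\wedge^\bullet\bCT_n^\vee$ --- is correct. You are also right to flag the need to check that $r$ and the splitting lift $\torus$-equivariantly to the dg enhancement.

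However, your final bookkeeping step is internally inconsistent, and this is a genuine gap. You correctly observe that if $(r_1)_*=(p_1)_*\circ r_*$ is the derived affine pushforward along the $\CC$-fibration, then the object $\int_{\bFH_n^\edg(\CC)}(r_1\circ\iota)_*(\sigma)\otimes\wedge^\bullet\bCT_n^\vee$ carries a built-in free $\CC[z]$-action. Feeding that into your display
\[
\HHH(\sigma)\cong\wedge^\bullet(\xi)\otimes\int_{\bFH_n^\edg(\CC)}(r_1\circ\iota)_*(\sigma)\otimes\wedge^\bullet\bCT_n^\vee
\]
together with the freeness statement $\HHH(\sigma)\cong\HHH(\bigcirc)\otimes\HHH^{\mathrm{red}}(\sigma)$ and $\HHH(\bigcirc)\simeq\CC[z]\otimes\wedge^\bullet(\xi)$ yields
\[
\int_{\bFH_n^\edg(\CC)}(r_1\circ\iota)_*(\sigma)\otimes\wedge^\bullet\bCT_n^\vee\cong\CC[z]\otimes\HHH^{\mathrm{red}}(\sigma),
\]
which is off from the claimed formula by exactly the $\CC[z]$-factor that you said was ``built in.'' The resolution is that $(r_1\circ\iota)_*(\sigma)$ should \emph{not} be read as the (quasi-coherent) pushforward $p_{1*}r_*\iota_*(\sigma)$, but as the coherent sheaf $\bM$ on $\bFH_n^\edg(\CC)$ with $\iota_*(\sigma)\cong r_1^*\bM\cong \bM\boxtimes\CO_\CC$; that is, $(r_1\circ\iota)_*$ extracts the $\bFH_n$-factor, not its pushforward. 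This requires the additional geometric input that the sheaf $\CB(\sigma)=\iota_*(\sigma)$ is pulled back along $r_1$ (equivalently, is constant along the $\CC$-fibers of $r$), a nontrivial claim about the sheaves produced by $\iota_*$ that your argument does not supply and that the paper itself only asserts for connected braid closures in the remark following Corollary~\ref{cor:support sheaves}. With that input in place, the $\CC[z]$-factor arises as $\int_{\CC}\CO_\CC$ and sits outside $\int_{\bFH_n^\edg(\CC)}$, and the bookkeeping closes correctly.
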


\subsection{}

Finally, we give a conjectural geometric description of $\fgl_N$ Khovanov-Rozansky homology \cite{KhR1,KhR2} for all $N$. Recall that in \cite{RasDiff} the third  author constructed a spectral sequence from the HOMFLY-PT homology to the $\fgl_N$ homology of any knot. For any pair of nonnegative integers $N,M$, there is an equivariant section: 
$$
s_{N,M}\in \Gamma \left(\FHilb_n(\CC),\CT_n \right), \qquad s_{N,M}|_{I_n \subset \ldots \subset I_0} = x^Ny^M \in \frac {\CC[x,y]}{I_n} = \CT_n|_{I_n \subset \ldots \subset I_0}
$$


\begin{conjecture}
\label{conj:diff}
For all braids $\sigma$, the $\fgl_N$ spectral sequence on the homology of $\overline{\sigma}$ is induced by the contraction of:
$$
\wedge^\bullet \CT_n^\vee \quad \text{on} \quad \FH^\edg_n(\CC)
$$
with the section $s_{N,0}$, which induces a differential on the vector space \eqref{eq:HOMFLY homology}.
\end{conjecture}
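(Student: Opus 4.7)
The plan is to identify the Koszul contraction by $s_{N,0}$ with the Rasmussen differential $d_N$ of \cite{RasDiff}. Recall that $d_N$ is constructed by perturbing the Koszul differential on the complex computing Hochschild homology of a Rouquier complex: to the natural differential one adds a term of the form $\sum_i x_i^N\,\partial_{\theta_i}$, where the $\theta_i$ are the odd generators of the Koszul complex labeled by the strands of the braid, and the resulting total differential gives rise to the spectral sequence converging to $\fgl_N$ Khovanov--Rozansky homology. Thus, in view of \eqref{eq:HOMFLY homology}, it suffices to show that the geometric contraction $\iota_{s_{N,0}}$ acting on $\wedge^\bullet\CT_n^\vee$ pushes forward to exactly this perturbation of the differential on $\HHH(\sigma)$.

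First, I would use Remark \ref{rem:a grading} and Conjecture \ref{conj:1} to interpret $\wedge^\bullet\CT_n^\vee$ as the geometric avatar of the Koszul algebra generated by the $\theta_i$. At each torus fixed point $I_T\in\FH_n(\CC)^{\torus}$, the fiber $\CT_n|_{I_T}\cong \CC[x,y]/I_T$ has a distinguished basis indexed by the boxes of the standard Young tableau $T$, with equivariant weights equal to the $(q,t)$--contents of those boxes. Under the bijection between strands and boxes dictated by the tower \eqref{eq:tower}, the restriction of $s_{N,0}=x^N$ to each fiber becomes precisely the polynomial $x_i^N$ in the corresponding Soergel variable. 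Consequently, fiberwise the contraction $\iota_{s_{N,0}}$ reproduces Rasmussen's map $\theta_i\mapsto x_i^N$, and the $a$- and $q$-degrees agree with those prescribed in \cite{RasDiff}.

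Next, I would establish compatibility with braid composition: since $\iota^*$ is monoidal and the Koszul contraction is a derivation natural in the underlying bundle, the induced differential on $\HHH(\sigma)$ is functorial in the braid $\sigma$. It then suffices to verify the match on the elementary Rouquier complexes $\sigma_k^{\pm 1}$, where one can use the explicit description of $\CB(\sigma_k^{\pm 1})$ arising from the tower \eqref{eq:tower} together with the known formula for $d_N$ on these building blocks. Once the identification holds on generators, naturality and degree considerations propagate it to every braid via the factorization \eqref{eqn:hhh factors} of Khovanov's Hochschild homology functor.

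The main obstacle is the global-versus-fiberwise comparison in the second paragraph: while the fiberwise calculation at each $I_T$ is essentially a direct equivariant computation, upgrading it to a genuine equality of differentials on the derived pushforward requires a precise bimodule-theoretic description of $\iota^*$ that is itself only conjectural at the level of this paper. A natural route is to argue by induction on $n$ using the tower \eqref{eq:tower}, exploiting the fact that $\pi_n$ is the projectivization of a two-term complex of vector bundles and that the section $s_{N,0}$ restricts in a controlled manner along this projectivization, and then to check compatibility with the partial trace $\Tr_n$ of \cite{Hog}. In particular, this would let one cross-check the proposed differential against the $\fgl_N$ specializations of the explicit formula \eqref{eqn:magic formula} for torus braids, providing strong evidence short of the full conjectural framework.
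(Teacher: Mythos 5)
This is a conjecture in the paper, not a theorem; Section~\ref{sec:differentials} supplies evidence (the $n=1$ case of Conjecture~\ref{conj:differentials}, Corollary~\ref{cor: kill projectors}, and the fact that it implies Conjecture~\ref{conj:GORS}) but no proof, so there is no paper argument to compare against. Treating your writeup as a proof strategy, the main issue lies in your second paragraph. You assert that ``fiberwise the contraction $\iota_{s_{N,0}}$ reproduces Rasmussen's map $\theta_i\mapsto x_i^N$,'' but neither side of this identification is correct as stated. On the geometric side, formula \eqref{eqn:dual diff chart} says the contraction is given by $\xi_i \mapsto [f(X,Y)v]_i$, i.e.\ the $i$-th entry of the \emph{first column} of $X^N$, not the diagonal entry $x_i^N$; in the chart $\oFH_{(1,\ldots,1)}$ this produces the multinomial coefficients of $(u_1+zu_2+\cdots+z^{n-1}u_n)^N \bmod z^n$, as in \eqref{eqn:dN}, which is visibly different from $u_i^N$. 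On the algebraic side, Rasmussen's $d_N$ on Bott--Samelson factors acts by the divided differences $p_{i,i+1}=W_{i,i+1}/(v_i-u_i)(v_i-u_{i+1})$ in \eqref{bs with potential}, not by $\theta_i\mapsto x_i^N$. So the ``fiberwise'' match you invoke as the base case of your argument is not established and is not as simple as you claim; any actual comparison must match these two more intricate expressions, chart by chart.

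Beyond this, the strategy ``check on elementary Rouquier complexes, then propagate by monoidality and naturality'' has a structural problem: the differential $d_N$ lives on the Hochschild homology $\HHH(\sigma)=\RHom(\1,\sigma)$, which is not a monoidal functor in $\sigma$, so naturality in the braid group does not automatically transport an identification of differentials from $\sigma_k^{\pm1}$ to arbitrary products. This is essentially why the paper phrases the refinement as Conjecture~\ref{conj:differentials} in terms of a whole tower of commuting squares compatible with $I$, $\Tr$, and $\pi_N$, rather than as a generator-and-relations check. Your last paragraph correctly flags that the whole enterprise rests on Conjecture~\ref{conj:1}, which is itself open, so even a repaired version of your argument would only give conditional evidence, not a proof --- which is consistent with the statement being a conjecture.
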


\begin{remark}
A similar conjecture can be stated for the reduced $\fgl_N$ homology. However, the map \eqref{eqn:reduction} does not commute with the differential, and hence the unreduced homology is no longer a free module over the homology of the unknot.
\end{remark}

We are hopeful that the contraction with more general $s_{N,M}$ may correspond to an (as yet undefined) knot homology theory associated to the Lie superalgebra $\fgl_{N|M}$ (see some conjectural properties in \cite{GGS}). In particular, the differential induced by $s_{1,1}=xy$ should give rise to  a knot homology theory  associated to $\fgl_{1|1}$.  
Recent work of Ellis, Petkova and V\'ertesi \cite{EPV} shows that the tangle Floer homology of \cite{PV} gives  a sort of categorification of the \(\fgl_{1|1}\) Reshitikhin-Turaev invariant. 
In the spirit of the above conjecture,  contraction with $s_{1,1}$ may give rise to a differential on $\HHH$ whose homology is knot Floer homology, as conjectured in \cite{DGR}. 

In an earlier joint work with A. Oblomkov and V. Shende (\cite{GORS}), the first and the third authors gave a precise conjectural description of the stable $\fgl_N$ homology of $(n,\infty)$ torus knots,  which is known (\cite{Cautis,Hog,Rose,Roz}) to be isomorphic to the $\fgl_N$ homology of the categorified projector $\CP_{(1,\ldots,1)}$.
\begin{conjecture}[\cite{GORS}]
\label{conj:GORS}
The spectral sequence from HOMFLY-PT homology (given by \eqref{eqn:sym}) to the $\fgl_N$ homology of $\CP_{(1,\ldots,1)}$ degenerates after the first nontrivial differential $d_N$, which is given by the equation:
\begin{equation}
\label{dN symm}
d_N\left(\sum_{k=1}^{n} z^{k-1}\xi_k\right)=\left(\sum_{k=1}^{n}z^{k-1}u_k\right)^{N}\mod z^n, \qquad d_N(u_i)=0.
\end{equation} 
\end{conjecture}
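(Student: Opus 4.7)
The plan is to combine Theorem~\ref{thm:symm antisymm} with Conjecture~\ref{conj:diff}. The former identifies the HOMFLY-PT homology $\End(P_{(1,\ldots,1)})$ with $\CC[u_1,\ldots,u_n]\otimes\wedge^{\bullet}(\xi_1,\ldots,\xi_n)$, while the latter says the $\fgl_N$ differential is the odd derivation induced by contraction with the equivariant section $s_{N,0}$ of $\CT_n$. Once these inputs are in hand, everything reduces to a local computation on the open chart $\oFH_{(1,\ldots,1)}(\CC)\subset\FH_n(\CC)$ containing the column torus fixed point.

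The first step is to produce explicit coordinates on this chart. The fixed point corresponds to the flag $I_k=(x,y^k)$, and the requirement that each $\CC[x,y]/I_k$ be supported on $\{y=0\}$ forces $y$ to act nilpotently with a single Jordan block of size $k$ in a neighborhood of this point. Consequently, the nearby ideals take the form
\[
I_k\;=\;\bigl(x-p_k(y),\ y^k\bigr),\qquad p_k(y)\;=\;u_1+u_2y+\cdots+u_ky^{k-1},
\]
since any deformation of the relation $y^k\equiv 0$ would spoil nilpotence, while $I_k\subset I_{k-1}$ forces $p_k(y)\equiv p_{k-1}(y)\pmod{y^{k-1}}$. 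This yields $n$ independent coordinates of the prescribed weights $\deg u_k=qt^{1-k}$, matching Theorem~\ref{thm:symm antisymm}. The theorem also certifies that the dg structure on the chart is trivial, so $\CT_n$ is freely trivialized by $1,y,\ldots,y^{n-1}$ with dual odd basis $\xi_1,\ldots,\xi_n$.

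The second step is to compute $s_{N,0}$ in this trivialization. By definition $s_{N,0}|_{I_\bullet}=x^N\in\CC[x,y]/I_n=\CT_n$, and using $x\equiv U(y)$ together with $y^n\equiv 0$ we obtain $s_{N,0}=U(y)^N\bmod y^n$, where $U(z):=\sum_{k=1}^{n}u_kz^{k-1}$. Contraction with $s_{N,0}$ is an odd derivation annihilating the $u_i$ and sending $\xi_k$ to the coefficient of $y^{k-1}$ in $U(y)^N\bmod y^n$. Repackaging with $\Xi(z):=\sum_{k=1}^{n}z^{k-1}\xi_k$ then gives the stated identity $d_N\Xi(z)=U(z)^N\bmod z^n$ together with $d_N(u_i)=0$.

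Finally, degeneration of the spectral sequence follows from the fact that the whole $\fgl_N$ differential is produced geometrically by a single contraction, leaving no room for higher differentials; rigorously, one computes the Poincar\'e series of the Koszul cohomology of the ideal generated by the coefficients of $U(z)^N\bmod z^n$ in $\CC[u_1,\ldots,u_n]\otimes\wedge^{\bullet}(\xi_1,\ldots,\xi_n)$ and matches it with the $\fgl_N$ homology of the antisymmetric projector computed in \cite{Cautis,Rose,Roz}. The main obstacle is that the argument is conditional on Conjectures~\ref{conj:1} and~\ref{conj:diff}; the most delicate proven technical input beyond these is the triviality of the dg structure on $\oFH^\dg_{(1,\ldots,1)}(\CC)$, which is exactly the content of Theorem~\ref{thm:symm antisymm} (the relevant case of Conjecture~\ref{conj:homfly}).
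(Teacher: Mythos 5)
Your argument is correct and follows the same route as the paper's Section~\ref{sub:affine diff}: you identify the affine chart $\oFH_{(1,\ldots,1)}$, observe that there $Y$ is a single nilpotent Jordan block $B$ and $X = u_1 + u_2 B + \cdots + u_n B^{n-1}$, and compute the first column of $X^N$ (equivalently $x^N \bmod I_n$ in your ideal presentation, which is the same thing since $X$ is multiplication by $p_n(y)=U(y)$ on $\CC[x,y]/I_n$) to obtain formula~\eqref{dN symm}. The only real difference is cosmetic: you phrase the local model in terms of ideals $I_k = (x-p_k(y),y^k)$ where the paper uses the ADHM matrix form, but these are manifestly equivalent parametrizations of the chart. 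Two small caveats: you refer to $\CP_{(1,\ldots,1)}$ as ``the antisymmetric projector,'' whereas in the paper's naming convention it is the \emph{symmetric} one (compare Examples~\ref{ex:Sym n} and~\ref{ex:Lambda n}); and the appeal to a Poincar\'e-series match with \cite{Cautis,Rose,Roz} in your last paragraph is unnecessary once you assume Conjecture~\ref{conj:diff}: on the affine chart the total complex \emph{is} the Koszul complex of $d_N$, so its cohomology is by fiat the $\fgl_N$ homology and the spectral sequence has no room for further differentials. The Poincar\'e-series comparison is an independent check of the conjectures, not a step in the conditional proof.
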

 This conjecture has been extensively verified against computer-generated data for $N=2$ and $3$ (see \cite{GL,GOR}). We prove that  Conjecture \ref{conj:GORS} immediately follows from Conjecture \ref{conj:diff}.

\subsection{}

This paper is naturally divided into two parts. The first part (Sections \ref{sec:flag}, \ref{sec:Sbim}, \ref{sec:geometry}) presents the non-equivariant picture, which relates the global geometry of the flag Hilbert scheme with the Soergel category. Sections \ref{sec:n=2} and \ref{sec:n=3} present examples of many of our constructions for $n=2$ and $n=3$, respectively. The second part of the paper (Sections \ref{sec:equiv}, \ref{sec:local}, \ref{sec:differentials}) is an equivariant refinement of the previous framework, which relates the local geometry of the flag Hilbert scheme with categorical idempotents in the Soergel category. More specifically:

\begin{itemize}

\item In Section \ref{sec:flag}, we define flag Hilbert schemes and the associated dg schemes, and we realize them as towers of projective bundles. 

\item In Section \ref{sec:Sbim}, we recall the necessary facts about the Hecke algebra and the Soergel category, and formulate the main conjectures.

\item In Section \ref{sec:geometry}, we develop a framework of monoidal categories over dg schemes, which encapsulates the existence of adjoint functors as in \eqref{eqn:conj1}, with all the desired properties. We show what computations one needs to make in order to prove Conjecture \ref{conj:1}.

\item In Section \ref{sec:n=2}, we present examples for $n=2$.

\item In Section \ref{sec:n=3}, we present examples for $n=3$.

\item In Section \ref{sec:equiv}, we show how the categorical setup of Section \ref{sec:geometry} can be enhanced to the equivariant setting. Inspired by the constructions of Elias--Hogancamp, we categorify the equivariant localization formula on projective space.

\item In Section \ref{sec:local}, we work out local equations for flag Hilbert schemes, and connect the structure sheaves of torus fixed points with the categorical projectors of \cite{EH}.

\item In Section \ref{sec:differentials}, we discuss differentials and Conjecture \ref{conj:diff}.

\item In Section \ref{sec:appendix}, we collect certain foundational facts about dg categories and dg schemes.

\end{itemize}

\section*{Acknowledgments}

The authors would like to thank Michael Abel, Ben Elias and Matt Hogancamp for explaining to us their results \cite{AbHog,EH,Hog}, and Mikhail Gorsky, Daniel Halpern-Leistner, Mikhail Khovanov, Ivan Losev, Davesh Maulik, Michael McBreen, Hugh Morton, Alexei Oblomkov, Andrei Okounkov, Claudiu Raicu, Sam Raskin, Raphael Rouquier, Lev Rozansky, Peter Samuelson, Peng Shan and Monica Vazirani for very useful discussions. Special thanks to Ian Grojnowski for explaining Example \ref{ex:not lci} to us. The work of E. G. was partially supported by the NSF grant DMS-1559338 and the Hellman fellowship. The work of A. N. was partially supported by the NSF grant DMS-1600375.
The work of E.G. in sections 3, 5 and 6 was supported by the grant 16-11-10018 of the Russian Science Foundation.

\section{The flag Hilbert scheme}
\label{sec:flag}

\subsection{Definition}
\label{sub:defflag}

Let us recall the usual \textbf{Hilbert scheme} of $n$ points on $\CC^2$:
$$
\H_n = \{\text{ideal } I \subset \CC[x,y], \text{ dim}_{\CC} \ \CC[x,y]/I = n \}
$$
There is a {\bf tautological bundle} of rank $n$ on the Hilbert scheme given by:
$$
\CT_n|_I = \CC[x,y]/I
$$
Similarly, one can define the \textbf{flag Hilbert scheme} $\FH_n(\CC^2)$ of $n$ points on $\CC^2$ \cite{Cheah,Tikh} as the moduli space of complete flags of ideals: 
\begin{equation}
\label{eqn:flagdef}
\FH_n(\CC^2) = \{I_n \subset ... \subset I_1 \subset I_0 = \CC[x,y], \text{ dim } I_{k-1}/I_k = 1, \ \forall k\}
\end{equation}
Clearly, $\FH_n(\CC^2)$ can be thought of as the closed subscheme of $\H_n \times ... \times \H_1 \times \H_0$ cut out by the inclusions $I_k \subset I_{k-1}$ for all $k$. We will not pursue this description, and instead work with an alternative one given in the next Subsection. Meanwhile, let us point out several general features of the flag Hilbert scheme \eqref{eqn:flagdef}. We may pull $\CT_n$ back to $\FH_n(\CC^2)$, where we have a full flag of tautological bundles:

\includegraphics{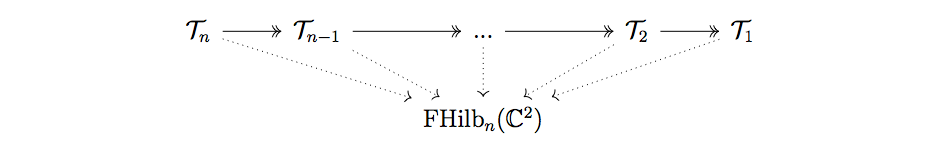}
of ranks $n,...,1$. For any $k \in \{1,...,n\}$, the fibers of $\CT_k$ over flags $I_n \subset ... \subset I_0$ are precisely the quotients $\CC[x,y]/I_k$. We define the tautological line bundles as the successive kernels:
\begin{equation}
\label{eqn:taut line}
\CL_k = \text{Ker} \left( \CT_k \twoheadrightarrow \CT_{k-1} \right)
\end{equation}
Moreover, there is a morphism:
\begin{equation}
\label{eqn:rho}
\rho: \FH_n(\CC^2) \longrightarrow \CC^{2n}=\CC^n\times \CC^n
\end{equation}
$$
(I_n \subset ... \subset I_0) \mapsto (x_1,\ldots, x_n, y_1,\ldots, y_n)
$$
where $(x_k,y_k) = \supp \ I_{k-1}/I_k$. We may consider the various fibers of this map:
$$
\FH_n(\CC)=\rho^{-1}(\CC^n\times \{0\}), \qquad \FH_n(\text{point}) = \rho^{-1}(\{0\}\times \{0\}) 
$$
These will be the moduli spaces of flags of sheaves set-theoretically supported on the line $\{y=0\}$ and at the point $(0,0)$, respectively. The vector bundles $\CT_k$ and $\CL_k$ are defined as before. As a rule, we will write:
$$
\FH_n \quad \text{for any of} \quad \FH_n(\CC^2), \FH_n(\CC) \text{ or } \FH_n(\point)
$$ 
when we will make general statements that apply to all our flag Hilbert schemes. 

\begin{example}
\label{ex:hilb}

It is well-known that $\H_2$ is the blow-up of the diagonal inside $(\CC^2\times \CC^2)/S_2$. It should be no surprise that:
\begin{equation}
\label{eqn:c2}
\FH_2(\CC^2) = \text{Bl}_\Delta\left( \CC^2 \times \CC^2 \right) = \text{Proj} \left( \frac {\CC[x_1,x_2,y_1,y_2,z,w]}{(x_1-x_2)w - (y_1-y_2)z} \right)
\end{equation}
where the variables $x_i,y_i$ sit in degree 0, while $z,w$ sit in degree 1 with respect to the Proj. Setting $y_1=y_2=0$, respectively $x_1=x_2=y_1=y_2=0$, we obtain:
\begin{equation}
\label{eqn:c1}
\FH_2(\CC) = \PP^1 \times \AA^1 \cup \AA^1 \times \AA^1 = \text{Proj} \left( \frac {\CC[x_1,x_2,z,w]}{(x_1-x_2)w} \right)
\end{equation}
\begin{equation}
\label{eqn:c0}
\FH_2(\point) = \PP^1 = \text{Proj} \left( \CC[z,w]\right)
\end{equation}

\end{example}

\subsection{The matrix presentation}
\label{sub:adhm hilbert}

Throughout this section, we fix the Lie groups:
$$
G = GL_n, \qquad B_0 = \text{invertible lower triangular }n \times n\text{ matrices}
$$
and the flag variety $\fl = G/B_0$. We will also consider the Lie algebras:
$$
\fg = n\times n \text{ matrices}, \qquad \fb_0 = \text{lower triangular }n \times n\text{ matrices}
$$
We will also write $\fn_0 \subset \fb_0$ for the nilpotent subgroup of strictly lower triangular matrices, and $V$ for the $n$ dimensional vector space on which all the above matrix groups and algebras act. 

\begin{proposition} 
\label{prop:adhm}

(ADHM construction, \cite{Nak}) \ The Hilbert scheme of $n$ points is given by:
\begin{equation}
\label{eqn:defhilbert}
\H_n = \mu^{-1}(0)^{\emph{cyc}}/G
\end{equation}
where the ``moment map" is given by:
\begin{equation}
\label{eqn:defmoment}
\mu: \fg \times \fg \times V \longrightarrow \fg, \qquad \mu(X,Y,v) = [X,Y]
\end{equation}
and the superscript $\cyc$ stands for the open subset of cyclic triples $(X,Y,v)$, i.e. those for which $V$ is generated by the vectors $\{X^aY^bv\}_{a,b\geq 0}$. Finally, the quotient by $G$ is explicitly given by:
$$
g\cdot (X,Y,v) = \left( g X g^{-1},g Y g^{-1},g v \right) \qquad \forall g \in G
$$
\end{proposition}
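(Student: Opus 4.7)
The plan is to exhibit mutually inverse maps between the two sides and then upgrade this set-theoretic bijection to a scheme-theoretic isomorphism. Given an ideal $I$ of colength $n$, the quotient $V_I = \CC[x,y]/I$ is naturally an $n$-dimensional vector space on which multiplication by $x$ and $y$ induces commuting operators $X_I, Y_I$, while the class of $1 \in V_I$ is a cyclic vector $v_I$. Choosing an isomorphism $V_I \cong V$ produces a cyclic triple $(X,Y,v)$ with $[X,Y]=0$, well-defined up to the $G$-action by change of basis. In the reverse direction, given $(X,Y,v) \in \mu^{-1}(0)^{\cyc}$, the assignment $x \mapsto X$, $y \mapsto Y$ defines an algebra homomorphism $\CC[x,y] \to \End(V)$; composing with evaluation at $v$ gives a surjection onto $V$ by cyclicity, and its kernel is a colength-$n$ ideal clearly independent of the $G$-orbit representative. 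These two constructions are manifestly inverse to each other on closed points.

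Next, I would verify that the $G$-action on $\mu^{-1}(0)^{\cyc}$ is free: any $g$ stabilizing $(X,Y,v)$ commutes with $X$ and $Y$ and fixes $v$, hence fixes every $X^a Y^b v$ and therefore acts as the identity on the cyclic span, which is all of $V$. A GIT argument, with cyclicity recast as stability for the character $\det$ of $G$, then promotes $\mu^{-1}(0)^{\cyc} \to \mu^{-1}(0)^{\cyc}/G$ to a principal $G$-bundle, so the quotient inherits the structure of a smooth algebraic variety of the expected dimension $2n$. At this stage the map $\H_n \to \mu^{-1}(0)^{\cyc}/G$ is a bijection of smooth varieties.

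The main obstacle is the scheme-theoretic upgrade: one must check that the bijection above extends to an isomorphism of moduli functors. Concretely, over an arbitrary base $S$, flat families of colength-$n$ subschemes of $\CC^2 \times S$ must correspond to principal $G$-bundles on $S$ equipped with an equivariant cyclic triple satisfying $\mu = 0$. Constructing the universal cyclic triple on $\H_n$ requires locally trivializing the pushforward of the structure sheaf of the universal subscheme, while the reverse direction requires realizing the universal quotient as $\CO_S[x,y]$ modulo the relations encoded by the universal $X$ and $Y$. I would follow Nakajima's original treatment for this step, since the real content lies in organizing these universal constructions compatibly with base change; the comparison on closed points, together with freeness of the $G$-action and smoothness of both sides, then promotes the morphism of moduli spaces to an isomorphism.
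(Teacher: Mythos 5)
Your proposal is correct and matches the paper's treatment: the paper does not prove this proposition but cites Nakajima and merely records the two mutually inverse assignments $I \leadsto (V=\CC[x,y]/I,\,X,Y,\,v=1\bmod I)$ and $(X,Y,v)\leadsto I=\{f : f(X,Y)v=0\}$, which is exactly your first paragraph. Your added discussion of freeness of the $G$-action, stability for the character $\det$, and the deferral of the moduli-functor upgrade to Nakajima's book is a reasonable fleshing out of the same route, not a different one.
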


\begin{remark}

The reader accustomed to the construction of symplectic varieties via Hamiltonian reduction will recognize that two of the Lie algebras in \eqref{eqn:defmoment} are usually replaced with their duals. Here we tacitly assume the identification of $\fg$ with its dual given by the trace pairing.

\end{remark}

Passing between the ideal description of the Hilbert scheme and the ADHM picture is easy:
$$
I \leadsto \{ V = \CC[x,y]/I, \ X,Y  = \text{multiplication by }x,y, \text{ and } v = 1\text{ mod }I \}
$$
$$
(X,Y,v) \leadsto I = \{f\in \CC[x,y] \text{ such that } f(X,Y)\cdot v = 0\} 
$$
To mimic \eqref{eqn:defhilbert} for the flag Hilbert scheme, one needs to replace the vector space $V$ by a full flag of vector spaces. Then the maps $X,Y$ must preserve these vector spaces, and so are required to lie in the Borel subspace $\fb_0$. In other words, we have:
\begin{equation}
\label{eqn:def flag hilbert 0}
\FH_n(\CC^2) = \bar{\mu}^{-1}(0)^{\cyc}/B_0
\end{equation}
where:
$$
\bar{\mu}: \fb_0 \times \fb_0 \times V \longrightarrow \fn_0, \qquad \bar{\mu}(X,Y,v) = [X,Y]
$$
However, using \eqref{eqn:def flag hilbert 0} as the definition of flag Hilbert schemes leads us into trouble, since there is no general reason why quotients modulo Borel subgroups are good. To remedy this problem, let us consider the following alternative definition of flag Hilbert schemes, built on the observation that one can let the Borel subgroup vary.

\begin{definition}
\label{def:flag} 

Consider the following space, inspired by the Grothendieck resolution:
$$
\fz = \Big\{(X,Y,v,\fb) \in \fg \times \fg \times V \times \fl, \ X,Y \in \fb \Big\}
$$
where we identify the flag variety with the set of Borel subalgebras of $\fg$. Consider the map:
\begin{equation}
\label{eqn:def nu}
\nu : \fz \longrightarrow \Adj_\fn, \qquad (X,Y,v,\fb) \mapsto [X,Y] 
\end{equation}
where the target $\Adj_\fn$ is the affine bundle over the flag variety with fibers given by the nilpotent radicals $\fn$. It is $G$--equivariant with respect to the adjoint action, hence the notation. Define:
\begin{equation}
\label{eqn:def flag hilbert}
\FH_n(\CC^2) = \nu^{-1}(0)^\cyc/G
\end{equation}
where the $G$ action is:
$$
g\cdot (X,Y,v,\fb) = \left( gXg^{-1},gYg^{-1},gv,\text{Ad}_g(\fb) \right)  \qquad \forall g \in G
$$
and the superscript $\cyc$ still refers to the open subset of cyclic triples.

\end{definition}

While mostly a matter of presentation, the definition \eqref{eqn:def flag hilbert} has several advantages. Firstly, note that the map $\nu:\FH_n(\CC^2) \rightarrow \H_n$ is simply given by forgetting the flag $\fb$. Secondly, the set of quadruples $(X,Y,v,\fb)$ which are cyclic is precisely the set of stable points with respect to the action of $G$ on the trivial line bundle on $\fz$ (endowed with the determinant character). Then geometric invariant theory implies that \eqref{eqn:def flag hilbert} is a geometric quotient.

\subsection{DG schemes}
\label{sub:dg schemes}

Because the quotient in \eqref{eqn:defhilbert} is taken in the sense of GIT, the Hilbert scheme is a quasi-projective variety. But let us neglect its interesting structure as a topological space, and describe its ring of functions locally. By definition, the locus of cyclic triples $(\fg \times \fg \times V)^\cyc$ is an open subset of affine space, and the moment map \eqref{eqn:defmoment} gives rise to a section of the trivial $\fg$ bundle:
$$
\mu \in \Gamma \left( \CO_{(\fg \times \fg \times V)^\cyc} \otimes \fg \right)
$$
over $(\fg \times \fg \times V)^\cyc$. We may write down the Koszul complex corresponding to this section:
$$
\left( \wedge^\bullet \fg, \mu \right) := \left[ \CO_{(\fg \times \fg \times V)^\cyc} \otimes \wedge^{\dim G} \fg \stackrel{\mu}\longrightarrow ... \stackrel{\mu}\longrightarrow \CO_{(\fg \times \fg \times V)^\cyc} \otimes \fg \stackrel{\mu}\longrightarrow \CO_{(\fg \times \fg \times V)^\cyc} \right]
$$
Since the Hilbert scheme is smooth, this complex is exact except at the rightmost cohomology group, where it is isomorphic to $\CO_{\mu^{-1}(0)^\cyc}$. Moreover, since all the maps are $G$--equivariant, we may write locally:
$$
\CO_{\H_n} \stackrel{\qis}\cong \left( \wedge^\bullet \adj_\fg, \mu \right) = \left[ \wedge^{\dim G} \adj_\fg \stackrel{\mu}\longrightarrow ... \stackrel{\mu}\longrightarrow \CO_{(\fg \times \fg \times V)^\cyc/G} \right]
$$
where $\adj_\fg$ denotes the vector bundle on $(\fg \times \fg \times V)^\cyc/G$, obtained by descending the trivial vector bundle $\fg$ on $\fg \times \fg \times V$, endowed with the $G$--action by conjugation. One may write down the analogous Koszul complex for the map $\nu$ of \eqref{eqn:def nu}, but observe that:
\begin{equation}
\label{eqn:koszul flag}
\CO_{\FH_n(\CC^2)} \textbf{ is not } \stackrel{\qis}\cong \left( \wedge^\bullet \adj_\fn, \nu \right) := \left[ \wedge^{\dim N} \adj_\fn \stackrel{\nu}\longrightarrow ... \stackrel{\nu}\longrightarrow \CO_{(\fg \times \fg \times V \times \fl)^\cyc/G} \right]
\end{equation}
(recall that $\adj_\fn$ denotes the vector bundle on $(\fg \times \fg \times V \times \fl)^\cyc/G$, obtained by descending the vector bundle $\Adj_\fn$ on $\fl$, endowed with the $G$--action by conjugation). The fact that the Koszul complex \eqref{eqn:koszul flag} is not exact anymore boils down to the fact that $\FH_n(\CC^2)$ is not a local complete intersection, and so we choose to work instead with the dg scheme:
\begin{equation}
\label{eqn:def dg zero}
\CO_{\FH_n^\dg(\CC^2)} := \left( \wedge^\bullet \adj_\fn, \nu \right)
\end{equation}
Note that we think of the left hand side as a sheaf of dg algebras, given precisely by the complex in \eqref{eqn:koszul flag} supported on the smooth scheme $(\fg \times \fg \times V \times \fl)^\cyc/G$, which is nothing but a flag variety bundle over the smooth scheme $(\fg \times \fg \times V)^\cyc/G$. This will allow us to ignore the subtleties of the topology of dg schemes.

\subsection{Explicit matrices}
\label{sub:explicit matrices}

Although the definition of $\fz$ and $\FH_n(\CC^2)$ is given by allowing the Borel subgroup to vary, to keep the presentation explicit we will henceforth fix it to be $B=B_0$. Therefore, points of the flag Hilbert scheme will be triples:
\begin{equation}
\label{eqn:triple matrices}
X = \left( \begin{array}{cccc}
x_1 & 0 & 0 & 0 \\
* & x_2 & 0 & 0 \\
* & * & ... & 0 \\
* & * & * & x_{n} \end{array} \right), \quad Y = \left( \begin{array}{cccc}
y_1 & 0 & 0 & 0 \\
* & y_2 & 0 & 0 \\
* & * & ... & 0 \\
* & * & * & y_{n} \end{array} \right), \quad v = \left( \begin{array}{cccc}
1 \\
0 \\
0 \\
0 \end{array} \right)  
\end{equation}
such that $[X,Y]=0$, and the vectors $\{X^aY^bv\}_{a,b\geq 0}$ generate the space $V$. This latter condition implies that the first entry of $v$ must be non-zero, so we may use the $B=B_0$ action to fix $v$ as in equation \eqref{eqn:triple matrices}. Therefore, we will abuse notation and re-write \eqref{eqn:def flag hilbert} as:
\begin{equation}
\label{eqn:defflaghilbert1}
\FH_n(\CC^2) = \Big\{(X,Y,v), \ X,Y \text{ lower triangular}, \ [X,Y]=0, \ v \text{ cyclic} \Big\}/B 
\end{equation}
In this language, the map:
$$
\FH_n(\CC^2) \stackrel{\rho}\longrightarrow \CC^{2n}
$$
is given by taking the joint eigenvalues of the matrices $X$ and $Y$. Therefore, we conclude that:
\begin{equation}
\label{eqn:defflaghilbert2}
\FH_n(\CC) = \Big\{(X,Y,v) \text{ as in \eqref{eqn:defflaghilbert1}}, \ \ \ Y \text{ strictly lower triangular} \Big\}
\end{equation}
\begin{equation}
\label{eqn:defflaghilbert3}
\FH_n(\point) = \Big\{(X,Y,v) \text{ as in \eqref{eqn:defflaghilbert1}}, \ X , Y \text{ strictly lower triangular} \Big\}
\end{equation}
We may use the descriptions \eqref{eqn:defflaghilbert1}--\eqref{eqn:defflaghilbert3} to obtain the following estimates of the dimensions of flag Hilbert schemes:
$$
\dim \FH_n(\CC^2) \geq \dim \left(\text{affine space of }(X,Y,v)\right) - \#\left(\text{equations }[X,Y]=0\right) - \dim B = 
$$
\begin{equation}
\label{eqn:expdim1}
= n^2+2n - \frac {n(n-1)}2 - \frac {n(n+1)}2 = 2n \ =: \ \expdim \FH_n(\CC^2)
\end{equation}
The right hand side stands for ``expected (or virtual) dimension". Similarly, we have:
\begin{equation}
\label{eqn:expdim2}
\dim \FH_n(\CC) \geq n \ =: \ \expdim \FH_n(\CC)
\end{equation}
\begin{equation}
\label{eqn:expdim3}
\dim \FH_n(\point) \geq n-1 \ =: \ \expdim \FH_n(\point)
\end{equation}
The reason why the expected dimension in \eqref{eqn:expdim3} is $n-1$ rather than 0 is that when $X$ and $Y$ are both strictly lower triangular matrices, the commutator $[X,Y]=0$ is not only strictly lower triangular, but has the first sub-diagonal equal to zero by default. Therefore, the first sub-diagonal entries are $n-1$ equations that need not be placed on $\FH_n(\point)$. 

\begin{example}
\label{ex:not lci}

If the inequalities in \eqref{eqn:expdim1}--\eqref{eqn:expdim3} were equalities, then we would conclude that flag Hilbert schemes were local complete intersections. However, this is not the case. We  give an example of how the bound in \eqref{eqn:expdim3} can fail, which we learned from Ian Grojnowski.  Let $n=10$, and  consider the affine space of matrices $X,Y$ which are lower triangular, and have zero blocks of sizes $1,2,3$ and $4$ on the diagonal:
\begin{equation}
\label{eqn:big matrix}
X,Y = \left( \begin{array}{cccccccccc}
0 & 0 & 0 & 0 & 0 & 0 & 0 & 0 & 0 & 0 \\
* & 0 & 0 & 0 & 0 & 0 & 0 & 0 & 0 & 0 \\
* & 0 & 0 & 0 & 0 & 0 & 0 & 0 & 0 & 0 \\
* & * & * & 0 & 0 & 0 & 0 & 0 & 0 & 0 \\
* & * & * & 0 & 0 & 0 & 0 & 0 & 0 & 0 \\
* & * & * & 0 & 0 & 0 & 0 & 0 & 0 & 0 \\
* & * & * & * & * & * & 0 & 0 & 0 & 0 \\
* & * & * & * & * & * & 0 & 0 & 0 & 0 \\
* & * & * & * & * & * & 0 & 0 & 0 & 0 \\
* & * & * & * & * & * & 0 & 0 & 0 & 0 \\
\end{array} \right)
\end{equation}
The dimension of the affine space consisting of triples $(X,Y,v)$ equals $35+35+10 = 80$. Since the commutator $[X,Y]=0$ must have the $2\times 1$, $3\times 2$ and $4\times 3$ blocks under the diagonal equal to zero by default, the number of equations we need to impose is only $15$. Taking into account the fact that the Borel subgroup has dimension $55$, we conclude that:
$$
\dim \FH_{10}(\point) \geq 80 - 15 - 55 = 10 > 9 = \expdim \FH_{10}(\point)
$$
We may translate this example in terms of flags of ideals inside $\CC[x,y]$. Let $d=4$, $n = \binom{d+1}{2}$, and $\mathfrak{m} \subset \CC[x,y]$ be the maximal ideal of the origin, and let us consider the locus of flags:
$$
L = (I_0\supset I_1\supset \ldots \supset I_{n}) \subset \FH_{n}(\point)
$$
such that:
\begin{equation}
\label{eqn:weird ideal}
I_{\binom{k+1}{2}}=\mathfrak{m}^{k},\quad k = 0,\ldots,d.
\end{equation}
By the defining property of the maximal ideal $\fm$, for each $k \in \{0,...,d-1\}$ the flag of ideals: 
$$
\mathfrak{m}^{k}\supset I_{\binom{k+1}{2}+1}\supset \ldots I_{\binom{k+2}{2}-1} \supset \mathfrak{m}^{k+1}
$$
can be chosen as an arbitrary complete flag of vector subspaces in $\mathfrak{m}^{k}/\mathfrak{m}^{k+1}\simeq \CC^{k+1}$. Since the dimension of the corresponding flag variety is $\binom{k+1}{2}$, we conclude that:
$$
\dim L = \sum_{k=0}^{d-1}\binom{k+1}{2}=\binom{d+1}{3} \gg n-1 = \expdim \FH_n(\point) 
$$
as $d$ becomes large (although the inequality is strict as soon as $d\geq 4$). This construction also shows that the stratum $L$ is non-empty, since there always exist flags of ideals with the property \eqref{eqn:weird ideal}, something which was not immediately apparent from the matrix construction \eqref{eqn:big matrix}.

\end{example}

\subsection{Projective tower construction}
\label{sub:tower}


Let us consider the action:
\begin{equation}
\CC^* \times \CC^* \curvearrowright \FH_n
\end{equation}
which scales the matrices $X,Y$ independently. We denote the basic characters of this action by $q$ and $t$, so the $\torus$ action is explicitly given by:
$$
(z_1,z_2) \cdot (X,Y) = (q(z_1) X ,t(z_2) Y), \qquad \forall \ (z_1,z_2) \in \CC^* \times \CC^*
$$
In the matrix presentation, the tautological bundle $\CT_n$ on $\FH_n$ has fibers consisting simply of the vector spaces $V$ on which the matrices $X,Y$ act. The fact that flag Hilbert schemes are defined as $B$--quotients means that this vector bundle need not be trivial. Therefore, the matrices $X,Y: V \rightarrow V$ give rise to endomorphisms of the tautological bundle on the whole of $\FH_n$, which we will denote by the same letters:
$$
q\CT_n \stackrel{X}\longrightarrow \CT_n, \qquad t\CT_n \stackrel{Y}\longrightarrow \CT_n
$$
In the formulas above, one must twist the tautological bundle by the torus characters $q,t$ in order for the endomorphisms $X,Y$ to be $\CC^* \times \CC^*$ equivariant. Since a point of the flag Hilbert scheme entails the choice of a fixed flag of $V$, there is a full flag of tautological vector bundles:
$$
\CT_n \twoheadrightarrow \CT_{n-1} \twoheadrightarrow ... \twoheadrightarrow \CT_1
$$
on $\FH_n$. Flag Hilbert schemes are easier to work with than usual Hilbert schemes because they can be built inductively. Specifically, we have the maps:
\begin{equation}
\label{eqn:tower}
\includegraphics{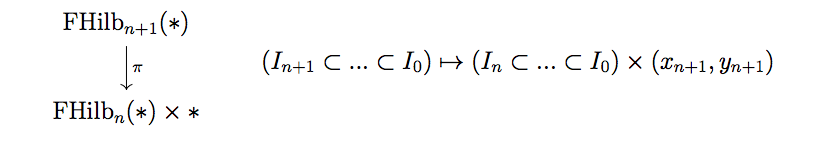}
\end{equation}
for any $\star$. When $* = \CC$ we set $y_{n+1}=0$ and when $* = \point$ we further set $x_{n+1} = y_{n+1} = 0$. What makes \eqref{eqn:tower} manageable is that it is a \textbf{projective bundle}, so we conclude that flag Hilbert schemes are projective towers. Specifically, consider the complexes:
\begin{equation}
\label{eqn:complex}
\includegraphics{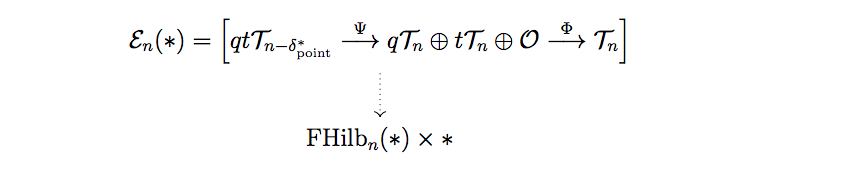}
\end{equation}
for any $\star$, with the maps defined by:
\begin{equation}
\label{eqn:psi}
\Psi(w) = \Big( - (Y-y_{n+1})w, (X-x_{n+1})w,0 \Big) 
\end{equation}
\begin{equation}
\label{eqn:phi}
\Phi(w_1,w_2,f) = (X-x_{n+1})w_1 + (Y-y_{n+1})w_2 + fv
\end{equation}
Here, $x_{n+1},y_{n+1}$ are the coordinates on the second factor of $\FH_n(\CC^2) \times \CC^2$, which are specialized to $y_{n+1} = 0$ (resp. $x_{n+1} = y_{n+1} = 0$) when $* = \CC$ (resp. $* = \point$). When $* = \point$, the leftmost bundle in the complex \eqref{eqn:complex} is $\CT_{n-1}$. This implicitly uses the fact that the maps $X,Y:\CT_n \rightarrow \CT_n$ become nilpotent, hence they factor through $\CT_{n} \twoheadrightarrow \CT_{n-1}$. In the next Subsection, we will prove the following inductive description of flag Hilbert schemes (\cite{Negut}):

\begin{theorem}
\label{thm:complex}
The maps $\pi$ of \eqref{eqn:tower} can be written as projectivizations:
\begin{equation}
\label{eqn:towers}
\FH_{n+1} = \PP \left( H^0(\CE_n)^{\vee} \right) := \proj_{\FH_n}\left(S^{\bullet} \left( H^0(\CE_n) \right) \right)
\end{equation}
This holds for each of the three variants $\star$ of flag Hilbert schemes. The line bundle $\CL_{n+1}$ on the left hand side coincides with the tautological sheaf $\CO(1)$ on the right. 
\end{theorem}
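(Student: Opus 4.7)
The plan is to show that $\FH_{n+1}$ and $\PP(H^0(\CE_n)^\vee)$ represent the same functor on schemes over $\FH_n \times *$, namely the functor of extending the flag of ideals by one step, and then to identify the tautological quotient $\CO(1)$ on the projectivization with the line bundle $\CL_{n+1}$. Throughout I focus on the case $* = \CC^2$; the variants $* = \CC$ and $* = \text{point}$ are obtained by setting $y_{n+1} = 0$ (respectively $x_{n+1} = y_{n+1} = 0$), and in the last case $\Phi$ further factors through $\CT_{n-1}$ because $X, Y$ are nilpotent on $\CT_n$.

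First, I would check that $\CE_n$ is indeed a complex. A direct computation gives
\[
\Phi(\Psi(w)) = -(X - x_{n+1})(Y - y_{n+1}) w + (Y - y_{n+1})(X - x_{n+1}) w = [Y, X] w,
\]
which vanishes thanks to the defining relation $[X, Y] = 0$ on $\FH_n$.

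Second, I would identify the fiber of $\pi: \FH_{n+1} \to \FH_n \times *$ over a point $(p, (x_{n+1}, y_{n+1}))$ with $\PP(H^0(\CE_n)|_{(p,(x_{n+1},y_{n+1}))}^\vee)$ in two complementary ways. The invariant picture: an extension $I_{n+1} \subset I_n$ with $I_n/I_{n+1}$ supported at $(x_{n+1}, y_{n+1})$ amounts to a nonzero surjection of $\CC[x,y]$-modules $I_n \twoheadrightarrow I_n/I_{n+1}$, up to scalar; since the maximal ideal at $(x_{n+1}, y_{n+1})$ annihilates the target, this surjection factors through the fiber $I_n \otimes_{\CC[x,y]} \CC_{(x_{n+1}, y_{n+1})}$, and one checks that tensoring the short exact sequence $0 \to I_n \to \CC[x,y] \to V \to 0$ with the Koszul resolution of $\CC_{(x_{n+1},y_{n+1})}$ produces precisely the complex $\CE_n$, with $H^0$ computing the desired fiber. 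The matrix picture from Subsection \ref{sub:explicit matrices}: an extension corresponds to a pair $(a, b) \in \CT_n^\vee \oplus \CT_n^\vee$ satisfying $a^T(Y - y_{n+1}) = b^T(X - x_{n+1})$, modulo the residual $B$-gauge (shifts by the image of $\Phi^\vee$ together with rescaling); dualizing, this is exactly a nonzero element of $H^0(\CE_n)^\vee$ modulo scalar, with the scalar in the $\CO$ summand of $\CE_n$ absorbed into the scaling gauge.

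Third, I would promote this pointwise bijection to an isomorphism of schemes by appealing to the universal property of $\proj$: for any scheme $T$ over $\FH_n \times *$, a morphism $T \to \PP(H^0(\CE_n)^\vee)$ is a pair $(\CL, \phi)$ consisting of a line bundle $\CL$ on $T$ and a surjection $\phi$ from the pullback of $H^0(\CE_n)$ onto $\CL$, and such data is equivalent (by the family version of the previous step) to extending the universal flag over $T$ by one step. Under this equivalence the tautological $\CO(1)$ corresponds to $\CL_{n+1} = I_n/I_{n+1}$. The main obstacle is establishing the sheaf-level identification $H^0(\CE_n) \cong I_n \otimes_{\CC[x,y]} \CC_{(x_{n+1}, y_{n+1})}$ functorially on $\FH_n \times *$ rather than only on fibers, which requires controlling higher Tor and compatible base change. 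This is delicate because $\FH_n$ is not generally a local complete intersection, so the dg replacement from Subsection \ref{sub:dg schemes} is likely essential for a clean and rigorous argument.
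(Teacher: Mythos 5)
Your ``matrix picture'' reproduces the paper's own proof almost verbatim: the paper studies fibers of $\pi$ by extending $(X,Y,v)$ by a bottom row $(w_1,w_2,f)$, records the closed condition $[\bar X,\bar Y]=0$ as $w_1(Y-y_{n+1})=w_2(X-x_{n+1})$, records the residual gauge $V\rtimes\CC^*$ as the image of $\Phi^\vee$ together with rescaling, and identifies cyclicity of the extended triple with non-triviality of $(w_1,w_2,f)$ modulo that gauge -- i.e.\ a nonzero point of $H^0(\CE_n^\vee)$ up to scalar. Your ``invariant picture'' (surjections $I_n\twoheadrightarrow I_n/I_{n+1}$ factoring through the fiber of $I_n$, and comparing with the Koszul resolution of the residue field at $(x_{n+1},y_{n+1})$) is not in the paper and is a nice reformulation.

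The one place you are overcautious is the claimed ``main obstacle'' about higher Tor and base change for $H^0(\CE_n)$. That concern is already defused by a fact you yourself used and the paper records in Remark~\ref{rem:surjective}: the right-hand map $\Phi$ is everywhere surjective (this is exactly cyclicity of $(X,Y,v)$). Consequently $\Ker\,\Phi$ is a vector bundle and $\CE_n$ is quasi-isomorphic to a two-term complex of vector bundles in degrees $-1,0$, so $H^0(\CE_n)=\mathrm{coker}\bigl(qt\,\CT_{n-\delta^*_{\point}}\to\Ker\,\Phi\bigr)$. Cokernel formation commutes with arbitrary base change, so the fiber of $H^0(\CE_n)$ is $H^0$ of the fiber complex, with no Tor-vanishing hypothesis needed. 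The failure of $\FH_n$ to be a local complete intersection (and hence the dg refinement of Subsection~\ref{sub:dg scheme}) is relevant to the vanishing of $H^{-1}(\CE_n)$, i.e.\ to Proposition~\ref{prop:dg scheme}, not to Theorem~\ref{thm:complex} as stated; the latter deliberately discards $H^{-1}$ by working with $H^0(\CE_n)$ only.
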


\begin{example}
\label{n=1}

Example \ref{ex:hilb} shows that the space $\FH_2$ can be obtained as $\proj$ of an explicit algebra. Let us obtain the same result using Theorem \ref{thm:complex}. Since $\CT_1=\CO$, we have:
$$
\CE_1(\CC^2) = \left[qt \CO\xrightarrow{(-y_1+y_2,x_1-x_2,0)} q\CO \oplus t\CO\oplus\CO\xrightarrow{(x_1-x_2,y_1-y_2,1)}\CO\right]\simeq
$$
$$
\simeq \left[qt \CO\xrightarrow{(-y_1+y_2,x_1-x_2)}q\CO\oplus t\CO\right] \quad \Rightarrow \quad S^\bullet \left(\CE_1 (\CC^2)\right) = \frac{\CC[x_1,x_2,y_1,y_2,z,w]}{(x_1-x_2)w-(y_1-y_2)z}
$$
precisely as in \eqref{eqn:c2}. Here, $z$ and $w$ are the two basis vectors of $q \CO \oplus t \CO$. If we set $y_1 = y_2 = 0$ in the above computation, we obtain the case $* = \CC$ of \eqref{eqn:c1}. Finally, we have:
$$
\CE_1(\point) =\left[q\CO\oplus t\CO\oplus\CO\xrightarrow{(0,0,1)}\CO\right]\simeq \left[q \CO\oplus t \CO\right] \quad \Rightarrow \quad S^\bullet \left( \CE_1(\point) \right) = \CC[z,w]
$$
as expected from \eqref{eqn:c0}.
\end{example}

\begin{example}
\label{ex:n=2}
Let us study Theorem \ref{thm:complex} in the case when $n=2$ and $* = \point$, in which case:
$$
\FH_2(\point) = \PP^1
$$
with respect to which we have $\CT_1 = \CO$ and $\CT_2 = \CO \oplus \CO(1)$. With this in mind, the complex \eqref{eqn:complex} is explicitly given by:
$$
\CE_2(\point) = \Big[ qt\CO \stackrel{\Psi}\longrightarrow q\CO \oplus t \CO \oplus \CO \oplus q \CO(1) \oplus t\CO(1) \stackrel{\Phi}\longrightarrow \CO \oplus \CO(1) \Big]
$$
and the maps are given by:
$$
\Psi = \left( \begin{array}{c}
0 \\ 0 \\ 0 \\ -z_1 \\ z_0 \end{array} \right), \qquad \Phi = \left( \begin{array}{ccccc}
0 & 0 & 1 & 0 & 0 \\
z_0 & z_1 & 0 & 0 & 0 \end{array} \right)
$$
It is clear from the above that the map $\Phi$ is surjective, which is a general phenomenon that follows from the cyclicity of triples $(X,Y,v)$. Therefore, we have:
$$
\CE_2(\point) \stackrel{\text{q.i.s.}}\cong \Big[ qt\CO \xrightarrow{(0,-z_1,z_0)} qt \CO(-1) \oplus  q \CO(1) \oplus t\CO(1) \Big] \stackrel{\text{q.i.s.}}\cong qt\CO(-1) \oplus \CO(2)
$$
Therefore, Theorem \ref{thm:complex} implies that:
\begin{equation}
\label{eqn:pt 3}
\FH_3(\point) = \PP_{\PP^1} \left( \frac {\CO(1)}{qt} \oplus \CO(-2) \right)
\end{equation}
which is a Hirzebruch surface. It is also the resolution of the singular cubic cone, which is nothing but the subvariety of the Hilbert scheme consisting of ideals supported at the origin.

\end{example}

\subsection{Proving Theorem \ref{thm:complex}}
\label{sub:proofcomplex}

Without loss of generality, we will treat the case $* = \CC^2$. We will proceed by induction by $n$, by studying the fibers of the map \eqref{eqn:tower}:
\begin{equation}
\label{eqn:mm}
\includegraphics{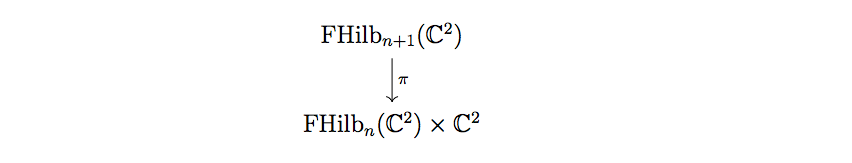}
\end{equation}
Recall that points of $\FH_n(\CC^2)$ are triples $(X,Y,v)$ consisting of two commuting lower triangular matrices (for simplicity, we fix the flag of vector spaces), together with a cyclic vector. Over such a triple, fibers of $\pi$ are completely determined by extending $X,Y,v$ by a bottom row:
$$
\bar{X} = \left( \begin{array}{cc}
X & 0 \\
w_1 & x_{n+1} \end{array} \right), \qquad \bar{Y} = \left( \begin{array}{cc}
Y & 0 \\
w_2 & y_{n+1} \end{array} \right), \qquad \bar{v} = \left( \begin{array}{cc}
v \\
f \end{array} \right)
$$
where $w_1, w_2 \in \CT_n^\vee$ and $f \in \CO$. The triple $(w_1, w_2,f)$ must satisfy the following properties: 

\begin{itemize} 

\item The closed condition $[\bar{X}, \bar{Y}] = 0$ is equivalent to:
\begin{equation}
\label{eqn:enigma1}
w_1 \cdot (Y - y_{n+1}) = w_2 \cdot (X - x_{n+1})
\end{equation}

\item $(w_1, w_2,f)$ is only defined up to conjugation by:
$$\
V \rtimes \CC^* = \text{Ker}(B_{n+1} \twoheadrightarrow B_n) = \left( \begin{array}{cc}
\text{Id} & 0 \\
w & c \end{array} \right) \qquad (w,c) \in V \rtimes \CC^*
$$
In other words, we do not consider the action of the group of $n \times n$ lower triangular matrices $B_n$ because it has already been trivialized locally on $\FH_n(\CC^2)$. In formulas:
\begin{equation}
\label{eqn:enigma2}
(w_1, w_2,f) \sim \left( cw_1 + w \cdot (X-x_{n+1}),cw_2 + w \cdot (Y-y_{n+1}),cf + w \cdot v \right) 
\end{equation}

\item Since we already know that $(X,Y,v)$ is cyclic, the extra condition that $(\bar{X}, \bar{Y}, \bar{v})$ be cyclic is equivalent to the fact that:
\begin{equation}
\label{eqn:equivalent}
\CC^{n+1} \text{ is generated by } \Big\{ \bar{v},\text{Im }(\bar{X}-x_{n+1}), \text{Im }(\bar{Y}-y_{n+1}) \Big\}
\end{equation}
This fails precisely when there exists a linear functional $\lambda : \CC^n \rightarrow \CC$ such that:
$$
\lambda(v) = f, \quad \lambda \left((X-x_{n+1})w \right) = w_1 \cdot w, \quad \lambda \left((Y-y_{n+1})w \right) = w_2 \cdot w
$$
for all $w \in V$. This is equivalent to $(w_1,w_2,f) \sim (0,0,0)$ with respect to \eqref{eqn:enigma2}. 

\end{itemize}

\begin{proof} \emph{of Theorem \ref{thm:complex}:}  The three bullets above establish the fact that the triple $(w_1,w_2,f)$ that determines points in the fibers of $\FH_{n+1}(\CC^2) \rightarrow \FH_n(\CC^2) \times \CC^2$ is a non-zero element in:
\begin{equation}
\label{eqn:dualcomplex}
H^0\left( \frac {\CT^\vee_{n}}{qt} \stackrel{\Psi^\vee}\longleftarrow \frac {\CT^\vee_n}q \oplus \frac {\CT_n^\vee}t \oplus \CO \stackrel{\Phi^\vee}\longleftarrow \CT_n^\vee \right)
\end{equation}
modulo rescaling. Note that \eqref{eqn:dualcomplex} is the dual of \eqref{eqn:complex}, which completes the proof. 

\end{proof}




\begin{remark}
\label{rem:surjective}

Note that the map $\Phi$ of \eqref{eqn:complex} is surjective, according to the equivalent description \eqref{eqn:equivalent} of a point being cyclic. This implies that $\CE_n(*)$ is quasi-isomorphic to a complex:
\begin{equation}
\label{eqn:two step}
\CE_n(*) \stackrel{\qis}\cong \left[ qt \CT_{n-\delta_\point^*} \stackrel{\Psi}\longrightarrow \Ker \ \Phi \right]
\end{equation}
of vector bundles on $\FH_n(*) \times *$, which lie in degrees $-1$ and $0$.

\end{remark}

\subsection{The dg scheme}
\label{sub:dg scheme} 

We will now give an alternative definition of the dg scheme \eqref{eqn:def dg zero}, and we leave it as an exercise to the interested reader to show that the two descriptions are equivalent (we will only use the definition in this Subsection for the remainder of this paper). The idea is to note that the map $\Psi$ of the complex \eqref{eqn:two step} fails to be injective on many fibers, and this will lead to the flag Hilbert scheme misbehaving. To remedy this issue, we replace the middle cohomology sheaf $H^0 (\CE_n)$ in \eqref{eqn:complex} by the entire complex $\CE_n$ (we tacitly suppress the symbol $* \in \{\CC^2,\CC,\point\}$ since the construction applies equally well to all three choices). 

\begin{proposition}
\label{prop:dg scheme}

There exist dg schemes $\FH_n^\edg$ endowed with flags of objects:
$$
\CT_n \rightarrow \CT_{n-1} \rightarrow ... \rightarrow \CT_1 \quad \in D^b(\coh(\FH_n^\edg))
$$
together with maps $q\CT_n \stackrel{X}\rightarrow \CT_n$, $t\CT_n \stackrel{Y}\rightarrow \CT_n$ that respect the above flag, and $\CO \stackrel{v}\rightarrow \CT_n$ such that:
\begin{equation}
\label{eqn:def dg}
\FH_{n+1}^\edg = \PP_{\FH^\edg_{n}} \left( \CE_n^\vee  \right) := \proj_{\FH^\edg_{n}} \left(S^\bullet\CE_n\right)
\end{equation}
where the complex $\CE_n$ is defined by formula \eqref{eqn:complex}. See Subsection \ref{sub:proj dg} for the definition of the Proj construction of a two-step complex of vector bundles (according to \eqref{eqn:two step}).

\end{proposition}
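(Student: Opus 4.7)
The plan is to construct $\FH_n^\edg$ inductively, using \eqref{eqn:def dg} as the definition, and to propagate the tautological data $(\CT_k, X, Y, v)$ through the induction. The base case is $\FH_1^\edg = *$ (a point, $\CC$, or $\CC^2$, depending on the choice of $\star$), equipped with $\CT_1 = \CO$, with $X$ and $Y$ multiplication by $x_1, y_1$ (zero whenever the corresponding variable is killed by the choice of $*$), and $v = 1$. All structure reduces to the classical one.

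For the inductive step, assume $\FH_n^\edg$ has been constructed together with a flag $\CT_n \to \CT_{n-1} \to \ldots \to \CT_1$ and maps $X, Y, v$ as in the statement. I would then form the complex $\CE_n$ on $\FH_n^\edg \times *$ by the formula \eqref{eqn:complex}, using $X, Y, v$ as the entries of the differentials $\Psi$ and $\Phi$. By Remark \ref{rem:surjective}, the surjectivity of $\Phi$ (which is a formal consequence of the presence of the section $v$) allows me to replace $\CE_n$ by the quasi-isomorphic two-term complex of vector bundles $[qt\CT_{n-\delta^*_\point} \xrightarrow{\Psi} \Ker\,\Phi]$ in cohomological degrees $-1$ and $0$. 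I then apply the Proj construction for a two-term complex developed in Subsection \ref{sub:proj dg} to define
\[
\FH_{n+1}^\edg := \proj_{\FH_n^\edg \times *}\left(S^\bullet\, \CE_n\right).
\]
Concretely, this is the classical $\PP(\Ker\,\Phi^\vee)$ equipped with an extra Koszul differential built from $\Psi$, so its structure sheaf is a sheaf of dg algebras that simultaneously records the projectivization and the failure of injectivity of $\Psi$. The new tautological line bundle $\CL_{n+1} := \CO(1)$ comes from the Proj.

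To extend the tautological data, I use the universal inclusion $\CO(-1) \hookrightarrow \pi^*\CE_n$ on $\FH_{n+1}^\edg$ to extract the new row vectors $(w_1, w_2, f)$ and define $\CT_{n+1}$ as the extension of $\pi^*\CT_n$ by $\CL_{n+1}$ determined, as in Subsection \ref{sub:explicit matrices}, by the bordered matrices $\bar X, \bar Y, \bar v$. The maps $X, Y : q,t \cdot \CT_{n+1} \to \CT_{n+1}$ and $v: \CO \to \CT_{n+1}$ are defined by adjoining the new row built from $w_1, w_2, f$ to the pullbacks of the old ones; the twists by $q, t$ track the torus equivariance of the entries of $\Psi, \Phi$. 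The commutation relation $[X,Y]=0$ on $\CT_{n+1}$ reduces, modulo the data on $\FH_n^\edg$, to the equation $w_1(Y-y_{n+1}) = w_2(X-x_{n+1})$ of \eqref{eqn:enigma1}, which is \emph{exactly} the image of the Koszul generator $\Psi$ in the dg algebra $S^\bullet\,\CE_n$. Hence $[X,Y]=0$ holds tautologically in $D^b(\coh(\FH_{n+1}^\edg))$ by the very definition of the dg structure. Cyclicity of $(X,Y,v)$ on $\CT_{n+1}$ is automatic from the nonvanishing of the tautological section of $\CO(1)$ in $\Ker\,\Phi$ combined with the cyclicity of $(X,Y,v)$ on $\CT_n$.

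The main obstacle is a foundational one: one has to verify that the dg Proj construction of Subsection \ref{sub:proj dg} is well posed and, in particular, that the quasi-isomorphism class of $\FH_{n+1}^\edg$ does not depend on the chosen two-term presentation of $\CE_n$ (different surjections $\Phi$ yield different $\Ker\,\Phi$, but the total complex $\CE_n$ and its symmetric algebra should be intrinsic up to quasi-isomorphism in the dg sense). Granted that, the rest is bookkeeping: one checks by induction that $\CT_{n+1}$ sits in $D^b(\coh(\FH_{n+1}^\edg))$ (not merely in a larger derived category) using that it is an extension of two perfect complexes, and one verifies that the map $\FH_{n+1}^\edg \to \FH_n^\edg$ is compatible with the classical projection $\FH_{n+1} \to \FH_n$ of \eqref{eqn:tower} after passing to the truncation $H^0$, recovering Theorem \ref{thm:complex} at the level of underlying schemes. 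Finally, comparing with \eqref{eqn:def dg zero}, one can check that the two dg structures agree locally by matching the Koszul complex for $\nu$ with the iterated Koszul differentials of the $\CE_k$'s.
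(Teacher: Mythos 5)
Your proposal follows essentially the same inductive scheme as the paper's proof: take \eqref{eqn:def dg} as the definition of $\FH_{n+1}^\edg$, and propagate the tautological data through the projective tower. The main difference is in how you produce $\CT_{n+1}$ and the extended maps. The paper does this more intrinsically: it takes the tautological map $\mathrm{Taut}\in\Hom(\pi^*\CE_n,\CO(1))$, composes with the natural map $\CT_n[-1]\to\CE_n$ (inclusion of the top-degree term), and obtains the extension class $\CT_{n+1}\in\Hom(\pi^*\CT_n[-1],\CL_{n+1})$; the extension of $X,Y,v$ then comes from the observation that $(X,Y,v)$ precomposed with $i$ is null-homotopic, which gives a splitting. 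Your version phrases this through the bordered matrices $\bar X,\bar Y,\bar v$, which is more explicit and closer in spirit to the set-theoretic computation of Subsection~\ref{sub:proofcomplex}, but the underlying content is the same.

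Two smaller points. First, there is a sign-of-duality slip: on $\FH_{n+1}^\edg=\proj(S^\bullet\CE_n)=\PP_{\FH_n^\edg}(\CE_n^\vee)$ the tautological datum is a surjection $\pi^*\CE_n\twoheadrightarrow\CO(1)$, equivalently an inclusion $\CO(-1)\hookrightarrow\pi^*\CE_n^\vee$ (not $\CO(-1)\hookrightarrow\pi^*\CE_n$); that is the morphism from which the row vector $(w_1,w_2,f)$ is extracted in Subsection~\ref{sub:proofcomplex}. Second, the ``foundational obstacle'' you raise --- independence of the dg Proj of the chosen two-term presentation --- is less of an issue than you suggest for this statement: Remark~\ref{rem:surjective} shows that $\Phi$ is surjective precisely because of the presence of $v$, so the two-step complex $[qt\CT_{n-\delta_\point^*}\to\Ker\Phi]$ is canonical here, and Definition~\ref{def:prog dg space} already defines the dg Proj via its category of graded $S^\bullet\CE_n$-dg modules, which only depends on $\CE_n$ up to quasi-isomorphism. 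So that step is bookkeeping rather than a gap. Finally, your remark that $[X,Y]=0$ holds ``tautologically'' should be read as: the commutator is null-homotopic, with the homotopy supplied by the Koszul differential coming from $\Psi$; stating it as a strict equality would be misleading in the dg setting.
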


\begin{proof} Let us write $\CL_{n+1} = \CO(1)$ for the tautological line bundle on the projectivization \eqref{eqn:def dg}, and $\pi:\FH_{n+1} \rightarrow \FH_n$ for the natural map. Take the defining map of projective bundles:
$$
\text{Taut} \in \Hom(\pi^*\CE_n, \CO(1))
$$
and compose it with the natural map $\CT_n[-1] \stackrel{i}\rightarrow \CE_n$. We obtain an object:
$$
i_*(\text{Taut}) \ =: \ \CT_{n+1} \in \Hom \left(\pi^*\CT_n[-1],\CL_{n+1} \right) 
$$
Composing the map $i$ with $q\CT_n \oplus t\CT_n \oplus \CO \xrightarrow{(X,Y,v)} \CT_n$ yields 0, hence:
$$
(X,Y,v)_* \left( \CT_{n+1} \right) \in \Hom \left(\pi^*( q\CT_n \oplus t\CT_n \oplus \CO )[-1], \CL_{n+1} \right) 
$$
equals 0 as well. This precisely gives rise to a splitting:
$$
\includegraphics{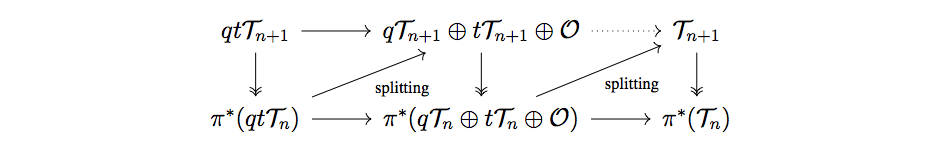}
$$
and the dotted map is the desired extension of the arrows $X,Y,v$ from $\CT_n$ to $\CT_{n+1}$. Note that we may write the above diagram as an equality in the derived category of $\FH_{n+1}^\dg$:
\begin{equation}
\label{eqn:ind}
\left[ qt \CL_{n+1} \xrightarrow{(-y,x)} \underline{q \CL_{n+1} \oplus t \CL_{n+1}} \xrightarrow{(x,y)} \CL_{n+1} \right] \cong \left[ \underline{\CE_{n+1}} \longrightarrow \widetilde{\pi^*(\CE_n)} \right]
\end{equation}
where we have underlined the $0$--th terms of both complexes. In the above equation, we write $x$ and $y$ for the operators of multiplication by $x_n-x_{n+1}$ and $y_n-y_{n+1}$, respectively, and:
\begin{equation}
\label{eqn:switch}
\widetilde{\pi^*(\CE_n)} \ \text{ denotes } \ \pi^*(\CE_n) \text{ with the variables }(x_n,x_{n+1}) \text{ and }(y_n,y_{n+1}) \text{ switched} 
\end{equation}

\end{proof}

\noindent One can run the proof of Proposition \ref{prop:dg scheme} with $\CE_n$ replaced by $H^0\CE_n$. We leave it as an exercise to the interested reader to show that one would obtain the schemes $\FH_n$ of Theorem \ref{thm:complex}.

\subsection{Serre duality}
\label{sub:serre}

As explained in Subsection \ref{sub:proj dg} of the Appendix, we may embed the dg scheme $\FH_{n+1}^\dg$ into an actual projective bundle:
\begin{equation}
\includegraphics{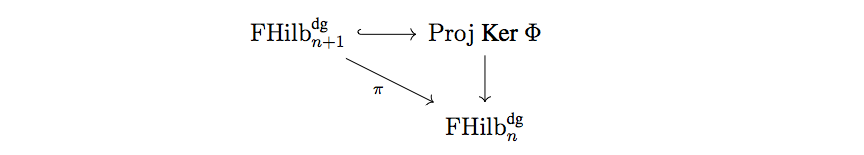}
\label{eqn:triangle}
\end{equation}
where we implicitly use the description \eqref{eqn:two step} of the complex of vector bundles $\CE_n$. This allows us to compute the push-forward $\pi_*$ of sheaves by factoring them through the diagram \eqref{eqn:triangle}. 

\begin{proposition}
\label{prop:serre}

Let $\pi:\FH^{\edg}_{n+1}(\CC) \to \FH_n^{\edg}(\CC)\times \CC$ be the projection. Then:
\begin{equation}
\label{eqn:serre}
\pi_*(\CA)^\vee \cong \pi_*(\CA^\vee\otimes \CL_{n+1}^{-1})
\end{equation}
for any $\CA\in D^b(\coh(\FH^{\edg}_{n+1}(\CC)))$. The functor $\pi_*$ is derived, and $\vee$ denotes Serre duality on the dg scheme $\FH_n^{\edg}(\CC)$, which is defined inductively by Proposition \ref{prop:serre}. 

\end{proposition}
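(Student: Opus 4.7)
The plan is to deduce Serre duality from classical Grothendieck--Serre duality applied to the honest projective bundle that resolves $\FH_{n+1}^{\edg}(\CC)$. Writing $\CE_n \simeq [qt \CT_n \xrightarrow{\Psi} \Ker \Phi]$ as a two-term complex of vector bundles as in \eqref{eqn:two step}, the composition of the tautological quotient $p^*\Ker\Phi \twoheadrightarrow \CO_\PP(1)$ with $p^*\Psi$ is a section $s$ of $F:= p^*(qt\CT_n)^{\vee}\otimes \CO_\PP(1)$ on the honest projective bundle $p: \PP := \PP_{\FH_n^{\edg}(\CC)\times\CC}(\Ker\Phi^\vee) \to \FH_n^{\edg}(\CC) \times \CC$, and the derived zero locus of $s$ is precisely $\FH_{n+1}^{\edg}(\CC)$. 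This is the factorization $\pi = p \circ i$ of diagram \eqref{eqn:triangle}, with $i: \FH_{n+1}^{\edg}(\CC) \hookrightarrow \PP$ the derived closed embedding resolved on $\PP$ by the Koszul complex $\wedge^\bullet F^\vee$.

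I would proceed by induction on $n$, with the base case $\FH_0^{\edg}(\CC) = \spec \CC$ where $\vee$ is ordinary linear duality, and use this factorization to define $\vee$ on $\FH_{n+1}^{\edg}(\CC)$ from duality on the base. Since $p$ is a smooth proper morphism of honest schemes with fibers $\PP^n$, the relative Euler sequence yields $\omega_p = p^*\det(\Ker\Phi) \otimes \CO_\PP(-n-1)[n]$, and classical Grothendieck--Serre duality gives $(p_*\CB)^\vee \cong p_*(\CB^\vee \otimes \omega_p)$. Combined with the standard identity $(i_*\CA)^\vee \cong i_*(\CA^\vee \otimes i^!\CO_\PP)$ coming from the $i_*$-$i^!$ adjunction and the projection formula, this yields
$$(\pi_*\CA)^\vee \cong \pi_*(\CA^\vee \otimes \omega_\pi), \qquad \omega_\pi = i^*\omega_p \otimes i^!\CO_\PP.$$
The proof then reduces to the computation $\omega_\pi \simeq \CL_{n+1}^{-1}$, with no cohomological shift and no equivariant twist.

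The crux of the argument is a miraculous cancellation that reflects the virtual geometry of the dg scheme. For the shift, the $[n]$ coming from $\omega_p$ cancels the $[-n]$ coming from $i^!\CO_\PP \simeq \det(F)|_{\FH_{n+1}^{\edg}(\CC)}[-n]$, reflecting the fact that the virtual rank of $\CE_n$ is $1$ and so $\pi$ has virtual relative dimension $0$. For the twists, the exact sequence $0 \to \Ker\Phi \to q\CT_n \oplus t\CT_n \oplus \CO \to \CT_n \to 0$ yields $\det(\Ker\Phi) = (qt)^n \det\CT_n$, which exactly cancels $\det(F)|_{\FH_{n+1}^{\edg}(\CC)} = (qt)^{-n}\det(\CT_n)^{-1}\otimes\CL_{n+1}^n$ together with the $\CO_\PP(-n-1)$ factor of $\omega_p$, leaving only $\CL_{n+1}^{-1}$. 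This is the manifestation in dualizing complexes of the fact that the virtual determinant of $\CE_n$ is trivial.

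The main technical obstacle is justifying Grothendieck--Serre duality for the dg morphism $\pi$ in the first place. The honest bundle $p$ is smooth and proper, so classical duality applies verbatim; but one must verify, within the dg formalism of Section \ref{sec:appendix}, that the derived embedding $i$ resolved by $\wedge^\bullet F^\vee$ satisfies $i^!\CO_\PP \cong \det(F)|_{\FH_{n+1}^{\edg}(\CC)}[-n]$, together with the corresponding projection formula and duality exchange $\RHom_\PP(i_*\CA, \CB) \cong i_*\RHom(\CA, i^!\CB)$. Granting that $i_*$ is computed by tensoring with the Koszul complex on $\PP$, these identities reduce to their classical counterparts on the smooth ambient scheme $\PP$ via explicit resolutions by vector bundles, making the inductive definition of $\vee$ on $\FH_n^{\edg}(\CC)$ well-posed and establishing the duality formula as an immediate corollary.
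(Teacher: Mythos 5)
Your proposal takes essentially the same route as the paper. The paper proves this proposition by citing the general relative Serre duality formula for projective dg bundles (Proposition \ref{prop:canonical} in the Appendix), whose proof factors $\pi^{\dg}$ through the honest projective bundle $\PP_{\FH_n^{\edg}\times\CC}(\Ker\Phi^\vee)$ exactly as you do, applies classical relative Serre duality \eqref{eqn:serre scheme} there, and then handles the Koszul datum via a self-duality identity for $\wedge^\bullet[\pi^*\CM(-1)\to\CO]$ (this plays precisely the role of your $i^!\CO_{\PP}\cong\det(F)[-n]$); the final cancellation of the shift and twist to $\CL_{n+1}^{-1}$ is, as in your argument, a consequence of $\CE_n(\CC)$ having virtual rank $1$ and trivial determinant.
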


This is a direct application of Proposition \ref{prop:canonical} in the Appendix, together with the fact that the determinant of the complex $\CE_n(\CC)$ of \eqref{eqn:complex} is trivial. Applying formula \eqref{eqn:serre} to $\CA = \CO$ gives us the following formulas for all $k \geq 0$:
\begin{equation}
\label{eqn:push negative}
\pi_{*}(\CL_{n+1}^{-1-k}) = \pi_*(\CL_{n+1}^{k})^{\vee}=S^{k}\CE_n^{\vee} \qquad \text{concentrated in degree 0}
\end{equation}

\begin{remark}

The analogue of \eqref{eqn:serre} when $\CC$ is replaced by $\CC^2$ holds exactly as stated. Meanwhile, when $\CC$ is replaced by point we must replace formula \eqref{eqn:serre} by the following equation:
\begin{equation}
\label{eqn:serre2}
\widetilde{\pi}_*(\CA)^\vee \cong \widetilde{\pi}_*\left(\CA^\vee\otimes \frac {qt \CL_n}{\CL_{n+1}^{2}} \right)[-1]
\end{equation}
where $\widetilde{\pi}:\FH_{n+1}^\dg \rightarrow \FH_n^\dg$ is the standard projection. 

\end{remark}



 


\section{The Hecke algebra and Soergel category}
\label{sec:Sbim}

\subsection{The Hecke algebra}

Recall that the Hecke algebra of type $A_n$ has $n-1$ generators:
$$
H_n = \CC(q) \langle \sigma_1,...,\sigma_{n-1} \rangle
$$
modulo relations:
\begin{equation}
\label{eqn:hecke1}
\left(\sigma_i-q^{\frac 12} \right)\left(\sigma_i+q^{-\frac 12} \right) = 0 \ \qquad \ \forall \ i\in \{1,\ldots,n-1\}
\end{equation}
\begin{equation}
\label{eqn:hecke2}
\sigma_i\sigma_{i+1}\sigma_i = \sigma_{i+1}\sigma_i\sigma_{i+1} \qquad \forall \ i\in \{1,\ldots,n-2\}
\end{equation}
\begin{equation}
\label{eqn:hecke3}
\sigma_i\sigma_j = \sigma_j\sigma_i \qquad \quad \ \ \forall \ |i-j|>1.
\end{equation}
The algebra $H_n$ is a $q$-deformation of the group algebra of the symmetric group $\CC[S_n]$.
The irreducible representations $V_{\lambda}$ of $H_n$ at generic parameter $q$ are labeled by partitions of $n$, or, equivalently, by Young diagrams of size $n$. The multiplicity of $V_{\lambda}$ in the regular representation is equal to its dimension, which is itself equal to the number of standard Young tableaux (henceforth abbreviated SYT) of shape $\lambda$. Therefore, the regular representation of $H_n$ splits into a direct sum of irreducible representations labeled by standard tableaux. For each such tableau $T$, let $P_T$ denote the projector onto the irreducible summand in $H_n$ labeled by $T$. By construction, these projectors have the following properties:
\begin{equation}
\label{proj}
P_T P_{T'} = \delta_{T'}^T P_T, \qquad \sum_{T}P_{T}=1.
\end{equation}
The projectors $P_T$ can be written very explicitly in terms of the generators $\sigma_i$, see \cite{AiMor,gyoja} for details. They satisfy the following \textbf{branching rule}:
\begin{equation}
\label{branching}
i(P_T)=\sum_{\sq}P_{T+\sq},
\end{equation}
where $i:H_n\to H_{n+1}$ is the natural inclusion and the summation in the right hand side is over all 
possible SYT obtained from $T$ by adding a single box labeled by $n+1$.

The renormalized \textbf{Markov trace} \(\chi: H_n \to \CC(a,q)\) satisfies  the relations:
\begin{equation}
\chi(\sigma \sigma ') = \chi(\sigma'  \sigma), \qquad
\chi(i(\sigma)) = \chi(\sigma) \cdot \frac{1-a}{q^{\frac 12}-q^{-\frac 12}}, \qquad
\chi(i(\sigma) \sigma_n) = \chi(\sigma).
\end{equation}
There is a natural  pairing \(\langle  \cdot, \cdot \rangle:  H_n \times H_n \to \CC(a,q)\)
 given by \(\langle \sigma, \tau \rangle = \chi(\sigma \tau ^\dagger)\), where \( \sigma^\dagger\) is the ``Hermitian conjugate'' of \(\sigma\) (this is the \(\CC\)-antilinear map   determined by the relations \(q^\dagger = q^{-1}\),  \(\sigma_i^\dagger = \sigma _i^{-1}\), and \((\sigma \tau )^\dagger = \tau^\dagger \sigma ^\dagger\)). With respect to this pairing, the adjoint of the  inclusion \(i: H_n \to H_{n+1}\) is the \textbf{partial Markov trace}:
$$
\Tr : H_{n+1} \to H_n \otimes \CC[a].
$$
It follows easily from the definitions that for all \(\sigma \in H_n\), we have \(\chi(\sigma) = \Tr^n(\sigma).\)

 The Markov trace of a projector $P_T$ only depends on the underlying Young diagram $\lambda$ of the SYT $T$, and is equal to the $\lambda$-colored HOMFLY-PT polynomial of the unknot. Specifically, we have the following result: 

\begin{proposition}(e.g. \cite{Ai}) The Markov trace of $P_T$ equals:
$$
\Tr ^n(P_T)=\prod_{\sq\in \lambda}\frac{1-aq^{c(\sq)}}{1-q^{h(\sq)}},
$$
where $c(\sq)$ and $h(\sq)$ respectively denote the content and the hook length of a square $\sq$ in $\lambda$.
\end{proposition}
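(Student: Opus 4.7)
The plan is to proceed by induction on $n = |\lambda|$, using the branching rule \eqref{branching}, the Markov trace axioms, and the Jucys--Murphy elements $L_k \in H_k$ to separate the summands on the right-hand side of \eqref{branching}.

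As a preliminary observation, I would show that $\chi(P_T)$ depends only on the partition $\lambda$ underlying $T$: any two standard tableaux of shape $\lambda$ give minimal idempotents in isomorphic blocks of $H_n$ and are therefore conjugate, $P_{T'} = u P_T u^{-1}$, so the cyclic property $\chi(\sigma\sigma') = \chi(\sigma'\sigma)$ forces $\chi(P_{T'}) = \chi(P_T)$. The base case $n=1$ reduces to $\chi(1) = (1-a)/(q^{1/2}-q^{-1/2})$, which agrees with the single-box hook-content factor in the appropriate normalization.

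For the inductive step, fix an addable box $\sq_0$ of $\lambda$ and set $T' = T + \sq_0$, $\lambda' = \lambda + \sq_0$. The Jucys--Murphy element $L_{n+1} \in H_{n+1}$ commutes with the subalgebra $i(H_n)$ and acts on each summand $P_{T+\sq}$ of $i(P_T) = \sum_{\sq} P_{T+\sq}$ by the scalar $q^{c(\sq)}$, so Lagrange interpolation over the addable boxes of $\lambda$ yields
\begin{equation*}
P_{T'} \;=\; i(P_T) \cdot \prod_{\sq \neq \sq_0} \frac{L_{n+1} - q^{c(\sq)}}{q^{c(\sq_0)} - q^{c(\sq)}}.
\end{equation*}
Expanding the right-hand side, each $\chi\bigl(i(P_T)\, L_{n+1}^k\bigr)$ reduces to a scalar multiple of $\chi(P_T)$ by repeatedly applying the axioms $\chi(i(\sigma)\sigma_n) = \chi(\sigma)$ and $\chi(i(\sigma)\sigma_n^{-1}) = a\chi(\sigma)$, together with the Hecke relations, to commute the factors of $\sigma_n$ appearing in $L_{n+1}$ past $i(P_T)$ and shorten the word.

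The main obstacle is then the combinatorial step of matching the resulting rational function of $a$ and $q$ with the claimed hook-content ratio
\begin{equation*}
\frac{\chi(P_{T'})}{\chi(P_T)} \;=\; \frac{1 - aq^{c(\sq_0)}}{1 - q^{h_{\lambda'}(\sq_0)}} \cdot \prod_{\sq} \frac{1 - q^{h_\lambda(\sq)}}{1 - q^{h_{\lambda'}(\sq)}},
\end{equation*}
where the product runs over those boxes of $\lambda$ whose hook length changes upon adding $\sq_0$ (equivalently, boxes in the row or column of $\sq_0$). This is a classical identity that telescopes to a product over the corners of $\lambda$ in the row and column of $\sq_0$, and can be checked by direct manipulation or extracted from standard references. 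A more conceptual alternative is to decompose $\chi = \sum_{\mu} c_\mu \chi_\mu$ as a combination of irreducible characters of $H_n$, observe that $\chi_\mu(P_T) = \delta_{\mu,\lambda}$ because $P_T$ is a minimal idempotent in the $V_\lambda$-isotypic block, and then identify $c_\lambda$ with the $\lambda$-colored \HOM{} polynomial of the unknot, whose hook-content form is standard.
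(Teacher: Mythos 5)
The paper does not prove this Proposition; it is stated with a citation to Aiston \cite{Ai}, whose skein-theoretic argument establishes the hook-content form of the quantum dimension. So your proposal cannot be compared line-by-line with the paper's own reasoning --- only with the cited literature and assessed on its own merits.

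Your outline (induction on $n$ via the branching rule, Lagrange interpolation over the Jucys--Murphy eigenvalues to isolate $P_{T+\sq_0}$ inside $i(P_T)$, then compute the Markov trace of the resulting expression) is a legitimate classical route, close in spirit to Aiston--Morton \cite{AiMor}, and your preliminary observation that $\chi(P_T)$ depends only on the shape $\lambda$ (by conjugacy of minimal idempotents in a block and cyclicity of $\chi$) is correct. The Lagrange interpolation formula $P_{T'} = i(P_T)\prod_{\sq\neq\sq_0}\frac{L_{n+1}-q^{c(\sq)}}{q^{c(\sq_0)}-q^{c(\sq)}}$ is also correct, and is exactly the mechanism the paper alludes to in the paragraph following equation \eqref{eigen decategorified 2}.

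The gap lies in the computation of $\chi\bigl(i(P_T)L_{n+1}^k\bigr)$. You propose to ``commute the factors of $\sigma_n$ appearing in $L_{n+1}$ past $i(P_T)$,'' but $\sigma_n$ does \emph{not} commute with $i(H_n)$: the generator $\sigma_{n-1}$, which appears both in $L_{n+1}=\sigma_n L_n\sigma_n$ and generically in $i(P_T)$, does not commute with $\sigma_n$. One cannot simply slide $\sigma_n$ past $i(P_T)$ and absorb it with a single Markov move. The correct reduction is either to show that the partial trace $\Tr(L_{n+1}^k)$ is a central element of $H_n$ (it is a symmetric function of the Jucys--Murphy elements $L_1,\dots,L_n$, as in Morton \cite{Morton}), so that it acts on the $V_\lambda$-isotypic block by a scalar $z^\lambda_k$, and then $\chi(i(P_T)L_{n+1}^k)=z^\lambda_k\,\chi(P_T)$; or to use the spectral decomposition $\chi(i(P_T)L_{n+1}^k)=\sum_\sq q^{kc(\sq)}\chi(P_{T+\sq})$ and treat the latter as a linear system in the unknowns $\chi(P_{T+\sq})$. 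In either case, determining the scalar $z^\lambda_k$ (equivalently, verifying that the resulting rational function telescopes to $\prod_{\sq\in\lambda}\frac{1-aq^{c(\sq)}}{1-q^{h(\sq)}}$) is precisely the hook-length/content identity you defer to the literature. So the combinatorial core of the proof is still missing, and the one mechanical step you do claim is argued by an invalid commutation. Your ``more conceptual alternative'' has the same character: identifying $c_\lambda$ with the $\lambda$-colored $\HOM$ polynomial of the unknot and invoking its hook-content form is exactly the content of the Proposition, restated rather than proved.
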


\subsection{The braid group}

The Hecke algebra is a quotient of the braid group on $n$ strands, which is defined by removing relation \eqref{eqn:hecke1}. Specifically, the braid group is generated by $\sigma_1^{\pm 1},...,\sigma_{n-1}^{\pm 1}$ modulo relations \eqref{eqn:hecke2} and \eqref{eqn:hecke3}. By definition, the \textbf{full twist} on $n$ strands is the braid:
$$
\FT_n = (\sigma_1 \cdots \sigma_{n-1})^{n}.
$$
\begin{figure}[ht!]
\begin{tikzpicture}
\begin{scope}[yscale=-1];
\draw (0,0)..controls (0,0.5) and (0.5,0.5)..(0.5,1);
\draw (0.5,0)..controls (0.5,0.5) and (1,0.5)..(1,1);
\draw (1,0)..controls (1,0.5) and (1.5,0.5)..(1.5,1);
\draw [line width=5,white] (1.5,0)..controls (1.5,0.5) and (0,0.5)..(0,1);
\draw  (1.5,0)..controls (1.5,0.5) and (0,0.5)..(0,1);

\draw (0,1)..controls (0,1.5) and (0.5,1.5)..(0.5,2);
\draw (0.5,1)..controls (0.5,1.5) and (1,1.5)..(1,2);
\draw (1,1)..controls (1,1.5) and (1.5,1.5)..(1.5,2);
\draw [line width=5,white] (1.5,1)..controls (1.5,1.5) and (0,1.5)..(0,2);
\draw  (1.5,1)..controls (1.5,1.5) and (0,1.5)..(0,2);

\draw (0,2)..controls (0,2.5) and (0.5,2.5)..(0.5,3);
\draw (0.5,2)..controls (0.5,2.5) and (1,2.5)..(1,3);
\draw (1,2)..controls (1,2.5) and (1.5,2.5)..(1.5,3);
\draw [line width=5,white] (1.5,2)..controls (1.5,2.5) and (0,2.5)..(0,3);
\draw  (1.5,2)..controls (1.5,2.5) and (0,2.5)..(0,3);

\draw (0,3)..controls (0,3.5) and (0.5,3.5)..(0.5,4);
\draw (0.5,3)..controls (0.5,3.5) and (1,3.5)..(1,4);
\draw (1,3)..controls (1,3.5) and (1.5,3.5)..(1.5,4);
\draw [line width=5,white] (1.5,3)..controls (1.5,3.5) and (0,3.5)..(0,4);
\draw  (1.5,3)..controls (1.5,3.5) and (0,3.5)..(0,4);
\end{scope}
\end{tikzpicture}
\caption{The full twist $\FT_4$}
\label{fig:ft}
\end{figure}
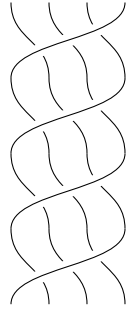
\noindent The full twist is known to be central in the braid group, and hence its image is central in the Hecke algebra. If we interpret the generator $\sigma_i$ as a single crossing between the strands $i$ and $i+1$, then the full twist corresponds to the pure braid where each strand wraps around all the other ones (see Figure \ref{fig:ft}). We may also define the partial twists:
$$
\FT_1,...,\FT_{n-1}
$$ 
where $\FT_k$ is the braid which consists of the full twist on the leftmost $k$ strands, with the rightmost $n-k$ strands simply vertical lines. We will also work with the generalized Jucys-Murphy elements (the name is due to the fact that their images in $H_n$ deform the well-known Jucys-Murphy elements in $\CC[S_n]$):
$$
L_k = \FT_{k-1}^{-1} \cdot \FT_k
$$
which are easily seen to be given by the formula:
$$
L_k = \sigma_{k-1}... \sigma_2 \sigma_1 \sigma_2 ... \sigma_{k-1} .
$$
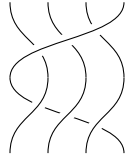
\begin{figure}[ht!]
\begin{tikzpicture}
\begin{scope}[yscale=-1];
\draw (0,0)..controls (0,0.5) and (0.5,0.5)..(0.5,1);
\draw (0.5,0)..controls (0.5,0.5) and (1,0.5)..(1,1);
\draw (1,0)..controls (1,0.5) and (1.5,0.5)..(1.5,1);
\draw [line width=5,white] (1.5,0)..controls (1.5,0.5) and (0,0.5)..(0,1);
\draw  (1.5,0)..controls (1.5,0.5) and (0,0.5)..(0,1);

\draw  (1.5,2)..controls (1.5,1.5) and (0,1.5)..(0,1);
\draw [line width=5,white] (0,2)..controls (0,1.5) and (0.5,1.5)..(0.5,1);
\draw [line width=5,white] (0.5,2)..controls (0.5,1.5) and (1,1.5)..(1,1);
\draw [line width=5,white] (1,2)..controls (1,1.5) and (1.5,1.5)..(1.5,1);
\draw (0,2)..controls (0,1.5) and (0.5,1.5)..(0.5,1);
\draw (0.5,2)..controls (0.5,1.5) and (1,1.5)..(1,1);
\draw (1,2)..controls (1,1.5) and (1.5,1.5)..(1.5,1);
\end{scope}
\end{tikzpicture}
\label{fig: murphy}
\caption{The braid $L_4$}
\end{figure}
Either the braids $\{\FT_k\}_{k=1,\ldots, n}$  or the braids $\{L_k\}_{k = 1, \ldots, n}$ generate a certain commutative subalgebra of the braid group, and hence also of the Hecke algebra, which we will denote by:
$$
C_n \subset H_n.
$$
It is well-known that the projectors $P_T$ lie in this subalgebra for all SYTx $T$. 

\begin{proposition}(e.g. \cite[Theorem 5.5]{AiMor}) The projectors are eigenvectors for twists with the following eigenvalues: 
\begin{equation}
\label{eigen decategorified}
\FT_k\cdot P_T =q^{c(\sq_1)+...+c(\sq_k)}  \cdot P_T \Longrightarrow
\end{equation}
\begin{equation}
\label{eigen decategorified 2}
\Longrightarrow L_k\cdot P_T = q^{c(\sq_k)}\cdot P_T
\end{equation}
where $\sq_k$ denotes the box labeled by $k$ in the standard Young tableau $T$. 
\end{proposition}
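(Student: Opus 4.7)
The plan is to reduce both eigenvalue formulas to the single statement that, in the Hecke algebra $H_k$, the central element $\FT_k$ acts on the irreducible representation $V_\mu$ (for $\mu \vdash k$) by the scalar $\alpha_\mu := q^{\sum_{\sq \in \mu} c(\sq)}$. First, I would note that the two formulas \eqref{eigen decategorified} and \eqref{eigen decategorified 2} are equivalent: in the commutative subalgebra $C_n$ we have $L_k = \FT_{k-1}^{-1}\FT_k$, and since $P_T$ is a common eigenvector of all the $\FT_j$, one formula converts into the other via the telescoping identity $c(\sq_k) = \sum_{j\le k} c(\sq_j) - \sum_{j\le k-1} c(\sq_j)$. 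Hence it suffices to prove \eqref{eigen decategorified}.

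Next, I would observe that $\FT_k$, viewed in $H_n$, is the image of the central element $\FT_k \in H_k$ under the natural embedding that adjoins $n-k$ free strands on the right. Since $\FT_k$ is central in $H_k$, it must act as a scalar on each $H_k$-isotypic piece of the restriction $V_\lambda\big|_{H_k}$. By iterating the branching rule \eqref{branching} (equivalently, by the Okounkov--Vershik description of the Gelfand--Tsetlin basis), the minimal idempotent $P_T$ lies in the $V_{\mathrm{sh}(T|_k)}$-isotypic component, where $T|_k$ denotes the SYT obtained by keeping only the boxes labeled $1, \dots, k$. Consequently $\FT_k \cdot P_T = \alpha_{\mathrm{sh}(T|_k)} \cdot P_T$, so \eqref{eigen decategorified} reduces to the identity $\alpha_{\mathrm{sh}(T|_k)} = q^{c(\sq_1) + \cdots + c(\sq_k)}$, which depends only on the underlying partition of $T|_k$.

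The remaining step is to identify $\alpha_\mu = q^{\sum_{\sq \in \mu} c(\sq)}$, which I would do by induction on $|\mu|$. The base case $|\mu|=1$ is immediate since $\FT_1 = \mathbf{1}$ and the sole box has content $0$. For the inductive step, apply the branching rule to $V_\mu\big|_{H_{|\mu|-1}} = \bigoplus_\nu V_\nu$, the sum running over all $\nu$ obtained from $\mu$ by removing a corner box $\sq$. By induction, $\FT_{|\mu|-1}$ acts on the $V_\nu$-summand by $q^{\sum_{\sq'\in\nu} c(\sq')}$; hence $L_{|\mu|} = \FT_{|\mu|-1}^{-1}\FT_{|\mu|}$ acts on that summand by $\alpha_\mu \cdot q^{-\sum_{\sq'\in\nu} c(\sq')}$.

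The main technical obstacle is then the independent verification that this eigenvalue of $L_{|\mu|}$ equals $q^{c(\sq)}$, i.e.\ the Jucys--Murphy eigenvalue formula $L_k v_T = q^{c(\sq_k)} v_T$ on the Gelfand--Tsetlin basis. This is proved by unwinding the explicit braid word $L_k = \sigma_{k-1}\cdots\sigma_2\sigma_1\sigma_2\cdots\sigma_{k-1}$, reducing to a two-strand computation via the Hecke relation \eqref{eqn:hecke1}, and comparing with the weights appearing in the branching chain; it is established in the references cited in the statement. Matching the two expressions for the eigenvalue of $L_{|\mu|}$ on each summand then forces $\alpha_\mu = q^{c(\sq) + \sum_{\sq'\in\nu} c(\sq')} = q^{\sum_{\sq''\in\mu} c(\sq'')}$, completing the induction and thereby the proof.
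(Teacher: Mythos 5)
The paper supplies no proof of this proposition; it simply cites \cite[Theorem 5.5]{AiMor}, so there is nothing in the paper to compare against and I am evaluating your argument on its own. Your reconstruction is logically sound, but it is more circuitous than it needs to be, and the detour does not buy you anything. The Jucys--Murphy eigenvalue formula $L_k v_T = q^{c(\square_k)} v_T$, which you invoke in the inductive step and defer to the cited references, \emph{is} precisely the content of \eqref{eigen decategorified 2} (stated for the Gelfand--Tsetlin basis vector $v_T$ rather than the minimal idempotent $P_T$; the two are interchangeable because $P_T$ is the rank-one projector $v_T \otimes v_T^*$ in the Wedderburn decomposition of $H_n$, and left multiplication by $L_k \in C_n$ acts on it through its action on $v_T$). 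Once \eqref{eigen decategorified 2} is granted, \eqref{eigen decategorified} follows in a single line by telescoping: $\FT_0 = \FT_1 = \1$ gives $\FT_k = L_1 L_2 \cdots L_k$, and each factor contributes $q^{c(\square_j)}$. So the induction on $\alpha_\mu$, the branching-rule bookkeeping, and the isotypic analysis of $V_\lambda\big|_{H_k}$ all serve to re-derive \eqref{eigen decategorified} from \eqref{eigen decategorified 2} by a longer route, without at any point dispensing with the need to cite the Jucys--Murphy formula. Your intermediate observation that $\FT_k$ is central in $H_k$ and hence its eigenvalue on $P_T$ depends only on the shape $\mathrm{sh}(T|_k)$ is correct and is conceptually clarifying, but it is also an immediate consequence of \eqref{eigen decategorified}, so it does not constitute an independent gain. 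A cleaner write-up would cite the Jucys--Murphy formula for \eqref{eigen decategorified 2} and then derive \eqref{eigen decategorified} directly by telescoping.
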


In fact, equations \eqref{branching} and \eqref{eigen decategorified} allow one to inductively construct the elements $P_T$, as follows: given $P_T$ for a standard Young tableau $T$ of size $n$, all projectors $P_{T+\sq}$ are eigenvectors for the full twist $\FT_{n+1}$ with different eigenvalues, and hence can be uniquely reconstructed as the projections of $i(P_T)$ onto the corresponding eigenspaces. This is precisely the viewpoint that is categorified in \cite{EH}, and which inspired Section \ref{sec:equiv} of the present paper.

\subsection{Soergel bimodules}
\label{sub:soergel}

The category of Soergel bimodules, which we will denote $\SBim_n$, is a categorification of the Hecke algebra. We will consider $R = \CC[x_1,...,x_n]$ and study graded $R-$bimodules, where $\deg x_i = 1$. We will write $q M$ for the graded module $M$ with the grading shifted by 1. Among the most important such $R-$bimodules are the elementary \textbf{Bott-Samelson} bimodules:
\begin{equation}
\label{eqn:bs}
B_i = q^{-\frac 12} R \otimes_{R^{i,i+1}} R
\end{equation}
for any simple transposition $s_i = (i,i+1)$, where we write $R^{i,i+1}$ for those polynomials which are invariant under $s_i$.  In other words, $R^{i,i+1}$ consists of polynomials which are symmetric in $x_i$ and $x_{i+1}$, and therefore $R$ has rank 2 over $R^{i,i+1}$. Therefore, $B_i$ has rank 2 as an $R-$module. 

\begin{definition}
\label{def:soergel}
The category $\SBim_n$ is the Karoubian envelope of the smallest full subcategory of $R\text{--mod--}R$ that contains the Bott-Samuelson modules $B_i$ and is closed under $\otimes_R$ and grading shifts. Objects of $\SBim_n$ will be called \textbf{Soergel bimodules}.
\end{definition}

The category $\SBim_n$ is monoidal with respect to the operation of tensoring bimodules over $R$. Clearly, the unit object is $\1 := R$, viewed as a bimodule over itself. Note that $\SBim_n$ is neither abelian, nor symmetric. Let:
$$
B_{i,i+1} = q^{-1} R \otimes_{R^{i,i+1,i+2}} R
$$
where  $R^{i,i+1,i+2}$ denotes the set of polynomials which are symmetric in $x_i,x_{i+1},x_{i+2}$. Then one can check the following identities \cite{Kh,Soergel}:
\begin{equation}
\label{eqn:BS1}
B_i^2\simeq q^{\frac 12} B_i\oplus q^{-\frac 12} B_i, \qquad B_iB_j\simeq B_jB_j\ \text{for}\ |i-j|>1,
\end{equation}
\begin{equation}
\label{eqn:BS2}
B_iB_{i+1}B_{i}\simeq B_i\oplus B_{i,i+1}\ \Rightarrow\ B_iB_{i+1}B_{i}\oplus B_{i+1}\simeq B_{i+1}B_{i}B_{i+1}\oplus B_{i}.
\end{equation}
It was shown in \cite{Soergel} that the split graded Grothendieck group of $\SBim_n$ is generated by the classes of $B_i$ and is isomorphic to $H_n$. Indeed, one can identify $[B_i]=\sigma_i+q^{-\frac 12}$ and show that \eqref{eqn:BS1}--\eqref{eqn:BS2} imply \eqref{eqn:hecke1}--\eqref{eqn:hecke3}.

\subsection{From Rouquier complexes to Khovanov-Rozansky homology}
\label{sub:khr}

Since  $\sigma_i=[B_i]-q^{-\frac 12}$, it is clear that $\sigma_i$ does not correspond to any Soergel bimodule. However, Rouquier showed that $\sigma_i$ can be realized in the homotopy category of complexes:
$$
K^b(\SBim_n)
$$
where we use the variable $s$ to keep track of homological degree. Explicitly, objects in the homotopy category of complexes will be denoted by:
$$
\Big[s^k M_k \rightarrow ... \rightarrow s^{k'} M_{k'} \Big]
$$
for some $k \leq k' \in \ZZ$. The variable $s$ may seem redundant when writing down chain complexes, but we keep track of it for two reasons: first of all, it will give rise to the equivariant parameter $t$ of Section \ref{sec:flag} via \eqref{eqn:matching}. Second of all, we think of the object: 
$$
\left[M \rightarrow s M' \right] \in K^b(\SBim_n)
$$
as the cone of a morphism between the objects $M$ and $s M'$, and thus the power of $s$ makes the homological degrees of our formulas manifest. Recall the Bott-Samuelson bimodules \eqref{eqn:bs} and consider the Rouquier complexes:
\begin{equation}
\label{eqn:rouquier}
\sigma_i:= \left[ B_i \xrightarrow{1\otimes 1 \mapsto 1} \frac {sR}{q^{\frac 12}}  \right], \qquad \sigma_i^{-1} := \left[ \frac {q^{\frac 12}R}s \xrightarrow{1 \mapsto x_i \otimes 1 - 1 \otimes x_{i+1}} B_i \right]
\end{equation}
They satisfy the following equations \cite{Kh,Rou} (which can be deduced from \eqref{eqn:BS1} and \eqref{eqn:BS2}):
$$
\sigma_i\otimes \sigma_i^{-1} \cong \sigma_i^{-1} \otimes \sigma_i \cong \1,
$$
$$
\sigma_i \otimes \sigma_j \cong \sigma_j \otimes \sigma_i \ \ \text{for }\ |i-j|>1,
$$
$$
\sigma_i \otimes \sigma_{i+1} \otimes \sigma_i \cong \sigma_{i+1} \otimes \sigma_i \otimes \sigma_{i+1},
$$
and hence categorify the braid group. To any braid $\sigma=\prod_{i=1}^{n-1} \sigma_{i}^{a_i}$ (where $\alpha_{i} \in \{-1,0,1\}$) one can associate a complex of bimodules obtained by tensoring together the various complexes \eqref{eqn:rouquier}. We abuse notation and denote the resulting complex also by $\sigma$. Khovanov \cite{Kh} defined the HOMFLY-PT homology of a braid $\sigma$ as: 
\begin{equation}
\label{eqn:Khovanov}
\HHH(\sigma):=\RHom_{K^b(\SBim_n)}(\1,\sigma).
\end{equation}
The right hand side is a triply graded vector space, endowed with the internal grading $q$, the homological grading $s$ of the complexes \eqref{eqn:rouquier} and their coproducts, and the Hochschild grading $a$ given by taking the $\RHom$. The appropriate derived category formalism can be found in \cite{Hog}. With respect to these three gradings, Khovanov proved that \eqref{eqn:Khovanov} is a topological invariant of the closure of $\sigma$, after a certain renormalization.






\begin{corollary}
\label{cor:trace twisted identity}

Let $\sigma,\sigma'$ be any two braids. Then:
$$
\HHH(\sigma\sigma') = \RHom_{K^b(\SBim_n)}(\1,\sigma \otimes \sigma') \quad \text{and} \quad \HHH(\sigma'\sigma) = \RHom_{K^b(\SBim_n)}(\1,\sigma' \otimes \sigma)
$$
are isomorphic as $R$-modules, up to a twist by $w_{\sigma}$.

\end{corollary}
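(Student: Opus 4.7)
The plan is to realize both sides as Hochschild homology and invoke its cyclic invariance, then track how the $R$-module structure transforms under this rotation. By definition, $\HHH(\sigma\sigma')=\RHom_{K^b(\SBim_n)}(\1,\sigma\otimes\sigma')$ can be identified with the derived Hochschild homology $\HH_*(\sigma\otimes_R\sigma')$ of the complex of $R$-bimodules $\sigma\otimes_R\sigma'$. The basic trace property of Hochschild homology gives a natural isomorphism of graded vector spaces
$$
\HH_*(M\otimes_R N)\;\xrightarrow{\sim}\;\HH_*(N\otimes_R M)
$$
for any pair of complexes of $R$-bimodules $M,N$; this is the categorical shadow of cyclic invariance of the trace, and can be proved either from the bar resolution or, more abstractly, from the trace on a dualizable object in a monoidal (dg-)category. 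Applied with $M=\sigma$ and $N=\sigma'$, this yields the isomorphism $\HHH(\sigma\sigma')\cong\HHH(\sigma'\sigma)$ as triply graded vector spaces.

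The only substantive content is therefore tracking the $R$-module structures. For a Soergel bimodule $\sigma$ whose underlying permutation is $w_\sigma$, the left and right actions of $R$ are not equal: they are intertwined up to $w_\sigma$, in the sense that on Hochschild homology $x_i$ acting from the left agrees with $x_{w_\sigma(i)}$ acting from the right. The $R$-module structure on $\HHH(\sigma\sigma')$ comes from the "outer" $R$-action on $\sigma\otimes_R\sigma'$. Under the cyclic rotation, what becomes the outer action on $\sigma'\otimes_R\sigma$ is what was the \emph{inner} action of $R$ across the tensor product in $\sigma\otimes_R\sigma'$, i.e.\ the right action of $R$ on $\sigma$ glued to the left action of $R$ on $\sigma'$. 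Passing this middle action around the cycle costs exactly one application of $w_\sigma$, and this is the twist asserted in the statement.

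Concretely, I would execute the argument by choosing an explicit model for the derived Hochschild functor in which the two structures can be compared at once — either the two-sided bar resolution of $R$ over $R\otimes R^{\rm op}$, or more conveniently the Koszul resolution of the diagonal $R$-bimodule, which is well-behaved under $\otimes_R$. In such a model, the cyclic isomorphism $M\otimes_R N\to N\otimes_R M$ (shifting the bar degrees cyclically, with appropriate Koszul signs in the homological and Hochschild directions) becomes a chain map, and an unwinding of what happens to the outer $R$-action shows it is precisely the one obtained from the outer $R$-action on $N\otimes_R M$ after precomposition with $w_\sigma$.

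The main obstacle is bookkeeping rather than anything conceptual: one must carefully keep the $q,s,a$-gradings and the distinction between the left and right $R$-actions separate while manipulating the resolution, and verify that no extra grading shift appears beyond the twist by $w_\sigma$. This is purely routine once a resolution is fixed, so the corollary follows formally from the cyclic property of $\HH_*$.
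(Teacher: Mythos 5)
Your proposal is correct in substance, but it travels by a genuinely different road from the one the paper takes. The paper's proof of this corollary is a one-line citation of Corollary~\ref{cor:inv 2}, the abstract monoidal-category fact that $\Hom_{\cat}(\1,F\otimes C)\cong\Hom_{\cat}(\1,C\otimes F)$ whenever $F$ is an \emph{invertible} object, applied here with $F=\sigma$ (a Rouquier complex, hence invertible in $K^b(\SBim_n)$) and $C=\sigma'$. The twist is left implicit. You instead appeal to the cyclic trace property of Hochschild homology, $\HH_*(M\otimes_R N)\cong\HH_*(N\otimes_R M)$, which holds for \emph{arbitrary} bimodule complexes and so does not use invertibility at all; you then track the $R$-action explicitly, and your observation that the outer action on one side rotates to the inner action on the other, which is then converted back to an outer action by one application of the homotopy $x_i\cdot(-)\simeq(-)\cdot x_{w_\sigma(i)}$ on $\sigma$, is precisely where the $w_\sigma$-twist enters and is a correct accounting. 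Both routes ultimately rest on that same fact about left versus right $R$-actions on a Rouquier complex; the paper's route is shorter and more structural, yours is more hands-on and makes the twist visible, and it is also formally more general since it would apply even when $F$ is merely dualizable rather than invertible.

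One small imprecision worth flagging: $\RHom_{K^b(\SBim_n)}(\1,-)$, as used in the paper's definition of $\HHH$, computes Hochschild \emph{cohomology} $\Ext^\bullet_{R^e}(R,-)$, not Hochschild homology. For $R=\CC[x_1,\dots,x_n]$ the two are identified by Poincar\'e--Van~den~Bergh duality (up to a degree shift, with trivial dualizing module since $R$ is polynomial), so your argument goes through; but as written you silently conflate the two, and a careful version should either transport the cyclic isomorphism across that duality or run the argument directly with the Ext-side trace isomorphism.
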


The above formula follows from Corollary \ref{cor:inv 2}, which applies to all invertible objects in a monoidal category.
 

\begin{proposition}
The Soergel bimodule $B_i$ is self biadjoint, for all $i$. The Rouquier complex $\sigma$ for a braid $\sigma$ is biadjoint to $\sigma^{-1}$. 
\end{proposition}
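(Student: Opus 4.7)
Both statements are essentially standard consequences of classical facts about $\SBim_n$ (see e.g.\ \cite{Kh,Soergel,Rou}); the plan is to indicate the structural reasons behind each.

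For the self-biadjointness of $B_i$, the plan is to exploit the fact that the subring $R^{i,i+1}\subset R$ is a graded Frobenius extension of rank $2$. Explicitly, $R$ is free as an $R^{i,i+1}$-module with basis $\{1,x_i\}$, and the divided-difference (Demazure) operator
$$
\partial_i(f):=\frac{f-s_i(f)}{x_i-x_{i+1}}\,:\,R\longrightarrow R^{i,i+1}
$$
is a homogeneous $R^{i,i+1}$-linear trace of degree $-1$ in the convention $\deg x_j = 1$. This Frobenius structure produces a two-sided adjunction between the induction functor $R\otimes_{R^{i,i+1}}-$ and the restriction functor, up to a grading shift of $q$ in one direction. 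Tensor product with $B_i$ is precisely the composition of restriction and induction applied to the right $R$-factor, with an additional overall shift of $q^{-1/2}$; this shift is tuned to distribute the Frobenius degree symmetrically on both sides, making the resulting functor $-\otimes B_i$ self-biadjoint without further shift. Concretely, one takes the counit $\varepsilon:B_i\otimes_R B_i\to \1$ to be induced by $f\otimes g\otimes h\mapsto f\,\partial_i(g)\,h$ and the unit $\eta:\1\to B_i\otimes_R B_i$ to be the Frobenius coproduct dual to $\partial_i$; the snake identities then reduce to the Frobenius axioms. The analogous construction on the left $R$-factor gives the other adjunction.

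For the statement about Rouquier complexes, the plan is to invoke the following general principle: in any monoidal category, if an object $X$ is invertible with two-sided inverse $Y$, i.e.\ if there are chosen isomorphisms $\1\xrightarrow{\sim}X\otimes Y$ and $Y\otimes X\xrightarrow{\sim}\1$ (and their reverses), then $Y$ is simultaneously a left and a right adjoint of $X$: the units and counits of the two adjunctions are built directly from these isomorphisms, and the zig-zag identities follow from the coherence of inversion. The paragraph just after \eqref{eqn:rouquier} records exactly this data for each generator $\sigma_i$, and tensoring it together yields $\sigma\otimes \sigma^{-1}\cong \1\cong \sigma^{-1}\otimes \sigma$ in $K^b(\SBim_n)$ for any braid $\sigma$. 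Applying the principle then gives $\sigma^{-1}\dashv \sigma\dashv \sigma^{-1}$, as claimed.

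The main obstacle, to the extent there is one, lies in part (1): one must carefully track the half-integer grading shifts in the definition \eqref{eqn:bs} of $B_i$ and verify by direct computation that the unit and counit constructed from the Frobenius data satisfy the zig-zag equations on the nose. Part (2) is then essentially formal, the only subtlety being to work consistently in the homotopy category $K^b(\SBim_n)$, so that the inverting isomorphisms are chain homotopy equivalences and the snake identities hold up to nullhomotopy.
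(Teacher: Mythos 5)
Your proposal is correct and matches the paper's approach: the paper explicitly derives the second statement from Corollary~\ref{cor:inv 1} (the general fact that tensoring with an invertible object and its inverse gives biadjoint functors), which is precisely the monoidal-category principle you invoke, and for the first statement it cites the result as known, while your Frobenius-extension argument via $\partial_i$ is the standard proof of that known fact.
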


The second statement of the above Proposition also follows from Corollary \ref{cor:inv 1} below, which is quite general, and actually implies the following stronger result:

\begin{corollary}
\label{cor:twisting homs}
For any $A,A'\in \SBim_n$ and any braid $\sigma$ there are canonical isomorphisms:
$$
\RHom_{K^b(\SBim_n)}(A\otimes \sigma,A' \otimes \sigma) \cong \RHom_{K^b(\SBim_n)}(A,A') \cong \RHom_{K^b(\SBim_n)}(\sigma \otimes A,\sigma\otimes A').
$$
\end{corollary}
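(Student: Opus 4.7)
The plan is to apply Corollary \ref{cor:inv 1} (from the Appendix) to the Rouquier complex $\sigma \in K^b(\SBim_n)$. The key input, already recorded in Subsection \ref{sub:khr}, is the invertibility of $\sigma$: we have quasi-isomorphisms
\begin{equation*}
\sigma \otimes \sigma^{-1} \;\cong\; \1 \;\cong\; \sigma^{-1} \otimes \sigma.
\end{equation*}

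First I would observe that invertibility of $\sigma$ means the endofunctor of right tensoring by $\sigma$, namely
\begin{equation*}
F_\sigma := (-) \otimes \sigma \colon K^b(\SBim_n) \longrightarrow K^b(\SBim_n),
\end{equation*}
is an equivalence of triangulated categories, with quasi-inverse $(-) \otimes \sigma^{-1}$. Indeed, the unit and counit of the natural adjunction $\bigl( (-) \otimes \sigma, \ (-) \otimes \sigma^{-1} \bigr)$ are obtained by tensoring an arbitrary object on the right with the two displayed isomorphisms, and are therefore isomorphisms themselves. Since any triangulated equivalence preserves all $\RHom$ groups, applying $F_\sigma$ to a pair $A, A'$ produces the first half of the Corollary:
\begin{equation*}
\RHom_{K^b(\SBim_n)}(A, A') \;\xrightarrow{\;\sim\;}\; \RHom_{K^b(\SBim_n)}(A \otimes \sigma, A' \otimes \sigma).
\end{equation*}
The other isomorphism is deduced identically by running the same argument for the functor $G_\sigma := \sigma \otimes (-)$, whose quasi-inverse is $\sigma^{-1} \otimes (-)$.

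The only genuinely delicate point is that $\SBim_n$ is \emph{not} symmetric monoidal, so the arguments for the left and the right tensor actions must be carried out independently; this is precisely the setting of Corollary \ref{cor:inv 1}, which is stated in the generality of an arbitrary (non-symmetric) monoidal category. I do not anticipate any substantive obstacle beyond assembling these standard facts, since the braid relations from Subsection \ref{sub:khr} already ensure that each $\sigma$, being a product of the invertible generators $\sigma_i^{\pm 1}$, is itself invertible in $K^b(\SBim_n)$.
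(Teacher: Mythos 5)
Your proof is correct and follows essentially the same route as the paper, which derives this corollary directly from Proposition~\ref{prop:inv} and Corollary~\ref{cor:inv 1}: the invertibility of $\sigma$ makes both $(-)\otimes\sigma$ and $\sigma\otimes(-)$ into (auto-)equivalences, which therefore preserve all $\RHom$ groups. Your reformulation in terms of adjoint equivalences is a standard repackaging of the same fact, and you correctly note the need to treat the left and right tensor actions separately since $\SBim_n$ is not symmetric monoidal.
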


\subsection{The trace functor}
\label{sub:trace}

We will henceforth write $R_n = \CC[x_1,...,x_n]$ to avoid confusion as to which number $n$ we are considering. For an extra variable $x_{n+1}$, we consider the category:
\begin{equation}
\label{eqn:upgrade}
\SBim_n[x_{n+1}]
\end{equation}
of Soergel bimodules which are equipped with an additional endomorphism denoted by $x_{n+1}$ that commutes with the action of $R_n$.
In other words, $\SBim_n[x_{n+1}]$ is the Karoubian envelope of the smallest full subcategory of $R_{n+1}\text{--mod--}R_{n+1}$ that contains the modules $B_1,\ldots B_{n-1}$ and is closed under $\otimes_{R_{n+1}}$ and grading shifts. 
 It is easy to see that the functors:
$$
\SBim_n[x_{n+1}] \xtofrom{} \SBim_n
$$
that forget the action of $x_{n+1}$, respectively tensor with $\CC[x_{n+1}]$, are adjoint with respect to each other. We will now recall the functors $I$ and $\Tr$ defined in \cite{Hog}, upgraded to the level of the category \eqref{eqn:upgrade}. At the level of additive categories, these functors are quite simple:
$$
I: \SBim_n[x_{n+1}] \longrightarrow \SBim_{n+1}
$$
is the full embedding. Meanwhile:
$$
\Tr: \SBim_{n+1} \longrightarrow \SBim_n[x_{n+1}], \qquad M \mapsto \text{Ker} \left(M \xrightarrow{x_{n+1} \otimes 1 - 1 \otimes x_{n+1}}  M \right)
$$
As shown in \cite{Hog}, these functors can be upgraded to the derived categories:
$$
D^b\left(\SBim_n[x_{n+1}]\right) \xtofrom[I]{\Tr}  D^b\left(\SBim_{n+1}\right)
$$
where the trace functor now encodes the full operation of multiplication by $x_{n+1} \otimes 1 - 1 \otimes x_{n+1}$, instead of simply the kernel:
$$
\Tr(M) = \left[ M \xrightarrow{x_{n+1} \otimes 1 - 1 \otimes x_{n+1}}  M \right].
$$
\begin{remark}
When working in the upgraded category \eqref{eqn:upgrade} rather than $\SBim_n$, one must be careful with Markov invariance, i.e. the statement (\cite{Kh}) that for $M\in \SBim_{n+1}$ one has:
$$
\Tr(M \otimes \sigma_n^{\pm 1})\simeq M \ \in \SBim_n
$$
In the upgraded category, this equation becomes:
\begin{equation}
\label{eqn: R1 upgraded}
\Tr(M \otimes \sigma_n^{\pm 1})\simeq \left[M\otimes \CC[x_{n+1}] \xrightarrow{x_n\otimes 1- 1 \otimes x_{n+1}} M\otimes \CC[x_{n+1}] \right] \ \in \SBim_n[x_{n+1}] 
\end{equation}
The proof is straightforward and we leave it to the reader. Remark that in the category $\SBim_n$ the complex \eqref{eqn: R1 upgraded} is quasi-isomorphic to $M$, but this is no longer true in $\SBim_n[x_{n+1}]$.

\begin{figure}[ht!]
\begin{tikzpicture}
\draw (0,-0.5)--(0,1);
\draw (0.5,-0.5)--(0.5,1);
\draw (1,-0.5)--(1,1);
\draw (1.5,-0.5)--(1.5,1);
\draw (-0.2,1)--(1.7,1)--(1.7,2)--(-0.2,2)--(-0.2,1);
\draw (0.7,1.5) node {$M$};
\draw (0,2)--(0,3.5);
\draw (0.5,2)--(0.5,3.5);
\draw (1,2)--(1,3.5);
\draw (1.5,2)--(1.5,2.2);
\draw (2,0)--(2,2.2);
\draw (1.5,3)--(1.5,3.5);
\draw (2,2.2).. controls (2,2.7) and (1.5,2.5)..(1.5,3);
\draw[line width=5,white] (1.5,2.2).. controls (1.5,2.7) and (2,2.5)..(2,3);
\draw (1.5,2.2).. controls (1.5,2.7) and (2,2.5)..(2,3);
\draw (2,3)..controls (2,4) and (2.5,3)..(2.5,1.5);
\draw (2,0)..controls (2,-1) and (2.5,0)..(2.5,1.5);

\draw (3,1.5) node {$\simeq$};

\draw (4,-0.5)--(4,1);
\draw (4.5,-0.5)--(4.5,1);
\draw (5,-0.5)--(5,1);
\draw (5.5,-0.5)--(5.5,1);
\draw (3.8,1)--(5.7,1)--(5.7,2)--(3.8,2)--(3.8,1);
\draw (4.7,1.5) node {$M$};
\draw (4,2)--(4,3.5);
\draw (4.5,2)--(4.5,3.5);
\draw (5,2)--(5,3.5);
\draw (5.5,2)--(5.5,3.5);
\draw (6,-0.5)--(6,3.5);

\draw (7.5,1.5) node {$\xrightarrow{x_n \otimes 1 - 1 \otimes x_{n+1}}$};

\draw (9,-0.5)--(9,1);
\draw (9.5,-0.5)--(9.5,1);
\draw (10,-0.5)--(10,1);
\draw (10.5,-0.5)--(10.5,1);
\draw (8.8,1)--(10.7,1)--(10.7,2)--(8.8,2)--(8.8,1);
\draw (9.7,1.5) node {$M$};
\draw (9,2)--(9,3.5);
\draw (9.5,2)--(9.5,3.5);
\draw (10,2)--(10,3.5);
\draw (10.5,2)--(10.5,3.5);
\draw (11,-0.5)--(11,3.5);

\end{tikzpicture}
\label{fig:R1}
\caption{Markov move in $\SBim_n[x_{n+1}]$}
\end{figure}
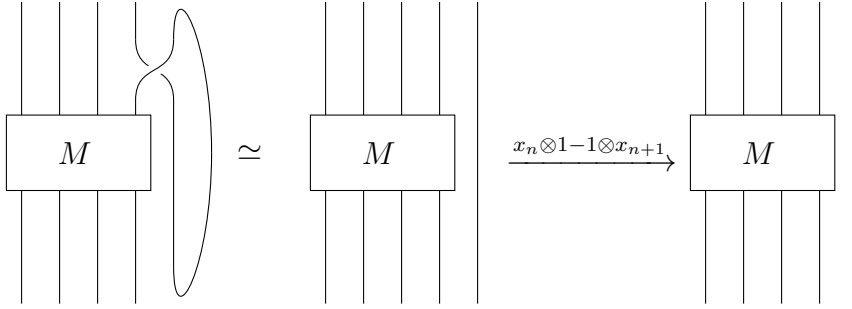

\end{remark}

\subsection{The main conjectures}
\label{sub:mainconj}

For the remainder of this Section, we will write $\FH_n^\dg = \FH_n^\dg(\CC)$ and $\CE_n = \CE_n(\CC)$, in the notation of Section \ref{sec:flag}. Our main Conjecture can be restated more precisely as follows: 

\begin{Conjecture}

There exists a pair of adjoint functors:
\begin{equation}
\label{eqn:funk}
K^b(\SBim_n) \xtofrom[\iota^*]{\iota_*} D^b\left(\coh_{\torus} \left( \FH_n^{\edg} \right) \right)
\end{equation}
where $\iota^*$ is monoidal and fully faithful. Moreover, we have:
\begin{equation}
\label{eqn:sk}
\iota_*(\iota^*N_1 \otimes M \otimes \iota^*N_2) \cong N_1 \otimes \iota_*(M) \otimes N_2
\end{equation}
for all $N_1,N_2 \in D^b\left(\coh_{\torus} \left( \FH_n^{\edg}\right) \right)$ and $M \in K^b(\SBim_n)$. In addition:
\begin{equation}
\label{eqn:nice 1}
\iota_* \1 = \CO \quad \text{and} \quad L_k = \iota^{*}(\CL_k) \qquad \stackrel{\eqref{eqn:sk}}\Longrightarrow
\end{equation}
\begin{equation}
\label{eqn:nice 2}
\stackrel{\eqref{eqn:sk}}\Longrightarrow \qquad \iota_* L_k = \CL_k \quad \forall \ k\in \{1,...,n\},
\end{equation}
where $\CO$ is the structure sheaf of $\FH_n^{\edg}$ and $\CL_k$ is the line bundle \eqref{eqn:taut line}. Finally, the following diagrams of functors commute (we write $\iota = \iota_{(n)}$ to keep track of $n$):
\begin{equation}
\label{eqn:diag1}
\includegraphics{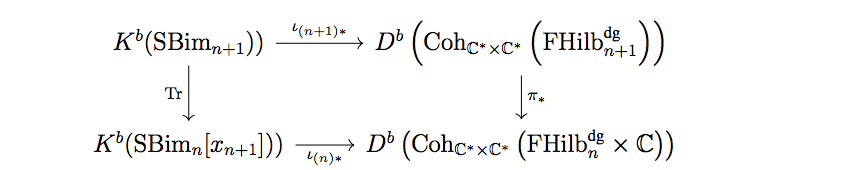}
\end{equation}
\begin{equation}
\label{eqn:diag2}
\includegraphics{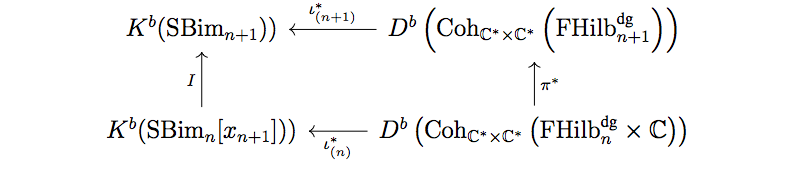}
\end{equation}
where the map $\pi : \FH_{n+1}^\edg \rightarrow \FH_n^\edg \times \CC$ is the particular case of \eqref{eqn:tower} for $* = \CC$.
\end{Conjecture}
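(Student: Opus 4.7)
The plan is to prove the Conjecture by induction on $n$, constructing the adjoint pair $(\iota_{(n),*}, \iota_{(n)}^*)$ compatibly with the projective tower structure from Subsection 2.5. The base case $n=1$ is trivial: $\FH_1^\dg(\CC) = \spec R_1$, $\SBim_1$ is equivalent to graded $R_1$--modules, and both functors are identities. For the inductive step, the strategy is to reduce the construction at level $n+1$ to a relative categorical diagonalization statement, and then combine it with the induction/trace adjunction $(I_{n+1}, \Tr_{n+1})$ between $\SBim_{n+1}$ and $\SBim_n[x_{n+1}]$ from Subsection 3.5.

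Suppose $\iota_{(n),*}, \iota_{(n)}^*$ have been constructed and satisfy \eqref{eqn:sk}--\eqref{eqn:nice 2} together with the diagrams \eqref{eqn:diag1}--\eqref{eqn:diag2}. The key geometric input is Proposition 2.6, which presents $\pi: \FH_{n+1}^\dg \to \FH_n^\dg \times \CC$ as the projectivization of the two--step complex $\CE_n$. Applying the relative version of Theorem \ref{th:1} (developed in Section \ref{sec:geometry}) over the base $\FH_n^\dg \times \CC$, a pair of adjoint functors from a homotopy category into $D^b(\coh_{\torus}(\FH_{n+1}^\dg))$ exists provided one exhibits a categorically diagonalizable object in $K^b(\SBim_n[x_{n+1}])$ whose eigenmaps and eigenvalues match the torus weights of the tautological line $\CL_{n+1}$ on the summands of $\CE_n$. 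The natural candidate is $I_{n+1}(L_{n+1})$, whose eigenvalues should match the fiberwise weights encoded by $\CE_n$ thanks to \eqref{eqn:nice 2} and the explicit form of the Rouquier complexes \eqref{eqn:rouquier}.

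Granting such a diagonalization, the output of relative Theorem \ref{th:1} is a pair of adjoint functors between $K^b(\SBim_n[x_{n+1}])$ and $D^b(\coh_\torus(\FH_{n+1}^\dg))$ realizing the projectivization geometrically. Composing with $I_{n+1}$ and $\Tr_{n+1}$ (using the biadjunction of Subsection 3.5) produces $\iota_{(n+1),*}$ and $\iota_{(n+1)}^*$. Commutativity of \eqref{eqn:diag1} holds by construction since $I_{n+1}$ is incorporated explicitly, while \eqref{eqn:diag2} becomes the assertion that $\iota_{(n+1),*}$ intertwines $\Tr_{n+1}$ with $\pi_*$, which is precisely the content of the relative formalism. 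The projection formula \eqref{eqn:sk}, monoidality, and full faithfulness of $\iota^*$ follow formally from relative Theorem \ref{th:1}, while $\iota_{(n+1),*}(L_{n+1}) = \CL_{n+1}$ is the identification of eigenobjects, and the cases $k < n+1$ reduce inductively via \eqref{eqn:diag1}.

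The main obstacle is verifying the hypotheses of relative Theorem \ref{th:1}, i.e.\ the diagonalizability of $I_{n+1}(L_{n+1})$ in $K^b(\SBim_n[x_{n+1}])$ with the prescribed torus-equivariant eigendata. By adjunction this is equivalent to an explicit computation of $\Tr_{n+1}(\FT_{n+1}^{\otimes k})$ and of the traces of intermediate twists for all $k \in \ZZ$, together with checking that the compositions of eigenmaps reproduce the Koszul--type relations encoded by $\CE_n$. These are precisely the Soergel--theoretic computations deferred to the upcoming work of Elias and Hogancamp; once available, the inductive machinery above converts them directly into the statement of the Conjecture.
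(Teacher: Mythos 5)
Your high-level architecture is on the right track: induction on $n$, the projective tower $\FH_{n+1}^\dg = \PP\CE_n^\vee$ from Proposition \ref{prop:dg scheme}, the relative lifting machinery of Section \ref{sec:geometry}, and reduction to the trace computations of Conjecture \ref{conj:main}. But you have the order of operations reversed, and this is a genuine gap. You propose to first lift the inductive functors to a pair $K^b(\SBim_n[x_{n+1}]) \leftrightarrow D^b(\coh_\torus(\FH_{n+1}^\dg))$ by exhibiting a diagonalizable invertible object in $K^b(\SBim_n[x_{n+1}])$, and only then compose with $I_{n+1}$ and $\Tr_{n+1}$. The trouble is that the only candidate invertible object with the right eigenvalue data, namely $L_{n+1}$, lives in $\SBim_{n+1}$ and not in the strict subcategory $\SBim_n[x_{n+1}]$: it involves the crossing $\sigma_n$, which is outside the subcategory generated by $B_1,\ldots,B_{n-1}$. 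Your notation ``$I_{n+1}(L_{n+1})$'' is also ill-formed, since $I_{n+1}$ maps $\SBim_n[x_{n+1}]$ \emph{into} $\SBim_{n+1}$, not the reverse, and there is no replacement object inside $K^b(\SBim_n[x_{n+1}])$ that would play the required role.

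The fix --- and what the paper does in Subsection \ref{sub:proof} --- is to compose with $I$ and $\Tr$ first, at the base level, and lift afterwards. One sets $\iota_* := \iota_{n*} \circ \Tr$ and $\iota^* := I \circ \iota_n^*$, giving a morphism from the category $\cat = K^b(\SBim_{n+1})$ to the base $\FH_n^\dg \times \CC$. Now $F = L_{n+1}$ is a legitimate invertible object of $\cat$, and $\beta : \iota^*\CE_n \to L_{n+1}$ is the adjoint of the identification $\CE_n = \iota_*(L_{n+1}) = \iota_{n*}(\Tr(L_{n+1}))$. Proposition \ref{prop:lift} (with $\CE_n$ of projective dimension one, which is why Proposition \ref{prop:lift} and not Theorem \ref{th:1} is the correct input here) then produces the lifted functors to $\FH_{n+1}^\dg$: birationality follows from part (b) of Conjecture \ref{conj:main}, which gives $S^k\CE_n \cong \iota_*(L_{n+1}^k)$ as required by \eqref{eqn:properproj}, and the Koszul vanishing \eqref{eqn:koszul} is part (c). Your identification of the key Soergel-side input --- the traces $\Tr(L_{n+1}^k)$ for all $k$ --- is correct, but the machinery that converts them into the Conjecture runs with $\cat = K^b(\SBim_{n+1})$, not through a relative diagonalization inside $K^b(\SBim_n[x_{n+1}])$.
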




In broad strokes, the functor $\iota_*$ is given by sending each object $M \in K^b(\SBim_n)$ to:
\begin{equation}
\label{eqn:general 1}
\iota_* M = \bigoplus_{a_1,...,a_n \in \NN} \text{Hom}_{K^b(\SBim_n)} \left(\1, M \bigotimes_{k=1}^n L_k^{a_k} \right) 
\end{equation}
which is naturally a module for the $\NN^n$--graded dg algebra:
\begin{equation}
\label{eqn:general 2}
A = \bigoplus_{a_1,...,a_n \in \NN} \text{Hom}_{K^b(\SBim_n)} \left(\1, \bigotimes_{k=1}^n L_k^{a_k} \right)
\end{equation}
This algebra is commutative 
and 
$\iota_*M$ gives rise to a coherent sheaf on $(\spec \ A)/(\CC^*)^n$. Our conjecture entails the fact that this sheaf is actually supported on the $n$--fold iterated projectivization $\proj A \hookrightarrow (\spec \ A)/(\CC^*)^n$, and that in fact:
\begin{equation}
\label{eqn:general 3}
\proj \ A = \FH_n^\dg
\end{equation}
To upgrade to the setting of Remark \ref{rem:a grading}, we must replace the Hom spaces by RHom in \eqref{eqn:general 1} and \eqref{eqn:general 2}. We expect that this can be dealt with as in the following conjecture.

\begin{conjecture}
\label{conj:a grading}
Given the setup of Conjecture \ref{conj:1} we consider the object:
$$
T_n  = \iota^*(\CT_n) \in K^b(\SBim_n)
$$
Then we claim that for any object $M \in K^b(\SBim_n)$, we have an isomorphism:
\begin{equation}
\label{eqn:rhom vs hom}
\RHom_{K^b(\SBim_n)}(\1, M) \cong \Hom_{K^b(\SBim_n)}\left(\1, M \otimes \wedge^\bullet T^\vee_n \right)
\end{equation}
which is functorial with respect to the action of the algebra $A \curvearrowright M$ of \eqref{eqn:general 2}.

\end{conjecture}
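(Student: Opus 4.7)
My plan is to realize the conjectured isomorphism as the Soergel-categorical incarnation of the standard Koszul computation of Hochschild cohomology of the polynomial ring \(R = \CC[x_1,\ldots,x_n]\). Classically, for any \(R\)-bimodule \(M\),
\begin{equation*}
\RHom_{R\text{-}R}(R,\,M) \;\simeq\; \bigl(M \otimes_R \wedge^\bullet V^\vee,\; d_K\bigr),
\end{equation*}
where \(V = \Span(x_1,\ldots,x_n)\) and \(d_K\) is the Koszul differential induced by \(\sum_i (x_i\otimes 1 - 1\otimes x_i)\,\partial_{\xi_i}\); this is the presentation used in the definition \eqref{eqn:Khovanov} of \HOMFLY\ homology. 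The conjecture asks for the categorical upgrade in which \(V\) is replaced by the tautological object \(T_n = \iota^*(\CT_n)\).

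First I would pin down \(T_n\) explicitly. Because \(\iota^*\) is monoidal and triangulated and the tautological flag \(\CT_n \twoheadrightarrow \cdots \twoheadrightarrow \CT_1\) on \(\FH_n^{\dg}(\CC)\) has successive kernels \(\CL_k\), combining with \eqref{eqn:nice 1} yields distinguished triangles \(L_k \to T_k \to T_{k-1} \xrightarrow{+1}\) in \(K^b(\SBim_n)\); this presents \(T_n\) as an iterated extension of the invertible Jucys--Murphy complexes \(L_1,\ldots,L_n\). The tautological endomorphism \(X:q\CT_n \to \CT_n\) then pulls back to the operator of Conjecture \ref{conj:T soergel}, acting as multiplication by \(x_k\) on the subquotient \(L_k\). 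I would then assemble the Koszul complex \((\wedge^\bullet T_n^\vee,\,d_X)\) inside \(K^b(\SBim_n)\), with \(d_X\) built from the bimodule difference \(X\otimes 1 - 1\otimes X\) contracted against the dual generators.

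The second step is to show that \(M \otimes \wedge^\bullet T_n^\vee\) equipped with this differential computes the derived bimodule Hom \(\RHom_{R\text{-}R}(\1, M)\). The filtration of \(T_n^\vee\) by the \(L_k^{-1}\) induces a filtration on \(\wedge^\bullet T_n^\vee\) whose associated graded is \(\bigotimes_{k=1}^n (\1 \oplus L_k^{-1}[1])\); the associated spectral sequence then has \(E_1\)-page given by the classical Koszul complex for \(R\) on the generators \(x_k\), and degeneration there identifies the total complex with the Hochschild cochain complex of \(M\). Applying \(\Hom_{K^b(\SBim_n)}(\1,-)\) yields the right-hand side of the conjecture, and functoriality over the algebra \(A\) of \eqref{eqn:general 2} is automatic because \(A\) acts on both sides by tensoring with \(\bigotimes_k L_k^{\otimes a_k}\), which commutes with the exterior power construction by monoidality of \(\iota^*\).

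The principal obstacle, I expect, is verifying that the Koszul differential \(d_X\) on \(\wedge^\bullet T_n^\vee\) squares to zero coherently in \(K^b(\SBim_n)\): since \(T_n\) is only an iterated extension, rather than a direct sum of the \(L_k\), naive contraction against \(X\otimes 1 - 1\otimes X\) produces correction terms indexed by the connecting maps of the extension triangles. Controlling these requires the full strength of the commutation \([X,Y]=0\) and the vanishing \(Y|_{L_k}=0\) from Conjecture \ref{conj:T soergel}, assembled into an \(A_\infty\) refinement of the Koszul resolution that lifts the classical construction past the non-split extension data of \(T_n\). This coherence problem is where the bulk of the technical work lies.
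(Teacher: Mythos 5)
This statement is a conjecture in the paper; the authors supply no proof. The only supporting evidence is Remark \ref{rem:a grading n=2}, which checks the claim by direct computation in the reduced two-strand category for $M = \bFT^k$, where $\overline{T}_2 = \iota^*(\bCT_2) = \bFT$ and the isomorphism reduces to $\Ext^1(\1, \bFT^k) \cong \Hom(\1, \bFT^{k-1})$. Your proposal is therefore a strategy for an open problem rather than a reconstruction of an argument in the paper.

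As a strategy, the Koszul/Hochschild interpretation you give is the right conceptual frame: the conjecture asserts that the tautological complex $T_n$ can stand in for the trivial rank-$n$ bundle in the Koszul resolution of $R$ as an $R$-bimodule, with exterior degree matching the Hochschild ($a$-)grading. But beyond the $A_\infty$ coherence problem you flag, there are two further gaps. First, the construction presupposes the operators $X$, $Y$ of Conjecture \ref{conj:T soergel}, which is itself open, so even a complete argument along your lines would leave the statement conditional. Second, and more seriously, your claim that the spectral sequence has ``$E_1$-page given by the classical Koszul complex for $R$ on the generators $x_k$'' is incorrect as stated: your associated graded is $\bigotimes_k (\1 \oplus L_k^{-1}[1])$, so after applying $\Hom_{K^b(\SBim_n)}(\1, M \otimes -)$ the graded pieces are $\Hom(\1, M \otimes \prod_{k \in S} L_k^{-1})$ for subsets $S$, not copies of $\Hom(\1, M)$. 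The Jucys--Murphy complexes $L_k^{-1}$ are nontrivial invertible Rouquier complexes, and identifying these shifted Hom spaces with the Hochschild groups of $M$ is precisely the nontrivial content of the conjecture. The $n=2$ verification already exhibits this: there $\Ext^1(\1, \bFT^k) \cong \Hom(\1, \bFT^k \otimes \bFT^{-1})$, not $\Hom(\1, \bFT^k)$. Your sketch offers no mechanism for this identification, which is where the real work lies.
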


Assuming Conjecture \ref{conj:a grading}, one may ask if there is a sheaf on the flag Hilbert scheme which is defined by replacing Hom with RHom in \eqref{eqn:general 1}. By \eqref{eqn:rhom vs hom} and \eqref{eqn:sk}, this sheaf would be:
$$
\iota_* \left(M \otimes \iota^*\left(\wedge^\bullet \CT_n^\vee \right)\right) = \iota_*M \otimes \wedge^\bullet \CT_n^\vee
$$
This sheaf should naturally be thought to live on $\tot_{\FH_n^\dg} (\CT_n[1]) = \spec_{\FH_n^\dg} \left(\wedge^\bullet \CT_n^\vee \right)$, as in Remark \ref{rem:a grading}. The entire picture presented in this Subsection will be explained in more detail in Section \ref{sec:geometry}, when we develop the formalism of categories over schemes in general. 

\begin{proof}[Proof of Corollary \ref{cor:1}] The fact that $\iota^*$ is a monoidal functor, together with \eqref{eqn:nice 1}, imply that:
$$
\sigma := \prod_{k=1}^n \FT_k^{a_k} = \prod_{k=1}^n \iota^{*}(\det \CT_k)^{\otimes a_k} = \iota^*\left(\bigotimes_{k} (\det \CT_k)^{\otimes a_k}\right).
$$
Corollary \ref{cor:inv 1} below implies that:
$$
\HHH( \sigma ) := \RHom_{K^b(\SBim_n)}(\1, \sigma)= \RHom_{K^b(\SBim_n)}(\sigma^{-1} , \1)
$$
while \eqref{eqn:rhom vs hom} implies that:
$$
\HHH( \sigma ) = \Hom_{K^b(\SBim_n)}(\sigma^{-1} \otimes \wedge^\bullet T_n, \1) = \Hom_{K^b(\SBim_n)} \left[ \iota^*\left(\bigotimes_{k} (\det \CT_k)^{-a_k} \otimes \wedge^\bullet \CT_n \right) , \1 \right]
$$
The adjunction of $\iota^*$ and $\iota_*$, together with the conjectured fact that $\iota_*\1 = \CO$, imply that:
$$
\HHH( \sigma ) = \RHom_{\FH_n^\dg} \left( \bigotimes_{k} (\det \CT_k)^{-a_k} \otimes \wedge^\bullet \CT_n, \CO \right) 
$$
Dualizing the RHom produces the desired result. 
\end{proof}

\subsection{}
\label{sub:topological computations}

Proposition \ref{prop:dg scheme} describes flag Hilbert schemes as projective towers, which implies that:
$$
\CE_n \cong \pi_*(\CL_{n+1}) \in D^b \left(\coh_{\torus} \left( \FH_n^\dg \right) \right)
$$
Define the following object:
\begin{equation}
\label{eqn:def e}
E_n := \Tr(L_{n+1}) \in K^b(\SBim_n)
\end{equation}
Conjecture \ref{conj:1} implies that:
\begin{equation}
\label{eqn:relation e}
\iota_*(E_n) = \iota_*(\Tr(L_{n+1})) = \pi_*(\iota_*(L_{n+1})) = \pi_*(\CL_{n+1})) \cong \CE_n.
\end{equation}

\begin{conjecture}
\label{conj:main}

The following topological facts hold for all $n\geq 0$.

\begin{itemize}

\item[(a)] $E_n$ is an explicit complex in terms of $I(E_{n-1})$ and $L_n$, as in \eqref{eqn:explicit} below.

\item[(b)] The following equation holds in $K^b(\SBim_n[x_{n+1}])$:
\begin{equation}
\label{eqn:conjmain}
S^k E_n \cong \Tr(L_{n+1}^k) \quad \forall \ k\ge 0. 
\end{equation}

\item[(c)] The Koszul complex 
\begin{equation}
\label{eqn:koszul topology}
\left[... \stackrel{\eta}\longrightarrow I(\wedge^2 E_n) \otimes L_{n+1}^{-2} \stackrel{\eta}\longrightarrow I(E_n) \otimes L_{n+1}^{-1} \stackrel{\eta}\longrightarrow R \right]
\end{equation}
is acyclic, where $I(E_n) \stackrel{\eta}\rightarrow L_{n+1}$ denotes the adjoint map to \eqref{eqn:def e}.

\end{itemize}

\end{conjecture}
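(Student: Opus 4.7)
The plan is to prove the three parts inductively in $n$, using the geometric structure from Section~\ref{sec:flag} as a guide. Under the conjectural functor $\iota_{*}$ of Conjecture~\ref{conj:1}, each statement translates to a known fact about the projective bundle $\FH_{n+1}^{\edg} = \PP_{\FH_{n}^{\edg}}(\CE_n^{\vee})$: part~(a) reflects the explicit complex \eqref{eqn:complex} describing $\CE_n$, part~(b) is the categorical projective bundle formula $\pi_{*}(\CL_{n+1}^{k}) \cong S^{k}\CE_n$, and part~(c) is the Koszul resolution associated to the tautological quotient on $\PP(\CE_n^{\vee})$. Each identity must be proved directly in $K^b(\SBim_n[x_{n+1}])$ using techniques of the kind developed in \cite{EH1}.

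For part~(a), I would start from the braid identity $L_{n+1} = \sigma_{n}\, L_{n}\, \sigma_{n}$, which follows at once from the definitions. Applying the partial trace $\Tr$ to the Rouquier complex of this word, and using the fact that $L_{n} = I(L_{n})$ involves no crossings with the $(n+1)$st strand, the Markov-style identity \eqref{eqn: R1 upgraded} allows one to absorb the trace into a small explicit resolution. Expanding \eqref{eqn:rouquier} and carefully tracking the grading shifts by $q$ and $s$, the result is expected to be a three-term complex paralleling \eqref{eqn:complex}, whose terms are built from $I(E_{n-1})$, $L_{n}$, and $R$, and whose differentials correspond on the geometric side to the multiplications by $X - x_{n+1}$ and $Y$ together with the inclusion of the cyclic vector $v$.

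For part~(b), the iterated adjunction counit $I(E_n) \to L_{n+1}$ produces a natural multiplication map $E_{n}^{\otimes k} \to \Tr(L_{n+1}^{k})$. To produce an $S_{k}$-action on $\Tr(L_{n+1}^{k})$, one braids the $(n+1)$st strand against itself in the $k$ iterated copies of $L_{n+1}$; under the partial trace, this braiding should descend to a genuine symmetric group action up to canonical homotopy. Identifying the invariants with $S^{k} E_n$ and showing that the resulting map is an isomorphism can then be carried out inductively on $k$, leveraging the explicit formula from~(a) together with the projection formula $\pi_{*}(\CL_{n+1}^{k}) \cong S^{k}\CE_n$ on the geometric side as a blueprint. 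Part~(c) then follows from~(b) by interpreting the Koszul differential as the adjunction counit $\eta$: using the isomorphism in~(b) to rewrite the graded pieces $I(\wedge^{k} E_n) \otimes L_{n+1}^{-k}$, the complex becomes the Soergel analog of the classical Koszul resolution on a projectivization, whose acyclicity is imposed by the very description $\FH_{n+1}^{\edg} = \PP_{\FH_{n}^{\edg}}(\CE_n^{\vee})$.

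The main obstacle will be part~(b). The partial trace $\Tr$ is not manifestly monoidal, so producing the $S_{k}$-action on $\Tr(L_{n+1}^{k})$ requires constructing explicit homotopies commuting the braidings past the differential $x_{n+1}\otimes 1 - 1 \otimes x_{n+1}$. Identifying the $S_{k}$-invariants with $S^{k} E_n$ in the triangulated setting then demands careful control of extensions and grading shifts, since symmetric powers are not generally well-behaved in $K^b(\SBim_n[x_{n+1}])$. The diagonalization framework of Elias--Hogancamp, and in particular their construction of eigenmaps and interpolation complexes for the full twist $\FT_n$, provides the natural setting for such manipulations; nevertheless, adapting these constructions to $L_{n+1}$ and to all $k\ge 0$ simultaneously is likely the core technical difficulty.
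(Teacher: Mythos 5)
You are attempting to prove Conjecture~\ref{conj:main}, which the paper does \emph{not} prove: it is explicitly left as an open conjecture. The authors only establish that it implies Conjecture~\ref{conj:1} (Theorem~\ref{thm:one implies the other}, proved in Subsection~\ref{sub:proof}) and defer its verification to Elias--Hogancamp-style diagrammatic computations in the spirit of \cite{EH1}. Your proposal is therefore a strategy outline rather than a proof, which is in fact the same level of rigor the paper offers; but several details diverge from the paper's formulation in ways that would cause trouble if pursued as stated.

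For part~(a), the paper's formula \eqref{eqn:explicit} is not ``a three-term complex whose terms are built from $I(E_{n-1})$, $L_n$, and $R$ paralleling \eqref{eqn:complex}'': it is an isomorphism in $K^b(\SBim_n[x_{n+1}])$ between the two-step complex $[E_n \to \widetilde{I(E_{n-1})}]$ and a complex with \emph{four} copies of $L_n$ and differentials given by $x_n-x_{n+1}$. The geometric model is \eqref{eqn:ind}, not \eqref{eqn:complex}, and the tilde (which switches the variables $x_n$ and $x_{n+1}$ on the last two strands) is a crucial feature of the upgraded category $\SBim_n[x_{n+1}]$ that a naive application of the Markov move would lose; the paper emphasizes precisely this point when comparing with \eqref{eqn:explicit0}. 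For part~(b), you do not need to construct an $S_k$-action on $\Tr(L_{n+1}^k)$ by braiding the last strand: the paper's key observation, which you omit, is that once~(a) holds, $E_n$ lies in the Karoubian \emph{symmetric} monoidal subcategory of $K^b(\SBim_n)$ generated by $L_1,\ldots,L_n$, so $S^kE_n$ and $\wedge^kE_n$ are intrinsically defined as images of the Young symmetrizer and antisymmetrizer idempotents (as in \cite{Deligne}); the hard content of~(b) is then the isomorphism with $\Tr(L_{n+1}^k)$, not the very definition of $S^kE_n$. Finally, part~(c) does not follow from part~(b): in the framework of Proposition~\ref{prop:lift}, the birationality condition \eqref{eqn:properproj} (categorified by~(b)) and the Koszul vanishing \eqref{eqn:koszul} (categorified by~(c)) are independent hypotheses, and both must be verified separately to run the reduction of Conjecture~\ref{conj:1} to Conjecture~\ref{conj:main}.
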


Statement (a) implies that $E_n$ lies in the monoidal subcategory of $K^b(\SBim_n)$ generated by $L_1,...,L_n$. Since this subcategory is symmetric and Karoubian, the objects $S^k E_n$ and $\wedge^k E_n$ that appear in (b) and (c) are well-defined: as in \cite{Deligne}, they are simply the projections of $E_n^{\otimes k}$ defined by the symmetric and antisymmetric projectors in the symmetric group $S_k$, respectively. The following result is proved in Section \ref{sub:proof}, and will show how to reduce our main Conjecture \ref{conj:1} to the topological computations of Conjecture \ref{conj:main} (a)--(c).

\begin{theorem}
\label{thm:one implies the other}
Conjecture \ref{conj:main} implies Conjecture \ref{conj:1}.
\end{theorem}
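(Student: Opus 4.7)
The plan is to prove Theorem \ref{thm:one implies the other} by induction on $n$, leveraging the projective-bundle presentation $\FH_{n+1}^{\edg} = \PP_{\FH_n^{\edg}\times \CC}(\CE_n^\vee)$ from Proposition \ref{prop:dg scheme}. The base case $n=1$ is trivial: $\FH_1^{\edg}(\CC) = \spec \CC[x_1]$, $\SBim_1 = \CC[x_1]\text{--mod}$, and the functors $\iota_{(1)}$ are essentially identities. For the inductive step, assume that $\iota_{(n)}$ has been constructed satisfying \eqref{eqn:funk}--\eqref{eqn:nice 2}, and that the outer squares of \eqref{eqn:diag1}, \eqref{eqn:diag2} commute at level $n$. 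The construction of $\iota_{(n+1)}$ will proceed through the tower \eqref{eq:tower}, using a relative version of Theorem \ref{th:1} for the map $\pi: \FH_{n+1}^{\edg} \to \FH_n^{\edg} \times \CC$.

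To define $\iota^*_{(n+1)}$, I use that $D^b(\coh_\torus(\FH_{n+1}^{\edg}))$ is generated, via the projective-bundle semiorthogonal decomposition, by objects of the form $\pi^*(G) \otimes \CL_{n+1}^k$ for $G \in D^b(\coh_\torus(\FH_n^{\edg} \times \CC))$ and $k \in \ZZ$. Forced by \eqref{eqn:diag1} and \eqref{eqn:nice 1}, I set
\begin{equation*}
\iota^*_{(n+1)}(\pi^*(G) \otimes \CL_{n+1}^k) := I(\iota^*_{(n)}(G)) \otimes L_{n+1}^k,
\end{equation*}
where $I: \SBim_n[x_{n+1}] \hookrightarrow \SBim_{n+1}$ is the natural inclusion, and extend by the universal property of the projectivization. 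The functor $\iota_{*,(n+1)}$ is then constructed as the right adjoint, with values on line-bundle twists controlled by relative Serre duality \eqref{eqn:serre}.

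The three parts of Conjecture \ref{conj:main} play distinct roles. Part (a) provides an explicit complex representing $E_n$ in terms of $I(E_{n-1})$ and $L_n$ that matches the two-step complex \eqref{eqn:two step} for $\CE_n$; combined with the induction hypothesis $L_k \mapsto \CL_k$, this yields $\iota_{*,(n)}(E_n) \cong \CE_n$, which is the input to the relative projectivization on the sheaf side. Part (b), the identification $\Tr(L_{n+1}^k) \cong S^k E_n$, is the key compatibility: combined with \eqref{eqn:diag2} at level $n$, it gives
\begin{equation*}
\iota_{*,(n)}(\Tr(L_{n+1}^k)) \cong \iota_{*,(n)}(S^k E_n) \cong S^k \CE_n \cong \pi_*(\CL_{n+1}^k),
\end{equation*}
which is precisely the classical pushforward on the projective bundle \eqref{eqn:def dg}. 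Together with \eqref{eqn:push negative}, this pins down $\iota_{*,(n+1)}(L_{n+1}^k) = \CL_{n+1}^k$ for all $k \in \ZZ$. Part (c) --- the acyclicity of the Koszul complex \eqref{eqn:koszul topology} --- is the categorical avatar of the Beilinson resolution of the diagonal on $\PP(\CE_n^\vee)$, and it guarantees that the inductive definition of $\iota^*_{(n+1)}$ on generators extends coherently to all objects and is fully faithful.

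The main obstacle I foresee is establishing the monoidality and the projection formula \eqref{eqn:sk} for $\iota^*_{(n+1)}$. Because $\SBim_{n+1}$ is not symmetric, monoidality of $\iota^*$ forces its image to lie in a commutative subcategory --- the categorification of $C_{n+1}$. Demonstrating this requires checking that $L_{n+1}$ commutes, up to canonical isomorphism compatible with the relevant coherences, with every object in $I(\SBim_n[x_{n+1}])$; one then bootstraps via the Koszul acyclicity of (c) to all tensor products. Once this is settled, the twist identification \eqref{eqn:addendum} follows by induction on $k$ from $\FT_k = \FT_{k-1} \otimes L_k$ and $\det \CT_k = \det \CT_{k-1} \otimes \CL_k$, and the factorization \eqref{eqn:hhh factors} of $\HHH$ reduces to adjunction combined with the equivariant pushforward description of $\int$.
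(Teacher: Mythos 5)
Your proposal follows essentially the same route as the paper's own argument: induction on $n$, the projective-tower presentation $\FH_{n+1}^{\edg}=\PP_{\FH_n^{\edg}\times\CC}(\CE_n^\vee)$, and an appeal to the relative-case machinery of Proposition~\ref{prop:lift} with $F=L_{n+1}$ and $\beta$ the adjoint of part~(b), where part~(a) makes the symmetric/exterior powers well-defined and identifies $\iota_{*}(E_n)\cong\CE_n$, part~(b) yields $\iota_*(L_{n+1}^k)\cong S^k\CE_n$ for the birationality condition~\eqref{eqn:properproj}, and part~(c) supplies the Koszul acyclicity~\eqref{eqn:koszul}. Your formulation via the semi-orthogonal decomposition on generators $\pi^*(G)\otimes\CL_{n+1}^k$ is just a restatement of what Proposition~\ref{prop:lift} packages, and the monoidality concern you flag is implicit in the paper at the same level of rigor, so there is no substantive divergence.
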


\subsection{$E_n$ as an explicit braid}
\label{sub:explicit braid}

The object $E_n = \Tr(L_{n+1}) \in K^b(\SBim_n[x_{n+1}])$ has a simple topological meaning, represented below.

\begin{figure}[ht!]
\begin{tikzpicture}
\begin{scope}[yscale=-1]
\draw (0,0)..controls (0,0.5) and (0.5,0.5)..(0.5,1);
\draw (0.5,0)..controls (0.5,0.5) and (1,0.5)..(1,1);
\draw (1,0)..controls (1,0.5) and (1.5,0.5)..(1.5,1);
\draw [line width=5,white] (1.5,0)..controls (1.5,0.5) and (0,0.5)..(0,1);
\draw  (1.5,0)..controls (1.5,0.5) and (0,0.5)..(0,1);

\draw  (1.5,2)..controls (1.5,1.5) and (0,1.5)..(0,1);
\draw [line width=5,white] (0,2)..controls (0,1.5) and (0.5,1.5)..(0.5,1);
\draw [line width=5,white] (0.5,2)..controls (0.5,1.5) and (1,1.5)..(1,1);
\draw [line width=5,white] (1,2)..controls (1,1.5) and (1.5,1.5)..(1.5,1);
\draw (0,2)..controls (0,1.5) and (0.5,1.5)..(0.5,1);
\draw (0.5,2)..controls (0.5,1.5) and (1,1.5)..(1,1);
\draw (1,2)..controls (1,1.5) and (1.5,1.5)..(1.5,1);
\end{scope}
\end{tikzpicture} \qquad \qquad 
\begin{tikzpicture}
\begin{scope}[yscale=-1]
\draw (0,0)--(0,1);
\draw (0.5,0)--(0.5,1);
\draw (1,0)--(1,1);
\draw [line width=5,white](-0.5,1)..controls (-0.5,0.5) and (1.5,0.5)..(1.5,1);
\draw (-0.5,1)..controls (-0.5,0.5) and (1.5,0.5)..(1.5,1);

\draw  (-0.5,1)..controls (-0.5,1.5) and (1.5,1.5)..(1.5,1);
\draw [line width=5,white] (0,2)--(0,1);
\draw [line width=5,white] (0.5,2)--(0.5,1);
\draw [line width=5,white] (1,2)--(1,1);
\draw (0,2)--(0,1);
\draw (0.5,2)--(0.5,1);
\draw (1,2)--(1,1);
\end{scope}
\end{tikzpicture}
\caption{The braid $L_4$ and its partial trace $E_3$.}
\end{figure}
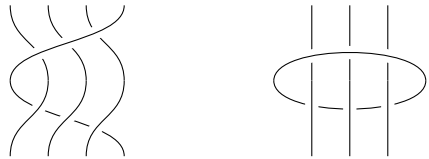

The relation between the tangle $E_n$ and the complex $\CE_n$ is expected to categorify the classical formula for $E_n$ (e.g. \cite{Morton}) in the skein algebra. Specifically, skein relations are topological equalities between knots which only differ near a crossing:

\begin{figure}[ht!]
\begin{tikzpicture}
\draw [<-] (0,1)--(1,0);
\draw [->,white,line width=5] (0,0)--(1,1);
\draw [->] (0,0)--(1,1);

\draw (1.5,0.5) node {$- q$};

\draw [->] (2,0)--(3,1);
\draw [->,white,line width=5] (2,1)--(3,0);
\draw [<-] (2,1)--(3,0);

\draw (4,0.5) node {$=(1-q)$};

\draw [->] (5.5,0)--(5.5,1);
\draw [->] (6,0)--(6,1);
\end{tikzpicture}
\caption{Skein relation}
\label{fig:skein def}
\end{figure}

In \(K^b(\SBim_n)\) such equalities must be replaced with exact sequences. For example, consider the skein relation applied to the bottom right crossing of the braid $L_{n+1}$.
\begin{figure}[ht!]
\begin{tikzpicture}
\begin{scope}[yscale=-1, shift={(0,-2)}]
\draw (0,0)..controls (0,0.5) and (0.5,0.5)..(0.5,1);
\draw (0.5,0)..controls (0.5,0.5) and (1,0.5)..(1,1);
\draw (1,0)..controls (1,0.5) and (1.5,0.5)..(1.5,1);
\draw [line width=5,white] (1.5,0)..controls (1.5,0.5) and (0,0.5)..(0,1);
\draw  (1.5,0)..controls (1.5,0.5) and (0,0.5)..(0,1);

\draw  (1.5,2)..controls (1.5,1.5) and (0,1.5)..(0,1);
\draw [line width=5,white] (0,2)..controls (0,1.5) and (0.5,1.5)..(0.5,1);
\draw [line width=5,white] (0.5,2)..controls (0.5,1.5) and (1,1.5)..(1,1);
\draw [line width=5,white] (1,2)..controls (1,1.5) and (1.5,1.5)..(1.5,1);
\draw (0,2)..controls (0,1.5) and (0.5,1.5)..(0.5,1);
\draw (0.5,2)..controls (0.5,1.5) and (1,1.5)..(1,1);
\draw (1,2)..controls (1,1.5) and (1.5,1.5)..(1.5,1);
\end{scope}

\draw (2.2,1) node {$-q$};

\begin{scope}[yscale=-1, shift={(0,-2)}]
\draw (3,0)..controls (3,0.5) and (3.5,0.5)..(3.5,1);
\draw (3.5,0)..controls (3.5,0.5) and (4,0.5)..(4,1);
\draw (4,0)..controls (4,0.5) and (4.5,0.5)..(4.5,1);
\draw [line width=5,white] (4.5,0)..controls (4.5,0.5) and (3,0.5)..(3,1);
\draw  (4.5,0)..controls (4.5,0.5) and (3,0.5)..(3,1);

\draw (4,2)..controls (4,1.5) and (4.5,1.5)..(4.5,1);
\draw [line width=5,white]  (4.5,2)..controls (4.5,1.5) and (3,1.5)..(3,1);
\draw  (4.5,2)..controls (4.5,1.5) and (3,1.5)..(3,1);
\draw [line width=5,white] (3,2)..controls (3,1.5) and (3.5,1.5)..(3.5,1);
\draw [line width=5,white] (3.5,2)..controls (3.5,1.5) and (4,1.5)..(4,1);
\draw (3,2)..controls (3,1.5) and (3.5,1.5)..(3.5,1);
\draw (3.5,2)..controls (3.5,1.5) and (4,1.5)..(4,1);
\end{scope}

\draw (5.7,1) node {$=(1-q)$};

\begin{scope}[yscale=-1, shift={(0,-2)}]
\draw (7,0)..controls (7,0.5) and (7.5,0.5)..(7.5,1);
\draw (7.5,0)..controls (7.5,0.5) and (8,0.5)..(8,1);
\draw (8,0)..controls (8,0.5) and (8.5,0.5)..(8.5,1);
\draw [line width=5,white] (8.5,0)..controls (8.5,0.5) and (7,0.5)..(7,1);
\draw  (8.5,0)..controls (8.5,0.5) and (7,0.5)..(7,1);

\draw (8.5,1)--(8.5,2);
\draw  (8,2)..controls (8,1.5) and (7,1.5)..(7,1);
\draw [line width=5,white] (7,2)..controls (7,1.5) and (7.5,1.5)..(7.5,1);
\draw [line width=5,white] (7.5,2)..controls (7.5,1.5) and (8,1.5)..(8,1);
\draw (7,2)..controls (7,1.5) and (7.5,1.5)..(7.5,1);
\draw (7.5,2)..controls (7.5,1.5) and (8,1.5)..(8,1);
\end{scope}
\end{tikzpicture}
\caption{Skein relation for $L_{n+1}$}
\label{fig:skein}
\end{figure}
If one closes the last strand in Figure \ref{fig:skein} and applies a Markov move, one gets the following formula in the Grothendieck group of $\SBim_n$ (which is isomorphic to the Hecke algebra):
\begin{equation}
\label{eqn:explicit0}
\langle E_n\rangle - \langle I(E_{n-1})\rangle = (1-q)\langle L_{n} \rangle
\end{equation}
In the category $K^b(\SBim_n[x_{n+1}])$, the above equality is lifted to an exact sequence:
\begin{equation}
\label{eqn:explicit}
\left[ qt L_{n}\xrightarrow{(0,x_n-x_{n+1})} \underline{q L_{n}\oplus t L_{n}} \xrightarrow{(x_n-x_{n+1},0)}L_{n} \right] \cong \left[ E_n \longrightarrow \widetilde{I(E_{n-1})} \right]
\end{equation}
where $t = s^2/q$ and $\widetilde{I(E_{n-1})}$ refers to the same braid as $I(E_{n-1})$, but with the variables on the last two strands switched (compare with \eqref{eqn:ind}). This is a crucial feature of the category $\SBim_n[x_{n+1}]$, where the variables $x_n$ and $x_{n+1}$ play different roles. Also note that \eqref{eqn:explicit} consists of 4 copies of $L_n$ instead of the two of \eqref{eqn:explicit0}, due to the modified Markov move \eqref{eqn: R1 upgraded}.

\subsection{Geometric Markov invariance}
\label{sub:markov}


In the category of Soergel bimodules, equation \eqref{eqn: R1 upgraded} governs the behavior of objects under Markov moves:
\begin{equation}
\label{eqn:markov}
\alpha \leadsto i(\alpha), \qquad \alpha \leadsto i(\alpha) \cdot \sigma_n, \qquad \alpha \leadsto i(\alpha) \cdot \sigma^{-1}_n
\end{equation}
where $i$ is the operation of adding an extra strand to a braid $\alpha$ on $n$ strands. We will now study how the complexes of sheaves $\CB(\alpha) = \iota_*(\alpha) \in D^b(\coh_{\torus}(\FH_n^\dg(\CC))$ behave under the same moves. Throughout this Subsection, we write $\FH_n^\dg = \FH_n^\dg(\CC)$ and:
$$
\pi:\FH_{n+1}^{\dg}  \rightarrow \FH_n^{\dg} \times \CC
$$
for the standard projection. The following Corollary is an easy consequence of Conjecture \ref{conj:main}, as we will show in Subsection \ref{sub:proof}.

\begin{corollary}
\label{cor:markov 1}
For any braid $\alpha$ on $n$ strands, we have: 
\begin{equation}
\label{eqn:markov 0}
\CB(i(\alpha)) = \pi^*(\CB(\alpha)) .
\end{equation}

\end{corollary}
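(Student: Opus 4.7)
The plan is to read the statement off as a direct application of the commuting diagram \eqref{eqn:diag1} in Conjecture \ref{conj:1}, which is available by hypothesis via Theorem \ref{thm:one implies the other}. First, I would identify the Markov-stabilized braid $i(\alpha)$, regarded as an object of $K^b(\SBim_{n+1})$, with $I(\alpha \otimes \CC[x_{n+1}])$. This follows by inspecting Rouquier complexes: adding a free $(n{+}1)$-st strand introduces no new crossings, so the Rouquier complex for $i(\alpha)$ is built out of the same Bott--Samelson bimodules $B_1, \ldots, B_{n-1}$ used for $\alpha$, simply promoted from $R_n$-bimodules to $R_{n+1}$-bimodules by tensoring with $\CC[x_{n+1}]$. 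This is exactly what the full embedding $I: \SBim_n[x_{n+1}] \hookrightarrow \SBim_{n+1}$ of Subsection \ref{sub:trace} does, and the generators \eqref{eqn:rouquier} show it respects the homotopy category structure.

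Next, I would apply the square \eqref{eqn:diag1} to the object $\alpha \otimes \CC[x_{n+1}] \in K^b(\SBim_n[x_{n+1}])$. The diagram asserts the natural isomorphism of functors
$$
\iota_{(n+1)*} \circ I \;\cong\; \pi^* \circ \bigl( \iota_{(n)*} \otimes \mathrm{id}_{\CC[x_{n+1}]} \bigr),
$$
with $\pi : \FH_{n+1}^\dg \to \FH_n^\dg \times \CC$ the projection of Proposition \ref{prop:dg scheme}. Since $\iota_{(n)*}$ is $R_n$-linear, one has $\iota_{(n)*}(\alpha \otimes \CC[x_{n+1}]) \cong \CB(\alpha) \boxtimes \CO_\CC$, and substitution gives
$$
\CB(i(\alpha)) \;=\; \iota_{(n+1)*}\bigl( I(\alpha \otimes \CC[x_{n+1}]) \bigr) \;\cong\; \pi^*\bigl( \CB(\alpha) \boxtimes \CO_\CC \bigr),
$$
which is $\pi^*(\CB(\alpha))$ under the standard abuse of notation that suppresses the trivial extension along the $\CC$-factor.

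The argument is essentially bookkeeping once \eqref{eqn:diag1} is in place; the real work was already done in Theorem \ref{thm:one implies the other}, where $\iota_{(n+1)*}$ is constructed inductively from $\iota_{(n)*}$ as a relative projectivization matching the presentation of $\FH_{n+1}^\dg$ in Proposition \ref{prop:dg scheme}, and its compatibility with $I$ and $\pi^*$ is verified. The only mild subtlety at the level of the corollary itself is being careful that the identification $i(\alpha) = I(\alpha \otimes \CC[x_{n+1}])$ really holds on the nose in the homotopy category (not just after taking Euler characteristics), but this is immediate from the form of the Rouquier complex generators, none of which involve $x_{n+1}$.
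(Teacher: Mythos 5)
Your overall strategy is correct, and the conclusion is right, but I do not think the identity you extract from \eqref{eqn:diag1} is what that diagram actually asserts.  The construction in Subsection \ref{sub:proof} produces $\iota_{(n+1)}$ as a factorization of the composite morphism $\iota_* = \iota_{(n)*}\circ\Tr$ (equivalently $\iota^* = I\circ\iota_{(n)}^*$) through $\FH^\dg_{n+1}=\PP\CE_n^\vee$, via Proposition~\ref{prop:lift}.  By the very definition of the relative case (Subsection~\ref{sub:relative}), the two compatibilities you get for free are $\pi_*\circ\iota_{(n+1)*}\cong\iota_{(n)*}\circ\Tr$ and $\iota_{(n+1)}^*\circ\pi^*\cong I\circ\iota_{(n)}^*$.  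These are adjoint-consistent squares (right adjoints with right adjoints, left with left), and one is the adjoint of the other; they are what \eqref{eqn:diag1} and \eqref{eqn:diag2} are expected to depict.  Your square $\iota_{(n+1)*}\circ I\cong\pi^*\circ(\iota_{(n)*}\otimes\mathrm{id})$ mixes a left adjoint $I$ with a right adjoint $\iota_{(n+1)*}$ (and a left adjoint $\pi^*$ with a right adjoint $\iota_{(n)*}$); it is a base-change-type statement, not one of the two adjoint commutations that the factorization hands you directly.

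That said, the identity you use is \emph{true} given the axioms, and the gap is small.  One clean way to see it is via the explicit $\Hom$-module formula for $\iota_*$:
$$
\iota_{(n+1)*}\bigl(I(X)\bigr)=\bigoplus_{a_1,\dots,a_{n+1}}\Hom_{\SBim_{n+1}}\!\Bigl(\1,\,I(X)\textstyle\prod_{k\leq n} L_k^{a_k}\otimes L_{n+1}^{a_{n+1}}\Bigr).
$$
Using the adjunction $I\dashv\Tr$ together with $\1_{n+1}=I(\1_n[x_{n+1}])$, then the conditional-expectation (Markov) property $\Tr(I(Y)\otimes Z)\cong Y\otimes\Tr(Z)$, and finally $\Tr(L_{n+1}^m)\cong S^m E_n$ from Conjecture~\ref{conj:main}(b), one rewrites this as $\bigoplus_m\iota_{(n)*}(X\otimes S^m E_n)\cong\bigoplus_m\iota_{(n)*}(X)\otimes S^m\CE_n=\pi^*\iota_{(n)*}(X)$.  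The ingredients I have italicized — the conditional-expectation property of $\Tr$ and Conjecture~\ref{conj:main}(b) — are what actually carry the weight here, and your write-up passes over them by attributing everything to \eqref{eqn:diag1}.  So: right conclusion, right overall shape of argument, but you should either state the base-change square as a lemma derived from \eqref{eqn:diag1}/\eqref{eqn:diag2} plus the Markov property of $\Tr$, or run the $\Hom$-module computation directly.  (The other steps — identifying $i(\alpha)$ with $I(\alpha\boxtimes\CC[x_{n+1}])$, and using $R_n$-linearity to get $\iota_{(n)*}(\alpha\boxtimes\CC[x_{n+1}])\cong\CB(\alpha)\boxtimes\CO_\CC$ — are fine.)
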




To tackle the second and third Markov moves of \eqref{eqn:markov}, we consider the dg subscheme:
\begin{equation}
\label{eqn:def zn}
Z_n \subset \FH_{n+1}^\dg
\end{equation}
$$
\CO_{Z_n} := \left[\ldots \xrightarrow{y_{n,n+1}} \frac {q^2t\CL_n}{\CL_{n+1}} \xrightarrow{x_n-x_{n+1}} \frac {qt\CL_n}{\CL_{n+1}} \xrightarrow{y_{n,n+1}}  q\CO \xrightarrow{x_n-x_{n+1}} \CO \right],
$$
where $y_{n,n+1}$ denotes the last subdiagonal entry of the matrix $Y$ of \eqref{eqn:triple matrices}, regarded as an endomorphism $t\CL_n \rightarrow \CL_{n+1}$ on $\FH_{n+1}^{\dg}$. The fact that $\CO_{Z_n}$ is a complex follows from:
$$
0 = [X,Y]_{n,n+1} = x_n y_{n,n+1} - y_{n,n+1} x_{n+1}
$$ 

\begin{conjecture}
\label{conj:markov}
For any braid $\alpha$ on $n$ strands, we have: 
\begin{equation}
\label{eqn:markov 1}
\CB(i(\alpha) \cdot \sigma_n) = \pi^*(\CB(\alpha)) \otimes \CO_{Z_n} .
\end{equation}

\end{conjecture}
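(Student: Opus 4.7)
The plan is to first apply the projection formula to reduce the statement to the base case $\alpha = \1$, and then to establish $\iota_*(\sigma_n)\cong \CO_{Z_n}$ directly by unpacking the Rouquier complex. For the reduction, note that Corollary \ref{cor:markov 1} gives $\CB(i(\alpha)) = \pi^*\CB(\alpha)$, and since $\iota^*$ is fully faithful we have $\iota^*\iota_* \cong \Id$, whence $i(\alpha) \cong \iota^*(\pi^*\CB(\alpha))$ in $K^b(\SBim_{n+1})$ (equivalently, this can be read off from diagram \eqref{eqn:diag2}). Applying the projection formula \eqref{eqn:sk} with $M = \sigma_n$ and $N_1 = \pi^*\CB(\alpha)$ then yields
$$\CB(i(\alpha)\cdot\sigma_n) = \iota_*\bigl(\iota^*(\pi^*\CB(\alpha))\otimes \sigma_n\bigr) = \pi^*\CB(\alpha) \otimes \iota_*(\sigma_n),$$
so the conjecture reduces to showing $\iota_*(\sigma_n) \cong \CO_{Z_n}$.

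For the base case, I would expand the Rouquier complex $\sigma_n = [B_n \to sq^{-1/2} R]$ and apply $\iota_*$ termwise. The term $\iota_*(R)$ is simply $\CO$, while the key step is to identify $\iota_*(B_n)$ with a natural sheaf on the divisor $\{x_n = x_{n+1}\}\subset \FH_{n+1}^\dg$; the cone of $\iota_*(B_n) \to sq^{-1/2}\CO$ then provides the two rightmost terms $q\CO \xrightarrow{x_n-x_{n+1}} \CO$ of the periodic complex $\CO_{Z_n}$. The remaining periodic tail reflects the failure of $(x_n - x_{n+1},\, y_{n,n+1})$ to be a regular sequence on the dg scheme $\FH_{n+1}^\dg$: the relation $[X,Y]_{n,n+1} = (x_n - x_{n+1}) y_{n,n+1} - y_{n,n+1}(x_n-x_{n+1}) = 0$ forces a Tate/matrix-factorization-style resolution, and matching this with the periodicity of $\CO_{Z_n}$ completes the identification. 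As both a cross-check and a uniqueness mechanism, one verifies the dual statement for $\sigma_n^{-1}$ and uses $\iota_*(\sigma_n)\otimes \iota_*(\sigma_n^{-1}) \cong \iota_*(\sigma_n\otimes \sigma_n^{-1}) = \CO$, which follows by applying \eqref{eqn:sk} to the substitution $\sigma_n^{-1} = \iota^*\iota_*(\sigma_n^{-1})$; this pins down $\iota_*(\sigma_n)$ as an invertible object in the tensor category.

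The main obstacle is the identification of $\iota_*(B_n)$, since $B_n$ is not (obviously) the image under $\iota^*$ of any geometric object, so the projection formula provides no direct route. I would attack this via the defining formula \eqref{eqn:general 1}, computing the $A$-module $\bigoplus_{a_k}\Hom_{K^b(\SBim_{n+1})}(\1,\, B_n \otimes \bigotimes_k L_k^{a_k})$ and recognizing the resulting multigraded object as sections of line bundles restricted to $\{x_n = x_{n+1}\}$. A complementary strategy is to use the tower structure $\pi:\FH_{n+1}^\dg \to \FH_n^\dg\times \CC$ of Proposition \ref{prop:dg scheme}: because $\sigma_n$ acts only on the last two strands, the computation can be localized to a neighborhood of the fiber and reduced to the $n=2$ setting detailed in Section \ref{sec:n=2}, with the patching governed by Proposition \ref{prop:serre}. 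Either route requires genuine input from the Soergel side beyond the formal properties of $\iota_*,\iota^*$ listed in Conjecture \ref{conj:1}.
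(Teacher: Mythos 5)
The statement you are trying to prove is labeled Conjecture~\ref{conj:markov} and is genuinely left open in the paper, so there is no argument there to compare against; your proposal must stand or fall on its own. Your own closing caveat already acknowledges that the base case $\iota_*(\sigma_n) \cong \CO_{Z_n}$ is not established, so at best you have a reduction, not a proof. But even the reduction step contains a concrete error. You write that since $\iota^*$ is fully faithful, $\iota^*\iota_*\cong\Id$, and hence $i(\alpha)\cong\iota^*(\pi^*\CB(\alpha))$. For a left adjoint $\iota^*$ with right adjoint $\iota_*$, fully faithfulness of $\iota^*$ is equivalent to the \emph{unit} $\Id\to\iota_*\iota^*$ being an isomorphism, i.e.\ $\iota_*\iota^*\cong\Id$ on $D^b(\coh(\FH_n^\edg))$. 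The identity $\iota^*\iota_*\cong\Id$ that you need would make $\iota_*$ fully faithful as well, which combined with fully faithfulness of $\iota^*$ would force the pair to be an equivalence. That cannot hold here: the Grothendieck group of $K^b(\SBim_n)$ is $H_n$ of dimension $n!$, while the sheaf side decategorifies to the maximal commutative subalgebra $C_n$ of strictly smaller dimension. So the counit $\iota^*\iota_*(M)\to M$ is not an isomorphism for general $M$, and in particular there is no reason for it to be one at $M = i(\alpha)$.

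The same issue blocks the fallback via diagram~\eqref{eqn:diag2}: that diagram gives $\iota_{(n+1)}^*\circ\pi^* = I_{n+1}\circ\iota_{(n)}^*$, so $\iota_{(n+1)}^*(\pi^*\CB(\alpha)) = I_{n+1}(\iota_{(n)}^*\iota_{(n),*}(\alpha))$, and you are back to needing $\iota_{(n)}^*\iota_{(n),*}(\alpha)\cong\alpha$. Notice that the paper's own proof of Corollary~\ref{cor:markov 2} deliberately avoids this move: it only peels off $L_{n+1}^{-1}$ and $L_n$ via the projection formula~\eqref{eqn:sk} because these \emph{are} of the form $\iota^*(\text{line bundle})$, leaving the factor $\iota_*(i(\alpha)\otimes\sigma_n)$ intact and invoking Conjecture~\ref{conj:markov} directly for it. Your instinct that the conjecture ought to reduce to a base-case computation for $\sigma_n$ is plausible, but justifying the reduction would require either a strengthening of~\eqref{eqn:sk} to cover factors of the form $I_{n+1}(\alpha)$, or a separate argument that $i(\alpha)\otimes\sigma_n$ lies in the image of $\iota^*$ whenever $\alpha$ does; neither follows from the properties listed in Conjecture~\ref{conj:1}.
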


\begin{corollary}
\label{cor:markov 2}
Conjecture~\ref{conj:markov} implies that for any braid $\alpha$ on $n$ strands: 
\begin{equation}
\label{eqn:markov 2}
\CB(i(\alpha) \cdot \sigma^{-1}_n) = \pi^*(\CB(\alpha)) \otimes \CO_{Z_n} \otimes \frac {\CL_n}{\CL_{n+1}} .
\end{equation}

\end{corollary}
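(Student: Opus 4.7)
The plan is to deduce Corollary~\ref{cor:markov 2} from Conjecture~\ref{conj:markov} by lifting a braid-group identity to an isomorphism in $K^b(\SBim_{n+1})$ and then extracting the resulting invertible line-bundle twists via the projection formula~\eqref{eqn:sk}.

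The starting point is the identity $L_{n+1} = \sigma_n L_n \sigma_n$ in $B_{n+1}$, which is immediate from the definition $L_k = \sigma_{k-1}\cdots\sigma_1\cdots\sigma_{k-1}$. Rearranging yields $\sigma_n^{-1} = L_{n+1}^{-1}\sigma_n L_n$, and since the Rouquier functor identifies equal braids with canonically isomorphic complexes, this lifts to $\sigma_n^{-1} \cong L_{n+1}^{-1}\otimes \sigma_n\otimes L_n$ in $K^b(\SBim_{n+1})$. Next, I would invoke the classical fact that the Jucys--Murphy type element $L_{n+1}$ commutes with the subgroup $i(B_n)\subset B_{n+1}$: topologically, the partial twist $L_{n+1}$ wraps only the $(n+1)$-st strand around the others and is isotopic to an arbitrary rearrangement relative to any braid supported on strands $1,\dots,n$; algebraically one verifies $L_{n+1}\sigma_k = \sigma_k L_{n+1}$ for each $k < n$ using only the braid relations. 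Lifting once more to $K^b(\SBim_{n+1})$ then gives
$$
i(\alpha)\,\sigma_n^{-1} \;\cong\; L_{n+1}^{-1}\otimes i(\alpha)\,\sigma_n\otimes L_n.
$$

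The last step is to apply $\iota_*$. Because $\iota^*$ is monoidal and $\iota^*(\CL_k)=L_k$ by~\eqref{eqn:nice 1}, both outer factors $L_{n+1}^{-1}=\iota^*(\CL_{n+1}^{-1})$ and $L_n=\iota^*(\CL_n)$ come from the geometric side, so the projection formula~\eqref{eqn:sk} extracts them to give
$$
\CB(i(\alpha)\sigma_n^{-1}) \;\cong\; \CL_{n+1}^{-1}\otimes \iota_*(i(\alpha)\sigma_n)\otimes \CL_n.
$$
Substituting $\iota_*(i(\alpha)\sigma_n) = \pi^*\CB(\alpha)\otimes \CO_{Z_n}$ from Conjecture~\ref{conj:markov} and regrouping tensor factors in the symmetric monoidal category $D^b(\coh_{\torus}(\FH^{\dg}_{n+1}))$ produces $\pi^*\CB(\alpha)\otimes \CO_{Z_n}\otimes \CL_n/\CL_{n+1}$, which is exactly~\eqref{eqn:markov 2}.

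The main obstacle I anticipate is ensuring that the two braid-relation lifts (the rearrangement $\sigma_n^{-1}=L_{n+1}^{-1}\sigma_n L_n$ and the commutation $[L_{n+1},i(\alpha)]=1$) are compatible with the identification furnished by Conjecture~\ref{conj:markov}; this is a coherence check reducible to standard manipulations of Rouquier complexes, but it merits careful verification. Once that is in place, the remainder of the argument is a formal consequence of~\eqref{eqn:sk}, \eqref{eqn:nice 1}, and the invertibility of the line bundles $\CL_k$.
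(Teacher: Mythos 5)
Your proof is correct and follows essentially the same route as the paper's: both start from $L_{n+1} = \sigma_n L_n \sigma_n$, rearrange to $\sigma_n^{-1} = L_{n+1}^{-1}\sigma_n L_n$, commute $L_{n+1}$ past $i(\alpha)$, and then apply $\iota_*$ together with the projection formula \eqref{eqn:sk} and $L_k = \iota^*(\CL_k)$ to extract the line-bundle twists. The coherence concern you flag at the end is handled implicitly in the paper by the standard fact that Rouquier complexes realize braid relations up to canonical homotopy equivalence.
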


\begin{proof} Note the following the equation in the braid group:
$$
L_{n+1} = \sigma_n \cdot L_n \cdot \sigma_n \Rightarrow \sigma_{n}^{-1} = L_{n+1}^{-1} \cdot \sigma_n \cdot L_{n} \Rightarrow $$
$$
 i(\alpha) \cdot  \sigma_n^{-1} = i(\alpha) \cdot L_{n+1}^{-1}\cdot \sigma_n  \cdot L_{n} = L_{n+1}^{-1}\cdot i(\alpha) \cdot \sigma_n  \cdot L_{n}.
$$
since $L_{n+1}$ commutes with the image of $i$. Applying $\CB(-)$ to the above equation implies:
$$
\CB(i(\alpha)\cdot \sigma_n^{-1}) = \iota_*( i(\alpha)  \otimes  \sigma_n^{-1})  = \iota_*(L_{n+1}^{-1}\otimes i(\alpha)  \otimes \sigma_n  \otimes L_n))
$$
As in Conjecture \ref{conj:1}, we have $L_k = \iota^*(\CL_k)$ for all $k$, and therefore \eqref{eqn:sk} implies \eqref{eqn:markov 2}. 

\end{proof}

Equations~\eqref{eqn:markov 0}--\eqref{eqn:markov 2} are compatible with the stabilization invariance of \(\HHH\) at the level of equivariant Euler characteristic. 
\begin{proposition}
\label{prop:markov integrals}

For any braid $\alpha$ on $n$ strands, we have:
\begin{equation}
\label{eqn:markov int 0}
\chi \left( 
\CB(i(\alpha)) \otimes \wedge^\bullet \CT_{n+1}^\vee  \right) = \frac {1-a}{1-q} \chi \left( 
 \CB(\alpha) \otimes \wedge^\bullet \CT_n^\vee \right)
\end{equation}
Assuming Conjecture \ref{conj:markov}, we further have:
\begin{equation}
\label{eqn:markov int 1}
\chi \left( 
\CB(i(\alpha) \cdot \sigma_n) \otimes \wedge^\bullet \CT_{n+1}^\vee \right)\ \ = \ \ \chi \left(
  \CB(\alpha) \otimes \wedge^\bullet \CT_n^\vee \right)
 \end{equation}
\begin{equation}
\label{eqn:markov int 2}
\chi \left(
  \CB(i(\alpha) \cdot \sigma^{-1}_n) \otimes \wedge^\bullet \CT_{n+1}^\vee \right) =  \frac a{qt} \chi \left( 
   \CB(\alpha) \otimes \wedge^\bullet \CT_n^\vee \right)
\end{equation}
\end{proposition}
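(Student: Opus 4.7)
The plan is to apply the projection formula along the composition $\tilde{\pi}: \FH_{n+1}^{\dg} \to \FH_n^{\dg}$ of the projective fibration $\pi: \FH_{n+1}^{\dg} \to \FH_n^{\dg} \times \CC$ from Proposition \ref{prop:dg scheme} with the second projection. The structural input is the tautological short exact sequence $0 \to \CL_{n+1} \to \CT_{n+1} \to \pi^{*}\CT_n \to 0$, which yields in equivariant K-theory the multiplicative factorization
\begin{equation*}
[\wedge^\bullet \CT_{n+1}^\vee] = \pi^{*}[\wedge^\bullet \CT_n^\vee] \cdot [\wedge^\bullet \CL_{n+1}^\vee].
\end{equation*}
Combined with Corollary \ref{cor:markov 1}, Conjecture \ref{conj:markov}, and Corollary \ref{cor:markov 2}, the projection formula reduces each of the three equations to a K-theoretic identity on $\FH_n^{\dg}$ of the form $\tilde{\pi}_{*}(\wedge^\bullet \CL_{n+1}^\vee \otimes \CG) = \lambda$, where $(\CG,\lambda)$ runs through $(\CO, \tfrac{1-a}{1-q})$, $(\CO_{Z_n}, 1)$, and $(\CO_{Z_n} \otimes \CL_n \CL_{n+1}^{-1}, \tfrac{a}{qt})$ respectively.

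For the first equation the computation is immediate. With the convention $[\wedge^\bullet \CL_{n+1}^\vee] = 1 - a \CL_{n+1}^{-1}$ in the $a$-graded equivariant K-theory, equation \eqref{eqn:push negative} with $k=0$ gives $\pi_{*}(\CL_{n+1}^{-1}) = \pi_{*}(\CO) = \CO_{\FH_n^\dg \times \CC}$, so $\pi_{*}(\wedge^\bullet \CL_{n+1}^\vee) = (1-a)\,\CO$. The further pushforward along $\FH_n^{\dg} \times \CC \to \FH_n^{\dg}$ contributes $\chi(\CO_\CC) = 1/(1-q)$, yielding the claimed factor. For the other two equations, one first extracts the K-theory class of the periodic complex \eqref{eqn:def zn} by summing its terms alternately:
\begin{equation*}
[\CO_{Z_n}] = \sum_{k \geq 0} (1-q) \bigl(qt \CL_n \CL_{n+1}^{-1}\bigr)^{k} = \frac{1-q}{1 - qt\CL_n\CL_{n+1}^{-1}}.
\end{equation*}
Expanding this as a geometric series, tensoring with $\wedge^\bullet \CL_{n+1}^\vee$ (and with $\CL_n \CL_{n+1}^{-1}$ in the negative crossing case), and then applying $\pi_*$ term-by-term via the second equality in \eqref{eqn:push negative} produces infinite sums of classes of the form $S^{k} \CE_n^\vee$; integration over the $\CC$-factor then cancels the prefactor $(1-q)$ against $\chi(\CO_\CC) = 1/(1-q)$, which is precisely what is needed to arrive at the scalars $1$ and $a/(qt)$.

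The main obstacle is justifying the K-theoretic collapse of these two infinite sums. The natural approach is equivariant localization at the torus-fixed points of $\FH_{n+1}^{\dg}(\CC)$: such fixed points lying over a given fixed point $T$ of $\FH_n^{\dg}(\CC)$ are labelled by the boxes one can append to the associated standard Young tableau, with $\CL_{n+1}$ at the new fixed point equal to the $q$-content of the added box. Under localization, the identities become explicit rational-function summations over the addable boxes, mirroring geometrically the Hecke-algebraic Markov relations of \eqref{eqn:jo2}. An alternative, potentially cleaner route would be to reinterpret $\CO_{Z_n} \otimes \wedge^\bullet \CL_{n+1}^\vee$ as a relative Koszul-type resolution of a bounded complex supported on a smaller subscheme, so that $\pi_*$ can be computed in a bounded fashion; identifying this subscheme geometrically would be the technical heart of the argument.
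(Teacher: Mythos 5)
Your treatment of \eqref{eqn:markov int 0} is correct and essentially the same as the paper's: the key input is that $\pi_*(\CO) = \pi_*(\CL_{n+1}^{-1}) = \CO_{\FH_n^\dg\times\CC}$, which you obtain from \eqref{eqn:push negative} with $k=0$ (the paper cites the equivalent \eqref{eqn:f1}), giving the $(1-a)$ from $\pi_*$ and the $(1-q)^{-1}$ from integrating over $\CC$.

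For \eqref{eqn:markov int 1} and \eqref{eqn:markov int 2}, however, there is a genuine gap. You correctly reduce the problem to computing the $K$-theory classes of $\pi_*(\CO_{Z_n}\otimes\CL_{n+1}^{-\epsilon})$ for $\epsilon = 0,1,2$, and you correctly record $[\CO_{Z_n}] = (1-q)(1-qt[\CL_n]/[\CL_{n+1}])^{-1}$, but the plan of expanding this as a geometric series and applying $\pi_*$ term-by-term leaves you with unbounded sums of $S^k\CE_n^\vee$-classes whose collapse you explicitly leave as an obstacle. The ``cancellation of the $(1-q)$ prefactor against $\chi(\CO_\CC)^{-1}$'' you appeal to is not enough on its own; what must be shown is precisely the three identities \eqref{eqn:f2}--\eqref{eqn:f3} of the paper: $\pi_*(\CO_{Z_n})$ has class $1-q$, $\pi_*(\CO_{Z_n}\otimes\CL_{n+1}^{-1}) = 0$, and $\pi_*(\CO_{Z_n}\otimes\CL_{n+1}^{-2})$ has class $(q-1)/(qt[\CL_n])$. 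The second of these, in particular, is an exact vanishing that no amount of hand-waving about prefactors will deliver.

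The paper closes this gap with the $\delta$-function formula \eqref{eqn:expansion}: since $[\CO_{Z_n}]$ is a rational function of $\CL_{n+1}$, one has
$$
\pi_*\!\left([\CO_{Z_n}]\cdot\delta\!\left(\tfrac{\CL_{n+1}}{z}\right)\right)=\frac{1-q}{1-qt[\CL_n]/z}\cdot\pi_*\!\left(\delta\!\left(\tfrac{\CL_{n+1}}{z}\right)\right),
$$
and the right-hand side is an explicit difference of two Laurent expansions of a single rational function by \eqref{eqn:expansion}. Extracting the coefficients of $z^0$, $z^1$, $z^2$ then yields $1-q$, $0$ and $(q-1)/(qt[\CL_n])$ respectively, which is exactly what \eqref{eqn:f2}--\eqref{eqn:f3} assert. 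Your alternative suggestions (equivariant localization on the dg scheme, or exhibiting a bounded Koszul model for $\CO_{Z_n}\otimes\wedge^\bullet\CL_{n+1}^\vee$) are plausible in principle but neither is carried out; the $\delta$-function technique is the concrete step your proposal is missing.
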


\begin{proof} 
We replace the sheaves in \eqref{eqn:markov int 0}--\eqref{eqn:markov int 2} by their $K$--theory classes and write:
$$
[\CT_{n+1}] = \pi^*\left(  [\CT_n] \right) + [\CL_{n+1}]
$$
and:
\begin{equation}
\label{eqn:k-theory zn}
[\CO_{Z_n}] = (1-q) \left(1-\frac {qt[\CL_n]}{[\CL_{n+1}]} \right)^{-1}
\end{equation}
Since $\int$ is just pushforward to a point, it can be decomposed along the projection map $\pi: \FH_{n+1}^\dg \rightarrow \FH_n^\dg \times \CC$. In other words, for all sheaves $\CA$ one has:
$$
\int_{\FH_{n+1}^\dg} \CA = \int_{\FH_{n}^\dg\times \CC}\pi_*\CA
$$
We will apply this equality for the $K$--theory class: 
$$
[\CA] = [\CB(i(\alpha))] \cdot \wedge^\bullet [\CT_{n+1}^\vee] = \pi^*\left( [\CB(\alpha)] \cdot  \wedge^\bullet [\CT_{n}^\vee] \right) \cdot \left(1- \frac a{[\CL_{n+1}]} \right)
$$
where in the second equality we have used \eqref{eqn:markov 0}. Then we may prove \eqref{eqn:markov int 0} by noting that:
$$
\chi \left( 
[\CB(i(\alpha))] \cdot \wedge^\bullet [\CT_{n+1}^\vee] \right)= \chi \left(  
 \pi_* \left[ \pi^*\left( [\CB(\alpha)] \cdot \wedge^\bullet [\CT_{n}^\vee] \right) \cdot \left(1- \frac a{[\CL_{n+1}]} \right) \right] \right)= 
$$
\begin{equation}
\label{eqn:above formula}
= \chi \left( 
[\CB(\alpha)] \cdot \wedge^\bullet [\CT_{n}^\vee] \cdot \pi_*\left(1- \frac a{[\CL_{n+1}]} \right) \right)= (1-a) \chi \left( 
[\CB(\alpha)] \cdot \wedge^\bullet [\CT_{n}^\vee]\right)
\end{equation}
(the additional factor of $1-q$ in the right hand side of \eqref{eqn:markov int 0} comes from integrating over $\CC$). To establish the last equality in \eqref{eqn:above formula}, we note that it holds at the categorified level:
\begin{equation}
\label{eqn:f1}
\pi_*\left(\CO_{\FH_{n+1}^\dg} \right) = \CO_{\FH_{n}^\dg \times \CC} = \pi_* \left(\CO_{\FH_{n+1}^\dg} \otimes \CL_{n+1}^{-1}  \right)
\end{equation}
where the first equality is a consequence of the fact that $\pi$ is the projectivization of $\CE^\vee_n$, and the second equality follows from the first and \eqref{eqn:serre} for $\CA = \CO$. Similarly, if we assume formula \eqref{eqn:markov 1} (which would also imply \eqref{eqn:markov 2}, according to Corollary \ref{cor:markov 2}), then relations \eqref{eqn:markov int 1} and \eqref{eqn:markov int 2} follow from:
\begin{equation}
\label{eqn:f2}
\pi_*(\CO_{Z_n}) = \left[q\CO \xrightarrow{x_n-x_{n+1}}\CO \right], \qquad \pi_* \left(\CO_{Z_n} \otimes \frac 1{\CL_{n+1}} \right) = 0 
\end{equation}
\begin{equation}
\label{eqn:f3}
\pi_* \left(\CO_{Z_n} \otimes \frac 1{\CL^2_{n+1}} \right) = \left[\frac 1{t \CL_n} \xrightarrow{x_n-x_{n+1}} \frac 1{qt \CL_n} \right][1]
\end{equation}
We will only prove these equalities at the level of $K$--theory, by using \eqref{eqn:k-theory zn}. Indeed, since the map $\pi$ is $\PP \CE^\vee_n$, the push-forwards of the powers of $\CL_{n+1} = \CO(1)$ are encoded by:
\begin{equation}
\label{eqn:expansion}
\pi_* \left( \delta\left(\frac {\CL_{n+1}}z \right) \right) = S^*_{z \sim \infty} [\CE_n] - S^*_{z \sim 0} [\CE_n] = 
\end{equation}
$$
= \frac {\wedge^*_{z \sim \infty} [qt\CT_n] \wedge^*_{z \sim \infty} [\CT_n]}{(1-z^{-1})\wedge^*_{z \sim \infty} [q\CT_n] \wedge^*_{z \sim \infty} [t\CT_n]} - \frac {\wedge^*_{z \sim 0} [qt\CT_n] \wedge^*_{z \sim 0} [\CT_n]}{(1-z^{-1})\wedge^*_{z \sim 0} [q\CT_n] \wedge^*_{z \sim 0} [t\CT_n]}
$$
where the $\delta$ function is $\delta(z) = \sum_{k=-\infty}^{\infty} z^k$. In the right hand side, we write:
$$
S^*_z[\CV] = \sum_{k=0}^\infty (-z)^{-k} \cdot S^k\CV, \qquad \wedge^*_z[\CV] = \sum_{k=0}^\infty (-z)^{-k} \cdot \wedge^k\CV
$$
and the notations $S^*_{z\sim 0}, \wedge^*_{z\sim 0}$ and $S^*_{z\sim \infty}, \wedge^*_{z\sim \infty}$ refer to expanding the rational functions $S^*_z, \wedge^*_z$ in the domains $z\sim 0$ and $z\sim \infty$, respectively. Applying \eqref{eqn:k-theory zn}, we obtain:
$$
\pi_*\left( [\CO_{Z_n}] \cdot \delta\left(\frac {\CL_{n+1}}z \right) \right) = \pi_*\left( \frac {1-q}{1-\frac {qt[\CL_n]}{[\CL_{n+1}]}} \cdot \delta\left(\frac {\CL_{n+1}}z \right) \right) = \frac {1-q}{1-\frac {qt[\CL_n]}z} \cdot \pi_* \left( \delta\left(\frac {\CL_{n+1}}z \right) \right)
$$
and we can compute the right hand side using \eqref{eqn:expansion}. To obtain \eqref{eqn:f2} and \eqref{eqn:f3}, we must extract the coefficients of $z^0$, $z^{1}$, $z^2$ in the right hand side of the above equality, and it is easy to see that one obtains $1-q$, $0$ and $\frac {q-1}{qt[\CL_n]}$, respectively.
\end{proof}

\subsection{Correspondences}
\label{sub:nakajima}

Formula \eqref{eqn:markov 0} can be expressed in terms of the complexes of sheaves:
$$
\CF ( \sigma ) = \nu_*(\CB(\sigma)) \ \in \ D^b(\coh_{\torus}(\H_n))
$$
of \eqref{eqn:sheaf on hilb}, where $\nu: \FH_n^\dg \rightarrow \H_n$ is the map \eqref{eqn:map to hilb}. Specifically, we have the spaces:
$$
\includegraphics{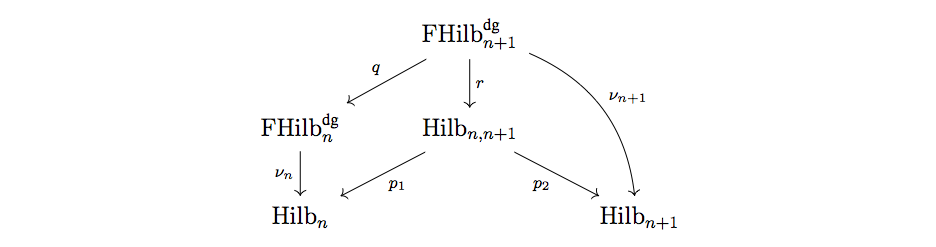}
$$
where $\H_{n,n+1} = \{I \in \H_n, I' \in \H_{n+1}, I \supset I' \text{ with quotient supported on } \{y=0\} \}$ are the correspondences studied by Nakajima and Grojnowski to describe the cohomology groups of Hilbert schemes. At the categorified level, their construction gives rise to a functor:
$$
D^b(\coh_{\torus}(\H_n)) \stackrel{\alpha}\longrightarrow D^b(\coh_{\torus}(\H_{n+1})), \qquad \alpha = p_{2*}p_1^*
$$
To establish \eqref{eqn:nakajima 1}, note that $\CF(i(\sigma))$ equals:
$$
\nu_{n+1 *}(\CB(i(\sigma))) = p_{2*}(r_*(\CB(i(\sigma)))) = p_{2*}(r_*(q^*(\CB(\sigma)))) = p_{2*}(p_1^*(\nu_{n*}(\CB(\sigma)))) = \alpha(\CF(\sigma))
$$
where the second equality follows from \eqref{eqn:markov 0}, and the third equality follows from the fact that the rhombus is cartesian. This latter fact may seem obvious at the level of closed points, but scheme-theoretically it only holds because we have replaced the badly behaved scheme $\FH_n$ with the nicely behaved dg scheme $\FH_n^\dg$. 


\subsection{Mirror braids}
\label{sub:mirror}

In this section, we will relate the operation of mirroring braids (i.e. looking at them from behind) with Verdier duality on the category of coherent sheaves on $\FH_n$.

\begin{proposition}
\label{prop:mirror} 

For any  $\CF\in D^b\coh(\FH^\edg_n)$ one has:
$$
\int_{\FH_n^\edg} \CF \otimes \wedge^\bullet \CT_n^\vee \cong \left[\int_{\FH_n^\edg}\CF^{\vee} \otimes \wedge^\bullet \CT_n \right]^{\vee}
$$
where the $a$-grading in the right hand side is reversed from $i$ to $n-i$.
\end{proposition}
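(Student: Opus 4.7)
The plan is to proceed by induction on $n$, exploiting the presentation of $\FH_n^\dg(\CC)$ as a tower of projective bundles $\pi_k\colon \FH_k^\dg(\CC) \to \FH_{k-1}^\dg(\CC) \times \CC$ (Proposition~\ref{prop:dg scheme}) and applying the Serre duality of Proposition~\ref{prop:serre} at each stage. The base case $n=1$ is immediate: $\FH_1^\dg(\CC) \cong \CC$ and $\CT_1 \cong \CO$, so $\wedge^\bullet \CT_1^\vee \cong \wedge^\bullet \CT_1$ and the reversal $i \mapsto 1-i$ is trivially realized.

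For the inductive step, first decompose $\wedge^\bullet \CT_n^\vee$ using the short exact sequence $0 \to \CL_n \to \CT_n \to \pi^*\CT_{n-1} \to 0$:
$$
\wedge^\bullet \CT_n^\vee \;\cong\; \pi^*\bigl(\wedge^\bullet \CT_{n-1}^\vee\bigr) \otimes \bigl(\CO \oplus a\,\CL_n^{-1}\bigr),
$$
with the analogous formula for $\wedge^\bullet \CT_n$ obtained by replacing $\CL_n^{-1}$ with $\CL_n$. The projection formula pulls the factor involving $\CT_{n-1}$ out of $\pi_*$, reducing both sides of the proposition to integrals over $\FH_{n-1}^\dg(\CC) \times \CC$ of that factor tensored against $\pi_*\CF$ and $\pi_*(\CF \otimes \CL_n^{-1})$ on the left, respectively $\pi_*\CF^\vee$ and $\pi_*(\CF^\vee \otimes \CL_n)$ on the right. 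Proposition~\ref{prop:serre}, $\pi_*(\CA)^\vee \cong \pi_*(\CA^\vee \otimes \CL_n^{-1})$, then matches $\pi_*\CF$ with the Serre dual of $\pi_*(\CF^\vee \otimes \CL_n^{-1})$ and $\pi_*(\CF \otimes \CL_n^{-1})$ with the Serre dual of $\pi_*\CF^\vee$, swapping the two $\CL_n$-degrees of the two sides up to a single-step reversal and introducing an overall $\CL_n^{-1}$ twist.

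The inductive hypothesis on $\FH_{n-1}^\dg(\CC)$ then converts $\wedge^\bullet \CT_{n-1}^\vee$ into $\wedge^\bullet \CT_{n-1}$ with the $(n-1)$-step reversal $j \mapsto (n-1)-j$. Combining the accumulated $\CL_n^{-1}$ twist from the current step with the inductively produced $(\det \CT_{n-1})^{-1}$ twist yields $(\det \CT_n)^{-1}$, which, via the identity $\wedge^i \CT_n^\vee \cong \wedge^{n-i} \CT_n \otimes (\det \CT_n)^{-1}$, is precisely the ingredient needed to promote the $(n-1)$-step reversal to the full $a$-reversal $i \mapsto n-i$ on $\{0,\ldots,n\}$. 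The main obstacle will be the simultaneous bookkeeping of the $a$-grading, the $\CL_n^{\pm 1}$ twists from Serre duality, and the $(\det \CT_n)^{-1}$ accumulation; these must mesh exactly across the two sides. A secondary subtlety is that the tower terminates at the affine base $\CC^n$, so one must verify that the inductively defined $\vee$ on $\FH_n^\dg(\CC)$ is compatible with the final equivariant pushforward $\int$ to a point, which is handled via $\torus$-localization.
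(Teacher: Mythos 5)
The paper dispenses with this proposition in a single sentence: it is the compatibility of proper push-forward with Verdier duality. Your proposal replaces that one-liner with a concrete induction along the projective tower $\pi_k:\FH_k^\edg\to\FH_{k-1}^\edg\times\CC$, invoking Proposition~\ref{prop:serre} at each rung. This is a genuinely different and more explicit route --- in fact it is the natural way to \emph{justify} the paper's terse claim, since $\FH_n^\edg(\CC)$ is neither proper nor an honest scheme, so the abstract statement only applies after one has actually computed the dualizing complex, and the tower is the only available handle on it. Strategically you are on the right track.

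However, the argument as written does not close, and the gap sits exactly where you flag ``the main obstacle.'' Trace the two summands produced by the filtration $\wedge^\bullet\CT_n^\vee\sim\pi^*\wedge^\bullet\CT_{n-1}^\vee\otimes(\CO\oplus a\CL_n^{-1})$ on the left against the two produced by $\wedge^\bullet\CT_n\sim\pi^*\wedge^\bullet\CT_{n-1}\otimes(\CO\oplus a\CL_n)$ on the right, then dualize via Proposition~\ref{prop:serre} ($\pi_*(\CA)^\vee\cong\pi_*(\CA^\vee\otimes\CL_n^{-1})$). One pair matches, $(\pi_*\CF^\vee)^\vee=\pi_*(\CF\otimes\CL_n^{-1})$, but the other does not: $(\pi_*(\CF^\vee\otimes\CL_n))^\vee=\pi_*(\CF\otimes\CL_n^{-2})$, which is not the $\pi_*\CF$ that the left-hand side requires. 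There is a residual $\CL_n^{-2}$ twist. Your sentence ``combining the accumulated $\CL_n^{-1}$ twist with the inductively produced $(\det\CT_{n-1})^{-1}$ twist yields $(\det\CT_n)^{-1}$'' asserts the desired cancellation but does not derive it; the identity $\wedge^i\CT_n^\vee\cong\wedge^{n-i}\CT_n\otimes(\det\CT_n)^{-1}$ is the right tool, but it is never actually wired into the induction. A second gap is the affine factor: each rung lands in $\FH_{k-1}^\edg\times\CC$, and the $\CC$ contributes a nontrivial equivariant dualizing twist ($\omega_\CC\cong q\,\CO_\CC$, not $\CO_\CC$), so ``handled via $\torus$-localization'' is not a substitute for bookkeeping --- already in your base case $\int_\CC\CO=\tfrac{1}{1-q}$ is not self-dual under $q\mapsto q^{-1}$, so $\int_\CC\CF\cong[\int_\CC\CF^\vee]^\vee$ only holds after accounting for that $q$-shift. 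In short: you have correctly identified the strategy, the key Serre-duality input, the crucial exterior-algebra identity, and the subtlety at the affine base, but the step you label ``the main obstacle'' is precisely the content of the proposition, and the proposal stops short of carrying it out; a naive execution of the plan as described produces a mismatch. What you have is a roadmap, not a proof.
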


The Proposition is obvious, since it's just stating that a proper push-forward commutes with Verdier duality. It is natural to conjecture, therefore, that mirroring the braid $\sigma$ simply corresponds to dualizing the complex of sheaves $\CB(\sigma)$ on $\FH_n^\dg$:

\begin{conjecture}
\label{eqn:mirror}

For any braid $\sigma$, we have:
$$
\CB(\sigma^{\vee})=\CB(\sigma)^{\vee},
$$
where $\beta^{\vee}$ denotes the mirror of $\beta$.
\end{conjecture}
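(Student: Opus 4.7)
The plan is to verify the conjecture by reducing to the Rouquier generators and then propagating by multiplicativity. On the braid side, mirror is generated by $\sigma_i \mapsto \sigma_i^{-1}$, so once compatibility is established for each generator and for braid composition, the conjecture follows for any $\sigma$.

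First, I would verify the base case on generators. The idea is to compute $\iota_*(\sigma_i)$ and $\iota_*(\sigma_i^{-1})$ directly from the Rouquier complexes \eqref{eqn:rouquier} and check that they are Serre-dual as objects of $D^b(\coh_\torus(\FH_n^\dg(\CC)))$. Since the Bott--Samelson bimodule $B_i$ is self-dual as an $R$-bimodule, and since $\iota_* \1 = \CO$ by Conjecture~\ref{conj:1}, the sheaf obtained from $\sigma_i^{-1}$ should literally be the dual of the one obtained from $\sigma_i$, with the grading shifts $q^{\pm 1/2}, s^{\pm 1}$ accounted for by the Serre twist of \eqref{eqn:serre}. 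This is consistent with Proposition~\ref{prop:mirror}, which already implements the statement at the level of Euler characteristics.

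Second, I would establish multiplicativity by introducing an involution $\dagger$ on $K^b(\SBim_n)$ realizing mirror on braids, and proving $\iota_*(M^\dagger) \cong \iota_*(M)^\vee$ for every $M$. The strategy is an induction using the projective tower of Proposition~\ref{prop:dg scheme}: each map $\pi: \FH_{n+1}^\dg \to \FH_n^\dg \times \CC$ is a projectivization of a two-step complex, and the Serre duality formula \eqref{eqn:serre} controls how $\vee$ interacts with $\pi_*$. Working inductively along the tower, one lifts the compatibility of $\dagger$ with $\vee$ from level $n$ to level $n+1$. The projection-formula-type identity \eqref{eqn:sk} would then propagate the compatibility from objects in the image of $\iota^*$ (where $\iota^*\CF^\vee$ can be identified with $(\iota^*\CF)^\dagger$ using monoidality of $\iota^*$) to arbitrary objects of $K^b(\SBim_n)$.

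The main obstacle is the multiplicativity step: because $\iota_*$ is not monoidal, one cannot formally deduce $\iota_*(\sigma \otimes \tau)^\vee$ from $\iota_*(\sigma)^\vee$ and $\iota_*(\tau)^\vee$. This forces one to dig into the inductive construction of $\iota_*$ sketched in Section~\ref{sec:geometry}, verifying at each step that Serre duality descends correctly through the tower of projectivizations. A secondary technical issue is defining mirror as an honest involution on complexes in $K^b(\SBim_n)$, not merely on braid generators; this is routine but requires care with sign and grading conventions, particularly so that the resulting $\dagger$ commutes with homotopy equivalence and with the trace functor of Subsection~\ref{sub:trace}. With Conjecture~\ref{conj:main} as input, this seems tractable, but will require detailed bookkeeping of Serre twists through the inductive construction.
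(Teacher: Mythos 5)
This statement is labeled a \emph{conjecture} in the paper, and the paper offers no proof of it. The only supporting material given there is Proposition~\ref{prop:mirror}, which confirms the claim at the level of equivariant Euler characteristics (a formal consequence of proper pushforward commuting with Verdier duality), and Example~\ref{ex:mirror}, which verifies the statement by hand for $\sigma_1^{\pm 1}$ on two strands. There is therefore no ``paper's own proof'' against which to compare your argument, and you should not present your sketch as though you were reproducing or improving on one.

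As for the merits of the strategy itself: you have correctly located the central difficulty, namely that $\iota_*$ is not monoidal, so compatibility with mirror on generators does not formally propagate to products. But you do not resolve it. The inductive step you outline --- pushing Serre duality up the projective tower $\FH_{n+1}^{\dg}\to\FH_n^{\dg}\times\CC$ and matching it with a hypothetical $\dagger$-involution commuting with $\Tr$ --- would, if carried out, be essentially a new theorem. It would require checking that the Serre twist in \eqref{eqn:serre}, which dualizes $\pi_*(\CA)$ against $\pi_*(\CA^\vee\otimes\CL_{n+1}^{-1})$, lines up precisely with the effect of mirror on the partial trace, and you give no evidence that it does beyond the base case. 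Moreover, the base case itself is not quite as cheap as stated: $B_i$ being self-dual as a bimodule does not by itself say anything about how $\iota_*$ behaves, since $\sigma_i$ and $\sigma_i^{-1}$ are cones over $B_i$ in opposite homological directions, and identifying $\CB(\sigma_i^{-1})$ with $\CB(\sigma_i)^\vee$ already requires the explicit computation done in Example~\ref{ex:mirror} (or in Section~\ref{sec:n=2}). Finally, the entire argument is conditional on Conjectures~\ref{conj:1} and~\ref{conj:main}, which are themselves open, so at best one would obtain a conditional implication rather than a proof. The honest status here is that you have reformulated the conjecture as a statement about the compatibility of mirror with the tower of functors \eqref{eq:tower}, which is a reasonable observation, but not a proof.
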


The following example shows that the computation of a dual sheaf can be nontrivial.

\begin{example}
\label{ex:mirror}

As we will see in Section \ref{sec:n=2} (and also from Section \ref{sub:markov}), the braid $\sigma_1\in \SBim_2$ corresponds to the structure sheaf of $\FH_2(\point)\times \CC\subset \FH_2(\CC)$, while $\sigma_1^{-1}\in \SBim_2$ corresponds to $\CO(-1)$ on  $\FH_2(\point)\times \CC$. The fact that the objects
$$
\CB(\sigma_1) = \CO_{\FH_2(\point)\times \CC} \qquad \text{and} \qquad \CB(\sigma^{-1}_1) = \CO_{\FH_2(\point)\times \CC}(-1)
$$
are dual to each other follows from the fact that the exact sequence:
$$
\CO_{\FH_2(\point)\times \CC}\xleftarrow{} \CO_{\FH_2(\CC)}\xleftarrow{x_1-x_2}\CO_{\FH_2(\CC)}\xleftarrow{w} \CO(-1)_{\FH_2(\point)\times \CC}
$$
is self-dual. 
\end{example}

\subsection{Some remarks on support}
\label{sub:knots}

We now explore what the endpoints of a braid $\sigma$ say about the sheaf $\CB_\sigma$ on $\FH_n^\dg$. For any braid $\sigma$, let $w_\sigma \in S_n$ denote the underlying permutation. 

\begin{proposition}(e.g. \cite[Proposition 2.16]{Hog})
For any braid $\sigma$ and for all $i\in \{1,...,n\}$, the left action of $x_i$ on the complex $\sigma \in K^b(\SBim_n)$ is homotopic to the right action of $x_{w_{\sigma(i)}}$.
\end{proposition}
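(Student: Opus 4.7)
The plan is to induct on the length of a braid word for $\sigma$, with the elementary Rouquier complexes $\sigma_i^{\pm 1}$ serving as the base case. For the base case, consider $\sigma_i = [B_i \xrightarrow{m} sR/q^{1/2}]$ and note the key identity in the Bott-Samelson bimodule: since $x_i + x_{i+1}$ and $x_i x_{i+1}$ lie in $R^{i,i+1}$, we have
$$
x_i \otimes 1 - 1 \otimes x_{i+1} \;=\; 1 \otimes x_i - x_{i+1} \otimes 1 \;\in\; B_i.
$$
Denote this element $d_i$. Define a degree $-1$ bimodule map $h : sR/q^{1/2} \to B_i$ by $h(1) = d_i$. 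A direct calculation shows that $dh + hd = L_{x_i} - R_{x_{i+1}}$, so left multiplication by $x_i$ is homotopic to right multiplication by $x_{i+1}$; the symmetric calculation with $h(1) = -d_i$ (or exchanging roles) handles $L_{x_{i+1}} \sim R_{x_i}$. For any $j \notin \{i, i+1\}$, the element $x_j$ is central in $B_i$, so $L_{x_j} = R_{x_j}$ strictly. The inverse complex $\sigma_i^{-1} = [q^{1/2}R/s \xrightarrow{\cdot d_i} B_i]$ is handled by the dual homotopy, namely the multiplication map $B_i \to R$. In all cases, $L_{x_j}$ is homotopic to $R_{x_{w_{\sigma_i^{\pm 1}}(j)}}$.

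For the inductive step, suppose $\sigma = \alpha \beta$ is written as the composition (stacking) of two braids for which the result is already known, with underlying permutations $w_\alpha$ and $w_\beta$, so that $w_\sigma = w_\beta \circ w_\alpha$. The Rouquier complex of $\sigma$ is $\alpha \otimes_R \beta$. Left multiplication by $x_j$ on this tensor product acts on the left factor $\alpha$ only, where by hypothesis it is homotopic to right multiplication by $x_{w_\alpha(j)}$; but right multiplication on $\alpha$ is the same as left multiplication on $\beta$ across the tensor over $R$, and this is in turn homotopic to right multiplication by $x_{w_\beta(w_\alpha(j))} = x_{w_\sigma(j)}$ on $\beta$. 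One then checks that tensoring each intermediate homotopy with the identity of the opposite factor gives a well-defined bimodule homotopy on $\alpha \otimes_R \beta$ (using that both homotopies in question are $R$-bilinear, and that composition of null-homotopic morphisms with any morphism remains null-homotopic in $K^b(\SBim_n)$).

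The main technical obstacle is bookkeeping: one must check that the homotopies at the base case are genuinely morphisms of graded $R$-bimodules and that the induction step respects this, so that the tensor-product homotopy is well-defined on $\alpha \otimes_R \beta$ rather than just on $\alpha \otimes_\CC \beta$. The bimodule property for $h : R \to B_i$ with $h(1) = d_i$ follows because $d_i$ commutes with $R^{i,i+1}$ from both sides and $R$ is generated over $R^{i,i+1}$ by $1$ and $x_i$, so one only needs to check compatibility with multiplication by $x_i$, which reduces to the identity $x_i \cdot d_i = d_i \cdot x_{i+1} + (x_i - x_{i+1}) \cdot 1 \otimes 1$ in $B_i$, and this correction term is exactly absorbed by the differential. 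Once this is verified, the induction runs cleanly.
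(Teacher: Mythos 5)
Your overall strategy is the standard and correct one (the paper itself gives no proof, deferring to Hogancamp): reduce to the elementary Rouquier complexes $\sigma_i^{\pm 1}$ by induction on braid length, use the centrality of $d_i := x_i \otimes 1 - 1 \otimes x_{i+1} = 1 \otimes x_i - x_{i+1} \otimes 1$ in $B_i$ to produce the base-case homotopy, and tensor homotopies across $\otimes_R$ for the inductive step. The homotopy $h(1) = d_i$ and its dual do give the required null-homotopies, and the inductive step (tensoring each homotopy with the identity of the complementary factor, then composing) is sound.

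However, the final paragraph contains a genuine error. The claimed identity
$$x_i \cdot d_i \;=\; d_i \cdot x_{i+1} \;+\; (x_i - x_{i+1}) \cdot (1 \otimes 1)$$
is \emph{false} in $B_i$. (Working in the $R^{i,i+1}$-basis $\{1\otimes 1, x_i \otimes 1, 1 \otimes x_i, x_i \otimes x_i\}$ with $e_1 = x_i + x_{i+1}$, $e_2 = x_i x_{i+1}$, one finds $x_i d_i = x_i \otimes x_i - e_2(1\otimes 1)$, which is \emph{not} equal to the right-hand side.) The correct and much simpler statement is that $d_i$ is \emph{central} in $B_i$: one checks directly $x_i d_i = d_i x_i = x_i \otimes x_i - e_2(1\otimes 1)$, $x_{i+1} d_i = d_i x_{i+1}$, and $x_j d_i = d_i x_j$ trivially for $j \notin \{i, i+1\}$. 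Centrality is exactly what makes $h : R \to B_i$, $r \mapsto r d_i$, a well-defined $R$-bimodule map, since a bimodule map out of $R$ is determined by $h(1)$ and requires $h(1) r = r h(1)$. No ``correction term'' is needed, and the phrase ``absorbed by the differential'' is conceptually off: a homotopy in $K^b(\SBim_n)$ must consist of honest bimodule maps, not maps that are bimodule-linear only up to a boundary. Replacing the erroneous identity with the centrality check yields a complete and correct argument.
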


In short, we will say that the left action $R \curvearrowright \sigma$ is homotopic to the right action $\sigma \curvearrowleft_{w(\sigma)} R$, {\em twisted} by the permutation $w_{\sigma}$. As a consequence, we obtain the following result:

\begin{corollary}
\label{cor:support rouquier}
The $R$--module $\RHom_{K^b(\SBim_n)} (\1,\sigma)$ is supported on the subspace:
$$
\left\{x_i=x_{w_{\sigma}(i)},\ i=1,\ldots,n\right\}\subset \CC^n.
$$
\end{corollary}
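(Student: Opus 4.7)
The plan is to use the preceding proposition together with the observation that the identity bimodule $\1 = R$ has coinciding left and right $R$-actions. The key point is to argue that on $\RHom(\1,\sigma)$, post-composition with $x_i^L$ (left multiplication on $\sigma$) and post-composition with $x_i^R$ (right multiplication) give the same operator, and then combine this with the proposition to identify $x_i^L$ with $x_{w_\sigma(i)}^L$ on $\RHom$.

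More precisely, I would first note that both $x_i^L$ and $x_i^R$ are bimodule endomorphisms of $\sigma$, so they induce operators on $\RHom_{K^b(\SBim_n)}(\1,\sigma)$ by post-composition. For a cycle $\phi:\1\to\sigma$ in the Hom complex, which is a bimodule map, left $R$-linearity gives $\phi\circ x_i^{L,\1} = x_i^{L,\sigma}\circ \phi$, while right $R$-linearity gives $\phi\circ x_i^{R,\1} = x_i^{R,\sigma}\circ\phi$. Since the left and right actions on the trivial bimodule $\1 = R$ coincide, we have $x_i^{L,\1} = x_i^{R,\1}$, and therefore
\[
x_i^{L,\sigma}\circ \phi \ = \ x_i^{R,\sigma}\circ\phi
\]
as cycles, so that $x_i^L$ and $x_i^R$ induce the same operator on $\RHom(\1,\sigma)$. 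In the derived setting, equalities become homotopies, but they descend to strict equalities after passing to cohomology.

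Now combine with the proposition: on $\sigma$ one has $x_i^L \simeq x_{w_\sigma(i)}^R$. Post-composing with $\phi$ and passing to $\RHom$, this yields
\[
x_i^L \ = \ x_{w_\sigma(i)}^R \ = \ x_{w_\sigma(i)}^L \qquad \text{as operators on } \RHom_{K^b(\SBim_n)}(\1,\sigma),
\]
where the second equality is the content of the previous paragraph applied to $x_{w_\sigma(i)}$. In other words, $(x_i - x_{w_\sigma(i)})$ annihilates $\RHom_{K^b(\SBim_n)}(\1,\sigma)$ for each $i$, so the support is contained in the vanishing locus $\{x_i = x_{w_\sigma(i)},\ i = 1,\ldots,n\}\subset \CC^n$.

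I do not anticipate a serious obstacle: the only subtle point is the passage from strict bimodule maps to the homotopy category, and this is handled by the standard fact that the $R\otimes R$-bimodule structure on the Hom complex respects the homotopy relation, so all identifications above are statements about cohomology classes which are well-defined on $\RHom$. The whole argument is essentially two applications of the preceding proposition together with the observation that $\1$ is a symmetric bimodule.
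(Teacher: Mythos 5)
Your proof is correct and is exactly the argument the authors leave implicit when they state this as an immediate corollary of the preceding proposition. The two ingredients you isolate — that $\1=R$ is the diagonal bimodule so left and right post-composition coincide on $\RHom(\1,-)$, and that the homotopy $x_i^L\simeq x_{w_\sigma(i)}^R$ from the proposition descends to an equality of operators on $\RHom$ — are precisely what is needed, and your handling of the passage from chain-level homotopies to cohomology is sound.
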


Our construction of Conjecture \ref{conj:1} is predicated on the expectation that: 
$$
\Hom_{K^b(\SBim_n)}(\1,\sigma) = R\Gamma(\FH_n^\dg,\CB(\sigma))
$$
and that moreover $\CB(\sigma)$ can be reconstructed from the spaces $\Hom_{K^b(\SBim_n)}(\1,\sigma \cdot \prod_{i=1}^n L_i^{a_i})$ for all sequences of large enough natural numbers $(a_1,...,a_n)$. These Hom spaces in the category $\SBim_n$ are very hard to compute, and all we can say at this stage is that Corollary \ref{cor:support rouquier} still applies to them. Therefore, we obtain the following:


\begin{corollary}
\label{cor:support sheaves}
The complex $\CB(\sigma) = \iota_*(\sigma)$ is supported on the subvariety:
$$
\FH^\edg_w := \rho^{-1} \left(\left\{x_i=x_{w_{\sigma}(i)},\ i=1,\ldots,n\right\} \right)\subset \FH_n^\edg = \FH_n^\edg(\CC)
$$
where $\rho:\FH_n^\edg(\CC) \rightarrow \CC^n$ is the map that records the eigenvalues $(x_1,...,x_n)$, akin to \eqref{eqn:rho}. 
\end{corollary}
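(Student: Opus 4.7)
The plan is to reduce the geometric support statement to its algebraic shadow, Corollary \ref{cor:support rouquier}, by using the conjectural reconstruction of $\iota_*(\sigma)$ from Hom spaces described in Subsection \ref{sub:mainconj}. Recall from \eqref{eqn:general 1}--\eqref{eqn:general 3} that $\iota_*(\sigma)$ arises as a quasi-coherent sheaf on $\proj A \cong \FH_n^\edg$ whose graded pieces are the modules
$$
M_{a_1,\dots,a_n}(\sigma) := \Hom_{K^b(\SBim_n)}\Bigl(\1,\,\sigma\otimes \bigotimes_{k=1}^n L_k^{a_k}\Bigr), \qquad (a_1,\dots,a_n)\in \NN^n,
$$
each a module over $R=\CC[x_1,\dots,x_n]=\End_{K^b(\SBim_n)}(\1)$. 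The action of $R$ on these modules matches the action on $\iota_*(\sigma)$ pulled back from $\CC^n$ via $\rho$, because $\iota_*\1 = \CO_{\FH_n^\edg}$ and $\iota^*$ is monoidal by Conjecture \ref{conj:1}; in particular, the generators $x_i$ of the diagonal entries of the matrix $X$ on $\FH_n^\edg$ correspond to left multiplication by $x_i \in \End_{K^b(\SBim_n)}(\1)$ on the Hom spaces.

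The key topological input is that each generalized Jucys-Murphy braid $L_k$ is pure, so $w_{L_k}=\mathrm{id}$ and hence
$$
w_{\sigma\otimes \prod_k L_k^{a_k}} = w_\sigma
$$
for every multi-index $(a_1,\dots,a_n)$. Applying Corollary \ref{cor:support rouquier} to the twisted braid $\sigma \otimes \prod_k L_k^{a_k}$ shows that each $M_{a_1,\dots,a_n}(\sigma)$, viewed as an $R$-module via the left action, is supported on $\{x_i=x_{w_\sigma(i)}\}\subset \CC^n$, uniformly in $(a_1,\dots,a_n)$. Since the support condition is closed on every graded piece and descends under the Proj construction to the sheafification, the resulting quasi-coherent sheaf on $\FH_n^\edg$ is supported on $\rho^{-1}(\{x_i=x_{w_\sigma(i)}\}) = \FH^\edg_w$, as desired.

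The main obstacle is not a technical difficulty but a logical one: the argument is conditional on Conjecture \ref{conj:1}, and in particular on the compatibility of the $R$-action on $M_\bullet(\sigma)$ with the eigenvalue map $\rho$. Granting this compatibility, which is essentially dictated by $\iota_*\1 = \CO$ together with monoidality, the proof is formal and uses nothing about $\sigma$ beyond its underlying permutation $w_\sigma$, and nothing about the $L_k$ beyond their purity. In particular, the same reasoning applies to the enhanced functor $\widetilde{\iota}_*$ of \eqref{eqn:conj2}, giving the analogous support statement for $\CB(\sigma)\otimes \wedge^\bullet \CT_n^\vee$ on $\tot_{\FH_n^\edg}\CT_n[1]$.
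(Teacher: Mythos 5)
Your proof is correct and follows essentially the same route as the paper. The paper's derivation of this corollary is a single sentence: it notes that $\CB(\sigma)$ is conjecturally reconstructed from the spaces $\Hom_{K^b(\SBim_n)}(\1,\sigma\cdot\prod_i L_i^{a_i})$ for large enough $(a_1,\dots,a_n)$, and that Corollary \ref{cor:support rouquier} applies to each of these because the $L_i$ are pure braids. You have reconstructed exactly this argument and made explicit the two points the paper leaves implicit: that $w_{\sigma\otimes\prod_k L_k^{a_k}}=w_\sigma$ because each $L_k$ has trivial underlying permutation, and that the $R$-action on the graded pieces must match the action pulled back via $\rho$ (which indeed is forced by $\iota_*\1=\CO$ and monoidality). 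Your observation about the extension to $\widetilde{\iota}_*$ is a correct and mild bonus not present in the paper. One small discrepancy of no consequence: Corollary \ref{cor:support rouquier} is stated for $\RHom$ rather than $\Hom$, but the support statement holds equally for the underived $\Hom$, which is the $a=0$ part.
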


\begin{corollary}
Suppose that the closure of $\sigma$ is connected. Then $\CB(\sigma)$ is supported on 
$$
\rho^{-1} \left( \left\{x_1=\ldots=x_n \right\} \right)=\FH_n(\point)\times \CC.
$$
\end{corollary}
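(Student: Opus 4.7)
The plan is to combine Corollary \ref{cor:support sheaves} with a standard fact relating the cycle structure of a braid's underlying permutation to the connectivity of its closure. Concretely, when one closes a braid $\sigma$ on $n$ strands, two endpoints $i$ and $j$ end up on the same component of $\overline{\sigma}$ if and only if $i$ and $j$ lie in the same cycle of the permutation $w_\sigma \in S_n$. Thus the number of connected components of $\overline{\sigma}$ equals the number of disjoint cycles of $w_\sigma$, and the assumption that $\overline{\sigma}$ is connected is equivalent to saying that $w_\sigma$ is a single $n$-cycle.

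Given this, the first step is to translate the support condition provided by Corollary \ref{cor:support sheaves}. That corollary says $\CB(\sigma)$ is supported on
\[
\FH^{\dg}_w = \rho^{-1}\left(\{x_i = x_{w_\sigma(i)}, \ i=1,\ldots,n\}\right).
\]
I would then observe that the defining equations $x_i = x_{w_\sigma(i)}$ are preserved under iterating $w_\sigma$, so they force $x_i = x_{w_\sigma^k(i)}$ for every $k \geq 0$ and every $i$.

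The key step is then purely combinatorial: if $w_\sigma$ is an $n$-cycle, the orbit of any index $i$ under $w_\sigma$ is the entire set $\{1,\ldots,n\}$, so the equations above force $x_1 = x_2 = \ldots = x_n$. Therefore
\[
\FH^{\dg}_w \subseteq \rho^{-1}\left(\{x_1 = \ldots = x_n\}\right) = \FH_n^{\dg}(\point) \times \CC,
\]
where the $\CC$ factor records the common eigenvalue. Combined with Corollary \ref{cor:support sheaves}, this gives the desired containment of supports.

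There is essentially no obstacle here beyond citing the Corollary and the standard braid-closure fact; the argument is a one-line orbit calculation once the cycle structure of $w_\sigma$ is known. The only subtlety worth flagging is scheme-theoretic: we are only claiming set-theoretic support, which matches what Corollary \ref{cor:support sheaves} provides, so no refinement of the support statement is needed.
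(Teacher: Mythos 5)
Your argument is correct and is precisely the (unstated) reasoning behind the Corollary: connectivity of $\overline{\sigma}$ is equivalent to $w_\sigma$ being an $n$-cycle, and iterating the equations $x_i = x_{w_\sigma(i)}$ from Corollary~\ref{cor:support sheaves} then forces $x_1 = \cdots = x_n$. The paper leaves this implicit, so your proof fills in exactly the expected details, including the correct interpretation of the $\CC$ factor as the common eigenvalue.
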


\begin{remark}
Following Section \ref{sub:reduced}, one can prove that if the closure of $\sigma$ is connected, then the sheaf $\CB(\sigma)$ fibers trivially over $\CC$, i.e.:
$$
\CB(\sigma) = \overline{\CB(\sigma)}\boxtimes \CO_{\CC}
$$
for some sheaf $\overline{\CB(\sigma)}\in D^b\coh(\FH_n(\point))$. Since $\FH_n(\point)$ is projective, the cohomology of this sheaf is expected to be finite-dimensional. Moreover, our conjectures imply the fact that this cohomology matches the {\em reduced} Khovanov-Rozansky homology of $\alpha$.
\end{remark}

In general, $\FH_w^\dg$ may be quite complicated. However,
for certain permutations $w=w_\sigma$ we can describe it explicitly. The baby case is when $w = (j,j+1)$ is a transposition.

\begin{definition}
Define the dg subscheme $Z_j \subset \FH_n^{\dg}$ by the following equation:
\begin{equation}
\label{eqn:z periodic}
\CO_{Z_j} := \left[\ldots \longrightarrow \frac {q^2t^2\CL_j^2}{\CL_{j+1}^{2}} \xrightarrow{y_{j,j+1}} \frac {q^2t\CL_j}{\CL_{j+1}} \xrightarrow{x_j-x_{j+1}} \frac {qt\CL_j}{\CL_{j+1}} \xrightarrow{y_{j,j+1}}  q\CO \xrightarrow{x_j-x_{j+1}} \CO \right]. 
\end{equation}
Here $y_{j,j+1}:t\CL_j\to \CL_{j+1}$ is the map of line bundles induced by the homonymous coefficient of the matrix $Y$ in \eqref{eqn:triple matrices}, and the fact that $y_{j,j+1}(x_j-x_{j+1})=0$ follows from $[X,Y]=0$.

\end{definition}

\begin{remark}
Formula \eqref{eqn:z periodic} implies the following exact sequence:
\begin{equation}
\label{eqn:Z recursion}
\left[ q\CO\xrightarrow{x_j-x_{j+1}}\CO \right] \cong \left[\CO_{Z_j} \xrightarrow{\text{Id}} \frac {qt\CL_j}{\CL_{j+1}} \otimes \CO_{Z_j} [2] \right]
\end{equation}
\end{remark}

Our motivation for defining $Z_j$ is the fact that:
\begin{equation}
\label{eqn:transposition}
\CO_{\FH_{(j,j+1)}^\dg} = \CO_{Z_j}
\end{equation}
for all $j\in \{1,...,n-1\}$. The following proposition follows directly by iterating \eqref{eqn:transposition}.

\begin{proposition}
\label{prop:fh eqiv}
Suppose that $w$ has cycle structure:
$$
(1,...,k_1)(k_1+1,...,k_2),...,(k_{r}+1,...,n)
$$
for some sequence $0<k_1<\ldots<k_{r}<n$. Then the dg structure sheaf of $\FH_w^\edg$ has the following periodic resolution by locally free sheaves on $\FH_n^{\edg}$:
\begin{multline}
\label{eq:mf}
\CO_{\FH_w^{\edg}} \cong \bigotimes_{j\notin \{k_1,...,k_r\}} \left[\ldots \longrightarrow \frac {q^2t\CL_j}{\CL_{j+1}} \xrightarrow{x_j-x_{j+1}} \frac {qt\CL_j}{\CL_{j+1}} \xrightarrow{y_{j,j+1}}  q\CO \xrightarrow{x_j-x_{j+1}} \CO \right].
\end{multline}
\end{proposition}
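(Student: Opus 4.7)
The plan is to reduce the proposition to the baby case \eqref{eqn:transposition}, which identifies $\CO_{\FH_{(j,j+1)}^{\dg}}$ with the explicit periodic complex $\CO_{Z_j}$, and then to iterate across the non-fixed-point positions of $w$ by a derived-intersection argument.

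The first step is combinatorial. For $w$ with the prescribed cycle structure, the condition $\{x_i = x_{w(i)} : i = 1,\ldots,n\}$ on $\CC^n$ is equivalent to the $n-r-1$ chain equalities $\{x_j - x_{j+1} = 0\}_{j \notin \{k_1,\ldots,k_r\}}$, since these equations link every pair of indices within a common cycle and nothing else. Pulling back along $\rho$ from \eqref{eqn:rho} and keeping track of the dg structure of $\FH_n^{\dg}$, I would identify $\FH_w^{\dg}$ with the derived intersection of the subschemes $\FH_{(j,j+1)}^{\dg}$ for $j \notin \{k_1,\ldots,k_r\}$ inside $\FH_n^{\dg}$.

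The second step uses \eqref{eqn:transposition} to replace each $\CO_{\FH_{(j,j+1)}^{\dg}}$ by its periodic resolution $\CO_{Z_j}$. I would then argue that the ordinary graded tensor product of these resolutions over $\CO_{\FH_n^{\dg}}$ already computes the derived tensor product, and a direct unfolding of the definition gives the right-hand side of \eqref{eq:mf}. The geometric justification is that the operators $x_j - x_{j+1}$ and $y_{j,j+1}$ for distinct $j \notin \{k_1,\ldots,k_r\}$ live in disjoint sub-diagonal positions of the matrices $X$ and $Y$; commutativity of the polynomial coordinates $x_i$ is automatic, while the commutations among different $y_{j,j+1}$'s are either automatic (disjoint matrix entries) or controlled by the off-diagonal $[X,Y]=0$ relations already built into the dg structure \eqref{eqn:def dg zero}.

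The main obstacle will be this last Tor-vanishing/commutation verification: since $\FH_n^{\dg}$ is not a local complete intersection, one cannot naively assert that tensoring periodic matrix-factorization-type complexes computes the derived intersection. I would handle this locally via the projective-tower description of Subsection~\ref{sub:tower}: over affine charts where the line bundles $\CL_j$ are trivialized and the $y_{j,j+1}$'s become honest polynomial functions, each $\CO_{Z_j}$ becomes an explicit $2$-periodic Koszul-type complex in two scalar variables attached to position $j$, and the variables for different $j$'s are disjoint. Patching these local verifications together, plus the compatibility afforded by \eqref{eqn:Z recursion} when collapsing $y_{j,j+1}$, yields the claimed global periodic resolution \eqref{eq:mf}.
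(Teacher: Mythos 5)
Your proposal matches the paper's own proof, which is the one-line observation that the claim ``follows directly by iterating \eqref{eqn:transposition}''; you correctly supply the combinatorial reduction of $\{x_i = x_{w(i)}\}$ to the chain equalities $\{x_j = x_{j+1}\}_{j\notin\{k_1,\ldots,k_r\}}$, identify $\FH_w^\edg$ as the derived preimage under $\rho$, and tensor the periodic resolutions $\CO_{Z_j}$. The local Tor-vanishing check you flag as the main obstacle is a sensible precaution but in fact not strictly necessary: the $\CO_{Z_j}$ are bounded-above complexes of line bundles, so their tensor product over $\CO_{\FH_n^\edg}$ automatically represents the derived tensor product, and with $\FH_w^\edg$ understood as a derived preimage the proposition then follows immediately from \eqref{eqn:transposition}.
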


\begin{conjecture}
\label{conj:knots}
Suppose that $\alpha = \prod_{i=0}^{r}(\sigma_{k_i+1}\cdots\sigma_{k_{i+1}-1})$ is a subword of the Coxeter word $\sigma_1\cdots\sigma_{n-1}$, for any sequence $0<k_1<\ldots<k_{r}<n$ as in Proposition \ref{prop:fh eqiv}. Then:
$$
\CB(\alpha) = \CO_{\FH_w^{\edg}}.
$$
\end{conjecture}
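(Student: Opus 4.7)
The plan is induction on $n$, the number of strands, exploiting the subword structure to shrink the braid one generator at a time. The base case $n \leq 1$ is trivial: both $\alpha$ and $w$ are the identity, and both sides equal $\CO_{\FH_n^\dg}$.

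For the inductive step, I exploit a crucial feature of the hypothesis: since $\alpha$ is a subword of $\sigma_1 \cdots \sigma_{n-1}$, each generator appears at most once, and $\sigma_{n-1}$ in particular, if present, is necessarily the last letter. I split into two cases. \textbf{Case A:} $\sigma_{n-1}$ is absent from $\alpha$. Then $k_r = n-1$, the last cycle of $w$ is the fixed point $(n)$, and $\alpha = i(\alpha')$ where $\alpha'$ is the same word viewed as a braid on $n-1$ strands, with permutation $w'$. Corollary~\ref{cor:markov 1} together with the inductive hypothesis yields
$$
\CB(\alpha) \;=\; \pi^{*}\CB(\alpha') \;=\; \pi^{*}\CO_{\FH_{w'}^\dg}.
$$
\textbf{Case B:} $\sigma_{n-1}$ is present, hence final. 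Write $\alpha = i(\beta) \cdot \sigma_{n-1}$ where $\beta$ is the prefix of $\alpha$ preceding the terminal $\sigma_{n-1}$, viewed as a braid on $n-1$ strands. The permutation $w''$ of $\beta$ is obtained from $w$ by shrinking the last cycle from $(k_r+1,\ldots,n)$ to $(k_r+1,\ldots,n-1)$. Conjecture~\ref{conj:markov} together with the inductive hypothesis yields
$$
\CB(\alpha) \;=\; \pi^{*}\CB(\beta) \otimes \CO_{Z_{n-1}} \;=\; \pi^{*}\CO_{\FH_{w''}^\dg} \otimes \CO_{Z_{n-1}}.
$$

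It remains to identify these outputs with $\CO_{\FH_{w}^\dg}$. I invoke Proposition~\ref{prop:fh eqiv}, which presents $\CO_{\FH_w^\dg}$ as a tensor product of the local factors \eqref{eqn:z periodic} indexed by $j \in \{1,\ldots,n-1\} \setminus \{k_1,\ldots,k_r\}$. Each factor involves only the line bundles $\CL_j, \CL_{j+1}$ and the sections $x_j - x_{j+1}$ and $y_{j,j+1}$, all of which pull back along $\pi : \FH_n^\dg \to \FH_{n-1}^\dg \times \CC$ from their counterparts on $\FH_{n-1}^\dg$ whenever $j < n-1$. Consequently, in Case A, $\pi^{*} \CO_{\FH_{w'}^\dg}$ equals the corresponding tensor product on $\FH_n^\dg$, and because $k_r = n-1$ the index $j = n-1$ is correctly absent, matching $\CO_{\FH_w^\dg}$ on the nose. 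In Case B, $\pi^{*} \CO_{\FH_{w''}^\dg}$ supplies all factors with $j < n-1$ in the correct index set $\{1,\ldots,n-2\} \setminus \{k_1,\ldots,k_r\}$, and $\CO_{Z_{n-1}}$ is precisely the missing $j = n-1$ tensor factor.

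The substantive obstacle is Case B, which is conditional on Conjecture~\ref{conj:markov}; modulo that input, the remainder of the argument is combinatorial bookkeeping of tensor factors. A secondary technical point is the compatibility of $\pi^{*}$ with the (infinite, matrix-factorization-type) periodic resolutions in \eqref{eq:mf}: this follows from the flatness of the projective bundle $\pi$ of Proposition~\ref{prop:dg scheme} together with the pullback compatibility of the sections $x_j - x_{j+1}$ and $y_{j,j+1}$ for $j < n-1$. Finally, one should confirm that no ambiguity arises from the order of letters in $\alpha$: any two $\sigma_i, \sigma_{i'}$ with $|i - i'| > 1$ appearing in the subword commute already as Rouquier complexes, so the decomposition $\alpha = i(\beta) \cdot \sigma_{n-1}$ in Case B is unambiguous in $K^b(\SBim_n)$.
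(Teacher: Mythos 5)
The statement you were asked to prove is labeled Conjecture~\ref{conj:knots} in the paper, and the paper offers \emph{no} proof of it; the authors only note that it is consistent with explicit computations for $n=2$ and for some $3$-strand braids. So there is no ``paper's own proof'' to compare against. What you have produced is not a proof but a \emph{conditional reduction}, showing that Conjecture~\ref{conj:knots} follows from the paper's earlier, more fundamental conjectures. Judged on those terms, the argument is sound and the combinatorial bookkeeping is correct: because the generators in a subword of the Coxeter word occur in strictly increasing index, $\sigma_{n-1}$, if present, is the final letter, so the dichotomy into Case A ($k_r = n-1$, last cycle trivial, strip a free strand) and Case B ($k_r < n-1$, strip $i(\beta)\cdot\sigma_{n-1}$) is exhaustive. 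The index sets in Proposition~\ref{prop:fh eqiv} match exactly after the shift $n \to n-1$, and the identification of $\CO_{Z_{n-1}}$ from \eqref{eqn:def zn} with the missing $j = n-1$ factor of \eqref{eq:mf} (equivalently \eqref{eqn:z periodic}) is correct; the pullback compatibility for $j < n-1$ holds because $\CL_j$, $\CL_{j+1}$, $x_j - x_{j+1}$, $y_{j,j+1}$ are all pulled back along $\pi : \FH_n^\edg \to \FH_{n-1}^\edg \times \CC$.

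One inaccuracy in your accounting of hypotheses: you flag only Case B as conditional (on Conjecture~\ref{conj:markov}), but Case A is conditional too. Corollary~\ref{cor:markov 1} is not an unconditional result --- the paper states explicitly, immediately before it, that it is ``an easy consequence of Conjecture~\ref{conj:main}.'' So the honest statement of what you have shown is: Conjecture~\ref{conj:main} together with Conjecture~\ref{conj:markov} implies Conjecture~\ref{conj:knots}. That is a genuinely useful observation, and one the paper does not make explicit, but it should not be presented as a proof of the statement.
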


\begin{example}
\label{ex:coxeter}
For $\alpha=1$, the conjecture simply reads $\CB(\alpha) = \CO_{\FH_n^{\dg}}$, as prescribed by Conjecture \ref{conj:1}. For $\alpha = \sigma_1\cdots\sigma_{n-1}$, the conjecture reads $\CB(\alpha) = \CO_{\FH_n^{\dg}(\point)\times \CC}$.
\end{example}

Conjecture \ref{conj:knots} gives a full description of $\CB(\alpha)$ for all braids $\alpha$ on two strands (see Section \ref{sec:n=2} for the explicit construction in this case). Moreover, it completely describes $\CB(\alpha)$ for the braids $\alpha = 1,s_1,s_2,s_1s_2$ on 3 strands, multiplied by arbitrary powers of the twists $\FT_2,\FT_3$. Building upon this, the following conjecture supersedes the main conjecture of \cite{GN}, and it serves as one of the motivating examples of the present work:

\begin{conjecture}
\label{conj:torus knots}
For $\text{gcd}(m,n)=1$, consider the torus braid $\alpha_{n,m}=(\sigma_1\cdots\sigma_{n-1})^m$. Then
\begin{equation}
\label{eq:torus knots}
\CB(\alpha_{m,n}) = \left(\bigotimes_{i=1}^n \CL_i^{\left\lfloor\frac{im}{n}\right\rfloor-\left\lfloor\frac{(i-1)m}{n}\right\rfloor}\right) \otimes \CO_{\FH_n^\edg(\point)\times \CC}
\end{equation}
\end{conjecture}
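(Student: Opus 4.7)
The plan is to proceed by induction on $m$, with the main recursive step exploiting the braid-group identity $\alpha_{m+n,n} = \FT_n \cdot \alpha_{m,n}$. Applying $\iota_*$ to this identity, invoking the monoidality of $\iota^*$ together with the relation $\iota^*(\det\CT_n) = \FT_n$ from \eqref{eqn:addendum}, and using the projection-formula-like identity \eqref{eqn:sk}, one obtains
\[
\CB(\alpha_{m+n,n}) = \det\CT_n \otimes \CB(\alpha_{m,n}) = \Bigl(\bigotimes_{i=1}^n \CL_i\Bigr) \otimes \CB(\alpha_{m,n}),
\]
where $\det\CT_n = \bigotimes_i \CL_i$ follows from the successive short exact sequences $0\to\CL_i\to\CT_i\to\CT_{i-1}\to 0$. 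A direct check with floor functions shows that each exponent $\lfloor im/n\rfloor - \lfloor(i-1)m/n\rfloor$ increases by exactly $1$ when $m\mapsto m+n$, so the conjectural formula propagates correctly. This reduces the problem to the finitely many base cases $1\leq m\leq n-1$ with $\gcd(m,n)=1$.

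The first base case $m=1$ is handled directly by Conjecture \ref{conj:knots} (Example \ref{ex:coxeter}), which gives $\CB(\sigma_1\cdots\sigma_{n-1}) = \CO_{\FH_n^\edg(\point)\times\CC}$. Matching this against the conjectural formula at $m=1$ reduces to verifying that the predicted line-bundle twist restricts trivially to the support subscheme $\FH_n^\edg(\point)\times\CC$ inside the ambient $\FH_n^\edg$, a computation that can be done locally using the matrix presentation of Subsection \ref{sub:adhm hilbert}.

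For the remaining base cases $2\leq m\leq n-1$ with $\gcd(m,n)=1$, the braid $\alpha_{m,n}$ is neither a subword of the Coxeter word nor an obvious Markov stabilization, so a more elaborate argument is required. A natural strategy is a secondary induction on $n$ via Markov moves: rewrite $\alpha_{m,n}$ using braid relations so as to factor as $i(\beta)\cdot\sigma_{n-1}^{\pm 1}\cdot(\cdots)$ for some $\beta$ on $n-1$ strands, then apply Conjecture \ref{conj:markov} and the Markov sheaf computations $\pi^*(-)\otimes\CO_{Z_n}$ from Subsection \ref{sub:markov}; the exponent combinatorics of the conjectural formula must then match the sheaf-theoretic output. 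An independent consistency check is provided by Corollary \ref{cor:1} combined with the Thomason localization formula \eqref{eqn:magic formula}, which computes the equivariant Euler characteristic of either side and reproduces the known HOMFLY-PT homology of the $(m,n)$ torus knot from \cite{GN}.

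The main obstacle is precisely this inductive step for intermediate $m$: the braid $\alpha_{m,n}$ admits no canonical Markov-style decomposition, and the delicate floor-function combinatorics must be tracked through several applications of Conjecture \ref{conj:markov}. A more conceptual route that may bypass the combinatorics is to prove directly that both sides of \eqref{eq:torus knots} define equivariant coherent sheaves on $\FH_n^\edg$ supported on $\FH_n^\edg(\point)\times\CC$, and then to establish a rigidity statement asserting that a sheaf on this dg subscheme with prescribed support and equivariant $K$-theory class is determined up to isomorphism. Combined with the $K$-theory verification via \eqref{eqn:magic formula}, such a rigidity statement would complete the proof without requiring explicit Markov manipulations for each $m$.
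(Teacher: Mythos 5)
This statement is one of the paper's conjectures, not a theorem; the authors supply no proof, only motivation from \cite{GN} and consistency checks for $n\le 3$ (Subsection \ref{sub:sheaves n=2} and Proposition \ref{prop:3 strand}) that themselves depend on Conjectures \ref{conj:1} and \ref{conj:knots}. There is therefore no proof in the paper against which to measure your proposal.

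Your $\FT_n$-recursion is formally correct (granting Conjecture \ref{conj:1}) and the floor-function bookkeeping is right, but the base case $m=1$ contains a real error: you assert that the twist $\CL_n$ restricts trivially to $\FH_n^\edg(\point)\times\CC$ and propose to verify this ``locally using the matrix presentation,'' but $\CL_n$ is not trivial there. Already for $n=2$, $\CL_2=\CO(1)$ on $\FH_2(\point)\cong\PP^1$, and in general the equivariant weight of $\CL_n$ at the fixed point $I_T$ is $z_n$, which varies with $T$, so $\CL_n$ cannot be (equivariantly) trivial on $\FH_n(\point)$. Consistently with this, the paper itself computes $\CB(\sigma_1)=\CO_{\PP^1\times\CC}$ in Subsection \ref{sub:sheaves n=2} and $\iota_*(\sigma_1\sigma_2)=\CO_{Z_2}$ in Proposition \ref{prop:3 strand}, each of which differs from the right side of \eqref{eq:torus knots} at $m=1$ by exactly a factor of $\CL_n$; the displayed formula \eqref{eq:torus knots} is therefore in tension with the paper's own low-strand checks and with Conjecture \ref{conj:knots} (most likely a normalization slip in the exponents), and your argument quietly absorbs this discrepancy rather than resolving it. Finally, as you acknowledge, the intermediate base cases $2\le m\le n-1$ carry the bulk of the content; neither your Markov-move sketch nor the rigidity idea is carried through, and the rigidity claim --- that support together with equivariant $K$-theory class determine a sheaf on $\FH_n^\edg(\point)\times\CC$ up to isomorphism --- is itself a strong and unjustified assumption.
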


See Sections \ref{sec:n=2} and \ref{sec:n=3} for detailed computations for two and three-strand torus braids.

\begin{remark}
It was proved in \cite{GN} that the equivariant Euler characteristic of the right hand side of \eqref{eq:torus knots} is equal to the ``refined Chern-Simons invariant'' defined by Aganagic-Shakirov \cite{AS} and Cherednik \cite{Ch}. One can therefore consider Conjecture \ref{conj:torus knots} as a categorification of the conjectures in \cite{AS,Ch} relating the Poincar\'e polynomial of Khovanov-Rozansky homology to these ``refined invariants''.
\end{remark}



\section{Categories and schemes}
\label{sec:geometry}

\subsection{Motivation: maps to projective space}
\label{sub:linear}

We start by recalling certain classical constructions in algebraic geometry which will guide all subsequent generalizations. Let $X$ be a projective algebraic variety and let $\CL$ be a line bundle (i.e. a rank one locally free sheaf) over $X$. One says that $\CL$ is generated by global sections if the map of sheaves:
$$
\CO_X \otimes \Gamma(X, \CL) \rightarrow \CL 
$$
is surjective. If we choose a basis $s_0,...,s_n$ of the vector space $\Gamma(X,\CL)$, this comes down to requiring that any local section of $\CL$ is a linear combination of the sections $s_0,...,s_n$. Moreover, the above datum gives rise to a map:
\begin{equation}
\label{eqn:embedding}
X \stackrel{\iota}\rightarrow \PP^n, \qquad x \mapsto [s_0(x):...:s_n(x)]
\end{equation}
Global generation implies the fact that the sections $s_0,...,s_n$ cannot all vanish simultaneously. Moreover, while $s_i$ are sections of the line bundle $\CL$, their ratios are well-defined functions on $X$. To this end, we may define the open subset: 
$$
X_i =\{s_i(x)\neq 0\} \subset X
$$
where the ratios $s_j/s_i$ are well--defined. Hence the map \eqref{eqn:embedding} restricts to a map:
$$
X_i \rightarrow U_i = \{z_i \neq 0\} \subset \PP^n
$$
If we let $\CO(1)$ denote the tautological line bundle on $\PP^n$, then we have:
$$
\iota^*(\CO(k)) = \CL^{\otimes k}, \quad \forall \ k\in \BZ
$$ 
The functor $\iota^*$ is monoidal, and is the left adjoint of the direct image functor:
\begin{equation}
\label{eqn:embedding2}
\coh(X) \xtofrom[\iota^*]{\iota_*} \coh(\PP^n) 
\end{equation}
In the remainder of this section, we present a generalization of this construction, where the role of the map $\iota:X \rightarrow \PP^n$ is replaced by an abstract categorical setup inspired by \eqref{eqn:embedding2}. 


\begin{remark}
\label{rem:prod cones}

By deriving the functors in question, we may write \eqref{eqn:embedding2} at the level of derived categories. Then the sections can be thought of as complexes:
$$
\left[ \CO_X \stackrel{s_i}\rightarrow \CL \right] \in D^b(\coh(X))
$$
which are supported on $\{ X\setminus X_i \} = \{s_i=0\}$. The product of these complexes: 
\begin{equation}
\label{eqn:tensor}
\bigotimes_{i=0}^{n} \left[ \CO_X \stackrel{s_i}\rightarrow \CL \right]
\end{equation}
is therefore supported on the set where all $s_i$ vanish simultaneously, which by assumption is the empty set. Therefore, \eqref{eqn:tensor} is quasi-isomorphic to 0, and hence it vanishes in $D^b(\coh(X))$. Put differently, the vanishing of \eqref{eqn:tensor} is forced upon us by the vanishing of the Koszul complex:
$$
\bigotimes_{i=0}^{n} \left[ \CO_{\PP^n} \stackrel{z_i}\rightarrow \CO_{\PP^n}(1) \right] \stackrel{\qis}\cong 0 \ \in \ D^b(\coh(\PP^n))
$$
and the fact that the derived version of the functor $\iota^*$ in \eqref{eqn:embedding2} is monoidal.

\end{remark}

\begin{remark} Projective space can be defined more scheme-theoretically as: 
$$
\PP^n = \proj\left(\bigoplus_{k=0}^{\infty} S^k\CC^{n+1} \right)
$$
Then the map \eqref{eqn:embedding} is given by the map 
$\CC^{n+1} \to \Gamma(X,\CL)$ induced by the choice of the sections $s_0,...,s_n$, and in fact global generation translates into:
$$
X = \proj\left(\bigoplus_{k=0}^{\infty} \Gamma(X,\CL^{\otimes k}) \right).
$$

\end{remark}

\subsection{Notations for categories}
\label{def:cat}

In this subsection, we would like to collect all homological algebra notations, definitions and assumptions which will be frequently used below. Let $\cat$ be an additive unital monoidal category with tensor product $\otimes$ and direct sum $\oplus$. The monoidal structure is not necessary symmetric. We will denote the unit object of $\cat$ by $\1_{\cat}$, or $\1$ if the category is clear from context. The endomorphism algebra $\End(\1)$ is always commutative, and we assume that it is Noetherian. For any object $A\in \cat$, the morphism space $\Hom(\1,A)$ is a module over  $\End(\1)$, and we assume that it is finitely generated. We assume that all morphism spaces are positively graded.  We denote by $K^b(\cat)$ the homotopy category of bounded complexes of objects in $\cat$ and by $K^-(\cat)$ the 
homotopy category of bounded above complexes. Unless stated otherwise, we will work with bounded above complexes and abbreviate $K^-(\cat)$ to $K(\cat)$. 

We will consider two types of ``semi-infinite completions" of the category $\cat$. The first type is the homotopy category $K^{-}(\cat)$ of bounded above complexes of objects in $\cat$ (which is well-known to also be a monoidal category). The other type is the category of certain infinite sums of objects in $\cat$, as in the following definition.

\begin{definition}
\label{def:completion}

Assume that $\cat$ is graded, and the grading shift is denoted by $A \mapsto A(1)$. We define its \textbf{graded completion} $\cat^{\uparrow}$ as follows. The objects are given by countable direct sums:
$$
\text{Ob}(\cat^\uparrow) = \left\{ \bigoplus_{i=-\infty}^N A_i(i) \text{ for some }N \in \BZ \right \}
$$ 
and the morphisms $\phi: \oplus A_i(i)\rightarrow \oplus B_j(j)$ are collections of arrows $\{\phi_{ij}:A_i(i)\to B_j(j) \}$ for all $i,j$, such that for each $i$ there are only finitely many $j$ such that $\phi_{ij}\neq 0$. 
\end{definition}

One can check that $\cat^{\uparrow}$ and $K^{-}(\cat^{\uparrow})$ inherit the tensor product from $\cat$. Note that $K^{-}(\cat^{\uparrow})$ is endowed with both the grading $(1)$ and the homological degree $[1]$. 


Note that the category $\cat$ may have multiple gradings, and the notion of completion depends on a specific choice of grading among these. For example, if $\cat$ is graded by $\mathbb{Z}^r$, this accounts to choosing a one-dimensional direction in $\mathbb{Z}^r$. To clarify homological algebra over $\cat^{\uparrow}$, we present some examples.

\begin{example}
Let $\cat$ be the category of graded finitely generated $\CC[x]$-modules.
Consider the following two-term complex in $K^{-}(\cat^{\uparrow})$:

\begin{figure}[ht!]
\includegraphics{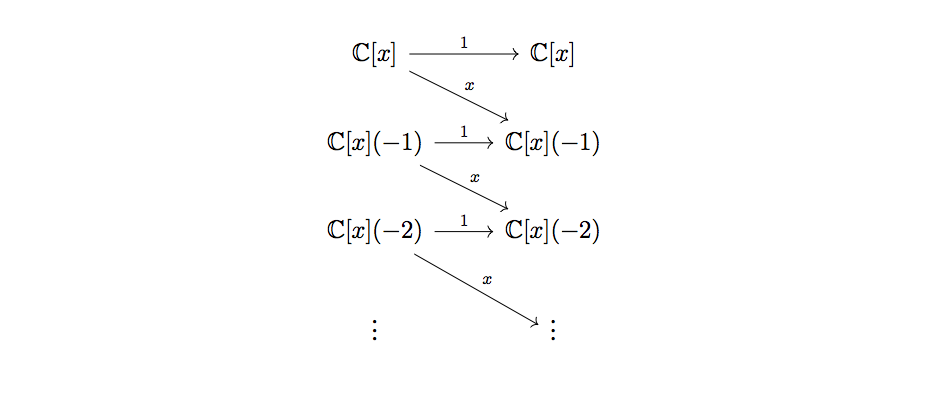}
\caption{}
\label{fig: model complex}
\end{figure}

We can introduce an auxilary variable $y$ of degree $(-1)$ and rewrite the complex as following:
$$\CC[x,y] \xrightarrow{1+xy}   \CC[x,y].$$
At first glance, one could think that since all horizontal arrows in Figure \ref{fig: model complex} are isomorphisms, the complex is contractible. However, this is not the case, since a homotopy would be:
$$
\CC[x,y]\xleftarrow{H}\CC[x,y] \text{ such that } H(1+xy)=(1+xy)H=1 
$$
A natural choice for $H$ would be:
$$
H(x,y)=\frac{1}{1+xy}=1-xy+x^2y^2-x^3y^3+\ldots,
$$
but this is not a valid morphism in $\cat^{\uparrow}$ since there would be non-zero arrows from the top-most copy of $\CC[x]$ to all infinitely many copies below it.

\end{example}

\begin{remark}
One can check that the homology of the complex in Figure \ref{fig: model complex} is isomorphic to $\CC[x,y]/(1+xy)=\CC[x,x^{-1}]$.
\end{remark}



 \subsection{Categories over schemes}
\label{sub:spec}

In this section, we will develop a general setup relating a category $\cat$ with a scheme $X$, with the goal of reducing Conjecture \ref{conj:1} to Conjecture \ref{conj:main}. Though we will not always say this explicitly, $X$ should be thought of as a dg scheme. 


\begin{definition}
\label{def:morphism}

A \textbf{morphism} from the category $\cat$ to the scheme $X$, written as:
$$
\cat \stackrel{\iota}\longrightarrow X
$$
consists of a pair of functors:
\begin{equation}
\label{eqn:functors}
\cat \xtofrom[\iota^*]{\iota_*} \coh(X) 
\end{equation}
such that:

\begin{itemize}

\item $\iota^*$ is a monoidal functor 

\item $\iota_*$ is the right adjoint of $\iota^*$ 

\item the following \textbf{projection formula} holds:
\begin{equation}
\label{eqn:projection}
\iota_*(\iota^*M_1 \otimes C \otimes \iota^*M_2) = M_1 \otimes \iota_*(C) \otimes M_2
\end{equation}
for all $M_1,M_2 \in \coh(X)$ and $C \in \cat$. 

\end{itemize}

\end{definition}

The above definition is modeled on the situation when $\cat = \coh(Y)$ for a scheme $Y$, in which case the functors $\iota_*$ and $\iota^*$ play the roles of direct and inverse image functors associated to a map of schemes $\iota:Y \rightarrow X$.

\begin{definition}
\label{def:birational} 

We call the map $\cat \stackrel{\iota}\longrightarrow X$ \textbf{birational} if:
\begin{equation}
\label{eqn:birational}
\iota_* \1 = \CO_X
\end{equation}

\end{definition}

This terminology, albeit imprecise, is motivated by the important case when $\cat = \coh(Y)$ where $Y$ is endowed with a proper birational map to $X$. 

\begin{proposition}
\label{prop: proper hom}
Suppose that $\cat \stackrel{\iota}\longrightarrow X$ is birational. Then $\iota^*$ is fully faithful, and moreover:
\begin{equation}
\label{eqn:RGamma}
\Hom_{\cat}(\1,\iota^* M) = \Gamma(X,M)
\end{equation}
for all $M\in \coh(X)$.
\end{proposition}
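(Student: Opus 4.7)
The plan is to extract everything from the identity $\iota_*\iota^* M = M$, which should follow directly from the projection formula once we use that $\iota^*$ sends $\CO_X$ to $\1$. So the first step will be to record that, since $\iota^*$ is monoidal by Definition~\ref{def:morphism}, we have $\iota^*\CO_X = \1_{\cat}$. Next, I will apply the projection formula \eqref{eqn:projection} with $C = \1$ and $M_2 = \CO_X$, which yields
\[
\iota_*(\iota^* M) \;=\; \iota_*(\iota^* M \otimes \1 \otimes \iota^*\CO_X) \;=\; M \otimes \iota_*(\1) \otimes \CO_X,
\]
and then invoke the birationality hypothesis $\iota_*\1 = \CO_X$ to conclude $\iota_*\iota^* M = M$ for every $M \in \coh(X)$.

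Once we have this, full faithfulness of $\iota^*$ is a one-line consequence of the adjunction $\iota^* \dashv \iota_*$:
\[
\Hom_{\cat}(\iota^* M, \iota^* N) \;=\; \Hom_{\coh(X)}(M, \iota_*\iota^* N) \;=\; \Hom_{\coh(X)}(M, N),
\]
so the unit $\mathrm{id} \to \iota_*\iota^*$ of the adjunction is an isomorphism, which is the standard criterion for a left adjoint to be fully faithful. For the Hom formula \eqref{eqn:RGamma}, I will use $\1 = \iota^*\CO_X$ and the same adjunction chain:
\[
\Hom_{\cat}(\1, \iota^* M) \;=\; \Hom_{\cat}(\iota^*\CO_X, \iota^* M) \;=\; \Hom_{\coh(X)}(\CO_X, \iota_*\iota^* M) \;=\; \Gamma(X, M),
\]
where the last equality uses $\iota_*\iota^* M = M$ and the definition of global sections.

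There is really no serious obstacle here: the entire argument is a formal consequence of the three axioms packaged into Definition~\ref{def:morphism} together with the birationality condition. The only point to be mildly careful about is the direction of the adjunction (we are given $\iota^*$ is the \emph{left} adjoint, so $\Hom_{\cat}(\iota^*(-), -) = \Hom_{\coh(X)}(-, \iota_*(-))$), and the fact that the projection formula must be applied with the sheaf $M$ in the first slot and $\CO_X$ in the second to absorb the factor $\iota_*\1$. If one wished to derive this proposition, the same manipulations go through verbatim once the functors and projection formula are understood in the derived sense, which is relevant when $X$ is a dg scheme as in the intended applications.
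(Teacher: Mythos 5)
Your proof is correct and follows essentially the same route as the paper: use the adjunction to write $\Hom_{\cat}(\iota^*M',\iota^*M) = \Hom_X(M',\iota_*\iota^*M)$, reduce $\iota_*\iota^*M$ to $M$ via the projection formula together with birationality, and then specialize $M' = \CO_X$ to obtain \eqref{eqn:RGamma}. You simply make explicit a step the paper leaves implicit (that $\iota^*\CO_X = \1$, used to absorb the unit into the projection formula), which is a minor but harmless elaboration.
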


\begin{proof} The adjunction implies that:
$$
\Hom_{\cat}(\iota^*M',\iota^*M) = \Hom_X(M',\iota_*\iota^*M) = \Hom_X(M',M)
$$
where the last equality follows from \eqref{eqn:projection} and \eqref{eqn:birational}. When $M'=\CO_X$ we obtain precisely \eqref{eqn:RGamma}.

\end{proof}

Most of the time we will consider a derived version of this construction.

\begin{definition}
 A \textbf{derived morphism} from the category $\cat$ to the scheme $X$, written as:
$$
\cat \stackrel{\iota}{\longrightarrow} X
$$
is a pair of  mutually adjoint functors:
\begin{equation}
\label{eqn:derfunctors}
K(\cat) \xtofrom[\iota^*]{\iota_*} D(\coh(X)) 
\end{equation}
All other properties and requirements remain unchanged. 
\end{definition}



\subsection{The affine case}
\label{sub:aff}

Let \(\cat\) be an additive monoidal category. Suppose we are given a Noetherian commutative ring $A$ and a ring homomorphism
\begin{equation}
\label{eqn:affine}
A \stackrel{f}\longrightarrow \End_\cat(\1)
\end{equation}
satisfying 
$$
(\#) \quad \Hom_\cat(\1,C) \text{ is finitely generated over }A
$$
for any object $C$ of $ \cat$. 
Then there is a derived morphism:
\begin{equation}
\label{eqn:affine0}
 \cat \stackrel{\iota}\longrightarrow  \spec \ A.
\end{equation}
The functors
$$
K(\cat) \xtofrom[\iota^*]{\iota_*} D(A\text{--mod})
$$
are defined as follows. There is a functor
\(i: \cat \to A \text{--mod}\)  given by:
\begin{equation}
\label{eqn:push0}
\iota_* (C) = \Hom_{\cat}(\1,C).
\end{equation}
This extends in the obvious way to a functor \(i: K(\cat) \to K(A \text{--mod})\), and \(\iota_*\) is defined to be the composition of \(i_*\) with the natural inclusion \( K(A \text{--mod}) \to D(A \text{--mod})\). 

In the other direction, let \(FA \text{--mod}\) be the category of finitely generated free \(A\) modules. 
The inclusion \( K(FA \text{--mod}) \to D(A \text{--mod})\) is an equivalence of categories, so we may as 
well give a functor \( \iota_* : K(FA \text{--mod}) \to K(\cat)\). 
We define \(\iota_*\) by setting 
\(\iota_*(A) = \1\) and \(\iota_*(a) = f(a)\) for \(a \in A = \Hom(A,A)\). This extends to  \(K(FA \text{--mod})\) in 
the obvious way.  If \(M\) is an object of \(D(A \text{--mod})\), we write
\(\iota_*(M) = M \otimes_A \1\). 

Let us check that the functors \(\iota^*\) and \(\iota_*\) are adjoint, or equivalently, that 
\begin{equation}
\label{eqn:adj0}
\Hom_{K(\cat)}(M \otimes_A \1, C) = \Hom_{D(A\text{--mod})}(M, \Hom_{\cat}(\1,C))
\end{equation}
for all $M \in D(A\text{--mod})$ and $C\in K(\cat)$. If \(C \in \cat\), the right-hand side is by definition 
\(\Ext_A(M,\Hom_{\cat}(1,C))\). The statement that it is equal to the left-hand side reduces to the  well known fact that to compute \(\Ext\) of two modules, it is enough to take a free resolution of one of them. 
Properties \eqref{eqn:projection} and \eqref{eqn:birational} also follow directly from the definitions.


\begin{example}
Let $Y$ be an algebraic variety, and $\cat=\coh(Y)$. The unit in $Y$ is given by the structure sheaf $\CO_Y$, and indeed $\cat$ is a category over $\spec\ \End_{\cat}(\1)=\spec\ \Gamma(Y,\CO_Y)$. This structure is precisely equivalent with the global section map:
$$
\iota : Y\to \spec\ \Gamma(Y,\CO_Y)
 $$
More generally, a ring homomorphism $A\stackrel{f}\rightarrow \Gamma(Y,\CO_Y)$ corresponds to a map $\spec\ \Gamma(Y,\CO_Y)\to \spec\ A$, and one can use the composed map from $Y$ to $\spec \ A$ to define $\iota_*$ and $\iota^*$.
\end{example}

\subsection{The projective case}
\label{sub:proj}



In the previous Subsection, we showed that any category can be realized over the spectrum of the endomorphism ring of its unit. We may upgrade this construction if we are given an \textbf{invertible} object $F \in K(\cat)$, i.e. one which is endowed with isomorphisms:
\begin{equation}
\label{eqn:def invertible}
F \otimes F^{-1} \cong F^{-1} \otimes F \cong \1
\end{equation}

\begin{assumption}
\label{ass:commute}
We assume that the graded algebra:
\begin{equation}
\label{eqn:commalg}
\Hom_{K(\cat)}(\1,F^{\bullet}) := \bigoplus_{k=0}^\infty \Hom_{K(\cat)}(\1,F^k)
\end{equation}
is commutative. 
\end{assumption}

\begin{remark}
Recall that $\cat$ was a graded category, so for every $k$ the space $\Hom_{K(\cat)}(\1,F^k)$ is graded. 
The algebra $\Hom_{K(\cat)}(\1,F^{\bullet})$ has an extra grading which equals $k$ on $\Hom_{K(\cat)}(\1,F^k)$.
\end{remark}

In this setting, there exists a tautological derived map:
\begin{equation}
\label{eqn:projective0}
\cat \stackrel{\iota}\longrightarrow (\spec \ R )/\CC^*
\end{equation}
for any Noetherian graded commutative ring $R$ and graded ring homomorphism: 
\begin{equation}
\label{eqn:projective}
R \stackrel{f}\longrightarrow  \Hom_{K(\cat)}(\1,F^{\bullet})
\end{equation}
The functors \eqref{eqn:functors} are explicitly given by:
\begin{equation}
\label{eqn:functors1}
\iota_*(C) = \Hom_{K(\cat)} \left(\1, F^{\bullet} \otimes C \right)
\end{equation}
\begin{equation}
\label{eqn:functors2}
{\iota}^*(M) = \left( M \otimes_R \bigoplus_{k=-\infty}^{\infty} F^k \right)^0
\end{equation}
for all graded $R-$modules $M$ and all $C \in K(\cat)$. The Hom space in \eqref{eqn:functors1} is an $R-$module via \eqref{eqn:projective}. It is straightforward to show that the analogue of \eqref{eqn:adj0} holds, and that the above datum makes $\cat$ into a category over the stack $(\spec \ R )/\CC^*$:
\begin{equation}
\label{eqn:yoy}
K(\cat) \xtofrom[\iota^*]{\iota_*}D(R\text{--grmod})
\end{equation}
Note that one needs the analogue of condition $(\#)$  on the category $\cat$ to ensure that the above functors are well-defined (in particular, that the right hand side of \eqref{eqn:functors1} is a finitely generated $R$-module). But given this, the map $\iota$ is birational if and only if the map $f$ of \eqref{eqn:projective} is an isomorphism. 


\begin{example}
\label{ex:projective} 

Let us consider the case where $R = A[z_0,...,z_n]$, for a ring $A$ equipped with a homomorphism $A \rightarrow \End_\cat(\1)$. Then the datum of the homomorphism \eqref{eqn:projective} boils down to giving $n+1$ morphisms:
\begin{equation}
z_i \quad \rightsquigarrow \quad \Big\{ \1 \xrightarrow{\alpha_i} F \Big\}_{i=0,...,n}
\end{equation}
This makes $\cat$ into a category over the stack:
$
\cat \xrightarrow{\iota} \AA^{n+1}_A/\CC^*.
$
The natural question is when does $\iota$ factor through projective space:
$$
\includegraphics{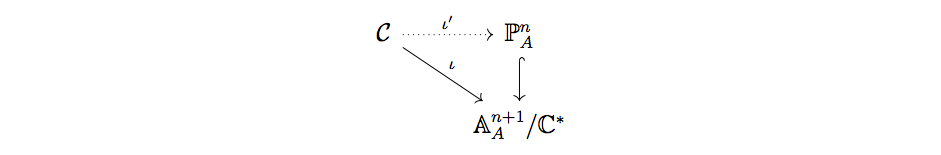}
$$
which amounts to factoring \eqref{eqn:yoy} through functors:
\begin{equation}
\label{eqn:func}
K(\cat) \xtofrom[{\iota'}^*]{\iota'_*} D(\coh(\PP^n_A)) 
\end{equation}
It is clear that ${\iota'}^*$ and $\iota'_*$ must be given by the same formulas as in \eqref{eqn:functors1}--\eqref{eqn:functors2}, but one needs to impose a certain relation. Because the zero section of $\AA^{n+1}_A/\CC^*$ is removed when defining projective space, the structure sheaf of the zero section becomes quasi-isomorphic to 0. Since this structure sheaf can be expressed via the following Koszul complex:
$$
\left[ ... \longrightarrow \CO(-2)^{\oplus \binom{n+1}{2}} \longrightarrow \CO(-1)^{\oplus \binom{n+1}{1}} \xrightarrow{(z_0,...,z_n)} \CO \right] = \bigotimes_{i=0}^n \left[\CO(-1) \xrightarrow{z_i} \CO \right]
$$
we conclude that the functors \eqref{eqn:func} are well-defined only if:
\begin{equation}
\label{eqn:conecondition}
\left[ \1 \xrightarrow{\alpha_0}  F \right] \otimes ... \otimes \left[ \1 \xrightarrow{\alpha_n} F \right] \stackrel{\he}\cong 0 \ \in K(\cat).
\end{equation}

It is not hard to see that this condition is also sufficient, by invoking Beilinson's description \cite{Be,BGG} of the derived category of projective space as equivalent to the homotopy category of complexes of finite direct sums of free $A[x_0,...,x_n]$--modules with degree shifts $\in \{0,...,n\}$.

\end{example}

\begin{remark}
If $F = \CL$ is a line bundle in $\cat=\coh(X)$, then $\alpha_i$ are nothing but sections of $\CL$.
By Remark \ref{rem:prod cones}, equation \eqref{eqn:conecondition} is equivalent to the fact that $\alpha_i$ generate $\CL$, and indeed this is a necessary and sufficient condition for the existence of $X \rightarrow \PP^n$, as we saw in Subsection \ref{sub:linear}.
\end{remark}

\subsection{The relative case}
\label{sub:relative}

The situation of Example \ref{ex:projective} captures a very interesting problem, namely when can we factor a map from a category to a scheme through another scheme:
\begin{equation}
\label{eqn:iota0}
\includegraphics{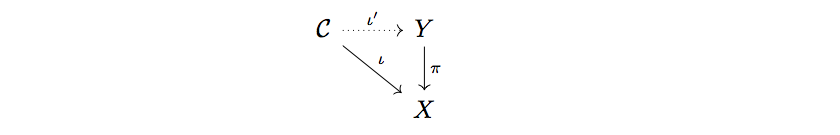}
\end{equation}
More precisely,  $\iota'$ should satisfy the equations $\iota^*=\iota'^*\circ \pi^*$ and  $\iota_*=\pi_*\circ \iota'_*$ and all the functors should be derived from now on. The situation we will study in this paper is when:
$$
Y = \PP \CV^{\vee} := \proj_X (S^{\bullet}\CV)
$$
where $\CV$ is a coherent sheaf on $X$ of projective dimension 0 or 1. Let us first study the case of projective dimension zero, so assume that $\CV$ is a vector bundle.  



\begin{proposition}
\label{prop:lift0}
Suppose that $Y=\PP \CV^{\vee}$ and that the map $\iota$ in \eqref{eqn:iota0} is constructed. The datum of the extension $\iota'$ is equivalent to an invertible object $F \in \cat$ together with an arrow:
\begin{equation}
\label{eqn:datum0}
\iota^*\CV \stackrel{\alpha}\longrightarrow F
\end{equation}
in $\cat$. This gives $\cat$ the structure of a category over $Y$ if and only if:
\begin{equation}
\label{eqn:koszul0}
\left[ ... \stackrel{\alpha}\longrightarrow \iota^*\left(\wedge^k \CV\right) \otimes F^{-k} \stackrel{\alpha}\longrightarrow ... \right] \stackrel{\ehe}\cong 0 \ \in \ K^b(\cat)
\end{equation}
The map $\iota'$ is birational if and only if $\iota$ satisfies:
\begin{equation}
\label{eqn:properproj0}
S^k\CV \cong \iota_*(F^k) \qquad \forall \ k\geq 0
\end{equation}

\end{proposition}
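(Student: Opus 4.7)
The plan is to reduce to Example \ref{ex:projective} by working Zariski-locally on $X$, where the vector bundle $\CV$ trivializes. For the forward direction, suppose $\iota'$ exists. Set $F := \iota'^*\CO_Y(1)$, which is invertible because $\CO_Y(1)$ is and $\iota'^*$ is monoidal. The tautological surjection $\pi^*\CV \twoheadrightarrow \CO_Y(1)$ on $Y$ pulls back via $\iota'^*$ to a morphism $\alpha : \iota^*\CV \to F$ in $\cat$, using $\iota^* = \iota'^*\circ \pi^*$. The standard Koszul resolution on a projective bundle,
$$
\left[ \cdots \to \pi^*(\wedge^k\CV)\otimes \CO_Y(-k) \to \cdots \to \pi^*\CV \otimes \CO_Y(-1) \to \CO_Y \right] \stackrel{\he}\cong 0,
$$
is acyclic (the tautological map corresponds to a regular, nowhere-vanishing section of the relevant twisted bundle), so applying the monoidal triangulated functor $\iota'^*$ yields \eqref{eqn:koszul0}.

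For the reverse direction, given $(F,\alpha)$ satisfying \eqref{eqn:koszul0}, I would construct $\iota'$ by specifying $\iota'_*$. For each $C \in K(\cat)$, the morphism $\alpha$ together with the projection formula produces maps $\CV \otimes \iota_*(F^{k-1} \otimes C) \to \iota_*(F^k \otimes C)$, which organize $\bigoplus_{k\geq 0} \iota_*(F^k\otimes C)$ into a graded $S^\bullet\CV$-module; one then defines $\iota'_*(C)$ to be the associated (derived) coherent sheaf on $Y = \proj_X S^\bullet\CV$, and sets $\iota'^*\CO_Y(k) = F^k$ (extending to all of $D^b(\coh(Y))$ via a Beilinson-type resolution). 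The Koszul condition \eqref{eqn:koszul0} is exactly the input ensuring that this prescription descends from the ambient $\spec_X S^\bullet\CV / \CC^*$ to its open substack $Y = \PP\CV^\vee$, mirroring the role of \eqref{eqn:conecondition} in Example \ref{ex:projective}. The two assignments $(\iota')\leftrightarrow (F,\alpha)$ are mutually inverse, checked Zariski-locally on $X$: over an affine $U = \spec A$ where $\CV$ trivializes as $\CO_U^{\oplus r}$, the datum $(F,\alpha)$ becomes $r$ arrows $\alpha_i: \1 \to F$ satisfying \eqref{eqn:conecondition}, placing us precisely in the situation of Example \ref{ex:projective}.

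For the birational criterion \eqref{eqn:properproj0}, the projection formula gives
$$
\iota_*(F^k) \;=\; \pi_*\iota'_*(F^k) \;=\; \pi_*\bigl(\iota'_*\1 \otimes \CO_Y(k)\bigr).
$$
If $\iota'$ is birational, then $\iota'_*\1 = \CO_Y$ and $\iota_*F^k = \pi_*\CO_Y(k) = S^k\CV$; conversely, \eqref{eqn:properproj0} determines $\iota'_*\1$ as a graded $S^\bullet\CV$-module (it equals $\bigoplus_k \pi_*\CO_Y(k)$), and taking $\proj$ forces $\iota'_*\1 = \CO_Y$. The main obstacle is making the reverse construction of $\iota'_*$ rigorous at the derived level: verifying that it lands in $D^b(\coh(Y))$, is right adjoint to $\iota'^*$, and satisfies the projection formula. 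The Zariski-local reduction makes this tractable in principle, but globalizing requires checking cocycle compatibility of the local equivalences from Example \ref{ex:projective} on overlaps, as well as handling the completion issues inherent in passing between bounded and unbounded complexes in $K(\cat)$.
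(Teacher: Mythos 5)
Your proof follows essentially the same route as the paper's. The forward direction (set $F := \iota'^*\CO_Y(1)$, pull back the tautological morphism, apply the monoidal functor to the acyclic Koszul complex; then deduce \eqref{eqn:properproj0} from $\iota'_*\1 = \CO_Y$, the projection formula, and $\pi_*$) matches the paper exactly. The reverse direction also matches in its core: you build $\iota'_*(C)$ as the graded $S^\bullet\CV$-module $\bigoplus_{k\geq 0}\iota_*(F^k\otimes C)$, set $\iota'^*\CO_Y(k) = F^k$, and use the Koszul condition \eqref{eqn:koszul0} plus a Beilinson-type generation statement for $D^b(\coh(\PP\CV^\vee))$ to show the functor descends to the derived category; the paper does exactly this, citing \cite{DanHL} for the generation result.

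Two remarks. First, your framing ``reduce to Example \ref{ex:projective} by working Zariski-locally on $X$'' is a red herring that the paper avoids: the abstract monoidal category $\cat$ is not a sheaf over $X$ and does not restrict to open subsets $U \subset X$, so one cannot literally localize the equivalence $(\iota') \leftrightarrow (F,\alpha)$ over a trivializing cover. Fortunately you do not actually use this reduction -- the direct constructions you give are global and are the right ones -- but as written, the Zariski-local gluing ``checked on overlaps'' you allude to is not a meaningful step in this setting. Second, you pass over the small but necessary verification that the induced maps $\iota^*(\CV^{\otimes k}) \to F^k$ factor through $\iota^*(S^k\CV) \to F^k$, which is what makes $\bigoplus_k\iota_*(F^k\otimes C)$ a module over $S^\bullet\CV$ rather than over the tensor algebra. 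The paper handles this by observing that invertibility of $F$ forces $S^kF = F^k$ (the antisymmetric projector on $F^{\otimes k}$ is zero since an invertible object has only scalar endomorphisms); you should include this observation explicitly.
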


\begin{proof} All notations $\CO$ or $\CO(k)$ will refer to invertible sheaves on $\PP \CV^\vee$. If $\iota'$ exists and has all the expected properties, then set $F = {\iota'}^*(\CO(1))$. In this case, the map \eqref{eqn:datum0} is simply ${\iota'}^*$ applied to the tautological morphism:
$$
\pi^*\CV \longrightarrow \CO(1)
$$
on $Y$. The fact that the complex \eqref{eqn:koszul0} is quasi-isomorphic to 0 follows by applying ${\iota'}^*$ to the Koszul complex of $Y$. The birationality of $\iota'$ implies that $\iota'_*\1 \cong \CO$, from which the projection formula implies $\iota'_*(F^k) \cong \CO(k)$. Applying $\pi_*$ to this relation implies precisely \eqref{eqn:properproj0}. Conversely, suppose that we are given a morphism \eqref{eqn:datum0} which satisfies \eqref{eqn:koszul0}, and let us construct the map $\iota'$ that makes the diagram \eqref{eqn:iota0} commute. Note that \eqref{eqn:datum0} gives us an arrow:
$$
\iota^*\left(\CV^{\otimes k} \right) \longrightarrow F^k
$$
for all $k\geq 0$. Because $F$ is invertible, this arrow factors through:
\begin{equation}
\label{eqn:algebras0}
\iota^*\left(S^k\CV \right) \longrightarrow F^k
\end{equation}
for all $k\geq 0$ (since $F$ is invertible, so is $F^k$, and hence has no nontrivial endomorphisms; this implies that the anti-symmetric projector is zero, hence $S^kF = F^k)$. This allows us to define:
$$
{\iota'}^*(M) = \left(\pi_*(M) \bigotimes_{S^{\bullet}\CV} \bigoplus_{k=-\infty}^{\infty} F^k \right)^0
$$
A priori, this only determines the functor ${\iota'}^*$ on the level of the homotopy category of coherent sheaves on $\PP\CV^\vee$. To check that it descends to a functor on the derived category, we must show that ${\iota'}^*$ takes quasi-isomorphic complexes to isomorphic complexes. The fact that this statement is true for the Koszul complex is precisely the assumption \eqref{eqn:koszul0}. The fact that this is sufficient is due to Theorem 2.10 of \cite{DanHL} (see also \cite{BFK}), which asserts that:
$$
D^b \left(\coh(\PP \CV^\vee ) \right) \cong \text{homotopy category of complexes of} \left( \bigoplus_{i=0}^{\text{rank }\CV-1} \CE_i(i) \right)_{\CE_0,\CE_1,... \in D^b(\coh(X))} 
$$ 
As for the right adjoint functor, we set:
$$
\iota'_*(C) = \iota_* \left( \bigoplus_{k=0}^\infty F^k \otimes C \right)
$$
as a graded $\CO_X-$module. To realize the right hand side as a sheaf on $Y$, we need to endow it with an action of $S^*\CV$, namely with an associative homomorphism of graded algebras:
$$
S^*\CV \otimes_{\CO_X} \iota_* \left( \bigoplus_{k=0}^\infty F^k \otimes C \right) \longrightarrow \iota_* \left( \bigoplus_{k=0}^\infty F^k \otimes C \right)
$$
The above morphism is obtained via adjunction and \eqref{eqn:algebras0}. 
\end{proof}

\subsection{Projective dimension one}
\label{sub:one}

For the setting of this paper, we will need a version of Proposition \ref{prop:lift0} when the vector bundle $\CV$ is replaced by the quotient:
$$
0 \longrightarrow\CW \stackrel{\psi}\longrightarrow \CV \longrightarrow \CQ \longrightarrow 0
$$ 
where $\CW$ is another vector bundle. More precisely, we are interested in the case when:
$$
Y \hookrightarrow \PP \CV^\vee
$$
is the (derived) zero locus of the section:
\begin{equation}
\label{eqn:section}
s : \pi^*(\CW) \stackrel{\psi}\longrightarrow \pi^*(\CV) \longrightarrow \CO(1)
\end{equation}
where $\pi$ is the map in the following diagram:
\begin{equation}
\label{eqn:iota}
\includegraphics{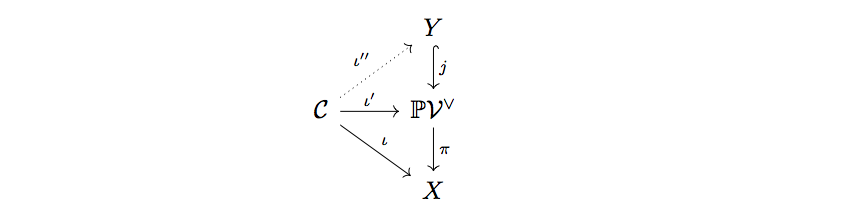}
\end{equation}
To simplify the geometry, we make the following very important assumption:
\begin{equation}
\label{eqn:avramov}
\text{the ideal of }Y \stackrel{j}\hookrightarrow \PP\CV^\vee \text{ is generated by a regular sequence in Im }s
\end{equation}
which entails that the embedding $\psi$ cuts out $Y$ as a complete intersection in $\PP\CV^\vee$. One could do without this assumption, but that would require one to replace $Y$ with the dg scheme determined by the exterior power of the section $s$. In other words, we must require the following quasi-isomorphism in the derived category of $\PP \CV^\vee$:
\begin{equation}
\label{eqn:rivers}
\CO_Y \cong \left[ ... \stackrel{s}\longrightarrow \wedge^k \pi^*(\CW) \otimes \CO(-k) \stackrel{s}\longrightarrow ... \stackrel{s}\longrightarrow \CO \right]
\end{equation}
In order to construct the lift $\iota''$ in \eqref{eqn:iota}, we must first construct the arrow $\iota'$, and for this we invoke Proposition \ref{prop:lift0}. Then the following Proposition says precisely when the arrow $\iota'$ thus defined factors through $Y$. 

\begin{proposition}
\label{prop:lift}
Suppose that $Y \stackrel{j}\hookrightarrow \PP \CV^{\vee}$ as in \eqref{eqn:iota} and that the map $\iota$ is constructed. The datum of the extension $\iota''$ is equivalent to an invertible object $F \in \cat$ together with an arrow:
\begin{equation}
\label{eqn:datum}
\iota^* \CQ \stackrel{\beta}\longrightarrow F
\end{equation}
in $\cat$. This gives $\cat$ the structure of a category over $Y$ if and only if:
\begin{equation}
\label{eqn:koszul}
\left[ ... \stackrel{\beta}\longrightarrow \iota^*\left(\wedge^k \CQ \right) \otimes F^{-k} \stackrel{\beta}\longrightarrow ... \right] \stackrel{\ehe}\cong 0
\end{equation}
The map $\iota''$ is birational if and only if $\iota_*$ gives rise to an isomorphism:
\begin{equation}
\label{eqn:properproj}
S^k \CQ \cong \iota_*(F^k) \qquad \forall \ k\geq 0
\end{equation}
\end{proposition}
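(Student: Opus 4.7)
My plan is to reduce this proposition to Proposition \ref{prop:lift0} by factoring any candidate morphism $\iota'' : \cat \to Y$ through the embedding $j: Y \hookrightarrow \PP \CV^\vee$ of \eqref{eqn:iota}, and characterizing explicitly when a morphism $\iota' : \cat \to \PP\CV^\vee$ factors through $Y$.

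First I would invoke Proposition \ref{prop:lift0} to classify derived morphisms $\iota' : \cat \to \PP\CV^\vee$ lifting $\iota$ by pairs $(F, \alpha : \iota^*\CV \to F)$ satisfying the Koszul vanishing \eqref{eqn:koszul0}. I then need to determine which such $(F,\alpha)$ correspond to morphisms that factor through the subscheme $Y$. By assumption \eqref{eqn:avramov}, the resolution \eqref{eqn:rivers} holds, so the factorization of $\iota'$ through $Y$ is equivalent to the vanishing of ${\iota'}^*$ applied to the defining section $s : \pi^*\CW \to \CO(1)$. Because ${\iota'}^*$ is monoidal and $s$ is the composition $\pi^*\CW \xrightarrow{\pi^*\psi} \pi^*\CV \to \CO(1)$, this pullback is precisely $\alpha \circ \iota^*\psi : \iota^*\CW \to F$. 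Hence $\iota'$ factors through $Y$ if and only if $\alpha \circ \iota^*\psi = 0$, which (since $\iota^*$ is monoidal and $\CQ = \mathrm{coker}(\psi)$) is equivalent to the existence of the factorization $\beta : \iota^*\CQ \to F$ asserted in \eqref{eqn:datum}.

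The main technical step is to show that the Koszul vanishing \eqref{eqn:koszul0} for $\CV$ is equivalent to the Koszul vanishing \eqref{eqn:koszul} for $\CQ$. Exploiting that $\alpha|_{\iota^*\CW}=0$, I would filter the complex $(\wedge^\bullet \iota^*\CV \otimes F^{-\bullet}, \alpha)$ by the subobjects generated, via the natural multiplication in the exterior algebra, by the higher exterior powers of $\iota^*\CW$. Since $\CW$ is a vector bundle and $\alpha$ kills $\iota^*\CW$, the associated graded identifies with
$$
\wedge^\bullet \iota^*\CW \,\otimes\, \bigl( \wedge^\bullet \iota^*\CQ \otimes F^{-\bullet},\, \beta \bigr),
$$
the differential acting solely on the second tensor factor. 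Because $\wedge^\bullet \iota^*\CW$ is a finite direct sum of invertible objects, and contains $\1$ as a retract, the associated graded is $\stackrel{\ehe}{\cong} 0$ if and only if $(\wedge^\bullet \iota^*\CQ \otimes F^{-\bullet}, \beta)$ is. An iterated mapping cone argument, using that the filtration has finitely many steps, then promotes this to the analogous statement for the total complex. I expect this filtration step to be the main obstacle: $\cat$ is only assumed to be additive and graded, so one must argue functorially via cones rather than invoke spectral sequences in an abelian category.

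Finally, for birationality, the identity $\iota_* = \pi_* \iota''_*$ and the projection formula reduce \eqref{eqn:properproj} to the purely geometric identity $\pi_* \CO_Y(k) \cong S^k \CQ$ for $k \geq 0$. This in turn follows by tensoring the Koszul resolution \eqref{eqn:rivers} with $\CO(k)$ and pushing forward along $\pi$ using the standard projective bundle formula $\pi_* \CO(j) = S^j\CV$; what results on the base $X$ is precisely the Koszul-type resolution of $S^k\CQ$ in terms of $\wedge^i \CW \otimes S^{k-i}\CV$.
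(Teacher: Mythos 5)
Your overall strategy matches the paper's: invoke Proposition~\ref{prop:lift0}, observe that ${\iota'}^*$ must annihilate the section $\pi^*\CW\to\CO(1)$ so that $\alpha$ factors through $\beta$ as in \eqref{eqn:datum}, and obtain \eqref{eqn:properproj} from the projection formula together with the Koszul resolution \eqref{eqn:rivers}. The genuine difference is the Koszul step: you aim to prove \eqref{eqn:koszul0}~$\Leftrightarrow$~\eqref{eqn:koszul} outright by filtering $K(\alpha):=(\wedge^\bullet\iota^*\CV\otimes F^{-\bullet},\alpha)$ by $\iota^*\CW$-content, whereas the paper argues the necessity of \eqref{eqn:koszul} directly, by observing that the derived functor ${\iota''}^*$ must carry the contractible Koszul complex on $Y$ to a contractible complex in $K^b(\cat)$, and leaves sufficiency to a Beilinson-type description of $D^b(\coh Y)$ as in the proof of Proposition~\ref{prop:lift0}.

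Your filtration argument has a gap in the direction you also need. From the finite tower $F_m\subset\cdots\subset F_0=K(\alpha)$ with $\mathrm{gr}_i\simeq\wedge^i\iota^*\CW\otimes F^{-i}\otimes K(\beta)$ and the triangles $F_{i+1}\to F_i\to\mathrm{gr}_i$, iterated cones give the implication $\mathrm{gr}_\bullet\stackrel{\he}\cong 0\Rightarrow K(\alpha)\stackrel{\he}\cong 0$, i.e.\ \eqref{eqn:koszul}~$\Rightarrow$~\eqref{eqn:koszul0}, but \emph{not} the converse: from $F_0\stackrel{\he}\cong 0$ the triangle $F_1\to F_0\to\mathrm{gr}_0$ only gives $F_1\simeq\mathrm{gr}_0[-1]$, which does not force $\mathrm{gr}_0=K(\beta)$ to be contractible. (A bounded filtered complex can be contractible with noncontractible associated graded, e.g.\ $[A\stackrel{\mathrm{id}}\to A]$ filtered by its first term.) That missing implication is precisely what is needed to deduce \eqref{eqn:koszul} from the existence of $\iota''$, so the paper's direct necessity argument is not a dispensable shortcut. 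A secondary issue: since $\CQ$ has projective dimension $1$, $\psi:\CW\to\CV$ is not a subbundle inclusion, so the naive filtration of $\wedge^\bullet\CV$ does not have associated graded $\wedge^\bullet\CW\otimes\wedge^\bullet\CQ$; the paper interprets $\wedge^k\CQ$ throughout as the two-term complex $\wedge^k[\CW\to\CV]$ of the Appendix, which makes \eqref{eqn:koszul} a genuine double complex rather than an associated graded of \eqref{eqn:koszul0}. Your filtration idea still captures the sufficiency direction correctly in spirit, but should be reformulated as an iterated-cone comparison between $\wedge^\bullet[\CW\to\CV]\otimes F^{-\bullet}$ and $\wedge^\bullet\CV\otimes F^{-\bullet}$ rather than as a filtration of the latter.
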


Note that if we interpret $Y$ as a dg scheme whose structure sheaf is the dg algebra in the right hand side of \eqref{eqn:rivers}, we must replace $\CQ$ in \eqref{eqn:datum}, \eqref{eqn:koszul},  \eqref{eqn:properproj} with the two term complex $[\CW\rightarrow\CV]$. Making sense of the symmetric and exterior powers of such a complex is rather straightforward homological algebra, which we relegate to the Appendix.

\begin{proof} As we have seen in Proposition \ref{prop:lift0}, the existence of a monoidal functor:
$$
{\iota'}^*: D^b(\coh(\PP \CV^\vee)) \rightarrow K^b(\cat)
$$
implies the datum of an invertible object $F \in \cat$ (the image of $\CO(1)$) together with an arrow $\iota^*\CV \rightarrow F$ in $\cat$ (the image of the tautological morphism). The question is when does the functor ${\iota'}^*$ factor through:
$$
D^b(\coh(\PP \CV^\vee)) \stackrel{j^*}\longrightarrow D^b(\coh(Y))
$$
$$
M \mapsto M \otimes_{\CO_{\PP \CV^\vee}} \CO_Y =  \left[ ... \stackrel{s}\longrightarrow \wedge^k \pi^*(\CW) \otimes M(-k) \stackrel{s}\longrightarrow ... \stackrel{s}\longrightarrow M \right]
$$
where in the last equality we have used the assumption \eqref{eqn:avramov}. In particular, we have: 
$$
\includegraphics{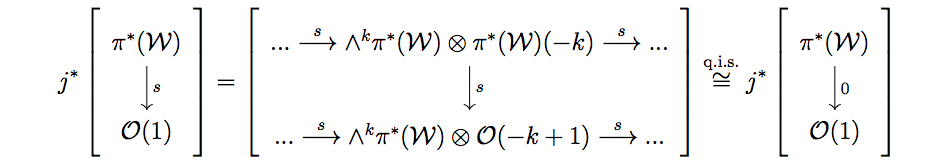}
$$
This implies that the functor ${\iota''}^*$ must take the composition $\pi^*(\CW) \stackrel{\psi}\hookrightarrow \pi^*(\CV) \rightarrow \CO(1)$ to zero, and hence the map $\alpha$ of \eqref{eqn:datum0} must factor through a map $\beta$ as in \eqref{eqn:datum}. Sending the Koszul complex of $\beta$ though the functor $j^*$ gives rise to the Koszul complex of $\alpha$, which must be sent to 0 by \eqref{eqn:koszul0}. Therefore, we conclude that the existence of the extension ${\iota''}^*$ requires \eqref{eqn:koszul}. Finally, recall that being birational is equivalent to $\iota''_*\1 \cong \CO_Y$. The projection formula implies that $\iota''_*(F^\bullet) \cong \CO_Y(k)$, and applying $j_*$ to this isomorphism yields:
$$
\iota'_*(F^\bullet) \cong \left[ ... \stackrel{s}\longrightarrow \wedge^k \pi^*(\CW) \otimes \CO(\bullet-k) \stackrel{s}\longrightarrow ... \right]
$$
Applying $\pi_*$ to the above isomorphism implies:
\begin{equation}
\label{eqn:luch}
\iota_*(F^\bullet) \cong \Big[ \wedge\CW \otimes S \ \CV \Big]^\bullet 
\end{equation}
where the differential in the right hand side of \eqref{eqn:luch} is given by the map $\psi:\CW \rightarrow \CV$. As in Example \ref{ex:important}, the right hand side is a resolution of $S^\bullet \CQ$, hence we obtain \eqref{eqn:properproj}.

\end{proof}

\subsection{Deducing Conjecture \ref{conj:1} from Conjecture \ref{conj:main}}
\label{sub:proof} 

The categorical setup presented in this section allows one to deduce the main conjecture from Conjecture \ref{conj:main} (a)--(c). We will proceed by induction on $n$, so let assume that the functors \eqref{eqn:funk} are well-defined for some fixed $n$. Our task is to construct functors:
$$
K^b(\SBim_{n+1}) \xtofrom[{\iota}_{n+1}^*]{\iota_{n+1*}}   D^b(\coh(\FH^\dg_{n+1}))
$$
given the functors:
$$
K^b(\SBim_{n})[x_{n+1}] \xtofrom[\iota_n^*]{\iota_{n*}} D^b(\coh(\FH^\dg_{n} \times \CC) )
$$
obtained from the inductive hypothesis and tensoring with the extra variable $x_{n+1}$. We define the composed functors:
$$
\iota_* \ : \ K^b(\SBim_{n+1}) \xtofrom[I]{\Tr}  \SBim_{n}[x_{n+1}] \xtofrom[{\iota_n}^*]{\iota_{n*}} D^b(\coh(\FH^\dg_{n} \times \CC)) \ : \ \iota^*
$$
According to Proposition \ref{prop:dg scheme}, we have $\FH^\dg_{n+1} = \PP \CE^{\vee}_n$, where $\CE_n$ is the complex on $\FH^\dg_n \times \CC$ from \eqref{eqn:complex}. Relation \eqref{eqn:two step} states that this complex has projective dimension 1, and we can therefore apply Proposition \ref{prop:lift}. To do so, we must exhibit an invertible object $F \in \SBim_{n+1}$ and a morphism:
$$
\iota^* \CE_n \stackrel{\beta}\longrightarrow F
$$
in $K^b(\SBim_n)$. We will choose $F = L_{n+1}$ and take the morphism $\beta$ to be the adjoint of \eqref{eqn:conjmain}:
$$
\CE_n = \iota_*(L_{n+1}) = \iota_{n*} \left( \Tr(L_{n+1}) \right)
$$
The full statement of \eqref{eqn:conjmain} allows one to prove that $S^k(\CE_n) = \iota_*(L_{n+1}^k)$, which establishes the fact that $\SBim_{n+1}$ is birational over $\FH_{n+1}$ by \eqref{eqn:properproj}. To complete the proof of Conjecture \ref{conj:1} one needs to also check that \eqref{eqn:koszul} holds, which is part (c) of Conjecture \ref{conj:main}.

\subsection{Invertible objects in monoidal categories}
\label{sub:invertible}

We summarize several important properties of invertible objects \eqref{eqn:def invertible} in arbitrary monoidal categories. The proofs are straightforward, and left as exercises to the interested reader.

\begin{proposition}
\label{prop:inv}
For any invertible object $F\in \cat$ and two arbitrary objects $C,C'\in \cat$, there exist canonical isomorphisms:
$$
\Hom_{\cat}(F \otimes C, F \otimes C') \cong \Hom_{\cat}(C,C') \cong \Hom_{\cat}(C \otimes F, C' \otimes F)
$$
\end{proposition}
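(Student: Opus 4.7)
The plan is to construct the isomorphism by tensoring on the left with $F$ (resp. with its inverse $F^{-1}$), and to check that these two operations are mutually inverse using only the defining property $F \otimes F^{-1} \cong F^{-1} \otimes F \cong \1$ together with the unit axioms of the monoidal category $\cat$. The right-hand isomorphism is handled symmetrically by tensoring on the right; note that we do \emph{not} need $\cat$ to be symmetric, since each of the two asserted isomorphisms only involves one-sided tensoring.

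Concretely, for the left-hand isomorphism I define
$$
\Phi : \Hom_\cat(C, C') \longrightarrow \Hom_\cat(F \otimes C, F \otimes C'), \qquad f \longmapsto \mathrm{id}_F \otimes f,
$$
and in the other direction
$$
\Psi : \Hom_\cat(F \otimes C, F \otimes C') \longrightarrow \Hom_\cat(C, C'), \qquad g \longmapsto \lambda_{C'} \circ (\epsilon \otimes \mathrm{id}_{C'}) \circ (\mathrm{id}_{F^{-1}} \otimes g) \circ (\epsilon^{-1} \otimes \mathrm{id}_C) \circ \lambda_C^{-1},
$$
where $\epsilon : F^{-1} \otimes F \xrightarrow{\sim} \1$ is the structural isomorphism from \eqref{eqn:def invertible}, $\lambda$ is the left unitor, and I have suppressed associators for readability. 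In other words, $\Psi(g)$ is obtained by tensoring $g$ on the left with $\mathrm{id}_{F^{-1}}$ and then contracting $F^{-1} \otimes F$ to $\1$ on both sides.

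The verification that $\Psi \circ \Phi = \mathrm{id}$ and $\Phi \circ \Psi = \mathrm{id}$ is an easy diagram chase: both compositions reduce, by naturality of the unitor $\lambda$ and the interchange law $(\mathrm{id}_{F^{-1}} \otimes \mathrm{id}_F) \otimes f = \mathrm{id}_{F^{-1} \otimes F} \otimes f$, to the obvious identity. The functoriality of these assignments in $C$ and $C'$ shows that the isomorphism is canonical. For the right-hand isomorphism $\Hom_\cat(C,C') \cong \Hom_\cat(C \otimes F, C' \otimes F)$, we repeat the argument verbatim with $F$ tensored on the right and $\epsilon' : F \otimes F^{-1} \xrightarrow{\sim} \1$ used in place of $\epsilon$.

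I do not anticipate any serious obstacle; the only point requiring mild care is keeping the unitors and associators straight, which is automatic once one invokes the Mac Lane coherence theorem and works in a strict monoidal model of $\cat$. In particular, no assumption beyond invertibility of $F$ in the sense of \eqref{eqn:def invertible} is needed.
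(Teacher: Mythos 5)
The paper itself does not prove Proposition \ref{prop:inv}; it is one of the statements in Subsection \ref{sub:invertible} explicitly left as an exercise to the reader, so there is no in-text proof to compare against. Your proposal follows the standard argument, and the key computation $\Psi \circ \Phi = \mathrm{id}$ is carried out correctly: $\Phi(f) = \mathrm{id}_F \otimes f$, then tensor with $\mathrm{id}_{F^{-1}}$, then cancel $\epsilon^{-1}$ against $\epsilon$, then use naturality of $\lambda$.

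The one place you are too quick is the claim that $\Phi \circ \Psi = \mathrm{id}$ follows ``by naturality of the unitor and the interchange law.'' After unwinding, one finds $\Phi\Psi(g) = (u \otimes \mathrm{id}_{C'}) \circ g \circ (u^{-1} \otimes \mathrm{id}_C)$, where $u = (\mathrm{id}_F \otimes \epsilon) \circ (\epsilon'^{-1} \otimes \mathrm{id}_F) : F \to F$ (suppressing associators and unitors). This $u$ is the identity precisely when the ``zigzag'' compatibility between the two structural isomorphisms $\epsilon : F^{-1} \otimes F \cong \1$ and $\epsilon' : F \otimes F^{-1} \cong \1$ holds, which equation \eqref{eqn:def invertible} alone does not guarantee. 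The gap is small and fixable in either of two ways: (i) replace $\epsilon'$ by $\epsilon' \circ (u^{-1} \otimes \mathrm{id}_{F^{-1}})$, after which the zigzag holds and your $\Psi$ is a genuine two-sided inverse; or (ii) observe that your argument already shows $\Psi$ is a retraction of $\Phi$, and running the identical argument with $F$ replaced by $F^{-1}$ shows that $g \mapsto \mathrm{id}_{F^{-1}} \otimes g$ (which, after conjugating by $\epsilon \otimes \mathrm{id}$, \emph{is} $\Psi$) also admits a retraction; being simultaneously a retraction and a section, $\Psi$ is an isomorphism, hence so is $\Phi$. Either way, the proposition holds and your construction of $\Psi$ is the right one; just be careful not to claim more from bifunctoriality than it gives.
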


\begin{corollary}
\label{cor:inv 1}

Tensoring with an invertible object and with its inverse yield biadjoint functors, that is, we have canonical isomorphisms:
$$
\Hom_{\cat}(C, F \otimes C') \cong \Hom_{\cat}(F^{-1} \otimes C,C') \qquad \Hom_{\cat}(C, C' \otimes F) \cong \Hom_{\cat}(C \otimes F^{-1},C')
$$
\end{corollary}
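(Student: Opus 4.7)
The plan is to deduce both isomorphisms directly from Proposition \ref{prop:inv} together with the defining isomorphisms $F \otimes F^{-1} \cong \1 \cong F^{-1} \otimes F$. The strategy is to insert $F$ (or $F^{-1}$) on the appropriate side inside a Hom space and use the cancellation provided by Proposition \ref{prop:inv}.

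For the first isomorphism, I would start from $\Hom_\cat(F^{-1} \otimes C, C')$ and apply Proposition \ref{prop:inv} to the invertible object $F$, tensoring both arguments with $F$ on the left. This produces a canonical isomorphism
\[
\Hom_\cat(F^{-1} \otimes C, C') \;\cong\; \Hom_\cat(F \otimes F^{-1} \otimes C,\, F \otimes C').
\]
Using the associator together with the isomorphism $F \otimes F^{-1} \cong \1$ and the unitor, the right-hand side is canonically identified with $\Hom_\cat(C, F \otimes C')$, which yields the first claim. The second isomorphism is entirely analogous: starting from $\Hom_\cat(C \otimes F^{-1}, C')$, I would apply Proposition \ref{prop:inv} by tensoring with $F$ on the right to obtain
\[
\Hom_\cat(C \otimes F^{-1}, C') \;\cong\; \Hom_\cat(C \otimes F^{-1} \otimes F,\, C' \otimes F) \;\cong\; \Hom_\cat(C, C' \otimes F),
\]
where the last identification uses $F^{-1} \otimes F \cong \1$ and the unitor.

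There is essentially no serious obstacle, since Proposition \ref{prop:inv} has already done the work of showing that tensoring with an invertible object is a fully faithful endofunctor of $\cat$. The only thing worth being careful about is that the unit/counit isomorphisms $F \otimes F^{-1} \cong \1$ and $F^{-1} \otimes F \cong \1$ are used coherently, so that the two composite isomorphisms constructed above are mutually inverse to the natural maps
\[
\Hom_\cat(C, F \otimes C') \to \Hom_\cat(F^{-1} \otimes C, C'), \qquad f \mapsto (\Id_{F^{-1}} \otimes f)\ \text{followed by}\ F^{-1}\otimes F \cong \1,
\]
and similarly on the other side. This amounts to an elementary diagram chase in any monoidal category, using only the pentagon and triangle axioms, and no additional hypotheses on $\cat$ are required.
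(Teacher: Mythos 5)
Your proposal is correct, and it is essentially the argument the paper intends: the paper explicitly leaves the proofs in this subsection as exercises, deriving the corollary from Proposition~\ref{prop:inv} together with the cancellation $F\otimes F^{-1}\cong\1\cong F^{-1}\otimes F$. Your substitution of $F^{-1}\otimes C$ (respectively $C\otimes F^{-1}$) into Proposition~\ref{prop:inv} and then collapsing via the unitor is exactly the expected route, and your identification of the underlying natural map $f \mapsto (\Id_{F^{-1}}\otimes f)$ composed with $F^{-1}\otimes F\cong\1$ confirms the canonicity claimed in the statement.
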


\begin{corollary}
\label{cor:inv 2}

For any invertible $F\in \cat$ and any object $C\in \cat$, we have:
$$
\Hom_{\cat}(\1, F\otimes C) \cong \Hom_{\cat}(1, C \otimes F)
$$
\end{corollary}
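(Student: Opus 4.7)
The plan is to reduce both Hom spaces to a common third expression using the biadjunction of Corollary \ref{cor:inv 1}, which is the natural companion result that has just been stated. Since the monoidal structure is not assumed to be symmetric, one cannot simply swap tensor factors; the whole force of the statement has to come from invertibility of $F$.

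Concretely, I would proceed as follows. Starting from the left-hand side, apply the first half of Corollary \ref{cor:inv 1} with the roles of the two objects taken to be $\1$ and $C$, and with the invertible object $F$ tensored on the left. This produces a canonical isomorphism
$$
\Hom_{\cat}(\1, F\otimes C) \;\cong\; \Hom_{\cat}(F^{-1} \otimes \1, C) \;\cong\; \Hom_{\cat}(F^{-1}, C),
$$
where the second step uses the unit axiom $F^{-1} \otimes \1 \cong F^{-1}$. Next, starting from the right-hand side, apply the second half of Corollary \ref{cor:inv 1} (tensoring $F$ on the right), again with the two objects taken to be $\1$ and $C$. This produces a canonical isomorphism
$$
\Hom_{\cat}(\1, C \otimes F) \;\cong\; \Hom_{\cat}(\1 \otimes F^{-1}, C) \;\cong\; \Hom_{\cat}(F^{-1}, C),
$$
using $\1 \otimes F^{-1} \cong F^{-1}$.

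Composing one of these isomorphisms with the inverse of the other yields the desired canonical isomorphism $\Hom_{\cat}(\1, F\otimes C) \cong \Hom_{\cat}(\1, C\otimes F)$. There is no real obstacle here; the only thing to be mildly careful about is the direction in which each biadjunction of Corollary \ref{cor:inv 1} is applied (left tensor versus right tensor), so that in both cases one lands on the common object $\Hom_{\cat}(F^{-1}, C)$ rather than, say, on $\Hom_{\cat}(C, F^{-1})$ in one case and $\Hom_{\cat}(F^{-1}, C)$ in the other. Both biadjunctions are provided by Corollary \ref{cor:inv 1}, so this is simply a matter of bookkeeping.
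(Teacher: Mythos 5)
Your proof is correct and is the standard argument. The paper explicitly leaves the proofs of Proposition \ref{prop:inv} and Corollaries \ref{cor:inv 1} and \ref{cor:inv 2} as exercises, so there is no paper proof to compare against; your reduction of both sides to the common $\Hom_{\cat}(F^{-1},C)$ via the two halves of the biadjunction in Corollary \ref{cor:inv 1} is exactly what one would write.
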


\section{Example: the case of two strands}
\label{sec:n=2}

\subsection{The geometry of $\FH_2$}
\label{sub:fh 2}

In this section, we will always write $\FH_2 = \FH_2(\CC)$. In this section we construct explicitly the functors $\iota^*$ and $\iota_*$ between the category of sheaves on $ \FH_2 $ and the category of Soergel bimodules $\SBim_2$. We have the matrix presentation:
$$
\FH_2 = \frac {\left\{ X = \left( \begin{array}{cc}
x_1 & 0 \\
z & x_2 \end{array} \right), Y = \left( \begin{array}{cc}
0 & 0 \\
w & 0 \end{array} \right),[X,Y]=0, v = \left( \begin{array}{cc}
1 \\
0 \end{array} \right) \text{ cyclic} \right\} }{ \text{conjugation by } g = \left( \begin{array}{cc}
1 & 0 \\
0 & c \end{array} \right)}
$$
Note that in the presentation above, we fixed the vector $v$ (and this fixes the first column of the conjugating matrix) to eliminate some coordinates. Unwinding the above gives us:
\begin{equation}
\label{eqn:matrix fh 2}
\FH_2 = \frac {\{(x_1,x_2,z,w), \ (x_1-x_2)w = 0, \ z,w \text{ not both zero} \}}{(x_1,x_2,z,w) \sim (x_1,x_2,cz,cw)} = \proj(A)
\end{equation}
where $x_1,x_2,z,w$ have degrees $0,0,1,1$ in the graded algebra:
\begin{equation}
\label{eqn:functions on fh 2}
A = \frac{\CC[x_1,x_2,z,w]}{(x_1-x_2)w} 
\end{equation}
Recall the complex \eqref{eqn:complex}:
\begin{equation}
\label{eqn:complex e1}
\CE_1 = \left[qt \CO \xrightarrow{(0,x_1-x_2,0)} q \CO \oplus t \CO \oplus \CO \xrightarrow{(x_1-x_2,0,1)^T} \CO \right]
\end{equation}
on $\FH_1(\CC) \times \CC = \CC^2$, from which it is clear the the leftmost map is injective and the rightmost map is surjective on all fibers. Therefore, we have $H^0(\CE_1) \cong \CE_1$ and hence:
$$
\FH_2 \cong \FH_2^{\dg}
$$ 
Moreover, letting $z$ and $w$ be coordinates on the first two summands of the middle space of \eqref{eqn:complex e1}, we observe that $H^0(\CE_1) = (\CO  z \oplus \CO w)/(x_1-x_2)w$, which matches the algebra \eqref{eqn:functions on fh 2}. The irreducible components of the flag Hilbert scheme are:
\begin{equation}
\label{eqn:irreducible components n=2}
\FH_2 = Z_1 \cup Z_2
\end{equation}
where:
\begin{equation}
\label{eqn:z1 n=2}
Z_1 = \overline{\{x_1\neq x_2\}} = \{b=0\} = \CC^2 \text{ with coordinates } (x_1,x_2) = \proj(A/wA)
\end{equation}
\begin{equation}
\label{eqn:z2 n=2}
Z_2 = \{x_1=x_2\} = \CC \times \PP^1 \text{ with coordinates } (x,[z:w]) = \proj(A/(x_1-x_2)A)
\end{equation}
The intersection of these two irreducible components is:
$$
Z_1 \cap Z_2 = \CC \times [1:0] = \CC \times \{ I_{(2)} \}
$$
while the other torus fixed point $I_{(1,1)}$ satisfies:
$$
Z_1 \not \ni I_{(1,1)} \in Z_2, \ I_{(1,1)} = (0,[0:1])
$$

\begin{figure}[ht!]
\begin{tikzpicture}
\draw (2,2) circle (1);
\draw (0,1)--(4,1);
\draw (2,1) node {$\bullet$};
\draw (2,3) node {$\bullet$};
\draw (2,0.5) node {$I_{(2)}$};
\draw (2,3.5) node {$I_{(1,1)}$};
\draw (4,0.7) node {$Z_1$};
\draw (0.7,2) node {$Z_2$};
\end{tikzpicture}
\caption{Flag Hilbert scheme of two points}
\label{fig:fh2}
\end{figure}
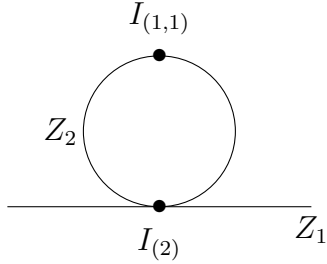

\subsection{Cohomology of sheaves on $\FH_2$}
\label{sub:coh 2}

On the projectivization \eqref{eqn:matrix fh 2}, the line bundles of importance for us are $\CL_1 \cong \CO$ and $\CL_2 \cong \CO(1)$, where the latter denotes the Serre twisting sheaf. Note that:
\begin{equation}
\label{eqn:taut 2}
\CT_2 \cong \CO \oplus \CO(1)
\end{equation}
We will now compute the cohomology groups of certain line bundles on $\FH_2$. To simplify our computations by removing a factor of $\CC$, we will work with the reduced version all the schemes and dg schemes in question (see Subsection \ref{sub:reduced}). Specifically, this means:
\begin{equation}
\label{eqn:bar a}
\bFH_2 = \proj(\bA) \qquad \text{where } \bA = \frac{\CC[x,z,w]}{xw}  
\end{equation}
where we set $x_1+x_2=0$ and $x_1-x_2 = x$. The irreducible components of this variety are:
$$
\bZ_1 = \CC \qquad \text{and} \qquad \bZ_2 = \PP^1 = \FH_2(\point)
$$
Note that $\bCT_2 = \CO(1)$. The following cohomology computations are well-known:
$$
H^i(\bZ_1,\CO(k)) = \frac {q^{k}}{1-q} \cdot \delta_{i,0}
$$
because $\bZ_1 = \CC$, while:
$$
H^i(\bZ_2,\CO(k))=\begin{cases}
t^k+\ldots+q^k\ & \text{if}\ i=0 \text{ and } k\ge 0\\
(qt)^{-1}(t^{k+2}+\ldots+q^{k+2})\ & \text{if}\ i=1 \text{ and } k\le -2 \\
0 & \text{otherwise}
\end{cases}
$$
because $\bZ_2 = \PP^1$ with equivariant weights $q$ and $t$. Consider the short exact sequence:
$$
\xymatrix{
0 \ar[r] & q\CO_{\bZ_1} \ar[r]^-{x} & \CO_{\bFH_2} \ar[r] & \CO_{\bZ_2} \ar[r] & 0}
$$
which is induced by \eqref{eqn:irreducible components n=2}. Because the cohomology of sheaves on $\bZ_1$ is concentrated in degree 0, we have the following equality of $(q,t)$--equivariant vector spaces:
$$
H^i(\bFH_2,\CO(k)) = q H^i(\bZ_1,\CO(k)) + H^i(\bZ_2,\CO(k)) =
$$
\begin{equation}
\label{eqn:coh n=2}
= \begin{cases}
t^k+\ldots+q^k+\frac{q^{k+1}}{1-q} & \text{if}\ i=0 \text{ and } k\ge 0\\
\frac{q^{k+1}}{1-q} & \text{if}\ i=0 \text{ and } k<0\\
(qt)^{-1}(t^{k+2}+\ldots+q^{k+2}) & \text{if}\ i=1 \text{ and } k\le -2 \\
0 & \text{otherwise}
\end{cases}
\end{equation}
The analogous equalities for the non-reduced version $\FH_2$ are obtained by dividing the right hand sides of \eqref{eqn:coh n=2} by $1-q$.

\subsection{Soergel bimodules for $n=2$}
\label{sub:soergel n=2}

The category of Soergel bimodules is generated by two objects: $R = \CC[x_1,x_2]$ and $B = R \otimes_{R^{(12)}} R$. With our grading conventions, we have: 
\begin{equation}
\label{eqn:square}
B^2 = B \otimes_R B \cong q^{\frac 12}B\oplus q^{- \frac 12}B
\end{equation}
In the reduced category, we can set $x_1+x_2=0$ and $x_1-x_2 = x$, and write $\Rred=\CC[x]$ and:
$$
\Bred=\Rred\otimes_{\Rred^s}\Rred=\CC[x]\otimes_{\CC[x^2]}\CC[x].
$$
This object also satisfies property \eqref{eqn:square}, and moreover:
$$
\Hom(\1,\Bred)\simeq \Ext^1(\1,\Bred)=\Rred
$$
are rank 1 modules over $\Rred$. In terms of grading, note that $\Ext^1$ differs from $\Hom$ by a shift by the equivariant weight $a^{-1}q^{-1}$, which is an incarnation of the wedge product in \eqref{eqn:iota wedge}. Thus:
\begin{equation}
\label{eqn:basic}
\RHom^{\bullet}(\1,\Bred)=\wedge^{\bullet} \left( \frac {\xi}{qa} \right)\otimes \Rred 
\end{equation}
for a formal variable $\xi$. The object in the Soergel category which corresponds to a single positive crossing $\sigma$ is the following complex:
$$
\sigma = \left[\Bred \xrightarrow{1\otimes 1 \mapsto 1} \frac {s\Rred}{q^{\frac 12}} \right]  
$$
The powers of $s$ mark homological degree, and so they are always consecutive integers in a complex. We mainly use them to pinpoint the 0--th term of a complex, and to compare with formulas from geometry. Similarly, the object in the Soergel category which corresponds to a single negative crossing $\sigma^{-1}$ is:
$$
\sigma^{-1} = \left[\frac {q^{\frac 12}\Rred}s \xrightarrow{1 \mapsto x \otimes 1 + 1 \otimes x} \Bred \right]  
$$
Let us write $\bFT = \bFT_2$ for the image of the full twist in the reduced Soergel category, and note that $\bFT = \sigma^2$. Therefore, formula \eqref{eqn:square} allows us to write:
$$
\bFT = \left[q^{\frac 12} \Bred \to \frac {s \Bred}{q^{\frac 12}} \to \frac {s^2 \Rred}q \right]  
$$
Recall that the connection between the parameters $s$ and $t$ is given by $s^2 = qt$. The two eigenmaps that span the space $\Hom(\1, \bFT)$ are described in the following diagram:
\begin{equation}
\label{eqn:eigenmaps2}
\includegraphics{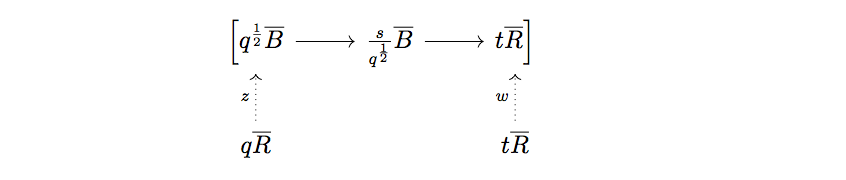}
\end{equation}
where $z = (1\mapsto x \otimes 1 + 1 \otimes x)$ and $w = \text{Id}$. As we will see in more examples in the next Subsection, it is no coincidence that the only maps from $\Rred$ into non-negative powers of the full twist have integer $q,t$--weights: this is called the ``parity miracle" by \cite{EH}.

\begin{proposition}
\label{prop:koszul=0 n=2}
We have the following relation in the category $\SBim_2$:
$$
0 \stackrel{\ehe}\cong \left[ ... \xrightarrow{\alpha_1} q \bFT^{-2} \oplus q \bFT^{-2} \xrightarrow{\alpha_2}  \bFT^{-2} \oplus q \bFT^{-1} \xrightarrow{\alpha_1} \bFT^{-1} \oplus \bFT^{-1} \xrightarrow{(z,w)} \1 \right]
$$
where the maps $\alpha_1$ and $\alpha_2$ are given by:
\begin{equation}
\label{eqn:alphas}
\alpha_1 = \left( \begin{array}{cc}
w & 0 \\
-z & x \end{array} \right) \qquad \text{and} \qquad  \alpha_2 = \left( \begin{array}{cc}
x & 0 \\
z & w \end{array} \right) 
\end{equation}

\end{proposition}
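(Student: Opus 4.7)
The strategy is to recognize the complex in Proposition \ref{prop:koszul=0 n=2} as the categorical Koszul-type complex associated to the two eigenmaps $z,w:\1\to\bFT$ from \eqref{eqn:eigenmaps2}. Geometrically these maps should correspond under the conjectural $\iota^*$ to the two homogeneous coordinates generating $\CL_2=\CO(1)$ on $\FH_2=\proj(A)$ with $A=\CC[x,z,w]/(xw)$. Because $z,w$ do \emph{not} form a regular sequence (they satisfy the relation $xw=0$), the naive two-term Koszul resolution is not exact; instead one must extend it to the $2$-periodic matrix factorization of the zero element $xw\in A$ appearing in the statement. Concretely, the matrices $\alpha_1,\alpha_2$ of \eqref{eqn:alphas} satisfy $\alpha_1\alpha_2=\alpha_2\alpha_1=xw\cdot\Id$ and $(z,w)\alpha_1=(0,wx)$, which is the defining feature of such a matrix factorization.

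The plan is to verify the contractibility by direct chain-level computation in $K^b(\SBim_2)$. First, using the explicit three-term presentation of $\bFT$ recalled above \eqref{eqn:eigenmaps2}, write down $\bFT^{-k}$ for each $k\ge 1$ as an explicit complex of reduced Soergel bimodules, together with the chain maps induced by $w=\Id$, $z=(1\mapsto x\otimes 1+1\otimes x)$, and multiplication by $x=x_1-x_2$. Second, verify that this is a complex by checking the matrix identities $\alpha_2\alpha_1\he 0$, $\alpha_1\alpha_2\he 0$, and $(z,w)\alpha_1\he 0$; all of these reduce to the single key homotopy relation $wx\he 0$ as a morphism $\bFT^{-1}\to\1$. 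The latter is proved by exhibiting an explicit nullhomotopy: the chain map representing $wx$ factors through the first differential of $\bFT$ (the one with source $q^{1/2}\Bred$), and this factorization provides the desired homotopy.

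Third, assemble a contracting homotopy for the whole complex. The $2$-periodic structure produces a natural candidate: if $h$ denotes the chosen nullhomotopy for $wx$, then splicing copies of $h$ in alternating columns yields a global contracting homotopy $H$ satisfying $dH+Hd=\Id$ on each term. Equivalently, one can proceed inductively by showing that each two-step truncation
\[
\bFT^{-k-1}\oplus q\bFT^{-k}\;\xrightarrow{\alpha_1}\;\bFT^{-k}\oplus\bFT^{-k}\;\xrightarrow{\alpha_2}\;q\bFT^{-k}\oplus q\bFT^{-k}
\]
is exact in the middle (again using $wx\he 0$), and then glueing these local exactness statements using the standard cone-of-telescopes argument in $K^b$.

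The main obstacle is Step 2 above, namely identifying a genuine, degree-homogeneous chain homotopy witnessing $wx\he 0$ in $\Hom_{K^b(\SBim_2)}(\bFT^{-1},\1)$; everything else is bookkeeping. Writing down this homotopy requires unwinding the Bott--Samelson presentation of $\Bred$ and checking the sign/degree conventions carefully, but once this single identity is in hand, the matrix-factorization formalism propagates it into a contracting homotopy of the entire semi-infinite complex.
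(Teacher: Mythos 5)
Your proposal takes a genuinely different route from the paper's, and Step 3 contains a real gap. The identity $wx \he 0$ is certainly true (it follows from $xw=0$ in the algebra $\bA = \Hom_{K^b(\SBim_2)}(\1,\bFT^{\bullet}) \cong \CC[x,z,w]/(xw)$) and is needed for the displayed diagram to be a genuine complex, but it is far from sufficient for contractibility. As a sanity check, consider the analogous infinite complex over the base ring $\bA$ itself, where $xw=0$ holds on the nose: there the complex resolves $\bA/(z,w)\cong\CC[x]\neq 0$ and is emphatically \emph{not} contractible. Whatever ``splicing copies of the nullhomotopy $h$'' means precisely, it sees only the algebra-level relation $xw=0$ and therefore cannot distinguish $\SBim_2$ from $\bA$-modules, so it cannot prove the statement. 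Likewise ``exact in the middle'' is not a well-defined notion in a homotopy category, and the inductive variant has the same problem: $wx\he 0$ does not tell you the two-step truncations are contractible.

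The actual content of the proposition is a cancellation specific to the Soergel category that is invisible at the level of $\bA$. The paper filters the complex so that the graded pieces are shifts of the three-term complex $\Cone(w)\otimes\Cone(z)$, reducing the proposition to $\Cone(w)\otimes\Cone(z)\simeq 0$; proving that in turn requires knowing what the cones actually \emph{are} in $\SBim_2$. Since $w$ maps $\Rred$ isomorphically onto the rightmost term of the Rouquier complex for $\bFT$, one has $\Cone(w)\simeq[\Bred\to\Bred]$, and then $\Bred\otimes\Cone(z)\simeq 0$ by a direct computation using $\Bred\otimes\Bred\cong q^{1/2}\Bred\oplus q^{-1/2}\Bred$. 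The nontrivial input is the structure of the Bott--Samelson bimodule $\Bred$, not a nullhomotopy for $wx$. Your Step 2 is consistent with the truth, but the decisive mechanism is the factorization through $\Bred$, which your argument never touches.
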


\begin{proof}
Remark that this complex is filtered by complexes:
\begin{multline}
\label{eqn:small koszul}
\left[\bFT^{-k-2}  \xrightarrow{(w,-z)} \bFT^{-k-1} \oplus \bFT^{-k-1} \xrightarrow{(z,w)} \bFT^{-k} \right] = \\
= \bFT^{-k-2}\otimes \Cone\left[\1\xrightarrow{w}\bFT\right]\otimes \Cone\left[\1\xrightarrow{z}\bFT\right],
\end{multline}
so it is sufficient to prove that $\Cone(z)\otimes \Cone(w)\simeq \Cone(w)\otimes \Cone(z) \simeq 0$ (indeed, this would imply that the complexes in the left hand side of \eqref{eqn:small koszul} are contractible). Since:
$$
\Cone\left[\1\xrightarrow{w}\bFT\right]=[\Bred \xrightarrow{} \Bred],
$$
and:
$$
\Bred \otimes \Cone(z)\simeq \Bred \otimes [\Rred \to \Bred \to \Bred \to \Rred]\simeq [\Bred \to \Bred \oplus \Bred \to \Bred \oplus \Bred \to \Bred]\simeq 0,
$$
we conclude that $\Cone(w) \otimes \Cone(z) \simeq 0$. The case of $\Cone(z) \otimes \Cone(w)$ is analogous. \end{proof}

\subsection{$\proj$ construction}
\label{sub:proj construction}

The purpose of this Subsection is to construct the functors:
\begin{equation}
\label{eqn:functors n=2}
D^b\left(\coh_{\torus} \left( \FH_2\right) \right) \xtofrom[\iota_*]{\iota^*} K^b(\SBim_2)
\end{equation}
and prove Conjecture \ref{conj:1} for $n=2$. To keep our notation simple, we will perform the computation for the reduced versions of the above categories. As was shown in Section \ref{sec:geometry}, in order to construct $\iota_*$ one needs to prove the following isomorphism of graded algebras:
\begin{equation}
\label{eqn:volo}
\bigoplus_{k=0}^\infty \Hom \left(\1,\bFT^{k}\right) \cong \bigoplus_{k=0}^\infty \Hom \left( \bFH_2,\CO(k) \right)  
\end{equation}
To compute the left hand side of \eqref{eqn:volo}, recall from \cite{Kh} that we have the following identity in $\SBim_2$ for all $k>0$:
$$
\bFT^{k}\simeq \left[\underbrace{q^{k-\frac 12}\Bred \rightarrow q^{k-\frac 32} s \Bred \rightarrow \cdots \rightarrow \frac {s^{2k-3} \Bred}{q^{k-\frac 52}} \rightarrow \frac {s^{2k-2} \Bred}{q^{k-\frac 32}} \rightarrow \frac {s^{2k-1} \Bred}{q^{k-\frac 12}}}_{2k} \rightarrow \frac {s^{2k} \Rred}{q^k} \right]  
$$
where the maps alternate between $\frac {x \otimes 1 - 1 \otimes x}2$ and $\frac {x \otimes 1 + 1 \otimes x}2$. Since $s=-\sqrt{qt}$, we have:
$$
\Hom(\1,\bFT^{k}) \simeq \left[\underbrace{q^k \Rred \stackrel{0}\rightarrow q^{k-1} t^{\frac 12} \Rred \xrightarrow{x} \cdots \stackrel{0}\rightarrow q  t^{k-\frac 32} \Rred \xrightarrow{x} q t^{k-1} \Rred \stackrel{0}\rightarrow t^{k-\frac 12} \Rred}_{2k} \xrightarrow{x} t^k \Rred \right]   
$$
\begin{equation}
\label{eqn:hom}
\cong z^k \CC[x] \bigoplus_{i=1}^{k}w^{i} z^{k-i} \frac {\CC[x]}x  = \bA^k 
\end{equation}
One can think of $z$, $w$ as formal variables of degrees $q$, $t$, but they actually correspond to the maps of \eqref{eqn:eigenmaps2} under the required isomorphism \eqref{eqn:volo}. This establishes \eqref{eqn:volo} as an isomorphism of $\CC[x]$--modules. We claim that this isomorphism also preserves the algebra structures, and therefore the functor $\iota_*$ is well-defined. By construction:
$$
\iota_*(\bFT^k) = \CO(k)
$$
for all $k\geq 0$. As for the functor $\iota^*$ of \eqref{eqn:functors n=2}, we require:
$$
\iota^*(\CO(k)) := \bFT^k
$$
and:
$$
\iota^* \left(q\CO \stackrel{z}\longrightarrow \CO(1) \right) \ \text{ and } \ \iota^* \left(q\CO \stackrel{w}\longrightarrow \CO(1) \right) \  = \text{ the maps \eqref{eqn:eigenmaps2}}
$$
However, note that this assignment simply defines a functor:
$$
\coh \left( \spec \ \bA / \CC^* \right) \xrightarrow{\iota^*} \SBim_2
$$
since $\bA$ is the homogeneous coordinate ring of $\bFH_2$. We wish to show that this functor factors through $D^b(\coh (\proj \ \bA )) $. To do so, we must prove that the object: 
\begin{equation}
\label{eqn:quiero}
0 \stackrel{\qis}\cong \bA_0 = \frac {\bA}{(z,w)} \text{ on } \bFH_2 \qquad \xrightarrow{\text{goes to}} \qquad \iota^*(\bA_0) \stackrel{\he}\cong 0 \text{ in } \SBim_2
\end{equation}
To compute the image of $\bA_0$ under $\iota^*$, we need to resolve this object in terms of free $\bA$ modules. The standard choice is the Koszul resolution, which is infinite because $\bFH_2$ is singular:
$$
0 \stackrel{\qis}\cong \left[ ... \xrightarrow{\alpha_1} q\bA(-2) \oplus q\bA(-2) \xrightarrow{\alpha_2}  \bA(-2) \oplus q\bA(-1) \xrightarrow{\alpha_1}\bA(-1) \oplus \bA(-1) \xrightarrow{(z,w)} \bA \right]
$$
where the maps alternate between those of \eqref{eqn:alphas}. Then \eqref{eqn:quiero} follows from Proposition \ref{prop:koszul=0 n=2}. 

\begin{remark}
\label{rem:a grading n=2}

By analogy with \eqref{eqn:hom}, we have:
$$
\Ext^1(\1,\bFT^{k}) \cong \left[\underbrace{q^{k-1} \Rred \stackrel{0}\rightarrow q^{k-1} t^{\frac 12} \Rred \xrightarrow{x} \cdots \stackrel{0}\rightarrow q t^{k-\frac 32} \Rred \xrightarrow{x}  t^{k-1} \Rred \stackrel{0}\rightarrow t^{k-\frac 12} \Rred}_{2k} \stackrel{1}\rightarrow t^k \Rred \right] \cong 
$$
\begin{equation}
\cong \ z^{k-1} \CC[x] \bigoplus_{i=1}^{k-1} w^{i} z^{k-1-i} \frac {\CC[x]}x = \bA^{k-1}
\end{equation}
and therefore:
$$
\RHom \left(\1,\bFT^k \right) \cong \Hom\left(\1,\bFT^{k-1} \right)
$$
This is precisely \eqref{eqn:rhom vs hom} for $M = \bFT^k$ and $\overline{T}_2 = \iota^*(\bCT_2) = \iota^*(\CO(1)) = \bFT$.

\end{remark}


\subsection{Sheaves for two-strand braids}
\label{sub:sheaves n=2}

To construct the sheaf $\iota_*(M)$ for any object $M \in \SBim_2$, one needs to consider the module $\Hom(\1, M \otimes \bFT^{\bullet})$ over the graded algebra $A = \Hom(\1,\bFT^{\bullet})$. In the previous subsection, we have studied the case $M = \bFT^k$ for positive integers $k$, and we found that $\iota_*(M) = \CO(k)$. The computation for negative $k$ is more interesting:
$$
\bFT^{-k} \cong  \left[t^{-k} \Rred  \rightarrow \underbrace{q^{-\frac 12} t^{\frac 12-k}  \Bred \rightarrow q^{-\frac 12} t^{1-k} \Bred \rightarrow q^{-1} t^{\frac 32-k} \Bred \rightarrow \dots \rightarrow q^{\frac 12-k} t^{-\frac 12} \Bred \rightarrow q^{\frac 12-k} \Bred}_{2k}  \right]  
$$
for any $k\geq 0$, where the maps alternate between $\frac {x \otimes 1 + 1 \otimes x}2$ and $\frac {x \otimes 1 - 1 \otimes x}2$. Therefore, we have:
$$
\Hom(\1,\bFT^{-k}) \cong \left[t^{-k} \Rred  \xrightarrow{1} \underbrace{ t^{\frac 12-k} \Rred \xrightarrow{0} t^{1-k} \Rred \xrightarrow{x}  q^{-1} t^{\frac 32-k} \Rred  \xrightarrow{0} \dots \xrightarrow{x} q^{1-k} t^{-\frac 12} \Rred  \xrightarrow{0}  q^{1-k} \Rred}_{2k}  \right] 
$$
\begin{equation}
\label{eqn:negative hom n=2}
\Longrightarrow \quad \Hom(\1,\bFT^{-k}) \cong t^{\frac 12} H^1\left(\bFH_2,\CO(-k)\right) 
\end{equation}
according to \eqref{eqn:coh n=2}. The case of general $a$ follows by analogy with the previous subsection, so we conclude the following formula that extends \eqref{eqn:volo} to negative integers:
\begin{equation}
\label{eqn:formula}
\RHom^\bullet_{\SBim_2}(\1,\bFT^{-k}) \cong R\Gamma \left( \bFH_2,\CO(-k)\otimes \wedge^{\bullet} \CO(-1) \right)
\end{equation}

\begin{remark}
\label{rem:different gradings}

Let us observe the fact that the derived functors in the two sides of the above equation are very different. In the left hand side, we have the derived Hochschild homology functor, whose degree is measured by $a$. In the right hand side, we have derived direct image of sheaves, whose degree is measured by $t^{\frac 12}$, and the $a$ grading comes from $\wedge^\bullet \CO(-1)$.

\end{remark}

To complete the discussion for $n=2$, let us compute $\CB(\sigma) := \iota_*(\sigma)$ where $\sigma$ denotes a single positive crossing. Together with the projection formula \eqref{eqn:projection}, this implies that:
$$
\CB(\sigma^{2k+1}) := \iota_*(\sigma^{2k+1}) = \iota_*(\sigma\otimes \bFT^k) = \iota_*(\sigma) \otimes \CO(k) = \CB(\sigma)\otimes \CO(k)
$$
for all integers $k$. In fact, we have:
$$
\Hom(\1,\sigma^{2k+1}) \cong \left[\underbrace{q^{k+\frac 12} \Rred \xrightarrow{x} \cdots \xrightarrow{0} q^{2} t^{k-1} \Rred \xrightarrow{x}  q t^{k-\frac 12} \Rred \xrightarrow{0} q t^{k} \Rred}_{2k+1} \xrightarrow{x}  t^{k+\frac 12} \Rred \right]  
$$
and therefore:
$$
\bigoplus_{k\ge 0}\Hom(\1, \sigma \otimes \FT^{k})=\bigoplus_{k\ge 0}\Hom(\1, \sigma^{2k+1})=t^{1/2}\frac{\CC[x,z,w]}{x}
$$
where recall that $z$ and $w$ are the maps of \eqref{eqn:eigenmaps2}. We conclude that $\CB(\sigma)$ is the structure sheaf of the subscheme $\{x=0\} \subset \bFH_2$, which is nothing but the connected component $\overline{Z_2} = \FH_2(\point) \cong \PP^1$ of \eqref{eqn:z2 n=2}. The periodic resolution \eqref{eq:mf} takes the form:
$$
\CB(\sigma) \cong \CO_{\PP_1} \stackrel{\qis} \cong \left[ ...  \stackrel{w}\longrightarrow q^2t \CO(-1) \stackrel{x}\rightarrow qt \CO(-1) \stackrel{w}\longrightarrow q \CO(-1) \stackrel{x}\rightarrow \CO\right]
$$
where $\CO$ denotes the structure sheaf of $\bFH_2$. In the non-reduced category, one needs to replace $x$ by $x_1-x_2$ everywhere. Finally, let us compute $\iota_*(B)$, where recall that $B = R \otimes_{R^s} R$. Since $z\otimes \Id_B$ is an isomorphism between $B$ and $\FT_2\otimes B$, we have:
$$
\bigoplus_{k\ge 0}\Hom(\1,B\cdot\FT^{k})=\bigoplus_{k\ge 0}z^k\Hom(\1,B)=\CC[x_1,x_2,z].
$$
Therefore $\iota_*(B)$ is the structure sheaf of the irreducible component $Z_1 \subset \FH_2$ cut out by the equation $w=0$ (see \eqref{eqn:z1 n=2}), which is isomorphic to $\CC^2$ with coordinates $x_1$ and $x_2$. 

\section{Example: the case of three strands}
\label{sec:n=3}

\subsection{The geometry of $\FH_3$}
\label{sub:fh 3}

We will now study the variety $\FH_3 = \FH_3(\CC)$ and formulate a precise conjecture about the sheaf $\iota_*(\text{figure eight knot})$. Recall the matrix presentation:
$$
\FH_3  = \left\{ X = \left( \begin{array}{ccc}
x_1 & 0 & 0 \\
a & x_2 & 0 \\
\alpha_1 & \alpha_2 & x_{3} \end{array} \right), \ Y = \left( \begin{array}{ccc}
0 & 0 & 0 \\
b & 0 & 0 \\
\beta_1 & \beta_2 & 0 \end{array} \right), \ [X,Y]=0, \right.
$$
\begin{equation}
\label{eqn:matrix fh 3}
\left. v = \left( \begin{array}{ccc}
1 \\
0 \\
0 \end{array} \right) \text{ cyclic} \right\} \Big/ \text{conjugation by } g = \left( \begin{array}{ccc}
1 & 0 & 0 \\
0 & c & 0 \\
0 & t & d \end{array} \right)
\end{equation}
Note that in the presentation above, we fixed the vector $v$ (and this fixes the first column of the conjugating matrix) to eliminate certain coordinates. Note that the map $\FH_3 \rightarrow \FH_2$ is given by only retaining the top $2 \times 2$ corners of the matrices in question. If one is given the eigenvalues $x_1,x_2,x_3$ and the point $[a:b] \in \PP^1$, then the datum one needs to construct a point in $\FH_3$ is the vector:
\begin{equation}
\label{eqn:dat}
(\alpha_1, \alpha_2, \beta_1, \beta_2) \in \CT_2^\vee \oplus \CT_2^\vee
\end{equation}
To ensure that the equation $[X,Y]=0$ is satisfied, we need to ensure that: 
$$
(x_1-x_3)\beta_1 = \alpha_2b - \beta_2 a \quad \text{and} \quad (x_2-x_3)\beta_2 = 0
$$
Moreover, the fact that we quotient out by conjugation matrices implies that we must identify:
$$
(\alpha_1, \alpha_2, \beta_1, \beta_2) \sim (\alpha_1 + ta, \alpha_2+t(x_2-x_3), \beta_1 + tb, \beta_2) \quad \text{and} \quad (\alpha_1,\alpha_2,\beta_1,\beta_2) \sim d(\alpha_1,\alpha_2,\beta_1,\beta_2)
$$
for $t \in \CC$ and $d\in \CC^*$. Unwinding these facts, one sees that the datum \eqref{eqn:dat} corresponds to a vector in $H^0(\CE^\vee_2)$, where $\CE_2$ is the complex in \eqref{eqn:complex} when $* = \CC$. It is elementary to prove that $\CE_2$ and $\CE_2^\vee$ are quasi-isomorphic to their zero-th cohomology, so we conclude that $\FH_3 = \FH_3^\dg$. The irreducible components of the flag Hilbert scheme $\FH_3$ are:
$$
\FH_3(\CC) = Z_1 \cup Z_2 \cup Z_3 \cup Z_4 \cup Z_5
$$
where $Z_1,\ldots,Z_5$ are determined by which eigenvalues $x_1,x_2,x_3$ are equal to each other:
$$
Z_1 = \overline{\{x_1 \neq x_2 \neq x_3 \neq x_2\}}, \qquad Z_2 = \{x_1=x_2=x_3\}
$$
$$
Z_3 = \overline{\{x_1 = x_2 \neq x_3\}}, \ Z_4 = \overline{\{x_3 = x_1 \neq x_2\}}, \ Z_5 = \overline{\{x_2 = x_3 \neq x_1\}}
$$
On $Z_1$, because the eigenvalues are generically distinct, the commutation relation $[X,Y]=0$ forces $Y=0$. Then the cyclicity of the vector $v$ implies $a,\alpha \neq 0$, and so conjugation by $g$ allows one to set $a = \alpha = 1$ and $e=0$. We conclude that:
\begin{equation}
\label{eqn:z1}
Z_1 = \CC^3
\end{equation}
As for $Z_2$, note that one can always subtract a constant matrix from $X$ without changing any of the other properties of \eqref{eqn:matrix fh 3}. By \eqref{eqn:pt 3}, we see that:
\begin{equation}
\label{eqn:z2}
Z_2 = \FH_2(\point) \times \CC = \PP_{\PP^1} \left( \frac {\CO(1)}{qt} \oplus \CO(-2) \right) \times \CC
\end{equation}


\subsection{Torus braids}
\label{sec:n=3 torus}

In this section, we compare our conjectures to the ones of \cite{GORS,ORS} for three-strand torus braids. The remainder of this Section provides explicit computations that follow from Conjectures \ref{conj:1} and \ref{conj:knots}.

\begin{proposition}
\label{prop:3 strand}

The sheaves on $\FH_3$ associated to torus braids on 3 strands are:
\begin{equation}
\label{eqn:3 strand}
\iota_{*}(\sigma_1\sigma_2)^{k} = \iota_{*}(\sigma_2\sigma_1)^{k} = 
\begin{cases}
\CL_2^{m}\CL_3^{m} & k=3m, \\
\CL_2^{m}\CL_3^{m}\otimes \CO_{Z_2}& k=3m+1,\\
\CL_2^{m+1}\CL_3^{m}\otimes \CO_{Z_2}& k=3m+2.\\
\end{cases}
\end{equation}
Here $m$ (and hence $k$) is allowed to be either positive or negative.
\end{proposition}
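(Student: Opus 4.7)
The plan is to reduce to three cases modulo the centrality of $\FT_3$ and then apply Conjecture~\ref{conj:1} together with the $\alpha = \sigma_1\sigma_2$ case of Conjecture~\ref{conj:knots}. Since $(\sigma_1\sigma_2)^3 = \FT_3 = (\sigma_2\sigma_1)^3$ in the three-strand braid group, writing $k = 3m+r$ with $r\in\{0,1,2\}$ gives $(\sigma_1\sigma_2)^k = \FT_3^m\cdot(\sigma_1\sigma_2)^r$ (and similarly for $\sigma_2\sigma_1$). The projection formula \eqref{eqn:projection} combined with $\iota^*(\det\CT_3) = \FT_3$ from \eqref{eqn:addendum} yields
$$
\iota_*\bigl(\FT_3^m\otimes\alpha\bigr) \;=\; (\det\CT_3)^m\otimes\iota_*(\alpha) \;=\; (\CL_2\CL_3)^m\otimes\iota_*(\alpha),
$$
where I use $\det\CT_3 = \CL_1\otimes\CL_2\otimes\CL_3$ and the fact that $\CL_1 = \CT_1 \cong \CO$ is trivialized by the cyclic vector $v$. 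This settles the case $r = 0$ at once.

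For $r = 1$, I invoke Example~\ref{ex:coxeter}, the $\alpha = \sigma_1\cdots\sigma_{n-1}$ case of Conjecture~\ref{conj:knots}, to conclude that $\iota_*(\sigma_1\sigma_2) = \CO_{\FH_3^\edg(\point)\times\CC} = \CO_{Z_2}$. The analogous statement $\iota_*(\sigma_2\sigma_1) = \CO_{Z_2}$ rests on the observation that, by Proposition~\ref{prop:fh eqiv}, the dg scheme $\FH_w^\edg$ depends only on the cycle structure of $w$, and both 3-cycles in $S_3$ share a single cycle structure.

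For $r = 2$, I would apply the braid relation $\sigma_1\sigma_2\sigma_1 = \sigma_2\sigma_1\sigma_2$ to obtain
\begin{align*}
(\sigma_1\sigma_2)^2 &= \sigma_1\cdot(\sigma_2\sigma_1\sigma_2) = \sigma_1\cdot(\sigma_1\sigma_2\sigma_1) = \FT_2\cdot\sigma_2\sigma_1, \\
(\sigma_2\sigma_1)^2 &= (\sigma_2\sigma_1\sigma_2)\cdot\sigma_1 = (\sigma_1\sigma_2\sigma_1)\cdot\sigma_1 = \sigma_1\sigma_2\cdot\FT_2,
\end{align*}
with $\FT_2 = \sigma_1^2$. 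Applying the projection formula with $\iota^*(\det\CT_2) = \FT_2$ and $\det\CT_2 = \CL_2$ reduces both expressions to the $r = 1$ case, and yields $\CL_2\otimes\CO_{Z_2}$. Multiplying by $(\CL_2\CL_3)^m$ gives $\CL_2^{m+1}\CL_3^m\otimes\CO_{Z_2}$, as claimed.

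The main obstacle is the equality $\iota_*(\sigma_2\sigma_1) = \CO_{Z_2}$, which is required for $(\sigma_1\sigma_2)^2$ but is not literally covered by Conjecture~\ref{conj:knots}, as $\sigma_2\sigma_1$ is not a subword of the standard Coxeter word $\sigma_1\sigma_2$. The most natural resolution is a modest extension of Conjecture~\ref{conj:knots} to all minimal-length representatives of a permutation with a given cycle structure, justified by Proposition~\ref{prop:fh eqiv}; alternatively, one can invoke a cyclic-invariance argument using the conjugation $\sigma_1\sigma_2 = \sigma_1\cdot\sigma_2\sigma_1\cdot\sigma_1^{-1}$. Consistency throughout can be cross-checked at the level of equivariant Euler characteristics via Corollary~\ref{cor:1} and the localization formula \eqref{eqn:magic formula}.
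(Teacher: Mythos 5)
Your overall strategy matches the paper's: reduce modulo $\FT_3$ using the projection formula, apply Conjecture~\ref{conj:knots} for the $r=1$ case, and use $(\sigma_1\sigma_2)^2 = \FT_2\cdot\sigma_2\sigma_1 = L_2\cdot\sigma_2\sigma_1$ for $r=2$. You correctly identify that the whole argument hinges on establishing $\iota_*(\sigma_2\sigma_1) = \iota_*(\sigma_1\sigma_2)$, which is not literally covered by Conjecture~\ref{conj:knots}.

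However, the first resolution you offer does not work. Proposition~\ref{prop:fh eqiv} tells you that $\CO_{\FH_w^\edg}$ depends only on the cycle structure of $w$; it does not tell you that $\iota_*(\sigma_2\sigma_1)$ \emph{equals} $\CO_{\FH_w^\edg}$ for any $w$. Conjecture~\ref{conj:knots} equates $\CB(\alpha)$ with such a structure sheaf only when $\alpha$ is a subword of the Coxeter word, and $\sigma_2\sigma_1$ is not. Knowing that the two $3$-cycles in $S_3$ share a cycle type identifies the candidate target sheaf, but says nothing about what $\iota_*$ does to $\sigma_2\sigma_1$. So the first route, as stated, is a gap, and the proposed ``modest extension of Conjecture~\ref{conj:knots}'' is unnecessary extra conjectural input.

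Your alternative route is the one the paper actually takes, and it closes the gap cleanly without any new conjecture. Since $\sigma_1$ commutes with both $L_2$ and $L_3$, and the partial trace satisfies $\Tr(\sigma\sigma') = \Tr(\sigma'\sigma)$, one gets
$$
\Tr(\sigma_2\sigma_1\,L_2^a L_3^b) \;=\; \Tr(\sigma_2\,L_2^a L_3^b\,\sigma_1) \;=\; \Tr(\sigma_1\sigma_2\,L_2^a L_3^b)
$$
for all $a,b\geq 0$. Because $\iota_*$ is built from exactly these Hom-spaces via \eqref{eqn:general 1}, the equality of graded $A$-modules forces $\iota_*(\sigma_2\sigma_1) = \iota_*(\sigma_1\sigma_2)$ as sheaves on $\FH_3^\edg$. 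The conjugation observation $\sigma_1\sigma_2 = \sigma_1(\sigma_2\sigma_1)\sigma_1^{-1}$ you mention is the same fact in disguise, but it has to be carried through at the level of the $A$-module structure (i.e.\ for all twists by $L_2^a L_3^b$), not merely as an isomorphism of underlying homology, for it to say anything about the sheaf.
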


\begin{proof}
Clearly, $(\sigma_1\sigma_2)^3=(\sigma_2\sigma_1)^3=\FT_3= \iota^*(\CL_2\CL_3)$, so in virtue of the projection formula \eqref{eqn:sk} it is sufficient to consider the cases $k=0,1,2$. For $k=0$, Conjecture \ref{conj:1} states that $\iota_*(\1)=\CO_{\FH_3}$, which is precisely the content of \eqref{eqn:3 strand}. For $k=1$, Conjecture \ref{conj:knots} implies $\iota_*(\sigma_1\sigma_2)=\CO_{Z_2}$. Furthermore, for all $a, b \in \NN$ one has:
\begin{equation}
\label{eqn:equal traces}
\Tr(\sigma_2\sigma_1\CL_2^{a}\CL_3^{b})=\Tr(\sigma_1\sigma_2\CL_2^{a}\CL_3^{b}),
\end{equation}
since $\sigma_1$ commutes with both $\CL_2$ and $\CL_3$, and the trace map enjows the property $\Tr(\sigma\sigma') = \Tr(\sigma'\sigma)$. By virtue of the definition \eqref{eqn:general 1} of the sheaves associated to the braids $\sigma_1\sigma_2$ and $\sigma_2\sigma_1$, formula \eqref{eqn:equal traces} implies that $\iota_*(\sigma_2\sigma_1) = \iota_*(\sigma_1\sigma_2)$. The case $k=2$ of \eqref{eqn:3 strand} follows analogously, because:
$$
(\sigma_1\sigma_2)^2=\CL_2\sigma_2\sigma_1,\ (\sigma_2\sigma_1)^2=\sigma_1\sigma_2\CL_2.
$$
\end{proof}

To compute the Khovanov-Rozansky homology of torus braids, one needs to compute the homology of the resulting line bundles either on $\FH_3$, or on $Z_2=\FH_3(\point)\times \CC$. For simplicity, we will consider only the latter case, which corresponds to knots:

\begin{proposition}
\label{prop:3 strand torus}

The following equations hold: $\qquad H^{i}(\FH_3(\point),\CL_2^{a}\CL_3^{b}) =$
\begin{equation}
\label{eqn:3 strand torus}
= \begin{cases}
H^i(\PP^1,\CO(a)\otimes S^b(\CO(2)\oplus qt\CO(-1))& \text{if}\ b\ge 0,\\
0& \text{if}\ b=-1, \\
H^{i+1} \left(\PP^1,\frac {\CO(a-1)}{qt} \otimes S^{-b-2}\left(\CO(-2) \oplus \frac {\CO(1)}{qt} \right) \right)& \text{if}\ b\le -2.
\end{cases}
\end{equation}
\end{proposition}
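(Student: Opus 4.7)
\medskip

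\noindent\textbf{Proof plan for Proposition \ref{prop:3 strand torus}.} The plan is to push the cohomology computation down to $\PP^1=\FH_2(\point)$ along the projection
$$
\pi \colon \FH_3(\point) \longrightarrow \FH_2(\point) = \PP^1
$$
which, by Example \ref{ex:n=2}, realizes $\FH_3(\point)=\PP_{\PP^1}(\tfrac{\CO(1)}{qt}\oplus\CO(-2))$ as a Hirzebruch surface, i.e.\ a $\PP^1$-bundle with $\CL_3=\CO(1)$ and $\pi_*\CL_3^b=S^b\CE_2$ for $b\ge 0$ (where $\CE_2=qt\CO(-1)\oplus\CO(2)$, cf.\ \eqref{eqn:push negative}). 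Since $\CL_2=\CO(1)$ on $\PP^1$ pulls back to $\CL_2$ on $\FH_3(\point)$, the projection formula reduces everything to
$$
H^i(\FH_3(\point),\CL_2^a\CL_3^b)\;=\;H^i\bigl(\PP^1,\CO(a)\otimes R\pi_*\CL_3^b\bigr).
$$

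The three cases then amount to a direct computation of $R\pi_*\CL_3^b$.  For $b\ge 0$, standard projective bundle theory gives $R\pi_*\CL_3^b=S^b\CE_2$ concentrated in degree $0$, which immediately produces the first line. For $b=-1$, the well-known vanishing $R\pi_*\CO_{\PP^1}(-1)=0$ on a $\PP^1$-bundle yields the second line. For $b\le -2$, we apply relative Serre duality: the relative canonical of $\pi$ is
$$
\omega_\pi \;\cong\; qt\,\CL_2\otimes \CL_3^{-2}
$$
(by the Euler sequence, using $\det\CE_2^\vee=(qt)^{-1}\CO(-1)$), so Grothendieck--Verdier duality gives
$$
R\pi_*(\CL_3^b)\;\cong\; (qt\CL_2)^{-1}\otimes S^{-b-2}\CE_2^\vee\,[\,\text{shift}\,]
$$
where $\CE_2^\vee=\tfrac{\CO(1)}{qt}\oplus\CO(-2)$. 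The factor $(qt\CL_2)^{-1}=\tfrac{\CO(-1)}{qt}$ combines with $\CO(a)$ on the base to give $\tfrac{\CO(a-1)}{qt}$, yielding exactly the third line after applying the projection formula and tracking the cohomological shift induced by duality.

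The main step requiring care is the relative duality computation for $b\le -2$: one must verify the equivariant weight of $\omega_\pi$, and reconcile the cohomological shift coming from $\omega_\pi[1]$ with the indexing convention used in the statement (the $H^{i+1}$ versus $H^i$ appearing in the three cases reflects that relative Serre duality on a $\PP^1$-fibration moves the support of $R\pi_*$ by one in cohomological degree). Everything else is a direct unwinding of the Leray spectral sequence (which degenerates because $\pi$ is a $\PP^1$-bundle), combined with the projection formula applied to the pulled-back line bundle $\CL_2^a$. I expect no essential obstacle beyond bookkeeping of equivariant $(q,t)$-weights.
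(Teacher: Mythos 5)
Your proposal is correct and follows essentially the same route as the paper: project to $\PP^1=\FH_2(\point)$, use the projective-bundle presentation $\FH_3(\point)=\proj S^\bullet_{\PP^1}\CE_2$ with $\CL_3=\CO(1)$ and $\CL_2=\pi^*\CO(1)$, compute $R\pi_*\CL_3^b$ in the three cases, and apply the projection formula. The paper's proof is terser — it simply records $R^i\pi_*\CL_3^b$ and says the negative case ``follows from the first and Serre duality'' (citing \eqref{eqn:serre2}), whereas you spell out the relative Serre duality step explicitly, including the identification $\omega_\pi\cong qt\,\CL_2\otimes\CL_3^{-2}$, which agrees with the twist appearing in the paper's formula \eqref{eqn:serre2}. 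No gap; only a difference of detail, not of method.
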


\begin{proof} Let $\pi:\FH_3(\point)\to \FH_2(\point)=\PP^1$ be the natural projection.
By \eqref{eqn:z2} we have $\FH_3(\point) = \proj \left( S^*_{\PP^1} (\CO(2)\oplus qt\CO(-1)) \right)$. The following properties hold:
$$
R^{i}\pi_* \left( \CL_3^{b} \right)=\begin{cases}
S^b(\CO(2)\oplus qt\CO(-1)) & \text{if} \ i=0 \text{ and } b\ge 0, \\
\frac {\CO(-1)}{qt} \otimes S^{-b-2}\left(\CO(-2) \oplus \frac {\CO(1)}{qt} \right) &\text{if} \ i=1 \text{ and } b\le -2, \\
0 & \text{otherwise}
\end{cases}
$$
Indeed, the second formula follows from the first and Serre duality. This completes the proof.
\end{proof}

\begin{corollary}
\label{cor:3 strand torus}
Putting together \eqref{eqn:3 strand}, \eqref{eqn:3 strand torus} and the well-known formula for the cohomology of line bundles on $\PP^1$, we have the following formulas for all $m \geq 0$. 
\begin{equation}
\label{eqn:hhh 3 strand 1}
\HHH \left( (\sigma_1\sigma_2)^{3m+1} \right) = H^*(\FH_3(\point),\CL_2^{m}\CL_3^{m}) = 
\end{equation}
$$
= H^0 \left(\PP^1,  \bigoplus_{i=0}^m (qt)^i \CO(3m-3i)\right) = \sum_{i=0}^m \sum_{j=0}^{3m-3i} q^{i+j} t^{3m-2i-j}
$$
\begin{equation}
\label{eqn:hhh 3 strand 2}
\HHH \left( ( \sigma_1\sigma_2)^{3m+2} \right) = H^*(\FH_3(\point),\CL_2^{m+1}\CL_3^{m})=
\end{equation}
$$
= H^0 \left(\PP^1,  \bigoplus_{i=0}^m (qt)^i \CO(3m-3i+1)\right) = \sum_{i=0}^m \sum_{j=0}^{3m-3i+1} q^{i+j} t^{3m-2i-j+1}
$$
\end{corollary}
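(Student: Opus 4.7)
The proof will be a direct unwinding of Proposition~\ref{prop:3 strand}, Proposition~\ref{prop:3 strand torus}, and the standard description of line bundle cohomology on $\PP^1 = \FH_2(\point)$, which in our conventions satisfies
$$
H^0(\PP^1,\CO(k)) \ \cong \ \bigoplus_{j=0}^{k} q^j t^{k-j} \qquad (k\ge 0),
$$
since $\PP^1 = \proj \CC[z,w]$ with $\deg z = q$ and $\deg w = t$.

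For the braid $(\sigma_1\sigma_2)^{3m+1}$, Proposition~\ref{prop:3 strand} first identifies $\iota_*(\sigma_1\sigma_2)^{3m+1} = \CL_2^m\CL_3^m \otimes \CO_{Z_2}$, and since the closure is a connected knot its reduced HOMFLY-PT homology is computed on $\FH_3(\point) \subset Z_2$. Next, I would apply formula \eqref{eqn:3 strand torus} with $a=m$, $b=m\ge 0$, reducing the task to computing
$$
H^\ast \bigl(\PP^1,\ \CO(m) \otimes S^m\bigl(\CO(2) \oplus qt\,\CO(-1)\bigr)\bigr).
$$
The symmetric power expansion on the rank-two bundle then gives
$$
S^m(\CO(2)\oplus qt\,\CO(-1)) \ = \ \bigoplus_{i=0}^{m} (qt)^{i}\,\CO(2(m-i))\otimes \CO(-i) \ = \ \bigoplus_{i=0}^{m} (qt)^{i} \CO(3m-3i),
$$
so after twisting by $\CO(m)$ we obtain $\bigoplus_{i=0}^m (qt)^i \CO(3m-3i)$. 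Since $3m-3i\ge 0$ for all $0\le i\le m$, the $H^1$ groups vanish, and $H^0(\PP^1,(qt)^i \CO(3m-3i)) = (qt)^i \sum_{j=0}^{3m-3i} q^j t^{3m-3i-j}$. Summing over $i$ gives precisely
$$
\sum_{i=0}^{m} \sum_{j=0}^{3m-3i} q^{i+j}\, t^{3m-2i-j},
$$
which is \eqref{eqn:hhh 3 strand 1}.

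The case of $(\sigma_1\sigma_2)^{3m+2}$ is entirely parallel: Proposition~\ref{prop:3 strand} gives $\iota_*(\sigma_1\sigma_2)^{3m+2}=\CL_2^{m+1}\CL_3^m\otimes\CO_{Z_2}$, so one feeds $a=m+1$, $b=m\ge 0$ into \eqref{eqn:3 strand torus}. The same symmetric-power expansion now produces $\bigoplus_{i=0}^m (qt)^i \CO(3m-3i+1)$, all of whose summands again have non-negative degree, so only $H^0$ contributes, yielding the stated
$$
\sum_{i=0}^{m}\sum_{j=0}^{3m-3i+1} q^{i+j}\, t^{3m-2i-j+1}.
$$

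There is no real obstacle here: the corollary is essentially a bookkeeping exercise, and the only point that needs to be handled with care is the matching of $(q,t)$ weights between the intrinsic Hilbert-scheme grading (where $\CL_2 = \CO_{\FH_2(\point)}(1)$ with $z,w$ of weights $q,t$) and the grading used in the symmetric-power decomposition of $\CO(2)\oplus qt\,\CO(-1)$ appearing in Proposition~\ref{prop:3 strand torus}. Once those conventions are confirmed consistent, both formulas follow immediately. (If one wished to record the $a$-grading as well, one would tensor with $\wedge^\bullet \bCT_3^\vee$ as in Corollary~\ref{conj:reduced}; but since the corollary as stated gives only the $q,t$-Poincar\'e polynomial, this step is not needed.)
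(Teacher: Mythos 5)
Your proposal is correct and follows exactly the route the paper intends — the paper itself gives no more proof than "putting together" Proposition~\ref{prop:3 strand}, Proposition~\ref{prop:3 strand torus}, and $H^0(\PP^1,\CO(k)) = \sum_{j=0}^k q^j t^{k-j}$, which is precisely what you do. One small typo to fix: in your symmetric-power display you write $S^m(\CO(2)\oplus qt\,\CO(-1)) = \bigoplus_{i=0}^m (qt)^i\,\CO(2(m-i))\otimes\CO(-i) = \bigoplus_{i=0}^m (qt)^i \CO(3m-3i)$, but the degree here is $2(m-i)-i = 2m-3i$, not $3m-3i$; the extra $m$ arrives only after the subsequent twist by $\CO(a)=\CO(m)$, which is what your next sentence (correctly) says, and the final displayed answer is right.
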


This agrees with the $a=0$ part of the Khovanov-Rozansky homology of $(3,3m+1)$ and of $(3,3m+2)$ torus knots, conjectured in \cite[Section 5.2]{GORS}. To recover the full $a$ dependence, we need to twist the right hand sides of \eqref{eqn:hhh 3 strand 1} and \eqref{eqn:hhh 3 strand 2} by the exterior power:
$$
\wedge^\bullet \CT_3^\vee = \wedge^\bullet (\CL_3 ``\oplus" \CL_2 \oplus \CO)^\vee
$$
where the symbol $``\oplus"$ refers to the fact that $\CT_3$ is a non-trivial extension of $\CL_2 \oplus \CO$ by $\CL_3$. Note that all of our computations can be easily extended to ``twisted torus knots"  in the sense of \cite{CDW}, which are presented by the braids $(\sigma_1\sigma_2)^{k}\otimes \iota^* (\CL_2^{a} )$. We leave the corresponding computation to the interested reader.

\subsection{The longest word}
\label{sub:longest}

Let us describe the sheaf for the positive lift $\sigma_1\sigma_2\sigma_1$ of the longest word in $S_3$.  Remark that the following equation holds for all $a$ and $b$:
\begin{equation}
\label{eqn:conj for 121}
\Tr(\sigma_1\sigma_2\sigma_1\CL_2^{a}\CL_3^{b})=\Tr(\sigma_2\sigma_1\sigma_1\CL_2^{a}\CL_3^{b}),
\end{equation}
since $\sigma_1$ commutes both with $\CL_2$ and $\CL_3$ and the trace satisfies $\Tr(\sigma \sigma') = \Tr(\sigma'\sigma)$.  By Corollary \ref{cor:trace twisted identity}, these traces are isomorphic up to a twist by a permutation $(1\ 2)$. In particular, the left hand side of \eqref{eqn:conj for 121} is supported on $\{x_1=x_3\}$, while the right hand side is supported on $\{x_2=x_3\}$. Furthermore, 
$$
\iota_*(\sigma_2\sigma_1\sigma_1) = L_2\otimes \iota_*(\sigma_2)=L_2\otimes \CO_{\FH(2\sim 3)}.
$$
Note that in the notations of Section \ref{sec:n=3}, $\FH(2\sim 3)=Z_2\cup Z_5$.
There is a natural involution $j_{12}$ on $\FH_3$ which exchanges $x_1$ and $x_2$ in $Z_1$, acts trivially on $Z_2$ and $Z_3$ and permutes $Z_4$ and $Z_5$. We arrive at the following conjecture:

\begin{conjecture}
One has $\iota_*(\sigma_1\sigma_2\sigma_1)=j_{12}^{*}(L_2\otimes \CO_{Z_2\cup Z_5}).$
\end{conjecture}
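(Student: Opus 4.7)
The plan is to compare $\sigma_1\sigma_2\sigma_1$ with the cyclically-permuted braid $\sigma_2\sigma_1\cdot\sigma_1=\sigma_2\sigma_1^2$, whose associated sheaf is already computable. Indeed, $\sigma_1^2=L_2=\iota^*(\CL_2)$, so the projection formula \eqref{eqn:sk} together with the special case of Conjecture \ref{conj:knots} for the elementary generator $\sigma_2$ gives
$$
\iota_*(\sigma_2\sigma_1^2) \;=\; \iota_*\bigl(\sigma_2\otimes \iota^*(\CL_2)\bigr) \;=\; \CL_2\otimes \iota_*(\sigma_2) \;=\; L_2\otimes \CO_{Z_2\cup Z_5},
$$
which is precisely the target sheaf before applying $j_{12}^*$. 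The task is then to transport this equality along the cyclic-permutation relation, picking up the involution $j_{12}$.

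Next I would establish the cyclic identity at the level of Hom spaces. Since $\sigma_1$ is invertible, Corollary \ref{cor:inv 2} provides a canonical isomorphism
$$
\Hom_{K^b(\SBim_3)}\bigl(\1,\sigma_1\otimes X\bigr) \;\cong\; \Hom_{K^b(\SBim_3)}\bigl(\1, X\otimes \sigma_1\bigr)
$$
natural in $X\in K^b(\SBim_3)$. Taking $X=\sigma_2\sigma_1\otimes L_2^{a_2}\otimes L_3^{a_3}$, this yields an isomorphism between $\Hom(\1,\sigma_1\sigma_2\sigma_1\otimes L_2^{a_2}L_3^{a_3})$ and $\Hom(\1,\sigma_2\sigma_1\otimes L_2^{a_2}L_3^{a_3}\otimes \sigma_1)$. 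A small auxiliary check in the braid group $B_3$ shows that $\sigma_1$ commutes with both $L_2=\sigma_1^2$ and $L_3=\FT_3 L_2^{-1}$ (the latter because $\FT_3$ is central). Hence one can migrate $\sigma_1$ past the Jucys--Murphy factors, identifying the right-hand side with $\Hom(\1,\sigma_2\sigma_1^2\otimes L_2^{a_2}L_3^{a_3})$.

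To pass from an equality of Hom spaces to an equality of sheaves, I would track the $R=\CC[x_1,x_2,x_3]$-module structure using the proposition of Section \ref{sub:knots}: for any braid $\tau$, left multiplication by $x_i$ on the Rouquier complex is homotopic to right multiplication by $x_{w_\tau(i)}$. Applied here, the cyclic-permutation isomorphism above intertwines the standard $R$-action on the left with the $R$-action on the right twisted by $w_{\sigma_1}=(1\,2)$. Assembling the resulting isomorphisms over all $(a_2,a_3)\in\NN^2$ via the definition \eqref{eqn:general 1} of $\iota_*$, one obtains
$$
\iota_*(\sigma_1\sigma_2\sigma_1) \;\cong\; j_{12}^*\,\iota_*(\sigma_2\sigma_1^2) \;=\; j_{12}^*\bigl(L_2\otimes \CO_{Z_2\cup Z_5}\bigr),
$$
as desired. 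One sanity check: the support is correctly matched, since $j_{12}^*\CO_{Z_5}=\CO_{Z_4}$ and $j_{12}^*\CO_{Z_2}=\CO_{Z_2}$, confirming that the output is supported on $\{x_1=x_3\}=Z_2\cup Z_4$ as predicted by Corollary \ref{cor:support sheaves}.

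The hardest step is the last one: upgrading the pointwise isomorphisms of Hom spaces to an isomorphism of coherent sheaves on $\FH_3^{\dg}$. This requires the cyclic-permutation isomorphism to be compatible not merely with the $R$-action but with the full action of the graded algebra $A=\bigoplus_{\vec a}\Hom(\1,\prod L_k^{a_k})$ of \eqref{eqn:general 2}. Naturality of the biadjunction $\sigma_1^{-1}\dashv\sigma_1$ in the auxiliary object $X$ delivers this in principle, but one must verify that the chosen commutation isomorphisms $L_k\otimes\sigma_1\cong\sigma_1\otimes L_k$ are compatible with the algebra multiplication in $A$ (i.e. that they glue into a map of graded modules). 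A secondary caveat is that the argument is entirely conditional on Conjecture \ref{conj:1} (for the existence of $\iota_*$ and the projection formula) and on the case of Conjecture \ref{conj:knots} that identifies $\iota_*(\sigma_2)$ with $\CO_{Z_2\cup Z_5}$; any unconditional proof would require first establishing these inputs.
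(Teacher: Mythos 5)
Your proposal follows essentially the same line of reasoning the paper uses to arrive at this statement, which — note — the paper records as a \emph{conjecture}, not a theorem. The paper's Subsection 6.3 argues heuristically: it observes $\Tr(\sigma_1\sigma_2\sigma_1\, L_2^{a}L_3^{b})=\Tr(\sigma_2\sigma_1^2\, L_2^{a}L_3^{b})$ because $\sigma_1$ commutes with $L_2,L_3$, invokes Corollary \ref{cor:trace twisted identity} to say these are isomorphic up to a twist by $(1\,2)$, computes $\iota_*(\sigma_2\sigma_1^2)=L_2\otimes\CO_{Z_2\cup Z_5}$ exactly as you do (projection formula plus the $\sigma_2$ case of Conjecture \ref{conj:knots}), and then ``arrives at'' the conjectural equality involving $j_{12}^*$. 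Your write-up spells out the same steps in greater detail and, importantly, explicitly isolates the gap — upgrading a family of Hom-space isomorphisms, compatible with the twisted $R$-action, to an honest isomorphism of sheaves over $\FH_3^{\dg}$ compatible with the full graded-algebra action of $A$ from \eqref{eqn:general 2} — which is precisely why the statement is left as a conjecture in the paper. Your flagging of the conditional dependencies (on Conjecture \ref{conj:1} and Conjecture \ref{conj:knots}) and the support sanity check are both correct and consistent with the paper's discussion.
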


\subsection{The figure eight knot}
\label{sub:figure eight}

In this section we describe a sheaf for the braid $\beta=\sigma_1\sigma_2^{-1}\sigma_1\sigma_2^{-1}$ representing the figure eight knot.
There is a skein exact sequence relating  $B_{\beta}$ with the following objects in $\SBim_3$:
$$
\sigma_1\sigma_2\sigma_1\sigma_2^{-1}=\sigma_2\sigma_1, \sigma_1\sigma_1\sigma_2^{-1}=\CL_2\sigma_2^{-1}.
$$
More precisely, there is an exact sequence:
\begin{equation}
\label{skein fig 8}
0\xleftarrow{}\sigma_2\sigma_1\xleftarrow{}\Cone\left[\CL_2\sigma_2^{-1}\xleftarrow{x_1-x_2} \CL_2\sigma_2^{-1}\right]\xleftarrow{}B_{\beta}\xleftarrow{} 0.
\end{equation}
\begin{proposition}
\label{prop:cone}
The following identity holds:
$$
\iota_*\Cone\left[\sigma_2^{-1}\xleftarrow{x_1-x_2} \sigma_2^{-1}\right]\simeq \left[\CL_2\CL_3^{-1}\oplus qt\CL_3^{-1}\right]_{Z_2}.
$$
\end{proposition}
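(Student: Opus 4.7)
The plan is to reduce Proposition~\ref{prop:cone} via the projection formula to a purely geometric cone computation on the dg flag Hilbert scheme, and then to carry out that computation using the periodic resolution of $\CO_{\FH_{(2,3)}^{\dg}}$.

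For the first step, I start from the braid identity $L_3 = \sigma_2\,L_2\,\sigma_2$, which rearranges to $\sigma_2^{-1} = L_2\,\sigma_2\,L_3^{-1}$. Invoking the projection formula \eqref{eqn:sk}, the identifications $L_k = \iota^*(\CL_k)$ of Conjecture~\ref{conj:1}, and the equality $\iota_*(\sigma_2) = \CO_{\FH_{(2,3)}^{\dg}}$ that comes from Conjecture~\ref{conj:knots} applied to the Coxeter subword $\sigma_2$, I obtain
\[
\iota_*(\sigma_2^{-1}) \;\cong\; \CL_2\CL_3^{-1} \otimes \CO_{\FH_{(2,3)}^{\dg}}.
\]
Since $\iota_*$ is an exact functor between triangulated categories and therefore commutes with cones, this reduces the proposition to exhibiting an isomorphism
\[
\Cone\bigl[\CO_{\FH_{(2,3)}^{\dg}} \xleftarrow{x_1-x_2} \CO_{\FH_{(2,3)}^{\dg}}\bigr] \;\cong\; \bigl[\CO \oplus qt\,\CL_2^{-1}\bigr]_{Z_2}
\]
in $D^b(\coh_{\torus}(\FH_3^{\dg}))$; the claim then follows by tensoring with $\CL_2\CL_3^{-1}$.

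For the second step, this cone is the derived restriction of $\CO_{\FH_{(2,3)}^{\dg}}$ along the Cartier divisor $\{x_1=x_2\} \subset \FH_3^{\dg}$. I would substitute the periodic resolution \eqref{eqn:z periodic} of $\CO_{\FH_{(2,3)}^{\dg}}$ with $j=2$, whose differentials alternate between $y_{23}$ and $x_2-x_3$, and tensor it with the Koszul complex $[q\CO \xrightarrow{x_1-x_2} \CO]$. Writing $x_1-x_2 = (x_1-x_3)-(x_2-x_3)$ allows one to absorb the $x_2-x_3$ component into the existing alternating differential; what remains is multiplication by $x_1-x_3$, which vanishes on the support $Z_2 = \{x_1=x_2=x_3\}$ of the result, while on the complementary component $Z_5$ of $\{x_2=x_3\}$ this operator is invertible and contracts the cone. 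A local equivariant calculation at the two torus fixed points of $\FH_3^{\dg}$ contained in $Z_2$, using the local coordinates of Section~\ref{sec:local}, then identifies the surviving cohomology with $\CO_{Z_2} \oplus qt\CL_2^{-1}|_{Z_2}$, the second summand carrying the $qt$-twist produced by the $y_{23}$-differential that couples the first two terms $\CO$ and $qt\CL_2\CL_3^{-1}$ of the periodic resolution.

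The main obstacle lies in controlling the contributions of the infinite tail of the periodic resolution. The commutator relation $y_{23}(x_2-x_3)=0$ from $[X,Y]=0$ on $\FH_3^{\dg}$ guarantees that the alternating differentials are well defined, but proving that only the first two terms survive in the cohomology—equivalently, that the higher terms $(qt\CL_2\CL_3^{-1})^k$ for $k\geq 2$ give acyclic subquotients once twisted by $x_1-x_2$—requires a delicate equivariant bookkeeping. This should be done either by an explicit telescoping argument exploiting the Jacobi-type identity $[X,Y]_{31}=0$ on $\FH_3^{\dg}$, or by equivariant localization at the two torus fixed points of $Z_2$ combined with the parity miracle of \cite{EH} to upgrade the resulting exact triangle to an honest direct sum decomposition.
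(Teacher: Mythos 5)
Your first reduction is correct and is essentially what the paper does: using $\sigma_2^{-1}=L_2\sigma_2 L_3^{-1}$, the equality $\iota_*(\sigma_2)=\CO_{\FH(2\sim 3)^{\dg}}$, and the projection formula, everything boils down to computing $\CL_2\CL_3^{-1}\otimes\Cone\bigl[\CO_{\FH(2\sim 3)^{\dg}}\xleftarrow{x_1-x_2}q\CO_{\FH(2\sim 3)^{\dg}}\bigr]$.

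From there, however, you diverge from the paper's route and leave the crucial work undone. You attempt to evaluate the cone by feeding the $j=2$ periodic resolution \eqref{eqn:z periodic} of $\CO_{\FH(2\sim 3)}$ (with alternating differentials $y_{23}$, $x_2-x_3$) into a bicomplex with the Koszul factor $[q\CO\xrightarrow{x_1-x_2}\CO]$, and then ``absorb'' $x_2-x_3$ by rewriting $x_1-x_2=(x_1-x_3)-(x_2-x_3)$. This is where the gap is: the two differentials of the periodic resolution point in a direction transverse to the Koszul differential, so this substitution does not telescope the infinite double complex, and the claim that $x_1-x_3$ ``contracts the cone on $Z_5$'' is not justified — $Z_5=\overline{\{x_2=x_3\neq x_1\}}$ contains its closure with $Z_2$, where $x_1-x_3$ vanishes, so this operator is not invertible where it matters. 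You yourself flag that the ``delicate equivariant bookkeeping'' for the tail has not been done; that is precisely the substance of the proposition.

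The paper avoids all of this by applying the periodic resolution to the \emph{other} transposition. The Koszul complex $[q\CO\xrightarrow{x_1-x_2}\CO]$ is literally the tail-truncation of the $j=1$ resolution \eqref{eq:mf}, giving a short complex
\[
\CO_{\FH(1\sim 2)}\simeq \bigl[\CO\xleftarrow{x_1-x_2}q\CO\xleftarrow{y_{21}}qt\CL_2^{-1}|_{\FH(1\sim 2)}\bigr].
\]
Tensoring with $\CL_2\CL_3^{-1}\otimes\CO_{\FH(2\sim 3)}$ and using $\CO_{\FH(2\sim 3)}\otimes\CO_{\FH(1\sim 2)}=\CO_{Z_2}$ immediately gives a two-term exact triangle with outer terms $\CL_2\CL_3^{-1}|_{Z_2}$ and $qt\CL_3^{-1}|_{Z_2}$. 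Finally, your appeal to the parity miracle to split the triangle is the wrong tool here; the paper simply computes
\[
\Ext_{Z_2}(\CL_2\CL_3^{-1},\CL_3^{-1})=H^*(Z_2,\CL_2^{-1})=H^*(\PP^1,\CO(-1))=0,
\]
which forces the splitting directly. Swapping your $j=2$ resolution for the $j=1$ one, and replacing the parity-miracle appeal by this $\Ext$ vanishing on $\PP^1$, would turn your outline into the paper's proof.
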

\begin{proof}
By \eqref{eq:mf}  one has:
$$
\CO_{\FH(1\sim 2)}\simeq [\CO_{\FH_3(\CC)}\xleftarrow{x_1-x_2}q\CO_{\FH_3(\CC)}\xleftarrow{y_{21}}qt\CL_2^{-1}|_{\FH(1\sim 2)}]
$$
(note that this is also a skein exact sequence for $\sigma_1^{-1},\1, \sigma_1$) and 
$$
\iota_*(\sigma_2^{-1})=\CL_2\CL_3^{-1}\otimes \CO_{\FH(2\sim 3)}.
$$
Since $\CO_{\FH(2\sim 3)}\otimes \CO_{\FH(1\sim 2)}=\CO_{Z_2}$, one has an exact sequence:
$$
0\leftarrow{} \CL_2\CL_3^{-1}|_{Z_2}\xleftarrow{} \iota_*\Cone\left[\sigma_2^{-1}\xleftarrow{x_1-x_2} \sigma_2^{-1}\right] \xleftarrow{}qt\CL_3^{-1}|_{Z_2}
\xleftarrow{} 0.
$$
It remains to notice that 
$$
\Ext_{Z_2}(\CL_2\CL_3^{-1},\CL_3^{-1})=H^{*}(Z_2,L_2^{-1})=H^{*}(\PP^1,\CO(-1))=0.
$$
\end{proof}

\begin{proposition}
Consider the braid $\beta=\sigma_1\sigma_2^{-1}\sigma_1\sigma_2^{-1}$ representing the figure eight knot.
Then, assuming Conjecture \ref{conj:1} and \ref{conj:knots}, one has
$$
\iota_*(\beta)=\CO_{\PP^1}\oplus qt\CL_2\CL_3^{-1}.
$$
\end{proposition}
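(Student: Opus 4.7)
The plan is to apply $\iota_*$ to the skein exact sequence \eqref{skein fig 8} and analyze the resulting distinguished triangle. By the projection formula \eqref{eqn:projection} and Proposition \ref{prop:cone} (applied after tensoring with $\CL_2$),
$$\iota_*\Cone\bigl[\CL_2\sigma_2^{-1}\xleftarrow{x_1-x_2}\CL_2\sigma_2^{-1}\bigr]\cong [\CL_2^2\CL_3^{-1}\oplus qt\CL_2\CL_3^{-1}]|_{Z_2}.$$
Combining this with $\iota_*(\sigma_2\sigma_1)=\CO_{Z_2}$ from Proposition \ref{prop:3 strand}, the skein sequence yields the distinguished triangle
$$\iota_*(\beta)\longrightarrow [\CL_2^2\CL_3^{-1}\oplus qt\CL_2\CL_3^{-1}]|_{Z_2}\xrightarrow{\ \phi\ } \CO_{Z_2}\longrightarrow \iota_*(\beta)[1].$$

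The next step is to identify the connecting map $\phi=(\phi_1,\phi_2)$. Using the description $\FH_3(\point)=\PP(\CE_2^\vee)$ with $\CE_2=qt\CL_2^{-1}\oplus \CL_2^2$ from Example \ref{ex:n=2}, the projection formula along $\pi:\FH_3(\point)\to \PP^1$ gives $\Hom_{Z_2}(\CL_2^2\CL_3^{-1},\CO_{Z_2})\cong \CC$ and $\Hom_{Z_2}(qt\CL_2\CL_3^{-1},\CO_{Z_2})\cong (qt)^{-1}\CC^2$, with canonical generators determined by the tautological surjection $\pi^*\CE_2\twoheadrightarrow\CL_3$. I would pin down $\phi$ by tracing the skein map through the Ext-splitting used in the proof of Proposition \ref{prop:cone}, and then compute the kernel of $\phi$ on the Hirzebruch surface $\FH_3(\point)\times \CC$. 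The map $\phi$ should drop rank along the section $D\cong \PP^1\subset Z_2$ where $\CL_3|_D\cong \CL_2^2|_D$, so that $\ker\phi$ splits into a rank-one part isomorphic to $qt\CL_2\CL_3^{-1}$ and a torsion part $\CL_2^2\CL_3^{-1}|_D\cong \CO_{\PP^1}$, yielding the claim.

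The main obstacle is the precise identification of the skein connecting map $\phi$: while the relevant Hom spaces are easy to pin down, showing that $\phi$ has exactly the right form (in particular, that the answer lives in cohomological degree $0$ rather than being a genuinely two-term complex) requires careful bookkeeping of the Rouquier complexes in $\SBim_3$ and of the Ext-vanishing splittings invoked in Propositions \ref{prop:cone} and \ref{prop:3 strand}. An independent consistency check at the decategorified level is provided by comparing the equivariant Euler characteristic of $\CO_{\PP^1}\oplus qt\CL_2\CL_3^{-1}$ on $Z_2$ with the HOMFLY-PT polynomial of the figure eight knot via \eqref{eq:HOMFLY homology}.
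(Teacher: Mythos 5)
Your skeleton is the same as the paper's: push the skein triangle \eqref{skein fig 8} through $\iota_*$, use Proposition \ref{prop:cone} (twisted by $\CL_2$), combine with $\iota_*(\sigma_2\sigma_1)=\CO_{Z_2}$, and then analyze the resulting connecting map $\phi\colon\bigl[\CL_2^2\CL_3^{-1}\oplus qt\CL_2\CL_3^{-1}\bigr]_{Z_2}\to\CO_{Z_2}$. Where you part ways is precisely at the step you flag as ``the main obstacle''. You compute
$$\Hom_{Z_2}(\CL_2^2\CL_3^{-1},\CO_{Z_2})=H^0(\PP^1,\CO\oplus qt\CO(-3))\cong\CC,\qquad
\Hom_{Z_2}(qt\CL_2\CL_3^{-1},\CO_{Z_2})\cong (qt)^{-1}H^0(\PP^1,\CO(1)),$$
but then propose a somewhat heavier route (tracing the Rouquier complexes and reading off a rank-drop locus on the Hirzebruch surface). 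The paper's argument is shorter and worth internalizing: $\phi$ is a $\torus$-equivariant map of prescribed weight, the first Hom space has exactly one generator of the required (trivial) weight, and the second has weights $q^{-1}$, $t^{-1}$ only, so $\phi$ is forced to vanish on $qt\CL_2\CL_3^{-1}$ and to be the canonical section on $\CL_2^2\CL_3^{-1}$. No bookkeeping of the Rouquier differentials is needed beyond equivariance. Once $\phi$ is pinned down this way, the cone of the section $\CL_2^2\CL_3^{-1}\to\CO_{Z_2}$ is (up to the $qt$ normalization in the sequence) the structure sheaf of its zero locus, which is the section $\PP^1\subset\FH_3(\point)$ that you correctly identify geometrically; so your ``torsion part'' $\CL_2^2\CL_3^{-1}|_D\cong\CO_{\PP^1}$ does match the paper's $\Cone[\CO\xleftarrow{\alpha}\CL_2^2\CL_3^{-1}]$ term.

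In short: you have the right diagram and the right Hom computations, but you stop one inference short. The equivariant-weight comparison you already set up determines $\phi$ uniquely, so the ``careful bookkeeping of Rouquier complexes'' you anticipate is unnecessary, and the ``split vs. two-term complex'' worry is resolved because the cone of the unique section is concentrated in a single cohomological degree once one identifies it with $\CO_{\PP^1}$. Your consistency check against the HOMFLY--PT polynomial of the figure eight knot is a reasonable sanity test, but the paper does not need it.
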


\begin{proof}
By \eqref{skein fig 8} and Proposition \ref{prop:cone} one has:
$$
0\xleftarrow{}\CO_{Z_2}\xleftarrow{\alpha}\left[\CL_2^2\CL_3^{-1}\oplus qt\CL_2\CL_3^{-1}\right]_{Z_2}\xleftarrow{}qt(\iota_*B_{\beta})\xleftarrow{} 0.
$$
Let us compute the map $\alpha$. Remark that:
$$
\Hom_{Z_2}(\CL_2\CL_3^{-1},\CO)=H^0(Z_2,\CL_2^{-1}\CL_3)=H^0(\PP^1,\CO(1)\oplus qt\CO(-2)),
$$
$$
\Hom_{Z_2}(\CL_2^2\CL_3^{-1},\CO)=H^0(Z_2,\CL_2^{-2}\CL_3)=H^0(\PP^1,\CO\oplus qt\CO(-3)).
$$
Therefore $\alpha$ is the unique degree 1 map $\CL_2^2\CL_3^{-1}\to \CO$ and vanishes on $\CL_2\CL_3^{-1}$, so
$$
\iota_*B_{\beta}\simeq \CL_2\CL_3^{-1}\oplus q^{-1}t^{-1}\Cone[\CO\xleftarrow{\alpha} \CL_2^2\CL_3^{-1}]\simeq \CO_{\PP^1}\oplus \CL_2\CL_3^{-1}.
$$
\end{proof}

Using this result, we can compute the reduced homology of $\beta\cdot \FT_2^{a}\FT_3^{b}$ by computing the homology of each summand individually. Since $\FH_3(\point)$ is a blowup of the punctual Hilbert scheme of 3 points, and $\PP^1$ is the exceptional divisor, the tautological bundle is trivial on $\PP_1$: $\overline{\CT}_3\otimes \CO_{\PP^1}\simeq (q+t)\CO_{\PP^1}$. Similarly, $\FT_3\otimes \CO_{\PP^1}\simeq qt\CO_{\PP^1}$.  We get the following equation:
\begin{equation}
\label{eq: 3 strand square}
\int_{\FH_3(\point)}\CO_{\PP^1}\otimes \FT_2^{a}\FT_3^{b}\otimes \wedge^{\bullet}\overline{\CT}_3^{\vee}=
(1+aq^{-1})(1+at^{-1})(qt)^{b}\int_{\PP^1}\CO(a).
\end{equation}

Equations \eqref{eq: 3 strand square} and \eqref{eqn:3 strand torus} can be used to compute the homology of $\beta\cdot \CL_2^{a}\CL_3^{b}$ for all $a$ and $b$. In particular:
$$
H^*(\FH_3(\point),\CL_2\CL_3^{-1})=H^*(\FH_3(\point),\CL_3^{-1})=0,
$$
$$
H^*(\FH_3(\point),\CL_3^{-2})=H^{*+1}(\PP^1,\CO(-1))=0,
$$
$$
H^*(\FH_3(\point),\CL_2\CL_3^{-2})=H^{*+1}(\PP^1,\CO)=\CC[1],
$$
so 
$$
\int_{\FH_3(\point)}\CL_2\CL_3^{-1}\otimes \wedge^{\bullet}\overline{\CT}_3^{\vee}=a\CC[1],
$$
and
$$
\HHH(\beta)=(1+aq^{-1})(1+at^{-1})+a\sqrt{qt}.
$$
One can compare this with \cite[Table 5.7]{DGR}.

\section{Categorical idempotents and equivariant localization}
\label{sec:equiv}

\subsection{Categories over equivariant schemes}
\label{sub:equivcategories}

We will now enhance the setup of Section \ref{sec:geometry} to schemes endowed with a torus action $T \curvearrowright X$. 

\begin{definition}
\label{def:equivariantcategory} 

A $T$--equivariant category $\cat$ is one which the Hom spaces are representations of $T$. If the category is monoidal, we require the tensor product to preserve the $T$ action. 

\end{definition}

\begin{definition}
\label{def:equivariant} 

Given a $T$--equivariant category $\cat$, we will say that a map $\iota:\cat \rightarrow X$ is $T$--equivariant if the defining functors:
$$
\cat \xtofrom[\iota^*]{\iota_*} \coh_T(X)
$$
preserve the action of $T$ on all $\Hom$ spaces. The derived version is defined analogously. 
\end{definition}

\begin{example}
\label{ex:equivspec} 
Suppose that $X = \spec \ A$ with $A$ being a $T$--graded ring. Recall from Subsection \ref{sub:aff} that realizing $\cat$ as a category over $X$ amounts to giving a ring homomorphism:
$$
A \stackrel{f}\longrightarrow \End_\cat(\1)
$$ 
It is easy to see that $\cat \rightarrow X$ is $T$--equivariant if and only if $f$ is $T$--equivariant.
\end{example}

\begin{example}
\label{ex:equivproj} 

Going one step further, suppose $A$ is a $T$--graded ring. Define:
$$
X = \PP_A^n
$$ 
where the $n+1$ coordinate directions of the projective spaces have $T$--equivariant characters $\lambda_0,...,\lambda_n$. As in Example \ref{ex:projective}, the map $\cat \stackrel{\iota}\longrightarrow X$ is the same datum as a ring homomorphism:
$$
A \stackrel{f}\longrightarrow \End_\cat(\1)
$$
together with an object $F \in K^b(\cat)$ and $n+1$ arrows:
$$
\left[ \lambda_0 \cdot \1 \stackrel{\alpha_0}\longrightarrow F \right], ..., \left[ \lambda_n \cdot \1 \stackrel{\alpha_n}\longrightarrow F \right]
$$
whose tensor product is homotopic to 0. Then $\iota$ is $T$--equivariant if the homomorphism $f$ is $T$--equivariant, and moreover the arrows $\alpha_i, i\in \{0,...,n\}$ are all homogeneous with respect to the structure of $T$--modules of the vector spaces $\Hom_{K^b(\cat)}(\lambda_i \cdot \1, F)$. 

\end{example}

\begin{example}
\label{ex:equivrelative} 

Finally, let us treat the relative case of Subsection \ref{sub:relative}. Suppose we have a $T$--equivariant map:
$
\cat \stackrel{\iota}\longrightarrow X
$
and we wish to upgrade it to a $T$--equivariant map:
$$
\cat \stackrel{\iota'}\longrightarrow \PP \CV^{\vee}
$$
where $\CV$ is a $T$--equivariant vector bundle on $X$. As we saw in Subsection \ref{sub:relative}, the existence of the map $\iota'$ is equivalent to the choice of an object $F \in \cat$ together with an arrow:
$$
\iota^* \CV \stackrel{\alpha}\longrightarrow F
$$
in $\cat$, whose Koszul complex is quasi-isomorphic to 0. It is easy to see that the map $\iota'$ is $T$--equivariant if and only if the map $\alpha$ is $T$--equivariant. The same picture applies when $\CV$ is replaced by a coherent sheaf $\CQ$ of homological dimension 1, as in Subsection \ref{sub:one}.

\end{example}

\subsection{Categorical diagonalization}
\label{sub:cat diag}

In \cite{EH}, Elias and Hogancamp developed a theory of categorical diagonalization, which we will now recall. Assume we are given an equivariant monoidal category $T \curvearrowright \cat$, which can be taken to be triangulated or dg. 

\begin{definition}(\cite{EH})
\label{def:EH1}
Fix an object $F \in K^b(\cat)$. An arrow:
\begin{equation}
\label{eqn:eigenmap}
\lambda\cdot \1 \stackrel{\alpha}\longrightarrow F
\end{equation}
is called an {\bf eigenmap} of $F$, and the grading shift $\lambda \in T^\vee$ is called an {\bf eigenvalue} of $F$.  
\end{definition}

\begin{definition}(\cite{EH})
\label{def:EH2}
An object $F \in K^b(\cat)$ is called {\bf diagonalizable} if it has a collection of eigenvalues $\lambda_0,...,\lambda_n \in T^\vee$ and eigenmaps:
$$
\Big\{ \lambda_i \cdot \1 \stackrel{\alpha_i}\to F \Big\}_{i\in \{0,...,n\}}
$$
such that $\otimes_{i=0}^n \cone(\alpha_i) \simeq 0$. 
\end{definition}

The intuition behind the above terminology comes about by considering the Grothendieck group $[\cat]$, which is an algebra because the category $\cat$ is monoidal. Multiplication by the class of the object $[F]$ induces an operator on $[\cat]$, and the datum of Definition \ref{def:EH2} amounts to:
\begin{equation}
\label{eqn:charpoly}
\prod_{i=0}^n \left( [F]-\lambda_i \right) = 0
\end{equation}
In other words, the condition that the product of the cones of the eigenmaps is 0 amounts to requiring the operator $* \leadsto * \cdot [F]$ to solve its characteristic polynomial. In Lemma \ref{lem:universal}, we establish the fact that categorical diagonalization is universally represented by the category:
$$
\CD = D^b(\coh_T(\PP^n_A))
$$
where $A$ is any commutative ring and $T \curvearrowright \PP_A^n$ acts via: 
\begin{equation}
\label{eqn:action}
t \cdot [z_0:...:z_n] \mapsto \left[ \frac {z_0}{\lambda_0(t)}:...: \frac {z_n}{\lambda_n(t)} \right]
\end{equation}
where $\lambda_0,...,\lambda_n \in T^\vee$. An immediate generalization of Example \ref{ex:projective} yields the following:

\begin{lemma}
\label{lem:universal} 

The datum of a diagonalizable object $F \in \cat$ as in Definition \ref{def:EH2} is equivalent to the existence of a $T$--equivariant map: 
$$
\iota:\cat\to \PP^n_{A}
$$
such that $F = \iota^*\left(\CO(1)\right)$, where $A = \End_\cat(\1)$. 
\end{lemma}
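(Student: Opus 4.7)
The plan is to recognize the Lemma as the direct synthesis of Example \ref{ex:projective} (for the monoidal and Koszul structure) with Example \ref{ex:equivproj} (for its $T$-equivariant refinement). The ring $A = \End_\cat(\1)$ gives a tautological homomorphism into $\End_\cat(\1)$, which already produces an underlying $T$-equivariant map $\cat \to \spec A$ by Subsection \ref{sub:aff}; all of the remaining data should be read as upgrading this to a map into the $T$-equivariant projective space $\PP^n_A$.

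For the direction \emph{map} $\Rightarrow$ \emph{diagonalization data}, starting from a $T$-equivariant $\iota: \cat \to \PP^n_A$ with $F = \iota^*(\CO(1))$, I would pull back the tautological homogeneous coordinates $z_i$. Under the torus action \eqref{eqn:action}, each $z_i$ is a section of $\CO(1)$ of $T$-weight $\lambda_i$, so the morphisms $\alpha_i := \iota^*(z_i): \lambda_i \cdot \1 \to F$ furnish $n+1$ eigenmaps in the sense of Definition \ref{def:EH1}. Since $\iota^*$ is monoidal and preserves the Koszul quasi-isomorphism $\bigotimes_i \Cone(z_i) \simeq 0$ on $\PP^n_A$, the resulting relation $\bigotimes_i \Cone(\alpha_i) \simeq 0$ is exactly the condition in Definition \ref{def:EH2}.

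Conversely, given diagonalization data $\{\alpha_i : \lambda_i \cdot \1 \to F\}$, the assignment $z_i \mapsto \alpha_i$ turns the graded ring $R = A[z_0, \ldots, z_n]$ into a monoidal datum producing a map $\cat \to (\spec R)/\CC^*$ as in Example \ref{ex:projective}, and the hypothesis $\bigotimes_i \Cone(\alpha_i) \simeq 0$ is precisely \eqref{eqn:conecondition}, which is the criterion under which this map factors through $\iota: \cat \to \PP^n_A$ with $F = \iota^*(\CO(1))$. Equivariance with respect to $T$ then follows from Example \ref{ex:equivproj}: assigning each coordinate $z_i$ the weight $\lambda_i$ is compatible with $\iota$ exactly because each $\alpha_i$ is homogeneous of weight $\lambda_i$, which is built into the definition of an eigenmap. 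The main point requiring care will be that the correspondence is truly bijective at the level of functors, which ultimately rests on the Beilinson-style generation statement already invoked in Example \ref{ex:projective} to the effect that a monoidal functor on $D^b(\coh_T(\PP^n_A))$ is determined by its value on $\CO(1)$ together with the single Koszul relation among the tautological sections.
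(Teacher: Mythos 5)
Your argument follows exactly the route the paper itself intends: the Lemma is stated as "an immediate generalization of Example \ref{ex:projective}," and your two directions correctly trace the correspondence $\alpha_i \leftrightarrow \iota^*(z_i)$, match the cone condition $\bigotimes_i \Cone(\alpha_i)\simeq 0$ with \eqref{eqn:conecondition}, and invoke Example \ref{ex:equivproj} for the $T$-equivariant refinement together with the Beilinson-style generation argument. The paper gives no written proof beyond citing these examples, so your write-up is consistent with, and slightly more explicit than, the original.
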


\subsection{Eigenobjects}
\label{sub:eigenobjects}

In Definition \ref{def:EH1} we have recalled the categorical version of eigenvalues. In \cite{EH}, the authors complete the picture by categorifying eigenvectors: 
 
\begin{definition}
\label{def:EH3}
If for some $P \in \cat$ the arrow: 
\begin{equation}
\label{eqn:eigenobject}
\alpha \otimes \Id_P : \lambda\cdot P \stackrel{\cong}\longrightarrow F\otimes P
\end{equation}
is an isomorphism, then we call $P$ an \textbf{eigenobject} for the datum of Definition \ref{def:EH1}.
\end{definition}

In the decategorified world, the eigenvectors of the operator of multiplication by $[F]$ of \eqref{eqn:charpoly} can be computed explicitly, essentially by the Lagrange interpolation formula:
\begin{equation}
\label{eqn:idempotent0}
[P_i] := \prod_{0\leq j \neq i \leq n} \frac {\lambda_j - [F]}{\lambda_j-\lambda_i} 
\end{equation}
The reason why we divide by $\lambda_j-\lambda_i$ is to ensure that the elements $[P_i]$ are idempotents. However, this comes at the cost of enlarging the algebra to account for such denominators. One of the main constructions in \cite{EH} is the categorify formula \eqref{eqn:idempotent0} in a way which keeps track of the eigenmaps. 





The main difficulty, which we will shortly address, is how to lift the denominators of \eqref{eqn:idempotent0} from the Grothendieck group to the category $\cat$. The idea spelled out in \cite{EH} is that in \eqref{eqn:idempotent0} one should expand:
$$
\frac {\lambda_j - [F]}{\lambda_j-\lambda_i} = \left(1 - \frac {[F]}{\lambda_j} \right)\left(1 + \frac {\lambda_i}{\lambda_j} + \frac {\lambda_i^2}{\lambda_j^2} + ... \right)
$$
if $j<i$ and:
$$
\frac {\lambda_j - [F]}{\lambda_j-\lambda_i} = \left(\frac {[F]}{\lambda_i} - \frac {\lambda_j}{\lambda_i} \right)\left(1 + \frac {\lambda_j}{\lambda_i} + \frac {\lambda_j^2}{\lambda_i^2} + ... \right) 
$$
if $j>i$. To understand the above as an expansion of geometric series, we assume that there exists a distinguished subtorus $\CC^* \subset T$ which we will be called \textbf{homological}, such that:
\begin{equation}
\label{eqn:increasing}
\lambda_0|_{\CC^*} > ... > \lambda_n|_{\CC^*}
\end{equation}
To categorify these geometric series, \cite{EH} replace the category $\cat$ by its {\bf homological completion} $\cat^{\uparrow}$,
as in Section \ref{def:cat}.



\begin{theorem}(\cite{EH})
\label{thm:EH diagonalization}
Let $F$ be a diagonalizable object, with eigenmaps $\alpha_i$ and eigenvalues $\lambda_i$ satisfying \eqref{eqn:increasing}. Then there exist a collection of eigenobjects $P_i$ as in \eqref{eqn:eigenobject}, explicitly given by:
\begin{equation}
\label{eqn:idempotent}
\includegraphics{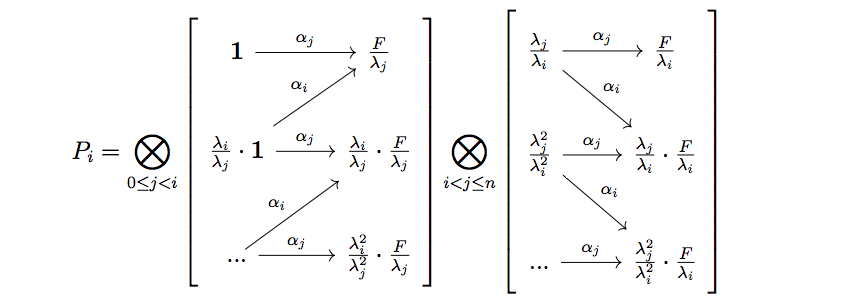}
\end{equation}
The objects should be added $\oplus$ along columns, with differentials according to the arrows. The collection $\{P_0,...,P_n\}$ yields a semi-orthogonal decomposition of $\bcat$:
\begin{equation}
\label{eqn:semi orthogonal}
\1 \cong \Big[P_0 \oplus ... \oplus P_n, \text{ a certain differential} \Big]
\end{equation}
and $\Hom_{\bcat}(P_i,P_j)=0$ if $i>j$. Furthermore, $P_i\otimes P_j\simeq 0$ for $i\neq j$ and $P_i\otimes P_i\simeq P_i$.
\end{theorem}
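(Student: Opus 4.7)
The plan is to mimic the proof of Elias--Hogancamp by categorifying the classical Lagrange interpolation formula \eqref{eqn:idempotent0}, the key point being to make categorical sense of the denominators $(\lambda_j-\lambda_i)^{-1}$. These can only be resolved as geometric series of the form $\sum_{k\geq 0}(\lambda_i/\lambda_j)^k$ (when $j>i$) or $\sum_{k\geq 0}(\lambda_j/\lambda_i)^k$ (when $j<i$), each of which becomes a well-defined object of $K^-(\cat^{\uparrow})$ precisely because the ordering assumption \eqref{eqn:increasing} ensures positivity in the homological $\CC^*$-degree. So the first step is simply to take \eqref{eqn:idempotent} as the definition of $P_i$ and check that the iterated tensor product of these infinite complexes converges in $K^-(\bcat)$: one verifies that in each fixed $\CC^*$-weight, only finitely many terms contribute, using \eqref{eqn:increasing}.

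Next, I would verify the eigenproperty \eqref{eqn:eigenobject}. Each elementary factor $\left[\lambda_j\cdot\1\xrightarrow{\alpha_j}F\right]\otimes\bigoplus_{k\geq 0}(\lambda_i/\lambda_j)^k$ admits a ``telescoping'' identity of the form $F\otimes\mathrm{factor}_j\simeq \lambda_i\cdot\mathrm{factor}_j$ up to a shift that is annihilated by some other factor $\mathrm{factor}_{j'}$ in the product. Because the product is taken over \emph{all} $j\neq i$, and because the defining relation $\bigotimes_{j=0}^n\Cone(\alpha_j)\simeq 0$ forces compatibility among all these shifts, one obtains $\alpha_i\otimes \Id_{P_i}:\lambda_iP_i\to F\otimes P_i$ as an isomorphism in $\bcat$. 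This is essentially a categorical version of the telescoping identity that makes Lagrange interpolation work.

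The third step is to establish the orthogonality and idempotence relations. Given the eigenproperty, the endomorphism ``tensor with $F$'' acts on $P_i\otimes P_j$ simultaneously as $\lambda_i$ and $\lambda_j$; so the map $(\lambda_i-\lambda_j)\cdot\Id$ is both null-homotopic and an isomorphism, forcing $P_i\otimes P_j\simeq 0$. The semi-orthogonal decomposition $\1\cong[P_0\oplus\cdots\oplus P_n]$ of \eqref{eqn:semi orthogonal} is built by organizing the partial iterated cones of the $\alpha_i$'s into a filtration of $\1$: the obvious morphisms from \eqref{eqn:idempotent} give maps $P_i\to \1$ (for the ``first'' factor) and $\1\to P_n$ whose successive cones recover the $P_j$'s. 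Idempotence $P_i\otimes P_i\simeq P_i$ then follows by tensoring the decomposition of $\1$ with $P_i$ and using $P_i\otimes P_j\simeq 0$ for $j\neq i$. The vanishing $\Hom_{\bcat}(P_i,P_j)=0$ for $i>j$ follows by an inductive computation from the explicit description of $P_i$ and the adjunction $\Hom(\Cone(\alpha_k),-)\cong\Hom(F,-)[-1]$ combined with $\Hom(F\otimes P_i,P_j)\cong\Hom(\lambda_i P_i,P_j)$.

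The main obstacle I anticipate is not any single conceptual step but rather the bookkeeping of infinite complexes in $\bcat$. Specifically, the telescoping argument in step two requires one to rearrange a doubly-infinite convolution of complexes and to identify various iterated cones; making this rigorous requires one to check that all the infinite sums truly lie in $\bcat$ (i.e., the local finiteness condition of Definition \ref{def:completion}), and that the homotopies exhibiting $\alpha_i\otimes\Id_{P_i}$ as an isomorphism themselves belong to $\bcat$. This is precisely where the strict inequality in \eqref{eqn:increasing} is essential: it rules out the pathological behavior illustrated in Figure~\ref{fig: model complex} of the ``model complex'' discussion.
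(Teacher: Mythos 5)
The paper does not prove Theorem~\ref{thm:EH diagonalization}: it states it with the explicit attribution ``\cite{EH}'' and then uses it as input, offering a geometric reinterpretation (Lemma~\ref{lem:universal} and Proposition~\ref{open vs proj}) rather than a proof. So there is no in-paper argument for you to match, and your sketch must be judged on its own merits as a reconstruction of the Elias--Hogancamp proof. Your overall plan --- categorify Lagrange interpolation \eqref{eqn:idempotent0}, resolve the denominators $(\lambda_j-\lambda_i)^{-1}$ as geometric series that converge in $\cat^{\uparrow}$ thanks to the strict ordering \eqref{eqn:increasing}, then verify the eigenproperty, orthogonality, idempotence and the semi-orthogonal decomposition --- is indeed the right skeleton.

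However, several of your detailed steps do not hold as stated. For $P_i\otimes P_j\simeq 0$ you assert that ``$(\lambda_i-\lambda_j)\cdot\Id$ is both null-homotopic and an isomorphism''; but $\lambda_i,\lambda_j$ are grading shifts, not scalars, so there is no endomorphism ``$(\lambda_i-\lambda_j)\cdot\Id$'' of $P_i\otimes P_j$ to analyze, and one cannot subtract the two eigen-isomorphisms $\lambda_i\,P_i\otimes P_j\cong F\otimes(P_i\otimes P_j)\cong \lambda_j\,P_i\otimes P_j$. The actual mechanism is tensor-theoretic: the eigenproperty gives $\Cone(\alpha_i)\otimes P_i\simeq 0$, and $P_j$ (for $j\neq i$) is a renormalized tensor product of the $\Cone(\alpha_k)$ over $k\neq j$, so it contains $\Cone(\alpha_i)$ as a factor; tensoring therefore kills $P_j\otimes P_i$. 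This also makes visible where centrality of $F$ is implicitly used --- to move $\Cone(\alpha_i)$ past the remaining factors --- a hypothesis that holds for $\FT_n$ but must be tracked in the abstract setting. Similarly, the ``adjunction'' $\Hom(\Cone(\alpha_k),-)\cong\Hom(F,-)[-1]$ you invoke for semiorthogonality is false: a cone sits in a distinguished triangle with $\Hom(\lambda_k\1,-)$ and $\Hom(F,-)$, not a shift; the correct argument filters $\1$ by partial iterated cones and uses a homological-degree bound coming from \eqref{eqn:increasing}. Finally, the ``telescoping identity'' showing that $\alpha_i\otimes\Id_{P_i}$ is an isomorphism is the genuine heart of the [EH] theorem, and your sketch provides no construction of the homotopy inverse on the infinite complex $P_i$; merely asserting that ``the defining relation $\bigotimes\Cone(\alpha_j)\simeq 0$ forces compatibility'' does not fill that gap, which is precisely where the machinery in \cite{EH} is concentrated.
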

 


The main application of \cite{EH} is when $\cat = K^b(\SBim_n)$ is replaced by $\bcat = K^-(\SBim_n)$, and the homological $\CC^*$ action is by homological degree of chain complexes. We may generalize this particular case to the following setup.

\subsection{The geometric realization over a fixed base}
\label{sub:geometric diagonalization}

As we saw in Lemma \ref{lem:universal}, any categorical diagonalization in a category $\cat$ comes from a $T$--equivariant map:
$$
\cat \rightarrow \PP^n_A \qquad \text{i.e.} \qquad K^b(\cat) \xtofrom[\iota^*]{\iota_*} \CD
$$
where $\CD = D^b(\coh_T(\PP^n_A))$, and the action $T \curvearrowright \PP^n_A$ is given in \eqref{eqn:action}. The above functors extend to functors on the homological completions:
$$
K(\cat^{\uparrow}) \xtofrom[\iota^*]{\iota_*} \CD^{\uparrow}
$$
which are given by the same formulas, but allow infinite direct sums of objects in decreasing homological degree. Therefore, we have:
$$
P_i = \iota^*(\CP_i)
$$
where $\CP_i \in \CD^{\uparrow}$ are   given by formula \eqref{eqn:idempotent} with $F$ replaced by $\CO(1)$ and $\alpha_i$ replaced by multiplication with the homogeneous coordinate $z_i$:
\begin{equation}
\label{eqn:idempotent geometry}
\includegraphics{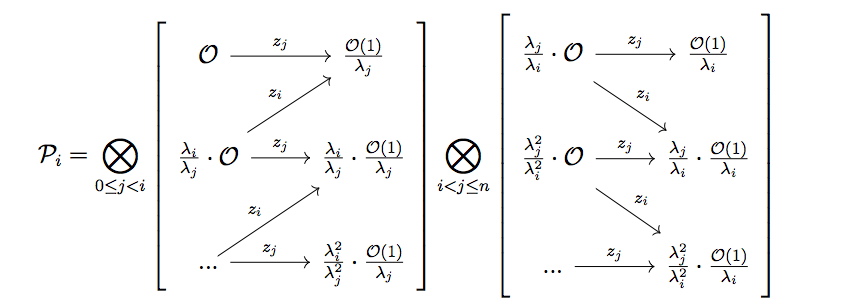}
\end{equation}
The rows in the above diagram make up for the expansion of the geometric series $(\lambda_j - \lambda_i)^{-1}$. Meanwhile, observe that the top row is precisely;
\begin{equation}
\label{eqn:quasi}
\text{top row of }\CP_i = \bigotimes_{j<i} \left[\CO \stackrel{z_j}\longrightarrow  \CO(1) \lambda_j^{-1}\right] \bigotimes_{j>i} \left[\lambda_j \lambda_i^{-1} \stackrel{z_j}\longrightarrow \CO(1)\lambda_i^{-1} \right] \stackrel{\qis}\cong \CO_{p_i} \prod_{j<i} \frac {\lambda_i}{\lambda_j}
\end{equation}
Here, $\CO_{p_i}$ is the structure sheaf of the torus invariant subscheme $p_i = [0:...:0:1:0:...:0] \in \PP^n_A$, which is a closed point if and only if $A$ is a field. The quasi-isomorphism in \eqref{eqn:quasi} is the standard one between the structure sheaf of $p_i$ and its Koszul complex. We conclude that the full idempotent \eqref{eqn:idempotent geometry} is a way to make sense of the denominators in the object:
\begin{equation}
\label{pi as a fraction}
\CP_i = \frac {\CO_{p_i}}{\prod_{0 \leq j \neq i \leq n} \left(1-\frac {\lambda_j}{\lambda_i} \right)} \in \bCD
\end{equation}
Recall from \eqref{eqn:semi orthogonal} that $\CP_0,...,\CP_n$ give a decomposition of the unit object in $\bCD$. This statement categorifies the fact that:
$$
[\CO] = \sum_{i=0}^n [\CP_i] = \sum_{i=0}^n \frac {[\CO_{p_i}]}{\prod_{0\leq j \neq i \leq n} \left(1 - \frac {\lambda_j}{\lambda_i} \right)}
$$
in the algebraic $K$--theory ring of $\PP^n_A$. The above is nothing but the Thomason equivariant localization formula, which is a very interesting result even in $K$--theory. At the categorical level, it is made even more interesting by the presence of the various differentials that appear in \eqref{eqn:semi orthogonal}, which give rise to a semi-orthogonal decomposition of the category $\bCD$. 

The denominator of \eqref{pi as a fraction} equals  the Poincar\'e series for the equivariant local ring of $\PP^n_A$ at $p_i$.
This is not a coincidence, and the relation between the two objects can be made more precise.

\begin{proposition}
\label{open vs proj}
Consider a locally closed subset $S_i=\{z_0=\ldots=z_{i-1}=0,z_i\neq 0\}\subset \PP^n_{A}$.  
Then $\CP_i$ is quasi-isomorphic to the pushforward of $S^{\bullet}(\nu^{\vee}_{S_i})$, where $\nu_{S_i}$ is the normal bundle to $S_i$.
 \end{proposition}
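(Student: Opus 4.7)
The plan is to compute both sides explicitly on $\PP^n_A$ and match them term by term. First I would unwind the local geometry of $S_i$: in the standard open chart $U_i = \{z_i \neq 0\}\cong \spec A[w_j:j\neq i]$ with coordinates $w_j = z_j/z_i$ of equivariant weight $\lambda_j/\lambda_i$, the subscheme $S_i$ is the closed complete intersection cut out by $(w_0, \ldots, w_{i-1})$. Consequently $\nu_{S_i}^\vee$ is free on $S_i$ of rank $i$ with generators $\xi_j = dw_j|_{S_i}$ of weight $\lambda_j/\lambda_i$ for $j < i$, so that $S^\bullet(\nu_{S_i}^\vee) \cong \CO_{S_i}[\xi_0, \ldots, \xi_{i-1}]$. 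To identify the pushforward, I would factor $j$ as $S_i \stackrel{j_1}{\hookrightarrow} V_i \stackrel{j_2}{\hookrightarrow} \PP^n_A$, where $V_i = \{z_0 = \cdots = z_{i-1}= 0\}\cong \PP^{n-i}_A$ is the closure of $S_i$ and $S_i\subset V_i$ is its standard open affine chart.

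Next I would resolve $j_*(S^\bullet \nu_{S_i}^\vee)$ on $\PP^n_A$ by combining two resolutions. For the closed embedding $j_2$, the equivariant Koszul complex $\bigotimes_{j<i}[\lambda_j\CO_{\PP^n_A}(-1) \stackrel{z_j}{\to} \CO_{\PP^n_A}]$ resolves $(j_2)_*\CO_{V_i}$ and matches, for those $j<i$, the ``horizontal'' eigenmap factors of \eqref{eqn:idempotent geometry}. For the open embedding $j_1$ and each symmetric power of $\nu_{S_i}^\vee$, pushing forward amounts to inverting $z_i$: concretely, the monomial $\xi_0^{k_0}\cdots \xi_{i-1}^{k_{i-1}} \in S^\bullet \nu_{S_i}^\vee$ corresponds, after trivializing $\CO_{V_i}(-1)|_{S_i}$ by $z_i^{-1}$, to a direct summand of weight $\prod_{j<i}(\lambda_j/\lambda_i)^{k_j}$. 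Summing over all multi-indices $(k_0, \ldots, k_{i-1}) \in \NN^i$ then categorifies the geometric series $\prod_{j<i}(1-\lambda_j/\lambda_i)^{-1}$.

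Finally I would match the resulting double complex with the diagram \eqref{eqn:idempotent geometry}. The top row of \eqref{eqn:idempotent geometry}, which by \eqref{eqn:quasi} is a Koszul resolution of $\CO_{p_i}$ (up to a weight twist), corresponds to the multi-index $(0, \ldots, 0)$, where the columns indexed by $j > i$ furnish the Koszul complex in the $w_j$ that further resolves $\CO_{p_i}$ inside $\CO_{S_i}$. The subsequent rows, indexed by $(k_0, \ldots, k_{i-1}) \in \NN^i$, correspond to the monomials $\xi_0^{k_0}\cdots\xi_{i-1}^{k_{i-1}}$, and the vertical differentials labeled by $z_j$ for $j<i$ in \eqref{eqn:idempotent geometry} realize the multiplication-by-$\xi_j$ structure on $S^\bullet\nu_{S_i}^\vee$. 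The main obstacle will be the careful equivariant bookkeeping of weights, signs, and the passage to the completion $\CD^\uparrow$: one must check that the vertical $z_j$-arrows encoding the expansion of $(1-\lambda_j/\lambda_i)^{-1}$ assemble with the horizontal Koszul differentials into a total complex that is genuinely quasi-isomorphic to $j_*(S^\bullet \nu_{S_i}^\vee)$ and not to some nontrivial Ext-extension thereof.
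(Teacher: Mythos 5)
Your approach differs genuinely from the paper's, which treats only the $n=1$ case by direct stalk computation: for $\CP_0$ it introduces a formal variable $y$, writes the infinite complex as $p_*\CO_{\{z_1+yz_0=0\}}$ over $\PP^1\times\AA^1$, and identifies it with $\CO_{S_0}$; for $\CP_1$ it reads off the quotient $\frac{z_1}{z_0}\CC[\frac{z_1}{z_0}]$ on stalks. Your systematic factorization $j = j_2\circ j_1$, with Koszul resolution of the closed part and a completion argument for the open part, is aimed at general $n$ and would be a genuine gain over the written proof if it could be carried out.

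The gap is in the matching with \eqref{eqn:idempotent geometry}. You index the rows by $(k_0,\ldots,k_{i-1})\in\NN^i$, identifying them with the symmetric-algebra multi-indices, and you relegate the $j>i$ columns to ``a Koszul complex further resolving $\CO_{p_i}$ inside $\CO_{S_i}$.'' But the rows of \eqref{eqn:idempotent geometry} expand $(\lambda_j-\lambda_i)^{-1}$ for \emph{every} $j\neq i$, not only $j<i$; this is already visible for $\CP_0$, where there are no $j<i$ and yet the complex is still infinite, indexed by powers of $y$ in the $j=1$ direction. Correspondingly, in $j_*(S^\bullet\nu^\vee_{S_i})$ the denominator $\prod_{j\neq i}(1-\lambda_j/\lambda_i)^{-1}$ receives the $j<i$ factors from $S^\bullet\nu^\vee$, but the $j>i$ factors come from the affine coordinates on $S_i$ --- that is, from the open pushforward $(j_1)_*$, which has no finite resolution. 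Your plan has no $j>i$ rows at all; yet the $j>i$ geometric series do not ``further resolve'' $\CO_{p_i}$ --- they \emph{undo} the $j>i$ Koszul factors, replacing the skyscraper $\CO_{p_i}$ by the structure sheaf of the attracting cell $S_i$, and this inherently infinite step is exactly where the paper needs the $y$-variable device. To close the gap you would have to resolve $(j_1)_*\CO_{S_i}$ by line bundles on $V_i$ compatibly with the completion $\CD^\uparrow$ and check that this resolution interleaves correctly with the $j<i$ rows, none of which is addressed in the proposal.
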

 
\begin{remark}
The ordering of coordinates in the definition of $S_i$ agrees with the ordering of eigenvalues of $\CO(1)$ (that is, the weights of the torus action) on $\PP^n$. It is easy to see that the strata $S_i$ agree with the cells in the Bia\l ynicki-Birula decomposition \cite{BB1,BB2} 
of $\PP^n$ with respect to this torus action. Similar decompositions of equivariant derived categories with respect to 
Bia\l ynicki-Birula strata were studied in \cite{DanHL}, and we plan to study the relation between the categorical diagonalization framework and \cite{DanHL} in the future work.
\end{remark}
 
\begin{proof}
To simplify the notations, we will consider the case $n=1$ and omit all the grading shifts (which can be easily reconstructed since all maps are homogeneous). The construction \eqref{eqn:idempotent geometry} yields two different infinite complexes built from the sections $z_0,z_1:\CO\to \CO(1)$. The first has a form:
$$ 
\includegraphics{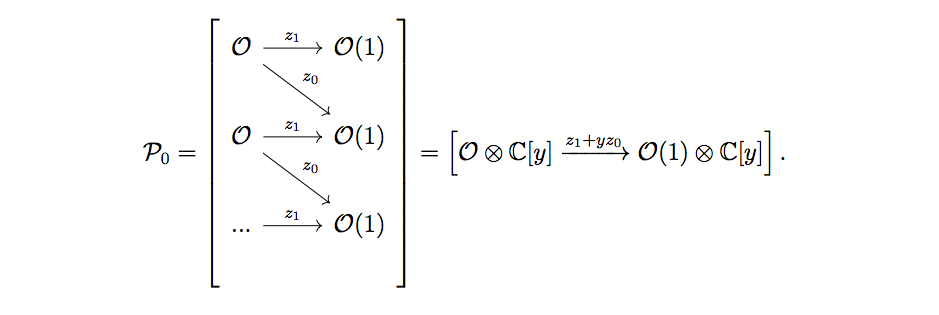}
$$
Here $y$ is a formal variable corresponding to the shift of the complex down by one unit. It can be made less formal by considering the projection $\pi:\PP^n\times \AA^1\to \PP^n$, so that
$$
\CP_0=p_*\left[\CO\xrightarrow{z_1+yz_0}\CO(1)\right]=p_*\CO_{\{z_1+yz_0=0\}}.
$$
The projection $p$ identifies the closed subset $\{z_1+yz_0=0\}\subset \PP^n\times \AA^1$ with  the open subset $S_0=\{z_0\neq 0\}\subset \PP^n$, so $\CP_0=\CO_{S_0}$. The second complex is more interesting. It has the form:
$$
\includegraphics{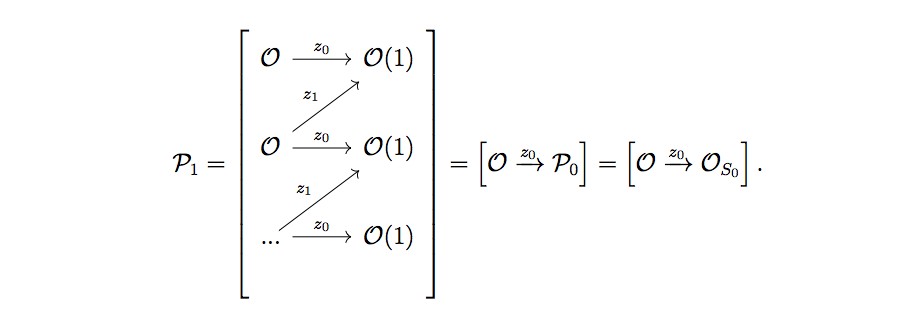}
$$
It is supported on $S_1=\PP^1\setminus S_0=\{z_0=0\}$ where the stalk of $\CO$ is isomorphic to $\CC[\frac{z_0}{z_1}]$ and the stalk of
$\CO_{S_0}$ is isomorphic to $\CC[\frac{z_0}{z_1},\frac{z_1}{z_0}]$, so the quotient is isomorphic to $\frac{z_1}{z_0}\cdot \CC[\frac{z_1}{z_0}].$
\end{proof} 

\begin{remark}
Note that 
$$
\includegraphics{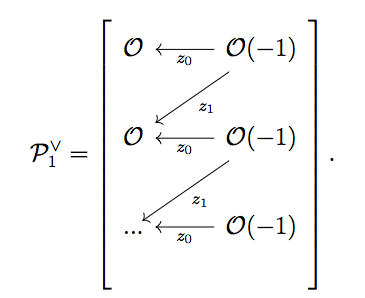}
$$
One can use similar arguments to formally match this complex with $\CO_{\{z_1\neq 0\}}\otimes \CO(-1)=\CO_{\{z_1\neq 0\}}$.
However, $\CP_1^{\vee}$ does not belong to the category $\CD^{\uparrow}$ since the gradings of its summands are unbounded.
\end{remark}

\begin{corollary}
The endomorphism ring of $\CP_i$ is isomorphic to the local ring of $\PP^n_{A}$ at a fixed point $p_i$.
\end{corollary}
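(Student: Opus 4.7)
The plan is to deduce the corollary directly from Proposition \ref{open vs proj}, which identifies $\CP_i \simeq Rj_*(S^\bullet \nu_{S_i}^\vee)$, where $j: S_i \hookrightarrow \PP^n_A$ is the inclusion of the locally closed stratum through $p_i$ and $\nu_{S_i}$ is its normal bundle. The goal is then to compute the endomorphism ring of this pushforward and match it with the local ring of $\PP^n_A$ at $p_i$.

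First I would factor $j$ as $S_i \xrightarrow{j_i} U_i \xrightarrow{k_i} \PP^n_A$, where $U_i = \{z_i \neq 0\} \cong \AA^n_A$ is the standard $T$-invariant affine chart containing $p_i$, and $j_i$ is the closed immersion cut out by the regular sequence $z_0/z_i, \ldots, z_{i-1}/z_i$. Because $k_i$ is an open immersion one has $k_i^* k_{i*} \cong \operatorname{id}$, which gives
$$
\End_{\PP^n_A}\bigl(k_{i*}\CF\bigr) \;\cong\; \End_{U_i}(\CF)
$$
for any $\CF \in D(\mathrm{QCoh}(U_i))$ set-theoretically supported on $U_i$. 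This reduces the computation of $\End(\CP_i)$ to that of $\End_{U_i}(j_{i*}(S^\bullet \nu_{S_i}^\vee))$ on the affine chart $U_i$.

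Next I would perform the local computation on $U_i$. Since $j_i$ is a closed immersion, $j_{i*}$ is fully faithful on quasi-coherent sheaves, so one is reduced to computing $\End_{S_i}(S^\bullet \nu_{S_i}^\vee)$ in the $T$-equivariant completed category $\CD^{\uparrow}$. The key observation is that $S^\bullet \nu_{S_i}^\vee$ is a sheaf of graded $\CO_{S_i}$-algebras which is $T$-equivariantly free of rank one over itself; combined with the homological boundedness of $\CD^{\uparrow}$, all equivariant self-maps must be multiplication by global sections. This gives
$$
\End(\CP_i) \;\cong\; \Gamma(S_i, S^\bullet \nu_{S_i}^\vee) \;\cong\; A[z_0/z_i,\ldots,\widehat{z_i/z_i},\ldots, z_n/z_i] \;=\; \CO(U_i),
$$
which is the equivariant local ring of $\PP^n_A$ at $p_i$. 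As a consistency check, its Poincar\'e series $\prod_{j\neq i}(1-\lambda_j/\lambda_i)^{-1}$ matches the denominator of $\CP_i$ in \eqref{pi as a fraction}.

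The main obstacle is the final step: rigorously verifying that equivariant endomorphisms in $\CD^\uparrow$ reduce to multiplication by sections, rather than yielding a larger (\emph{a priori} infinite) space of $\CO_{S_i}$-linear maps between the various summands $S^k\nu_{S_i}^\vee$ and $S^l\nu_{S_i}^\vee$. This should follow from a weight-by-weight analysis using \eqref{eqn:increasing}, which forces each $T$-equivariant weight space of $\End(\CP_i)$ to be finite-dimensional and to have precisely the dimensions of the corresponding weight space of $\CO(U_i)$; however, the book-keeping requires care because of the interaction between the homological boundedness built into the definition of $\CD^\uparrow$ and the full torus grading.
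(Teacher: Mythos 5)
Your approach is essentially the same as the paper's: both invoke Proposition \ref{open vs proj} to replace $\CP_i$ by a pushforward along the stratum $S_i$ and then compute the endomorphism ring locally. The factorization through the affine chart $U_i$, the use of full faithfulness of closed-immersion pushforward, and the identification $\Gamma(S_i,S^\bullet\nu_{S_i}^\vee)\cong\CO(U_i)$ are all correct and make the argument for general $n$ more systematic than the paper, which only works out $n=1$ and asserts the rest is analogous.

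The gap you flag at the end is, however, the real content of the corollary, and it is worth pointing out a subtlety underneath it that the proposal glosses over. First, the identification in Proposition \ref{open vs proj} is only a quasi-isomorphism, whereas $\CD^\uparrow$ is a homotopy category of semi-infinite complexes and $\Hom$ there is not a priori quasi-isomorphism invariant; the paper circumvents this for $\CP_1$ by computing $\End$ directly on the stalk of the complex presentation, which is not the same \emph{a priori} as computing $\End$ of its cohomology module. Second, after pushing $S^\bullet\nu_{S_i}^\vee$ forward to $U_i$ by a closed immersion, the natural $\CO(U_i)$-module structure it carries is the one where the defining equations $z_j/z_i$ of $S_i$ act by contraction (as on a local cohomology module), not trivially, and it is this ``dual'' action --- not the internal $S^\bullet\nu^\vee$-algebra structure --- that makes the graded endomorphism ring come out to be the polynomial ring $\CO(U_i)$ rather than the weight-space-wise product $\prod_k\CC$. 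Your phrase ``$T$-equivariantly free of rank one over itself'' appeals to the algebra structure on $S^\bullet\nu^\vee$, but $\End$ is being computed over $\CO_{U_i}$, and the $\CD^\uparrow$-boundedness condition as defined (finitely many nonzero components per source summand) does not by itself exclude arbitrary diagonal weight-preserving maps. A weight-by-weight count against the module structure of the stalk of $\CP_i$ (as the paper does for $n=1$) is what actually pins down the answer; if you carry that through, your outline closes the gap, but it is a genuine computation rather than a formal consequence of boundedness.
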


\begin{proof}
We follow the proof of Proposition \ref{open vs proj}. Indeed, $\End(\CP_0)=H^0(S_0,\CO_{S_0})=\CC[\frac{z_1}{z_0}]$. On the other hand,
$$
\End(\CP_1)=\End\left[\CC\left[\frac{z_0}{z_1}\right]\rightarrow \CC\left[\frac{z_1}{z_0},\frac{z_0}{z_1}\right]\right]=\CC\left[\frac{z_0}{z_1}\right].
$$
One could also argue that 
$$
\End(\CP_1)=\End(\CP_1^{\vee})=\End(\CO_{\{z_1\neq 0\}})=\CC\left[\frac{z_0}{z_1}\right].
$$
The proof for general $n$ is analogous.
\end{proof}

\begin{remark}
Proposition \ref{open vs proj} shows that the endomorphism rings of the projectors can be interpreted as the rings of functions on certain open charts. This point of view will be important in the next section where we define some open charts on the flag Hilbert scheme and compute the rings of functions on them (up to a certain completion). By Conjecture \ref{conj:1} and the preceding discussion these rings match the homology of the categorified Jones-Wenzl projectors. 
\end{remark}

\begin{remark}
\label{rem:what if}
The equivariant localization formula makes sense when $\CD = D^b(\coh_T(X))$ for any local complete intersection $X$ acted on by a torus $T$:
\begin{equation}
\label{eqn:equivloc}
[\CO_X] = \sum_{p \in X^T} \frac {[\CO_p]}{\wedge^\bullet \left( \Tan_p^\vee X \right)} 
\end{equation}
As we have seen, when $X = \PP^n$ the above setup encodes categorical diagonalization as in Definition \ref{def:EH2} and \ref{def:EH3}. It would be very interesting to determine which problems in ``categorical linear algebra" are encoded by formula \eqref{eqn:equivloc} for more general schemes $X$. 

\end{remark}

\subsection{The relative case}

For the remainder of this Section, we will generalize the objects \eqref{eqn:idempotent geometry} from $\PP^n_A$ to projective bundles $\PP \CV^\vee$ on an arbitrary base scheme $X$, as in Example \ref{ex:equivrelative}. We assume that both $X$ and $\CV$ are acted on by a torus $T$, and that we have a decomposition:
\begin{equation}
\label{eqn:bill}
\CO_X \cong \left[ \bigoplus_{x\in X^T} \CP_x, \text{ a certain differential}\right] \in \overline{D^b(\coh_T(X))}
\end{equation}
where the indexing set goes over the fixed points of $X$. We assume that the above is semi-orthogonal, in the sense that $\Hom(\CP_x, \CP_y) = 0$ whenever $x>y$ with respect to some total order. We wish to upgrade the decomposition \eqref{eqn:bill} to the projective bundle $\PP \CV^{\vee}$. 

\begin{proposition}
\label{prop:decomposition projective}

Let $n + 1 = \emph{rank }\CV$. There exist objects $\CP_i^x$ for all $i \in \{0,...,n\}$ and $x\in X^T$, such that we have a semi-orthogonal decomposition:
\begin{equation}
\label{eqn:decomposition}
\CO_{\PP \CV^{\vee}} \cong \left[ \bigoplus^{0\leq i \leq n}_{x\in X^T} \CP^i_x, \text{ a certain differential}\right] \in \overline{D^b(\coh_T(\PP \CV^\vee))}
\end{equation}
whenever the homological subtorus $\CC^* \subset T$ acts with distinct weights in the fibers $\CV|_x$ for all $x \in X^T$. We have $\Hom(\CP_x^i, \CP_y^j) = 0$ if $x>y$ or if $x=y$ and $i<j$.

\end{proposition}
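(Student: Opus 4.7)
The plan is to combine the given semi-orthogonal decomposition of $\CO_X$ with a fiberwise application of categorical diagonalization (Theorem \ref{thm:EH diagonalization}) to the line bundle $\CO(1)$ on the projective bundle $\pi: \PP\CV^\vee \to X$. The crucial structural input is that over each $T$-fixed point $x \in X^T$, the fiber $\CV|_x$ decomposes as a direct sum $\bigoplus_{i=0}^n \CL_i^x$ of $T$-weight lines of characters $\lambda_0^x,\ldots,\lambda_n^x$, whose weights under the distinguished homological $\CC^* \subset T$ are all distinct by hypothesis. We may therefore order them so that $\lambda_0^x|_{\CC^*} > \cdots > \lambda_n^x|_{\CC^*}$, placing us exactly in the setup of Subsection \ref{sub:geometric diagonalization}.

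First I would work locally at a single fixed point $x \in X^T$. The inclusions $\CL_i^x \hookrightarrow \CV|_x$ give rise to $n+1$ eigenmaps $\alpha_i^x : \lambda_i^x \cdot \CO \to \CO(1)$ on the fiber $\pi^{-1}(x) \cong \PP(\CV|_x^\vee) \cong \PP^n$, and applying formula \eqref{eqn:idempotent geometry} to this data produces local projectors $\CP^i_{x,\mathrm{loc}} \in \overline{D^b(\coh_T(\pi^{-1}(x)))}$ satisfying the fiber-wise semi-orthogonal decomposition
$$
\CO_{\pi^{-1}(x)} \cong \Bigl[ \bigoplus_{i=0}^n \CP^i_{x,\mathrm{loc}},\ \text{some differential}\Bigr].
$$
I would then define the global object $\CP_x^i$ by tensoring the pullback of $\CP_x$ with a globalization of $\CP^i_{x,\mathrm{loc}}$ — that is, form the complex \eqref{eqn:idempotent geometry} on all of $\PP\CV^\vee$ using $\CO(1)$ in place of $F$ and using the eigenmaps $\alpha_i^x$ extended off the fiber, then tensor the result with $\pi^*\CP_x$. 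The key point is that the description \eqref{pi as a fraction} of $\CP_x$ realizes it as the structure sheaf of $\{x\}$ divided by the exterior algebra of the conormal data at $x$, so tensoring with $\pi^*\CP_x$ "localizes" to a formal neighborhood of $\pi^{-1}(x)$, on which $\CV$ does split into its weight subbundles and the required eigenmaps genuinely exist.

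Finally, the decomposition \eqref{eqn:decomposition} is obtained in two steps: pulling back $\CO_X \cong [\bigoplus_x \CP_x]$ along $\pi$ yields $\CO_{\PP\CV^\vee} \cong [\bigoplus_x \pi^*\CP_x]$ (since $\pi$ is flat and $\pi^*$ commutes with the completion used for the idempotents), and then replacing each $\pi^*\CP_x$ by its further decomposition $[\bigoplus_i \CP_x^i]$ coming from the fiberwise construction above. Semi-orthogonality $\Hom(\CP_x^i,\CP_y^j)=0$ follows lexicographically: when $x \neq y$ it reduces to $\Hom(\CP_x, \CP_y)=0$ via the projection formula and adjunction $(\pi^*, \pi_*)$; when $x=y$ it reduces to the known vanishing $\Hom(\CP^i_{x,\mathrm{loc}}, \CP^j_{x,\mathrm{loc}}) = 0$ for $i<j$ from Theorem \ref{thm:EH diagonalization}.

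The main obstacle will be the construction in the second step: making the local-at-$x$ projectors into well-defined global objects, and verifying that $\pi^*\CP_x$ really does carry enough of a "formal splitting" of $\CV$ to make sense of the eigenmaps globally. I expect this will require reinterpreting $\CP_x$ via Proposition \ref{open vs proj} as a pushforward from a Bia\l ynicki-Birula-type stratum, where $\CV$ restricts to a $T$-equivariant bundle that splits into weight subbundles along the stratum. A secondary technical point, which should be more routine, is checking that the two semi-orthogonal decompositions (over $X^T$ and over $\{0,\ldots,n\}$) are compatible so that the induced ordering on pairs $(x,i)$ really yields a semi-orthogonal decomposition with the stated vanishing pattern.
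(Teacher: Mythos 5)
Your proposal matches the approach the paper implicitly takes, but it's worth noting that the paper provides no actual proof of this proposition — the only text accompanying it is the single sentence stating that $\CP_x^i$ is formula \eqref{eqn:idempotent geometry} with $\CO$ replaced by $\CP_x$ and $\lambda_0,\ldots,\lambda_n$ taken to be the $T$-weights of $\CV|_x$. Your sketch is essentially a fleshing-out of that prescription: build the projector complex fiberwise from the eigenmaps $\alpha_i^x : \lambda_i^x \cdot \CO \to \CO(1)$ coming from the weight decomposition of $\CV|_x$, then tensor with $\pi^*\CP_x$, then substitute into the decomposition \eqref{eqn:bill} pulled back along $\pi$. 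The semi-orthogonality argument you give (lexicographic on $(x,i)$, reducing to $\Hom(\CP_x,\CP_y)=0$ for $x>y$ via projection formula and to the relative statement of Theorem \ref{thm:EH diagonalization} for $x=y$, $i<j$) is the right skeleton.

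You correctly flag the genuine subtlety that the paper leaves unaddressed: the homogeneous-coordinate eigenmaps $z_j$ of \eqref{eqn:idempotent geometry} do not exist globally on $\PP\CV^\vee$ for an arbitrary vector bundle $\CV$, so ``replace $\CO$ with $\CP_x$'' requires a mechanism by which tensoring with $\pi^*\CP_x$ makes the needed eigenmaps available. Your proposed resolution — reinterpret $\CP_x$ via Proposition \ref{open vs proj} as a pushforward from a Bia\l ynicki--Birula stratum $S_x$, along which a $T$-equivariant bundle with distinct homological weights at $x$ does split into weight subbundles (a $\CC^*$-equivariant bundle over a contracting affine cell splits by its weights) — is a sensible route, and arguably supplies more content than the paper itself does. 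One small point of care: you would want the decomposition \eqref{eqn:bill} to itself be of BB type (so that the $\CP_x$ genuinely correspond to such strata, not just abstract summands), which is an implicit assumption consistent with how the construction is used in the paper but not stated in the hypotheses of the proposition.
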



The object $\CP_x^i$ is precisely of the form \eqref{eqn:idempotent geometry} if one replaces $\CO$ with $\CP_x$, and $\lambda_0,...,\lambda_n$ are precisely the weights of the torus $T$ in the fiber $\CV|_x$.

\section{Local charts and fixed points of $\FH_n$}
\label{sec:local}

\subsection{Affine charts for Hilbert schemes}
\label{sub:affinecharts1}

Recall the action of $\torus$ on Hilbert schemes given by rescaling the $X$ and $Y$ matrices. The fixed points of this action on the Hilbert scheme are well-known. They are given by monomial ideals, which are indexed by partitions of $n$:
$$
\H_n^{\torus} = \{I_\lambda\}_{\lambda \vdash n}, \qquad I_\lambda = (x^{\lambda_1}, x^{\lambda_2}y,...)\subset \CC[x,y]
$$
Haiman described a set of affine charts on the Hilbert scheme, each of which is $\torus$ invariant and contains a single fixed point:
\begin{equation}
\label{eqn:chart0}
\H_n = \bigcup_{\lambda \vdash n} \oH_\lambda
\end{equation}
where:
\begin{equation}
\label{eqn:haimanchart}
\oH_\lambda = \Big \{I \text{ such that } \{x^ay^b\}_{(a,b) \in \lambda} \text{ is a basis of } \CC[x,y]/I\Big\}
\end{equation}
Here and throughout this paper, we identify a partition with its Young diagram, which is the set of $1 \times 1$ boxes in the first quadrant of the plane with coordinates $(a,b) \in \NN_0 \times \NN_0$, $b < \lambda_{a}$:

\begin{picture}(100,150)(-130,-20)
\label{fig0}


\put(0,0){\line(1,0){160}}
\put(0,40){\line(1,0){160}}
\put(0,80){\line(1,0){120}}
\put(0,120){\line(1,0){40}}

\put(0,0){\line(0,1){120}}
\put(40,0){\line(0,1){120}}
\put(80,0){\line(0,1){80}}
\put(120,0){\line(0,1){80}}
\put(160,0){\line(0,1){40}}

\end{picture}

\noindent For example, the above Young diagram corresponds to the partition $\lambda = (4,3,1)$. It would be very nice to have a clear description of the algebra of functions on each affine chart \eqref{eqn:chart0}, but this is not at all easy. Instead, Haiman's construction gives us a set of generators:
$$
\{f_1,...,f_{2n} \} \in \fm_\lambda/\fm^2_\lambda
$$
where $\fm_\lambda \in \CC[\oH_\lambda]$ denotes the maximal ideal of the fixed point $I_\lambda$. 

\subsection{Affine charts for flag Hilbert schemes}
\label{sub:affinecharts2}

The situation is somewhat better in the case of flag Hilbert schemes $\FH_n(*)$ for any $\star$, where one has affine coverings: 
\begin{equation}
\label{eqn:chart}
\FH_n(*) = \bigcup_{T \vdash n} \oFH_T(*)
\end{equation}
indexed by standard Young tableaux $T$ of size $n$. Recall that a standard Young tableau is a numbering of the boxes of a Young diagram of size $n$ with the numbers $1,...,n$ such that the numbers increase as we go up and right in the diagram. A covering \eqref{eqn:chart} is called \textbf{good} if all the charts are $\torus$ equivariant and it respects passage from $n+1$ to $n$:
$$
\includegraphics{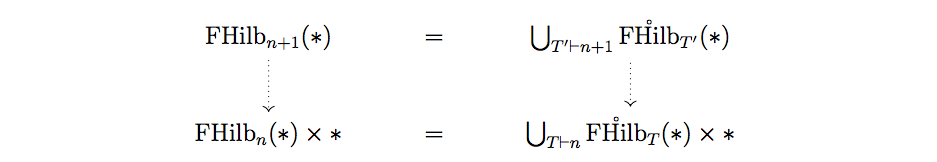}
$$
where the chart corresponding to any $T'$ maps to the chart corresponding to $T = T' \backslash \square_{n+1}$. Here, $\square_{n+1}$ denotes the box labeled $n+1$ in $T$, which must necessarily be an outer corner of $T$ and an inner corner of $T'$. Restricting the sheaf of dg algebras $\FH_n^\dg(*)$ to the affine charts \eqref{eqn:chart} gives rise to dg algebras:
\begin{equation}
\label{eqn:coordinaterings}
\oCA^\dg_T(*) = \CC\left[\oFH^\dg_T(*)\right]
\end{equation}

\begin{conjecture}
\label{conj:charts}

There exists a good covering whose coordinate rings \eqref{eqn:coordinaterings} satisfy:
\begin{equation}
\label{eqn:genrel}
\oCA_{T \cup \sq}(*) = \frac {\oCA_{T}(*)[*, f_{\sq_1},f_{\sq_2},...]}{(r_{\bsq_1},r_{\bsq_2},...)}
\end{equation}
where $\square_1,\square_2,...$ denote the inner corners of $T$ different from $\square$, and $\blacksquare_1,\blacksquare_2,...$ denote the outer corners of $T$ (except for the outer corner labeled $n$ in the case $* = \point$). The generators denoted by $*$ stand for the affine coordinates $\{x_{n+1},y_{n+1}\}$, $\{x_{n+1}\}$, $\emptyset$ when $* = \CC^2, \CC, \point$.

\end{conjecture}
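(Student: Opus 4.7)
The plan is to prove Conjecture \ref{conj:charts} by induction on $n$, leveraging the projective bundle description of $\FH_{n+1}^\dg(*)$ over $\FH_n^\dg(*) \times *$ established in Proposition \ref{prop:dg scheme}. The base case $n=0$ is trivial since there is a unique empty tableau and $\oCA_\emptyset(*) = \CC[*]$. For the inductive step, assume a good covering $\{\oFH_T^\dg(*)\}_{T}$ indexed by SYTx of size $n$ has been constructed satisfying \eqref{eqn:genrel}. Given a SYT $T$ of size $n$, the torus-fixed points of the fiber of $\pi_{n+1}: \FH_{n+1}^\dg(*) \to \FH_n^\dg(*) \times *$ over the fixed point $I_T$ correspond bijectively to the inner corners $\sq$ of $T$ (that is, the addable cells), via the torus weight decomposition of the fiber of $\CE_n^\vee$ at $I_T$. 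For each such $\sq$ I would define $\oFH_{T \cup \sq}^\dg(*)$ as the preimage under $\pi_{n+1}$ of $\oFH_T^\dg(*) \times *$, intersected with the standard affine chart $\{z_\sq \ne 0\}$ of the projectivization.

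The generators $f_{\sq_i}$ then appear naturally as the homogeneous coordinates $z_{\sq_i}/z_\sq$ for the remaining inner corners $\sq_i$ of $T$, while the generators denoted $*$ come from the affine coordinates on the factor $*$ (namely $x_{n+1}$ and/or $y_{n+1}$). The relations $r_{\bsq_j}$ should arise from the dg structure on $\PP\CE_n^\vee$ described in Subsection \ref{sub:proj dg}: since $\CE_n$ is a two-step complex, $\proj S^\bullet \CE_n$ carries odd generators coming from the kernel part $qt\CT_n$ together with a differential induced by $\Psi$ of \eqref{eqn:psi}. Contracting these odd generators against $z_\sq^{-1}$ on the chart yields precisely one relation per outer corner of $T$, and the exception at $* = \point$ for the outer corner labeled $n$ matches the fact that the first subdiagonal of $[X,Y]$ is automatically zero in that setting, see \eqref{eqn:expdim3}.

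For the covering property, I would argue that the flag Hilbert scheme admits a Bia\l ynicki-Birula type decomposition with respect to the homological $\CC^*$ subtorus of $\torus$, with strata indexed by the fixed points, and that each stratum lies in the corresponding chart $\oFH_T^\dg(*)$. Combined with the fact that the charts $\{z_\sq \ne 0\}$ of each projective fiber cover it, the desired affine covering on $\FH_{n+1}^\dg(*)$ follows. Goodness (torus equivariance and compatibility with the tower map) is then automatic from the construction, since each chart is stable under $\torus$ and is defined inductively through the projection.

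The main obstacle will be step four, the precise computation and counting of the relations $r_{\bsq}$. Although the two-step nature of $\CE_n$ suggests one odd generator per copy of $\CT_n$ in the kernel, and hence potentially $n$ relations per chart, what is actually observed in the conjecture is one relation per outer corner of $T$ (minus one in the $* = \point$ case). Explaining this reduction requires a careful analysis of how $\Psi$ restricts to the torus weight decomposition of $\CT_n$ at $I_T$: on each weight space corresponding to an interior box of $T$, the map $\Psi$ is an isomorphism and the odd generator becomes trivially identified with an even one, while non-trivial relations survive only at the weights corresponding to outer corners. Carrying out this identification explicitly, and checking that no hidden higher relations appear, is the technical core of the argument and would plausibly benefit from a compatibility check against the torus character computations of Section \ref{sec:local} and the symmetric/antisymmetric cases treated in Theorem \ref{thm:symm antisymm}.
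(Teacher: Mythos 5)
Note first that the statement you are trying to prove is, in the paper, an open conjecture: immediately after stating Conjecture \ref{conj:charts}, the authors write that they do not know how to define the generators $f_k$ and relations $r_k$ in general and will only ``establish a weaker version.'' So there is no proof to match; the closest the paper comes is Proposition \ref{prop:chart} in Subsection \ref{sub:localcoord}, which establishes the presentation \eqref{eqn:genrel} for the dg \emph{local ring} $\oCA_T$ at each fixed point (more precisely on an open subscheme of $\FH_T$ on which certain torus-invariant determinants are units), not for an explicit ``good covering.''

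Your outline tracks the paper's partial approach quite closely: the charts are engineered so that $\CT_n$ trivializes over $\oFH_T$, the fiber of $\pi$ is the projectivization of $\CE_n^\vee$, one picks the standard affine patch indexed by an inner corner, and the relations should be the image of the outer-corner summands under $\Psi$. The crucial step you correctly flag as ``the technical core'' --- reducing the three-term complex $\CE_n|_{\oFH_T}$ quasi-isomorphically to the two-term complex $[\bigoplus_{\bsq\ \text{outer}} \CO \to \bigoplus_{\sq\ \text{inner}} \CO]$, while solving for all remaining coordinates and verifying no extra relations appear --- is exactly what the paper's Proposition \ref{prop:chart} carries out, via the combinatorial thick/dotted/colored-edge device and the observation that the relevant linear systems have determinant in $1+\ofm_T$, hence are invertible only after localizing. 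So if you carry your plan out, expect to land where the paper does: a presentation of the local ring, with the honest global affine cover (and thus the conjecture as literally stated) still out of reach. Two further cautions. First, your Bia\l ynicki--Birula argument for the cover is a genuinely different angle from the paper's ``torus-invariance principle'' \eqref{eqn:principletorus}; it is plausible but needs a real proof that each BB stratum sits inside the corresponding chart, and that the BB decomposition is well-behaved for the dg scheme (both the paper's and your argument are delicate on the non-projective $\FH_n(\CC)$, $\FH_n(\CC^2)$). Second, watch the exception at $*=\point$: it is not that the first subdiagonal of $[X,Y]$ vanishes per se, but rather that the leftmost term of \eqref{eqn:complex} drops from $\CT_n$ to $\CT_{n-1}$ there (because $X,Y$ become nilpotent), which removes exactly one outer-corner generator, the one labeled $n$, from the Koszul differential.
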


We do not know how to define the generators $f_k$ and the relations $r_k$, but we know how to predict their characters with respect to the $\torus$ action. Specifically, for a box $\square = (a,b)$ in a Young diagram, we define its weight as:
\begin{equation}
\label{eqn:weight}
z_\square = q^a t^b
\end{equation}
When $\bsq$ is the box labeled by $i$ in a Young tableau $T$, we will write $z_\bsq = z_i$ for brevity. Then we expect that the generators and relations of \eqref{eqn:genrel} have equivariant weights 
\begin{equation}
\label{eqn:expectation}
\text{weight }f_{\bsq} = \frac {z_{\bsq}}{z_{\square}}, \qquad \text{weight }r_{\bsq} = \frac {z_{\bsq}}{z_{\square}}
\end{equation}
where $\sq$ is the corner that is being added in \eqref{eqn:genrel}. In the remainder of this Section, we will establish a weaker version of Conjecture \ref{conj:charts}, by constructing affine $\torus$ invariant open sets that contain the fixed points of $\FH_n(*)$, but are not required to cover it.


\subsection{Defining the charts}
\label{sub:defcharts}

In this Section, we will define affine charts on the flag Hilbert scheme which only satisfy Conjecture \ref{conj:charts} on the local rings around the fixed points. $\FH_n$ will henceforth refer to either of $\FH_n(*)$ for $* \in \{\CC^2,\CC,\point\}$. 

\begin{definition}
\label{def:chart}

For any point $(X,Y,v) \in \FH_n$ and standard Young tableau $T$, consider the following algorithm to construct a basis $e_1 = v, e_2,...,e_n$ of $\CC^n$. Suppose $e_1,...,e_{k-1}$ have been constructed and the $k$-th box looks as in the following picture:

\begin{picture}(100,90)(-170,-5)
\label{pic:1}

\put(15,57){$i$}
\put(57,17){$i'$}
\put(53,57){$k$}
\put(0,0){\line(1,0){80}}
\put(0,40){\line(1,0){80}}
\put(0,80){\line(1,0){80}}
\put(0,0){\line(0,1){80}}
\put(40,0){\line(0,1){80}}
\put(80,0){\line(0,1){80}}

\end{picture}

\noindent Define the vector $e_k \in \Ker(\CC^n \twoheadrightarrow \CC^{k-1})$ by the following formula if $i>i'$:
\begin{equation}
\label{eqn:chartx}
X e_{i} = e_k + \sum_{j=i}^{k-1} x^j_i e_j 
\end{equation}
where $x^j_i$ are coefficients, and by the following formula if $i<i'$:
\begin{equation}
\label{eqn:charty}
Y e_{i'} = e_k + \sum_{j={i'}}^{k-1} y^j_{i'} e_j 
\end{equation}
where $y^j_{i'}$ are coefficients. If the process terminates after having constructed $e_n$, in a way such that $e_1,...,e_k$ form a basis of the quotient $\CC^n \twoheadrightarrow \CC^k$ for all $k$, then we set:
$$
(X,Y,v) \in \FH_T
$$

\end{definition}

In either \eqref{eqn:chartx} or \eqref{eqn:charty}, it is clear that the vector $e_k$ is unique, since the coefficients $x_i^j$ or $y_i^j$ are uniquely determined by the fact that $e_k$ vanishes in the quotient $\CC^n \twoheadrightarrow \CC^{k-1}$. The fact that such an $e_k$ exists at each step, and that the resulting collection of vectors forms a basis, is an open condition and therefore:
$$
\FH_T \subset \FH_n
$$
thus defined is an open subscheme. It is also an affine subscheme, simply because the basis $e_1,...,e_n$ is unique. We could therefore define $\FH_T$ alternatively as the affine space of matrices $X,Y$ of the form prescribed by \eqref{eqn:chartx} and \eqref{eqn:charty} in a fixed basis. It is also clear that the locus $\FH_T$ is $\torus$ invariant and that the only fixed point it contains is: 
$$
I_T = \left\{\CC^n = \bigoplus_{i=1}^n \CC \cdot e_i \quad \text{with} \quad  X\cdot e_i = e_{i \rightarrow}, \ Y\cdot e_i = e_{i\uparrow}, \ v = e_1 \right\}
$$
In the above formula, for any box $i \in T$ we write $i \rightarrow$ and $i \uparrow$ for the boxes immediately right and above $\sq$, respectively. If there is no box to the right or up of $\sq$, we naturally set $e_{i \rightarrow}$ or $e_{i\uparrow}$ equal to 0. The fact that the open sets of Definition \ref{def:chart} cover the whole of $\FH_n$ follows from the following principle:
\begin{equation}
\label{eqn:principletorus}
\textbf{any open torus invariant property which holds}
\end{equation}
$$
\textbf{near the fixed points of }\FH_n \textbf{ holds everywhere}
$$
This is because the set of points which do not enjoy said property is closed, torus invariant and contains no fixed points: any such set must be empty. One must be careful here, because the argument is a priori only true for projective varieties, such as $\FH_n(\point)$. However, it also applies to $\FH_n(\CC)$ and $\FH_n(\CC^2)$ because the torus $\torus$ contracts the affine directions $\CC$ and $\CC^2$ to the origin.

\subsection{The special coefficients}
\label{sub:special}

Note that the coefficients $x_i^i$ and $y_i^i$ in \eqref{eqn:chartx} and \eqref{eqn:charty} are precisely the eigenvalues of the matrices $(X,Y,v) \in \FH_n$. If we are in the case $* = \CC$ or $* = \point$, then we must set $y_i^i = 0$ or $x_i^i = y_i^i = 0$ in \eqref{eqn:chartx} and \eqref{eqn:charty}, respectively.

\begin{definition}
\label{def:specialcoeffs}

The coefficients $x_i^j$ and $y_i^j$ which appear in \eqref{eqn:chartx} and \eqref{eqn:charty} will be called \textbf{special coefficients}. We also apply this terminology to the case when $k$ is an outer corner of the Young diagram of $T$, but in that case \eqref{eqn:chartx} and \eqref{eqn:charty} hold with $e_k = 0$. 

\end{definition}

Note that the number of special coefficients corresponding to a standard Young tableau $T$ is:
\begin{equation}
\label{eqn:numberofgens}
\sum_{i=1}^n \# \left( \text{of inner corners of the Young diagram consisting of the boxes labeled } 1,...,i \right)
\end{equation}
Conjecture \ref{conj:charts} would suggest that the special coefficients generate the dg ring of functions $\oCA_T$ subject to a number of: 
\begin{equation}
\label{eqn:numberofrels}
\sum_{i=1}^n \# \left( \text{of outer corners of the Young diagram consisting of the boxes labeled } 1,...,i \right)
\end{equation}
However, this is not true, because this would entail that all coefficients $x_i^j$ and $y_i^j$ could be written as polynomials in the special coefficients. We partially salvage this in the next Subsection, when we will show that the previous sentence holds if we replace the word ``polynomials" by ``rational functions". In other words, some open subset of $\FH_T$ can be described by \eqref{eqn:numberofgens} generators and \eqref{eqn:numberofrels} relations.

\begin{example}
\label{ex:symmetric}
When $T= (n)$ and $* = \CC$, only relations \eqref{eqn:chartx} come into play:
$$
Xe_i = e_{i+1} + x_i e_i
$$
unless $i=1$, in which case we have:
$$
Y e_1 = \sum_{j=2}^n y_1^j e_j
$$
Therefore, the special coefficients are $\{x_i, y_1^j\}^{1\leq i \leq n}_{2\leq j \leq n}$. The number of these coefficients is $2n-1$, and it matches \eqref{eqn:numberofgens} minus 1, where the minus one stems from the fact that $y_1^1 = 0$ for $* = \CC$. The non-special coefficients are the $y_i^j$ with $i>1$, but they can be inferred from the special ones via the commutation relation $[X,Y]=0$, which in the case at hand reads:
\begin{equation}
\label{eqn:above}
y^j_i (x_i-x_j) = y^j_{i+1} - y^{j-1}_i
\end{equation}
for all $i<j$. Note that \eqref{eqn:above} is precisely \eqref{eqn:sym}.  We make the convention that $y_i^j=0$ for $j\leq i$. After solving for $y^j_i$ in terms of $\{x_i, y_1^j\}$, we obtain the inductive formulas for any $\delta>0$:
$$
y_i^{i+\delta} = y_1^{\delta+1} + \sum_{s=1}^{i-1} y_{i-s}^{i-s+\delta+1} (x_{i-s}-x_{i-s+\delta+1})
$$
The above relation also holds when $i+\delta=n+1$, in which case the left hand side is 0. We therefore obtain a relation among the special coefficients $\{x_i,y_1^j\}$ for all $\delta>0$. There are $n-1$ such relations, and their number matches \eqref{eqn:numberofrels} minus 1, where the minus one stems from the fact that $y_1^1 = 0$ for $* = \CC$.

\end{example}

\begin{example}
\label{ex:antisymmetric}

When $T= (1,...,1)$ and $* = \CC$, only relations \eqref{eqn:charty} come into play:
$$
Ye_i = e_{i+1}
$$
unless $i=1$, in which case we have:
$$
X e_1 = \sum_{j=1}^n x_1^j e_j
$$
Therefore, the special coefficients are $x_1^j$ for all $j$. Note that the commutation relation $[X,Y]=0$ implies that:
$$
x_i^{j-1} = x_{i+1}^j \qquad \forall i<j
$$
and therefore we conclude that $x_i^j = u_{j-i+1}$ for some variables $u_1,...,u_n$. Compare with \eqref{eqn:antisym}. 

\end{example}




\subsection{Explicit local coordinates}
\label{sub:localcoord}

In this section, we will use the special coefficients to describe the neighborhood of the fixed point $I_T$ for any standard Young tableau $T$:
\begin{equation}
\label{eqn:localization}
\oFH_T := \left( \FH_n \right)_{\text{localized at }T}
\end{equation}
and the dg local ring $\oCA^\dg_T = \CC [\oFH^\dg_T ]$. In fact, we will actually describe an open subscheme of $\FH_T$ given by the non-vanishing of certain torus invariant functions. The resulting open subschemes also form a cover of $\FH_n$ because of the principle \eqref{eqn:principletorus}, so we abuse notation and use \eqref{eqn:localization} both for the local neighborhood and for the open subscheme $\oFH_T \subset \FH_T$.  

\begin{proposition}
\label{prop:chart} 

For any standard Young tableau $T \vdash n$, the complex $\CE_n$ of \eqref{eqn:complex} is:
\begin{equation}
\label{eqn:localcomplex}
\CE_n|_{\oFH_T} \stackrel{\qis}\cong \left[\bigoplus^{\bsq \text{ outer}}_{\text{corner of }T} \CO \cdot e_\bsq \stackrel{\psi}\longrightarrow \bigoplus^{\sq \text{ inner}}_{\text{corner of }T} \CO \cdot f_\sq \right]
\end{equation}
Theorem \ref{thm:complex} describes the map $\pi: \FH_{n+1} \rightarrow \FH_n$ as the projectivization of $H^0(\CE_n)$. Locally, this map takes the form:
$$
\pi^{-1}\left(\oFH_T \right) = \bigcup^{\sq \text{ inner}}_{\text{corner of }T} \oFH_{T \cup \sq}
$$
where $\oFH_{T \cup \sq} \subset \PP H^0\left( \CE^{\vee}_n|_{\oFH_T} \right)$ is the affine chart of \eqref{eqn:localcomplex} given by $f_\sq = 1$. We conclude \eqref{eqn:genrel}, where the generators are $f_{\sq'}$ for inner corners $\sq'\neq \sq$ and the relations are $r_{\bsq} = \psi(e_\bsq)$.

\end{proposition}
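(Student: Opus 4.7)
The proof proceeds by a careful local analysis on the chart $\oFH_T$, using the explicit basis $e_1=v,e_2,\ldots,e_n$ of $\CT_n$ afforded by Definition~\ref{def:chart}. My first step is to reduce $\CE_n$ to a two-term complex: by Remark~\ref{rem:surjective}, cyclicity is preserved on $\oFH_T$, so $\Phi$ remains surjective there and $\CE_n|_{\oFH_T}$ is quasi-isomorphic to the two-term complex $[qt\CT_n \xrightarrow{\Psi} \Ker(\Phi)]$ in degrees $-1$ and $0$, with both terms locally free of ranks $n$ and $n+1$.

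Next I will express $\Psi$ in the basis $\{e_i\}$. At the fixed point $I_T$, formulas \eqref{eqn:chartx}--\eqref{eqn:charty} specialize to $Xe_i = e_{i\to}$ and $Ye_i = e_{i\uparrow}$ (with $e_k = 0$ for $k$ indexing a box outside $T$), so $\Psi(e_k) = (-e_{k\uparrow},e_{k\to},0)$. Away from the fixed point, correction terms polynomial in the special coefficients $x_i^j, y_i^j$ appear, but they are lower order in a triangular sense. The vectors $\Psi(e_k)$ for $k\in T$ such that at least one of $k\to, k\uparrow$ lies in $T$ carry an invertible pivot entry, which allows us to eliminate $e_k$ from $qt\CT_n$ together with one generator from $\Ker(\Phi)$ via a homotopy equivalence of complexes. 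The indices $k$ surviving after this elimination are exactly those with $k\to \notin T$ \emph{and} $k\uparrow\notin T$, i.e.\ the outer corners $\bsq$ of $T$; this produces the source of \eqref{eqn:localcomplex}. Dually, after eliminating the corresponding $n - \#\{\bsq\}$ generators from $\Ker(\Phi)$, the surviving generators are naturally labeled by the addable boxes of $T$, i.e.\ the inner corners $\sq$, producing the target of \eqref{eqn:localcomplex}. The residual map is the $\psi$ claimed in the proposition, which vanishes at the fixed point and has entries polynomial in the special coefficients; their weights match the prediction \eqref{eqn:expectation}.

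With \eqref{eqn:localcomplex} established, the chart description of $\pi^{-1}(\oFH_T)$ follows from Proposition~\ref{prop:dg scheme}: the map $\pi$ is the projectivization of the dual of the complex, and the standard affine cover of this projective bundle by the open sets $\{f_\sq \neq 0\}$ (one for each inner corner) gives the opens $\oFH_{T\cup\sq}$. On such a chart, setting $f_\sq = 1$ in the relations $\psi(e_\bsq)=0$ produces precisely the generators $f_{\sq'}$ (for $\sq'\neq \sq$) and relations $r_\bsq$ of \eqref{eqn:genrel}, together with the auxiliary coordinates $x_{n+1},y_{n+1}$ coming from the $\CC$ or $\CC^2$ factor.

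The main obstacle is the global validity of the elimination in the second step. While the triangular pivots are manifestly invertible at $I_T$ (being equal to $\pm 1$), off the fixed point they are polynomial in the special coefficients and may vanish on proper closed subsets of $\FH_T$; thus one may need to pass to a smaller open subscheme of $\FH_T$, still torus-invariant and containing $I_T$. This is consistent with the weakening announced in Subsection~\ref{sub:defcharts}, where the resulting $\oFH_T$ need not cover $\FH_n$ but only its local ring at each fixed point. Verifying that a uniform choice of pivots exists, compatible with the induction on $n$ implicit in passing from $T$ to $T\cup\sq$, is the technical heart of the argument.
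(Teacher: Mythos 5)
Your proof takes essentially the same route as the paper's: reduce $\CE_n$ to the two-term complex $[qt\CT_n \to \Ker\Phi]$ via the surjectivity of $\Phi$, trivialize the tautological bundles by the preferred basis $\{e_i\}$, and then Gaussian-eliminate pairs of generators using pivots that are units in the local ring $\oCA_T$, leaving exactly the outer and inner corners. The paper's proof carries out this elimination with a more explicit combinatorial bookkeeping (the thick/dotted, black/red line assignment determines which generators of $\Ker\Phi$ survive and which pivots are used), and the relevant pivots are shown to lie in $1 + \ofm_T$ rather than being literally $\pm 1$ at $I_T$, but this is only a matter of precision; the concern you raise about off-the-fixed-point vanishing is exactly what is handled by passing to the localization $\oCA_T$.
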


\begin{proof}

From each box in $T$, draw two lines of unit length, one going up and one to the right:

\begin{picture}(70,155)(-140,-5)
\label{fig}

\put(7,6){$1$}
\put(7,46){$3$}
\put(7,86){$4$}
\put(47,6){$2$}
\put(47,46){$5$}
\put(87,6){$6$}
\put(87,46){$8$}
\put(127,6){$7$}

\put(0,0){\line(1,0){160}}
\put(0,40){\line(1,0){160}}
\put(0,80){\line(1,0){120}}
\put(0,120){\line(1,0){40}}

\put(0,0){\line(0,1){120}}
\put(40,0){\line(0,1){120}}
\put(80,0){\line(0,1){80}}
\put(120,0){\line(0,1){80}}
\put(160,0){\line(0,1){40}}

\multiput(20,100)(8,0){5}{\color{red}{\line(1,0){4}}}
\multiput(60,60)(8,0){5}{\line(1,0){4}}
\multiput(60,20)(0,8){5}{\line(0,1){4}}
\multiput(100,60)(0,8){5}{\color{red}{\line(0,1){4}}}
\multiput(140,20)(0,8){5}{\color{red}{\line(0,1){4}}}

\linethickness{1mm}
\put(20,20){\line(1,0){120}}
\put(140,20){\color{red}{\line(1,0){40}}}
\put(20,20){\line(0,1){80}}
\put(20,100){\color{red}{\line(0,1){40}}}
\put(100,20){\line(0,1){40}}
\put(100,60){\color{red}{\line(1,0){40}}}
\put(60,60){\color{red}{\line(0,1){40}}}
\put(20,60){\line(1,0){40}}

\end{picture}








\noindent The lines are of two types: thick or dotted, and black or red. The color of a line is determined by whether the line points to a box in $T$ or outside of $T$. The shape of a line is determined by the following rule: If $i>i'$ where $i'$ is the label of the box to the southeast (respectively northwest) of $i$, then we make the horizontal (respectively vertical) line starting at $i$ thick. All the boxes below and to the left of the diagram are thought to have label $0$ for the purpose of this rule, and all the boxes above and to the right of the diagram are thought to have label $\infty$. By definition:
\begin{equation}
\label{eqn:localcomplex2}
\CE_n = \Big[ qt\CT_{n} \stackrel{\Psi}\longrightarrow q \CT_n \oplus t \CT_n \oplus \CO \stackrel{\Phi}\longrightarrow \CT_n \Big]
\end{equation}
When we restrict the complex to the affine chart $\oFH_T$, we observe that the tautological bundles are already trivialized by the basis $e_1,...,e_n$ of Definition \ref{def:chart}:
$$
\CT_n |_{\oFH_T} = \CO \cdot e_1 \oplus ... \oplus \CO \cdot e_n
$$
Therefore, the middle term of \eqref{eqn:localcomplex2} has a basis which we will denote by $e_1,...,e_n, e_1',...,e_n',1$. We claim that the projection that forgets some of these basis vectors induces an isomorphism:
\begin{equation}
\label{eqn:localkernel}
\Ker \ \Phi|_{\oFH_T} \cong \bigoplus^{\text{red or dotted horizontal}}_{\text{lines from box }i} \CO \cdot e_i \bigoplus^{\text{red or dotted vertical}}_{\text{lines from box }i} \CO \cdot e_i'
\end{equation}
In other words, we claim that if one specifies rescaled basis vectors $c_i e_i$ and $d_i e_i'$ corresponding to the red and dotted edges, then there exist unique rescaled basis vectors $\gamma_i e_i$ and $\delta_i e_i'$ corresponding to the black thick edges, and a function $f$, such that:
$$
(X-x_{n+1})\left(\sum_{i\text{ dotted}} c_i e_i + \sum_{i\text{ thick}} \gamma_i e_i \right) + (Y-y_{n+1})\left(\sum_{i\text{ dotted}} d_i e_i + \sum_{i\text{ thick}} \delta_i e_i \right) + f e_1 = 0
$$
Any box $k$ has a unique black thick line going to the left or down. Assume without loss of generality that the black thick line from $k$ leads one step left to the box $i$. Then \eqref{eqn:chartx} implies that equating the coefficient of $e_k$ in the left hand side to 0 yields the equation:
$$
\gamma_i \in \sum_j (c_j \text{ or } d_j) \cdot \text{coefficients} + \sum_j (\gamma_j \text{ or }\delta_j) \cdot \ofm_T
$$
This system of equations can be solved in the localization $\oCA_T$, since its determinant is in $1+\ofm_T$. Therefore, we conclude that in the local chart $\oFH_T$, we have:
\begin{equation}
\label{eqn:zzz}
\CE_n|_{\oFH_T} \stackrel{\qis}\cong \left[\bigoplus_{i=1}^n \CO \cdot e_i \stackrel{\Psi}\longrightarrow \bigoplus^{\text{red or dotted horizontal}}_{\text{lines from box }i} \CO \cdot e_i \bigoplus^{\text{red or dotted vertical}}_{\text{lines from box }i} \CO \cdot e_i' \right]
\end{equation}
The Proposition will be proved once we show that projecting the two terms in the above complex to a certain subset of factors induces a quasi-isomorphism. Specifically, in the domain of $\Psi$ we consider the subspace spanned by basis vectors $e_\bsq$ corresponding to outer corners $\bsq$, and in the codomain of $\Psi$ we project onto one basis vector $f_\sq$ corresponding to each inner corner. The rule is that $f_\sq = e_i$ or $e_{i'}'$, depending on whether the number $i$ to the left of $\sq$ is bigger or smaller than the number $i'$ below $\sq$. In other words, the only $e_i$ or $e_i'$ we will consider in the codomain of $\Psi$ are the ones corresponding to thick red lines:
\begin{equation}
\label{eqn:zzzz}
\CE_n|_{\oFH_T} \stackrel{\qis}\cong \left[\bigoplus^{\bsq \text{ outer}}_{\text{corner of }T} \CO \cdot e_\bsq \stackrel{\psi}\longrightarrow \bigoplus^{\sq \text{ inner}}_{\text{corner of }T} \CO \cdot (e_{\sq \leftarrow} \text{ or } e'_{\sq \downarrow}) \right]
\end{equation}
In plain English, the claim is that for any $e_i$ where $i$ is not an outer corner of $T$, quotienting the codomain of \eqref{eqn:zzz} by the vector:
\begin{equation}
\label{eqn:quotientvector}
\sum^{\text{red or dotted horizontal}}_{\text{lines between boxes }\bar{jk}} (x_i^j - \delta_j^i x_{n+1}) e_j + \sum^{\text{red or dotted vertical}}_{\text{lines between boxes }\bar{jk}} (y_i^j - \delta_j^i y_{n+1}) e_j'
\end{equation}
will allow us to solve for one of the $e_j,e_{j}'$. The only basis vectors which remain unsolved for should be the ones that appear in the codomain of \eqref{eqn:zzzz}. For example, suppose we are trying to solve for $e_j$, where $j$ corresponds to a dotted horizontal edge $\bar{jk}$. Then with the notation in the following picture:

\begin{picture}(100,90)(-160,-5)

\put(10,10){$i$}
\put(10,65){$j$}
\put(65,10){$j'$}
\put(65,65){$k$}
\put(0,0){\line(1,0){80}}
\put(0,40){\line(1,0){80}}
\put(0,80){\line(1,0){80}}
\put(0,0){\line(0,1){80}}
\put(40,0){\line(0,1){80}}
\put(80,0){\line(0,1){80}}

\multiput(20,60)(8,0){5}{\line(1,0){4}}

\linethickness{1mm}
\put(60,20){\line(0,1){40}}


\end{picture}

\noindent let us observe that \eqref{eqn:chartx}--\eqref{eqn:charty} imply that $y_i^j  = x_i^{j'} = y_{j'}^k \in 1 + \ofm_T$. Then the vector \eqref{eqn:quotientvector} lies in $e_j + \ofm_T$, and $e_j$ can therefore be solved for in the localization $\oCA_T$.

\end{proof}

\subsection{Examples }
\label{sec:exprojectors}

In this subsection, we use the local geometry of the flag Hilbert scheme to describe the homology of 
categorified projectors on two and three strands.

\begin{example}
For the $S^2$ projector, we have
$$
Y=\left(\begin{matrix}
0 & 0\\
1 & 0\\
\end{matrix}\right).
$$
The commutation relation implies that \(X\) is of the form
$$
X=\left(\begin{matrix}
u_1 & 0\\
u_2 & u_1\\
\end{matrix}\right).
$$
One has $\deg(u_1)=q$, $\deg(u_2)=q/t$, so the Poincar\'e series equals 
$$
P(\oCA_T)=\frac{1}{(1-q)(1-q/t)}.
$$
\end{example}



\begin{example}

For the $\Lambda^2$ projector, we have
$$
X=\left(\begin{matrix}
x_1 & 0\\
1 & x_2\\
\end{matrix}\right),
Y=\left(\begin{matrix}
0 & 0\\
y_{21} & 0\\
\end{matrix}\right),
$$
and the commutation relation $(x_1-x_2)y_{21}=0$.
One has $\deg(x_1)=\deg(x_2)=q$, $\deg(y_{21})=t/q$, so the Poincar\'e series equals 
$$
P(\oCA_T)=\frac{1-t}{(1-q)^2(1-t/q)}=\frac{1}{(1-q)^2}+\frac{t/q}{(1-q)(1-t/q)}.
$$

\end{example}


\begin{example}
\label{ex:Sym n}
For the $S^3$ projector, we have
$$
X=\left(\begin{matrix}
x_1 & 0 & 0\\
x_2 & x_1 & 0\\
x_3 & x_2 & x_1\\
\end{matrix}\right),
Y=\left(\begin{matrix}
0 & 0 & 0\\
1 & 0 & 0\\
0 & 1 & 0\\
\end{matrix}\right),
$$
One has $\deg(x_1)=q$, $\deg(x_2)=q/t$, $\deg x_3=q/t^2$, so the Poincar\'e series equals 
$$
P(\oCA_T)=\frac{1}{(1-q)(1-q/t)(1-q/t^2)}.
$$
More generally, for the \(S^n\) projector, we have 
$$
P(\oCA_T)=\prod_{i=1}^n{(1-qt^{1-i})^{-1}}.
$$

\end{example}


\begin{example}
\label{ex:Lambda n}
For the $\Lambda^3$ projector, we have
$$
X=\left(\begin{matrix}
x_1 & 0 & 0\\
1   & x_2 & 0\\
0 & 1 & x_3\\
\end{matrix}\right),
Y=\left(\begin{matrix}
0 & 0 & 0\\
y_{21} & 0 & 0\\
y_{31} & y_{32} & 0\\
\end{matrix}\right),
$$
and the commutation relations $(x_1-x_2)y_{21}=(x_2-x_3)y_{32}=0$ and
$$
y_{21}-y_{32}=(x_{1}-x_{3})y_{31}.
$$
Note that one can eliminate $y_{32}$ using the last equation.
One has $\deg(x_1)=\deg(x_2)=\deg(x_3)=q$, $\deg(y_{21})=\deg(y_{32})=t/q$, $\deg(y_{31})=t/q^2$,  so the Poincar\'e series equals 
$$
P(\oCA_T)=\frac{(1-t)^2}{(1-q)^3(1-q/t)(1-q/t^2)}.
$$

More generally, for the \(\Lambda^n\) projector, we have 
 $$\oCA_T= \frac {\CC[x_1,\ldots,x_n, y_{i,j}]_{i>j}}{y_{i,j}(x_i-x_j) - (y_{i-1,j} - y_{i,j+1})} $$
 and 
 $$
P(\oCA_T)=\frac{(1-t)^{n-1}}{(1-q)^n}\prod_{i=1}^{n-1}(1-qt^{-i})^{-1}.
$$
(which can also be seen directly from Proposition \ref{prop:poincare}.)
\end{example}

\begin{example}
For the hook-shaped projector with $(z_1,z_2,z_3)=(1,t,q)$, we have
$$
X=\left(\begin{matrix}
x_1 & 0 & 0\\
x_{21}   & x_1 & 0\\
1 & x_{32} & x_3\\
\end{matrix}\right),
Y=\left(\begin{matrix}
0 & 0 & 0\\
1 & 0 & 0\\
0 & y_{32} & 0\\
\end{matrix}\right),
$$
with commutation relations
$$
x_{32}=x_{21}y_{32},\ (x_1-x_3)y_{32}=0,
$$
In this case $\deg(x_1)=\deg(x_3)=q$, $\deg(x_{21})=q/t$, $\deg(x_{32})=t$, $\deg(y_{32})=t^2/q$, so the Poincar\'e series equals 
$$
P(\oCA_T)=\frac{(1-t^2)}{(1-q)^2(1-q/t)(1-t^2/q)}.
$$

For the other hook-shaped projector we have $(z_1,z_2,z_3)=(1,q,t)$, so
$$
X=\left(\begin{matrix}
x_1 & 0 & 0\\
1   & x_2 & 0\\
0 & x_{32} & x_3\\
\end{matrix}\right),
Y=\left(\begin{matrix}
0 & 0 & 0\\
y_{21} & 0 & 0\\
1 & y_{32} & 0\\
\end{matrix}\right),
$$
with commutation relations $(x_1-x_2)y_{21}=(x_2-x_3)y_{32}=0$ and 
$$
x_1-x_3+y_{32}=x_{32}y_{21}.
$$
In this case $\deg(x_1)=\deg(x_2)=\deg(x_3)=q$, $\deg(x_{32})=q^2/t$, $\deg(y_{21})=t/q$, $\deg(y_{32})=q$, so the Poincar\'e series equals 
$$
P(\oCA_T)=\frac{(1-t)(1-q^2)}{(1-q)^3(1-t/q)(1-q^2/t)}.
$$
\end{example}

\subsection{Poincare series}

In general,  Proposition~\ref{prop:chart} can be used to show

\begin{proposition}
\label{prop:poincare}

For any standard Young tableau of size $n$, the bigraded Poincar\'e series of graded algebras $\oCA_T(*)$ are given by the following formulas:
\begin{equation}
\label{eqn:poincare1}
P(\oCA_T(\CC^2)) = (1-q)^{-n} (1-t)^{-n} \prod_{i=1}^{n} \frac{1}{1-z_i^{-1}} \prod_{1\leq i<j \leq n} \zeta \left(\frac {z_i}{z_j}\right)
\end{equation}
\begin{equation}
\label{eqn:poincare2}
P(\oCA_T(\CC)) = (1-q)^{-n} \prod_{i=1}^{n}\frac{1}{1-z_i^{-1}} \prod_{1\leq i<j \leq n} \zeta \left(\frac {z_i}{z_j}\right)
\end{equation}
\begin{equation}
\label{eqn:poincare3}
P(\oCA_T(\point)) = \prod_{i=1}^{n}\frac{1}{1-z_i^{-1}}  \prod_{i=2}^{n}\frac{1}{1-q t  z_i/z_{i+1}} \prod_{1\leq i<j \leq n} \zeta \left(\frac {z_i}{z_j}\right)
\end{equation}
where
$$
\zeta(x) = \frac {(1 - x)(1 - q t x)}{(1 - q x)(1 - t x)}
$$
and $z_i$ denotes the weight of the \(i\)th box in the standard Young tableau $T$.
\end{proposition}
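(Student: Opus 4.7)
The plan is to prove Proposition \ref{prop:poincare} by induction on $n$, using the local description of the dg scheme $\FH_n^{\edg}$ afforded by Proposition \ref{prop:chart}. Throughout, the Poincar\'e series is interpreted as a formal equivariant rational function in $q,t$; the apparent singularities at $z_1=1$ etc.\ cancel globally, just as they do in the Thomason sum \eqref{eqn:magic formula}.

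First, I would establish the inductive recurrence. Proposition \ref{prop:chart} presents
$$
\oCA_{T\cup\sq}(*) \;=\; \oCA_T(*)\bigl[\,\text{ambient}(*),\, f_{\sq_j}\,\bigr]_{\sq_j\text{ inner corner},\,\sq_j\neq\sq} \;\big/\;(r_{\bsq_i})_{\bsq_i\text{ outer corner}},
$$
where the ambient coordinates contribute weights $\{q,t\}$, $\{q\}$, or $\emptyset$ for $*\in\{\CC^2,\CC,\point\}$, the $f_{\sq_j}$ have weights $z_{\sq_j}/z_\sq$ (the inner corners being the addable boxes), and the $r_{\bsq_i}$ have weights $z_{\bsq_i}/z_\sq$ (the outer corners being the removable boxes; in the $\point$ case one outer corner is excluded, as in Conjecture \ref{conj:charts}). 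Since $\FH_n^{\edg}$ is a local complete intersection by construction, the equivariant Poincar\'e series transforms as
$$
\frac{P(\oCA_{T\cup\sq}(*))}{P(\oCA_T(*))} \;=\; P_{\mathrm{amb}}(*)\cdot\frac{\prod_{\bsq}(1-z_\bsq/z_\sq)}{\prod_{\sq_j\neq\sq}(1-z_{\sq_j}/z_\sq)},
$$
with $P_{\mathrm{amb}}$ equal to $[(1-q)(1-t)]^{-1}$, $(1-q)^{-1}$, or $1$ in the three cases.

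Second, comparing this against the ratio obtained formally from \eqref{eqn:poincare1}--\eqref{eqn:poincare3} reduces the induction step to the combinatorial identity
$$
\frac{1}{1-z_\sq^{-1}}\prod_{\sq'\in T}\zeta(z_{\sq'}/z_\sq) \;=\; \frac{\prod_{\bsq}(1-z_\bsq/z_\sq)}{\prod_{\sq_j\neq\sq}(1-z_{\sq_j}/z_\sq)}
$$
(in the $\CC^2$ and $\CC$ cases), together with an analogous identity involving an extra factor $(1-qtz_\sq/z_n)^{-1}$ in the $\point$ case. I would prove the identity by row-by-row telescoping: setting $u=z_\sq^{-1}$, for each row $j$ of $T$ the product $\prod_{c=0}^{\lambda_j-1}\zeta(q^c t^{j-1}u)$ telescopes, because the denominator factor $(1-q^{c+1}t^{j-1}u)$ in $\zeta(q^c t^{j-1}u)$ cancels the numerator factor of $\zeta(q^{c+1}t^{j-1}u)$. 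What survives after recombining the row-endpoint contributions is exactly the product of $(1-z_\bsq u)$ over removable boxes divided by $(1-z_{\sq_j}u)$ over the addable boxes other than $\sq$, with the factor $(1-z_\sq^{-1})^{-1}$ on the left accounting for $\sq$ itself as an addable corner. This is the multiplicative form of the standard boundary identity $(1-q)(1-t)\sum_{\sq\in T}z_\sq = 1 - \sum_{\sq_j}z_{\sq_j} + \sum_{\bsq}z_\bsq$. The base case $n=0$ is trivial.

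The main obstacle will be the $\point$ case: one outer corner of $T$ is excluded from the relations, reflecting the automatic vanishing of the first subdiagonal of $[X,Y]=0$ for strictly lower-triangular matrices (cf.\ the dimension estimate \eqref{eqn:expdim3}). The excluded corner must be identified so that the ``missing'' factor $(1-z_\bsq/z_\sq)^{-1}$, when transferred to the left-hand side of the identity, matches precisely the extra product $\prod_{i=2}^n(1-qtz_i/z_{i+1})^{-1}$ in \eqref{eqn:poincare3}. Determining which outer corner of $T$ is dropped at each inductive step --- depending on the position of $\sq$ relative to the previously added box --- and checking that its weight matches the expected $qt$-shifted form coming from the telescoping is the delicate combinatorial point and requires a careful case analysis.
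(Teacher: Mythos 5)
Your overall strategy matches the paper's: induct on $n$, use Proposition~\ref{prop:chart} (equations \eqref{eqn:genrel}, \eqref{eqn:expectation}) to write the ratio $P(\oCA_{T\cup\sq})/P(\oCA_T)$, and reduce the inductive step to a telescoping identity for the product of $\zeta$'s. The paper only spells out the $\CC^2$ case, so your remarks on $\CC$ and $\point$ (identifying the excluded outer corner) are genuine additions to what the paper records.

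However, the telescoping identity you state is not correct as written, and the error propagates into the boundary identity you cite. Carrying out the row-by-row telescoping you describe (for a row $j$ of length $\lambda_j$ and $u = z_\sq^{-1}$, the product $\prod_{c=0}^{\lambda_j-1}\zeta(q^c t^{j-1} u)$ telescopes to $\tfrac{(1-t^{j-1}u)(1-q^{\lambda_j}t^j u)}{(1-q^{\lambda_j}t^{j-1}u)(1-t^j u)}$, and then the column factors $\prod_j (1-t^{j-1}u)/(1-t^j u)$ telescope in turn), one actually obtains
\begin{equation*}
\frac{1}{1-u}\prod_{\sq'\in T}\zeta\left(z_{\sq'}u\right) \;=\; \frac{\prod_{\bsq\ \mathrm{removable}}\bigl(1-qt\,z_\bsq u\bigr)}{\prod_{\sq'\ \mathrm{addable}}\bigl(1-z_{\sq'}u\bigr)}.
\end{equation*}
Two things differ from your claimed identity. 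First, each removable-corner factor appears with a $qt$-shift, $1-qt\,z_\bsq u$, not $1-z_\bsq u$. Correspondingly, the correct boundary identity is $(1-q)(1-t)\sum_{\sq'\in T}z_{\sq'} = 1 - \sum_{\mathrm{addable}}z_{\sq_j} + qt\sum_{\mathrm{removable}}z_\bsq$ --- check it on $T=(1)$, where the left side is $1-q-t+qt$, not $2-q-t$ as your version would give. Second, the addable product on the right-hand side ranges over \emph{all} addable corners including $\sq$ itself; since $z_\sq u = 1$, this contributes a factor $(1-z_\sq u)=0$, which is precisely the uncancelled pole in the left-hand side. The cancellation is therefore not automatic: the factor $\tfrac{1}{1-z_\sq u}$ must be explicitly extracted before the two sides become honest equal rational functions, and your opening remark that the singularities ``cancel globally, just as they do in the Thomason sum'' conflates the behavior of the single-chart formula (where the pole survives) with the fully summed \eqref{eqn:magic formula} (where it can). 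The $qt$-shift also means the relation weights must be taken as $qt\,z_\bsq/z_\sq$ rather than the stated $z_\bsq/z_\sq$ of \eqref{eqn:expectation}; without this correction the inductive ratio will not match the local-presentation side, so the induction fails to close. (You can verify the corrected weight on the $T=(1)\to(2)$ step in $\FH_2(\CC^2)$: the one relation there has weight $t$, which is $qt\cdot z_{(0,0)}/z_{(1,0)}$, not $z_{(0,0)}/z_{(1,0)}=q^{-1}$.)
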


\begin{proof} We will prove \eqref{eqn:poincare1}, and leave the other two formulas as exercises for the interested reader. In order to prove the formula by induction, one needs to compute the following quotient for any standard Young tableau $T$ of size $n$ and an inner corner $\square \in T$:
$$
\frac {P(\oCA_{T \cup \square} (\CC^2))}{P(\oCA_T(\CC^2))} = \frac 1{(1-q)(1-t)(1-z_{\square}^{-1})} \prod_{i=1}^{n} \zeta \left(\frac {z_i}{z_{\square}}\right)
$$
Since $T$ is a Young tableau, it is easy to show that the product of $\zeta$'s can be simplified to:
$$
\frac {P(\oCA_{T \cup \square} (\CC^2))}{P(\oCA_T(\CC^2))} = \frac 1{(1-q)(1-t)} \cdot \frac {\prod_{\bsq \text{inner corner of }T}^{\bsq \neq \sq} (1-z_{\bsq}/z_{\sq})}{\prod_{\bsq \text{outer corner of }T} (1-z_{\bsq}/z_{\sq})}
$$
The above formula follows from \eqref{eqn:genrel} and the relation \eqref{eqn:expectation} for the weights. \end{proof}

If we pass to the decategorified setting by substituting \(t=q^{-1}\), we see that the Poincar\'e series depends only on the Young diagram of \(T\): 
\begin{corollary}
\label{cor:decat ps}
$ \displaystyle
P(\oCA_T(\CC))|_{t=q^{-1}}=\frac{1}{\prod_{\square\in \lambda}(1-q^{h(\square)})}.
$
\end{corollary}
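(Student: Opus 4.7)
My plan is to deduce Corollary \ref{cor:decat ps} as the $a=0$ specialization of Theorem \ref{thm:hecke projector}. That theorem identifies the equivariant Euler characteristic of $\oCA_T(\CC)\otimes \wedge^{\bullet}\CT_n^{\vee}|_{\oFH_T(\CC)}$ with the Markov trace $\prod_{\sq}\frac{1-aq^{c(\sq)}}{1-q^{h(\sq)}}$ of the Hecke idempotent $p_\lambda$, once we specialize to $t=q^{-1}$ (the Markov trace is a function of only $a$ and $q$).

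The key observation is that the Euler characteristic factors. On the affine chart $\oFH_T(\CC)$, the bundle $\CT_n^{\vee}$ is trivialized by the dual of the equivariant frame $e_1,\dots,e_n$ of Definition \ref{def:chart}, with weights $z_1^{-1},\dots,z_n^{-1}$. Hence the contribution of $\wedge^{\bullet}\CT_n^{\vee}$ to the equivariant Euler characteristic is the Koszul-type product $\prod_{i=1}^{n}(1-az_i^{-1})$, where $a$ tracks the exterior degree. The full Euler characteristic therefore factors as $P(\oCA_T(\CC))\cdot\prod_{i=1}^{n}(1-az_i^{-1})$. Setting $a=0$ collapses the wedge factor to $1$ on both sides, and what remains is precisely $P(\oCA_T(\CC))|_{t=q^{-1}}=\prod_{\sq}\frac{1}{1-q^{h(\sq)}}$, which is the Corollary.

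The main obstacle in this approach is that it presupposes Theorem \ref{thm:hecke projector}, which is itself the substantive result. For a self-contained proof, I would work directly from Proposition \ref{prop:poincare} and argue by induction on $n=|\lambda|$. The inductive step uses the recursive form of the Poincaré series derived inside the proof of Proposition \ref{prop:poincare},
\[
\frac{P(\oCA_{T\cup\sq}(\CC))}{P(\oCA_T(\CC))}=\frac{1}{1-q}\cdot\frac{\prod_{\bsq \text{ inner corner of } \mu,\ \bsq\neq\sq}(1-z_{\bsq}/z_{\sq})}{\prod_{\bsq \text{ outer corner of } \mu}(1-z_{\bsq}/z_{\sq})},
\]
with $\mu=\lambda\cup\sq$. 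At $t=q^{-1}$ each $z_i$ becomes the pure power $q^{c(i)}$, and one must identify this ratio with $\prod_{\sq'\in\lambda}(1-q^{h_\lambda(\sq')})/\prod_{\sq'\in\mu}(1-q^{h_\mu(\sq')})$. The latter telescopes into a product over the boxes in the hook of $\sq$ in $\mu$, and the required matching is the classical combinatorial identity relating arm- and leg-lengths of boxes in the hook of $\sq$ to the contents of the inner and outer corners of $\mu$---precisely the identity underlying the Frame--Robinson--Thrall hook length formula. The delicate bookkeeping step in this direct route is tracking the powers of $q$ introduced by identities such as $1-q^{-m}=-q^{-m}(1-q^m)$ when converting differences of (possibly negative) contents into positive hook lengths.
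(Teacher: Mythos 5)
Your first route is circular within the logical structure of the paper: the paper's proof of Theorem~\ref{thm:hecke projector} begins precisely by invoking Corollary~\ref{cor:decat ps} and then matching against the known Markov-trace formula from~\cite{Ai}. So you cannot turn around and deduce the Corollary from Theorem~\ref{thm:hecke projector} without first supplying an independent proof of the latter. This is more than an ``obstacle''---as written, the deduction is unavailable.

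Your second route is genuinely different from the paper's, and in principle sound. The paper specializes \eqref{eqn:poincare2} directly: it observes that at $t=q^{-1}$ one has $\zeta_{q,q^{-1}}(x)=\zeta_{q,q^{-1}}(x^{-1})$, so the product $\prod_{i<j}\zeta_{q,q^{-1}}(z_i/z_j)$ is a symmetric function of the $z_i$ and hence depends only on the shape $\lambda$, not on the tableau $T$. This is the pivotal observation: it licenses the choice of a convenient (non-standard) ordering of the weights, reading $\lambda$ row by row as $(1,q,\dots,q^{\lambda_1-1},q^{-1},\dots)$, after which each factor $1-qz_i/z_j$ with $i<j$ and appropriate boundary boxes is identified with $1-q^{h(\square)}$ for the box $\square$ in the row of $i$ and column of $j$, and a telescoping product finishes the computation in one stroke. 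Your induction, by contrast, works one box at a time with the actual standard ordering, and the step you need is the multiplicative recursion for the hook-length product under adding a box---exactly the Frame--Robinson--Thrall recursion. That identity is correct and classical, but you have only named it; the ``delicate bookkeeping'' you flag (converting differences of contents of inner and outer corners of $\lambda$ into hook lengths in $\lambda$ versus $\mu=\lambda\cup\square$, with the attendant signs from $1-q^{-m}=-q^{-m}(1-q^m)$) is precisely where the work lies, and without carrying it out the proof is not complete. Note also that the ratio you quote should have inner and outer corners taken in $T$ (i.e.\ in $\lambda$), as in the paper's display inside the proof of Proposition~\ref{prop:poincare}, not in $\mu$.

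In short: the paper's argument trades the one-box-at-a-time recursion for a single global reordering made possible by the symmetry of $\zeta_{q,q^{-1}}$, which is shorter and sidesteps the FRT bookkeeping. Your inductive route can certainly be completed, but as presented it reduces the Corollary to an unproven combinatorial identity rather than proving it.
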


\begin{proof}
If we let  $\zeta_{q,q^{-1}}(x) = \zeta(x)|_{t=q^{-1}}$, then clearly
$
\zeta_{q,q^{-1}}(x)=\zeta_{q,q^{-1}}(x^{-1}).
$
It follows that the  function $\prod_{i<j}\zeta_{q,q^{-1}}(z_i/z_j)$
is actually symmetric in $z_i$, hence depends only of the shape of the Young diagram.
Let us choose the permutation of $z_i$ such that
$$
(z_1,\ldots,z_n)=(1,q,\ldots,q^{\lambda_1-1},q^{-1},\ldots,q^{\lambda_1-2},\ldots),
$$
Given $z_i$ on the vertical boundary of $\lambda$ and $z_j$ on the horizonal boundary such that $i<j$,
one can consider the box $\square$ in the same row as $z_i$ and in the same column as $z_j$. 
We get $(1-qz_i/z_j)=(1-q^{h(\square)})$, where $h(\square)$ denotes the hook-length of $\square$.
One can check after all telescopic cancellations 
$$
\prod_{i<j}\zeta_{q,q^{-1}}(z_i/z_j)=\frac{(1-q)^n\prod_{j>1}(1-z_i^{-1})}{\prod_{\square\in \lambda}(1-q^{h(\square)})},
$$
so
$$
P(\oCA_T)=\prod_{i=1}^{n}\frac{1}{(1-q)(1-z_i^{-1})}\prod_{i<j}\zeta_{q,q^{-1}}(z_i/z_j)=\frac{1}{\prod_{\square\in \lambda}(1-q^{h(\square)})}.
$$
\end{proof}

To compute the full endomorphism ring 
of the projector \(P_T\), we should tensor with $\wedge^{\bullet}\CT_n^{\vee}$.  
When we restrict to the affine chart $\FH_T \subset \FH_n$  the vector space $\CC^n$ is endowed with a preferred basis $e_1,...,e_n$, which more abstractly means that the tautological bundle is trivialized:
$$
\CT_n|_{\FH_T} \cong \CO \cdot e_1 \oplus...\oplus \CO \cdot e_n
$$
The basis vectors are indexed by boxes $\sq$ in the Young diagram of $T$, and the torus $\torus$ acts on the basis vector $e^\sq$ by the character $z_\sq = q^at^b$ for any box $\sq = (a,b)$. We conclude that:
$$
\wedge^{\bullet}\CT_n^{\vee}|_{\FH_T} \cong \wedge(\xi_1,\ldots,\xi_n)
$$
where the equivariant weights of the symbols $\xi_\sq$ are given by $z_\sq^{-1} = q^{-a}t^{-b}$. In particular, Conjecture~\ref{conj:homfly} implies that $\End(P_T)$ should be  the tensor product of the homology on the ``bottom row'' with an exterior algebra. 

The theorems stated in the introduction can be easily deduced from the results above.

\begin{proof} (Of Theorem \ref{thm:symm antisymm} ) The observations in Examples \ref{ex:Sym n} and \ref{ex:Lambda n}, together with the remark above, show that  the expressions on the right-hand side of equations 
\eqref{eqn:antisym} and \eqref{eqn:sym} agree with 
\( \CA_T(\CC) \otimes \left( \wedge^{\bullet}\CT_n^{\vee}|_{\oFH_T(\CC)} \right)\). 
On the other hand, these expressions agree with the known homology of the symmetric projector (computed by Hogancamp in \cite{Hog}) and the antisymmetric projector (computed by Abel and Hogancamp in \cite{AbHog}.) 
\end{proof}

\begin{proof} (of Theorem \ref{thm:hecke projector}). 
From Corollary~\ref{cor:decat ps}, we see that 
$$P\left(\oCA_T(\CC) \otimes \left( \wedge^{\bullet}\CT_n^{\vee}|_{\oFH_T(\CC)} \right) \right)= 
\prod_{\square\in \lambda}\frac{1-aq^{b(\square)-a(\square)}}{(1-q^{h(\square)})}.$$
This right-hand side is a well-known formula for the \(\lambda\)-colored HOMFLY-PT polynomial of the unknot, which is by definition the Markov trace of the Jones-Wenzl projector \(p_\lambda \in H_n\). 

\end{proof}

\section{Differentials and $\fgl_N$ homology}
\label{sec:differentials}

\subsection{Spectral sequence for $\fgl_N$ homology}
\label{sub:spectral diff}

By \cite{RasDiff}, for each $N$ there exists a spectral sequence starting at the HOMFLY-PT homology and converging to $\fsl_N$ homology of a given knot.  More precisely, for a given braid $\sigma$ one can construct a complex of Soergel bimodules as described in Subsection \ref{sub:khr}. The Hochschild homology of this complex coincides with the HOMFLY-PT homology of the closure of $\sigma$. Given a polynomial \(p \in \CC[x]\),  we can construct an additional differential \(d_-\) which acts on Soergel bimodules, as we now describe. 

Recall that the simple Soergel bimodule can be written as $B_i=R \otimes_{R^{i,i+1}} R$.
Denote $u_j=x_j\otimes 1, v_j=1\otimes x_j$ for all $j$, and
$$
U_{i,i+1}:=\frac{\CC[u_1,\ldots,u_n,v_1,\ldots,v_n]}{(u_i+u_{i+1}-v_i-v_{i+1},u_j-v_j,j\notin \{i,i+1\})}, 
$$
then
$$
B_i\cong \left[U_{i,i+1}\xrightarrow{(v_i-u_i)(v_i-u_{i+1})}U_{i,i+1}\right].
$$
Given a polynomial $p\in \CC[x]$, consider the difference
$$
W_{i,i+1}:=p(u_i)+p(u_{i+1})-p(v_i)-p(v_{i+1})=p(u_i)+p(u_{i+1})-p(v_i)-p(u_i+u_{i+1}-v_i)\in U_{i,i+1}.
$$
Remark that $W_{i,i+1}$ is divisible by $(v_i-u_i)(v_i-u_{i+1})$: indeed, $W_{i,i+1}$ vanishes if $v_i=u_i$ or $v_i=u_{i+1}$.
Let $p_{i,i+1}=W_{i,i+1}/(v_i-u_i)(v_i-u_{i+1})$.  We use $p_{i,i+1}$ to define an additional differential (denoted by $d_{-}$ in \cite{RasDiff}) which acts {\em backwards}:
\begin{equation}
\label{bs with potential}
B_i^{(p)}:=\left[U_{i,i+1}\xtofrom[d_{-}:=p_{i,i+1}]{(v_i-u_i)(v_i-u_{i+1})}U_{i,i+1}\right].
\end{equation}
Note that the total complex $(B_i^{(p)},d_++d_-)$ is not a chain complex but a matrix factorization with potential $W_{i,i+1}$.

It is proved in \cite{RasDiff} that this additional differential $d_{-}$ can be naturally extended to Bott-Samuelson bimodules (tensor products of $B_i$), and to Rouquier complexes. One can also prove \cite{Becker} that $d_{-}$ can be correctly defined on general Soergel bimodules as well.
For $p'(x)=x^N$, this differential is usually denoted by $d_N$, and the homology of the total differential is isomorphic to $\fgl_N$ Khovanov-Rozansky homology \cite{KhR1}. The desired spectral sequence is then induced by $d_N$ on $\HHH(\sigma)$. 



In the present section, we wish to present a more geometric viewpoint of this construction. Given $N$, we define the so-called $\fsl_N$ dg category $(\SBim_n,d_N)$, where the objects are Soergel bimodules equipped with the ``internal differential'' $d_N$. This is a subcategory of the category of matrix factorizations with potential $x^N$. There is a monoidal functor: 
$$
K^b(\SBim_n) \to \left( K^b(\SBim_n), d_N \right)
$$
which is given by endowing complexes of Soergel bimodules with the differential $d_N$.


\subsection{Sections and schemes}
\label{sub:sections schemes}

On the geometric side, we have a remarkable family of dg schemes closely related to $\FH^\dg_n = \FH_n^\dg(\CC)$. Namely, let $s$ be an arbitrary section of the tautological bundle $\CT_n$. It defines a contraction map: 
\begin{equation}
\label{eqn:contraction}
d_s:\wedge^{\bullet}\CT_n^{\vee}\to \wedge^{\bullet-1}\CT_n^{\vee}
\end{equation}
Recall the construction \eqref{eqn:iota wedge}:
$$
\widetilde{\iota}_*(\sigma) = \iota_*(\sigma) \otimes \wedge^\bullet \CT_n^\vee
$$
which is naturally a sheaf of dg modules on $\tot_{\FH_n^\dg} \CT_n[1]$. If we endow the exterior power with the differential \eqref{eqn:contraction}, we obtain:
$$
\left( \widetilde{\iota}_*(\sigma), d_s \right)
$$
which is naturally a sheaf of dg modules on the dg scheme:
$$
\tot_{\FH_n^\dg} (\CT_n[1],s) := \text{the sheaf of dg algebras } \left( \wedge^\bullet \CT_n^\vee, d_s \right) \text{ on }\FH_n^\dg.
$$
To construct sections $s$ of the tautological bundle $\CT_n$, recall that its fibers are given by:
$$
\CT_n|_{I_n\subset \ldots\subset \CC[x,y]} = \CC[x,y]/I_n.
$$
Therefore every polynomial $f\in \CC[x,y]$ defines a section $s_f \in \Gamma(\FH_n, \CT_n)$ for all $n$, and these sections are all compatible with each other: 
$$
\includegraphics{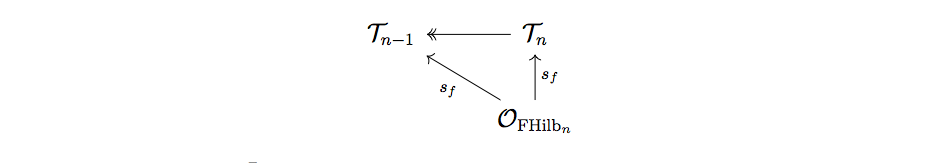}
$$
The morphism $\FH_{n} \stackrel{\pi}\longrightarrow \FH_{n-1} \times \CC$ therefore induces a map:
$$
\tot_{\FH_n^\dg} (\CT_n[1],s_f) \stackrel{\pi_f}\longrightarrow \tot_{\FH_{n-1}^\dg \times \CC} (\CT_{n-1}[1],s_f)
$$
and so one has a commutative diagram of maps of dg schemes:
$$
\includegraphics{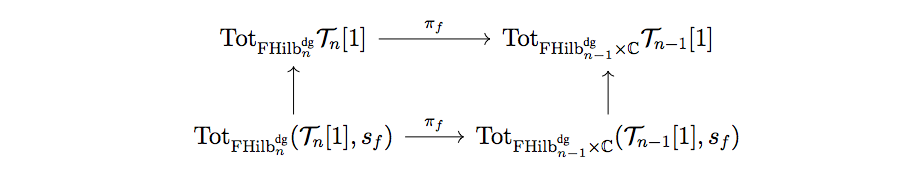}
$$
where the vertical maps are simply induced by the map of dg algebras $\wedge^\bullet \CT_n^\vee \rightarrow \left( \wedge^\bullet \CT_n^\vee, d_s \right)$. Note that the dg scheme $\tot_{\FH_n^\dg} (\CT_n[1],s_f)$ is $\torus$ equivariant if and only if $f$ is an equivariant section of $\CT_n$. It is not hard to see that the only such equivariant sections are $f(x,y) = x^Ny^M$ for some $(N,M) \in \NN_0 \times \NN_0$. We denote the corresponding section by $s_{N|M}$.


\begin{remark}
In \cite[Section 7]{GORS}, the differentials were parametrized by copies of the defining representation of $S_n$ in the rational Cherednik algebra, which can be considered as a noncommutative deformation of $\CC[x_1,\ldots,x_n,y_1,\ldots,y_n]$. One can check that such a copy naturally corresponds to a section of $\CT_n$, in particular, $f\in \CC[x,y]$ corresponds to
the subspace $\Span(f(x_i,y_i))_{1\leq i \leq n}$.
\end{remark}

\subsection{The commutative tower}
\label{sub:conj differentials}

We conjecture that the differential $d_N$ in the Soergel category is closely related to the section $f=x^N$ of the tautological bundle on the flag Hilbert scheme. More precisely, we propose the following:

\begin{conjecture}
\label{conj:differentials}
There is a map $\iota_N : (\SBim_n,d_N) \to (Z_n(\CC),s_N)$ in the sense of Definition \ref{def:morphism}. The corresponding functors fit into the commutative diagram:
\begin{equation}
\label{eqn:conj differentials}
\includegraphics{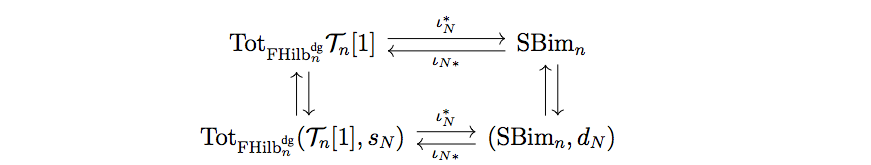}
\end{equation}
Furthermore, there is a tower of commuting squares connected with $\pi_N,\Tr,I$ akin to \eqref{eq:tower}.
\end{conjecture}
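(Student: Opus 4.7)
My strategy is to assume Conjecture \ref{conj:1} and Conjecture \ref{conj:a grading} have been inductively established, producing the functors $\widetilde{\iota}_*$, $\widetilde{\iota}^*$ of \eqref{eqn:conj2}, and then upgrade the entire construction to the matrix-factorization setting by equipping both sides with compatible differentials. On the Soergel side the extra differential is $d_N$ of Subsection \ref{sub:spectral diff}; on the geometric side it is the Koszul contraction $d_{s_N}$ against the equivariant section $s_N = s_{N,0}$ of $\CT_n$. Since the underlying objects are unchanged, the entire content of the conjecture reduces to verifying that $\widetilde{\iota}_*$ and $\widetilde{\iota}^*$ intertwine these two differentials, plus checking compatibility under the inductive tower.

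I would first verify this intertwining on the monoidal generators. On the unit object $\mathbf{1} = R$, the differential $d_N$ acts as zero, consistent with the fact that $s_N$ is a section (not a form) and contracts trivially against the zero-form part of $\wedge^\bullet \CT_n^\vee$. The serious check is on the Bott-Samuelson bimodule $B_i$, where $d_N$ is multiplication by the polynomial $p_{i,i+1} = (u_i^N + u_{i+1}^N - v_i^N - v_{i+1}^N)/((v_i-u_i)(v_i-u_{i+1}))$ from \eqref{bs with potential}. The geometric incarnation of $B_i$ under Conjecture \ref{conj:1} is a specific two-term complex of sheaves supported on the locus where two adjacent tautological eigenvalues coalesce, and on this locus the contraction by $s_N = x^N$ produces a polynomial in the eigenvalues of $X$ which should match $p_{i,i+1}$ exactly. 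I would carry out this matching in the local charts $\oFH_T$ of Subsection \ref{sub:localcoord}, where $\CT_n$ is trivialized by the basis $e_1, \ldots, e_n$ and the action of $X^N$ becomes an explicit polynomial in the special coefficients of Subsection \ref{sub:special}.

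Once the matching of differentials is proved on the generators, the commutativity of \eqref{eqn:conj differentials} is immediate: the horizontal maps are the forgetful functors dropping the extra differential, and a functor intertwining the differentials automatically commutes with these forgetful maps. For the tower of commuting squares, I would run the inductive argument of Subsection \ref{sub:proof} in parallel with the differentials: the projective bundle structure $\FH_{n+1}^\dg = \PP \CE_n^\vee$ of Proposition \ref{prop:dg scheme} is preserved when we adjoin the sections $s_N$, because $s_N$ is defined uniformly for all $n$ from the polynomial $x^N \in \CC[x,y]$ and is thus compatible with the tautological quotient $\CT_{n+1} \twoheadrightarrow \CT_n$. Correspondingly, on the Soergel side both $I$ and $\Tr_n$ commute with $d_N$, which follows from the construction of $d_N$ in \cite{RasDiff,Becker} combined with the description of $\Tr_n$ in Subsection \ref{sub:trace}.

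The main obstacle will be the first step: establishing the precise match between $d_N$ on $B_i$ and the contraction $d_{s_N}$ on the geometric realization of $B_i$. This is a genuinely nontrivial computation because the polynomial $p_{i,i+1}$ is defined by a formal division, and the geometric side must reproduce that quotient naturally from the Koszul structure. A promising approach is to first verify the matching on the projectors $P_T$ using the local model for $\End(P_T)$ given by Conjecture \ref{conj:homfly}, where both differentials can be written down explicitly on the dg algebra $\oCA_T(\CC) \otimes \wedge^\bullet \CT_n^\vee|_{\oFH_T(\CC)}$; if the matching holds for all $P_T$ then, by the semi-orthogonal decomposition \eqref{eqn:semi orthogonal} applied to $\iota^*$ of $\mathbf{1}$, it descends to the full twist and hence — by monoidality and the generation of $K^b(\SBim_n)$ by $B_i$ and Rouquier complexes — to all objects of interest.
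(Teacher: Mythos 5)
This statement is a \emph{conjecture}, and the paper does not prove it; the only argument the paper offers is the explicit verification in the $n=1$ case that follows the conjecture. Your proposal is therefore best read as a strategy sketch, and as such it has a reasonable high-level shape, but two of its concrete claims contradict what the paper actually establishes.

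First, you assert that ``on the unit object $\mathbf{1}=R$, the differential $d_N$ acts as zero.'' The paper's $n=1$ verification shows precisely the opposite: after presenting $\mathbf{1}=\CC[x,y]/(x-y)$ as a matrix factorization with backward differential $(W(x)-W(y))/(x-y)$ and eliminating $y$, one finds $\mathbf{1}\cong[\CC[x]\xrightarrow{x^N}\CC[x]]$. In other words, $d_N$ on the (resolved) unit object \emph{is} multiplication by $x^N$, and this is exactly what must match the Koszul contraction $d_{s_N}$ acting on $\wedge^\bullet\CT_1^\vee$. Your heuristic that the section ``contracts trivially against the zero-form part'' conflates the underlying $\CO$ (which carries no $a$-degree) with the full $\wedge^\bullet\CT_n^\vee$ carrying the Hochschild/RHom degrees, where the differential is nontrivial. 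If $d_N$ on $\mathbf{1}$ really were zero, the $n=1$ check would already fail.

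Second, your geometric description of $B_i$ is wrong. You say ``the geometric incarnation of $B_i$ under Conjecture \ref{conj:1} is a specific two-term complex of sheaves supported on the locus where two adjacent tautological eigenvalues coalesce.'' The paper's $n=2$ computation in Subsection \ref{sub:sheaves n=2} shows that $\iota_*(B)=\CO_{Z_1}$ is supported on the component $Z_1=\overline{\{x_1\ne x_2\}}$ cut out by $w=0$, i.e.\ the \emph{generic} component, whereas it is the Rouquier complex $\sigma_1$ (not $B_1$) whose image is $\CO_{Z_2}$ supported where $x_1=x_2$. Building your matching argument on an incorrect geometric picture of $B_i$ would lead it nowhere, and this matters because the identity $B_i\cong[\mathbf{1}\to\sigma_i]$ (up to shifts) mixes the two components, so the reconciliation of $p_{i,i+1}$ with the contraction is genuinely more delicate than the proposal suggests.

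Finally, even granting the conjectural inputs (Conjectures \ref{conj:1}, \ref{conj:a grading}), the proposal never performs the advertised local-chart computation matching $d_N$ on $B_i$ with $d_{s_N}$; it only says it ``should match exactly.'' Since the conjecture itself is effectively this assertion of compatibility, that sentence begs the question. The later suggestion of proving the matching on the projectors $P_T$ via the local dg algebras $\oCA_T$ and then descending to all objects by semi-orthogonality is appealing, but it inherits the same unverified gap, and in any event the paper does not carry this out — it only records the $n=1$ base case.
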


\begin{remark}
We expect that the general differential on $\SBim_n$ corresponding to the polynomial $p(x)$ in the right hand side, corresponds to replacing $s_N$ by $s_{p(x)}$ in the left hand side.
\end{remark}

The conjecture is true for $n=1$. Indeed, $\FH_1=\FH_1^\dg = \CC$, so: 
$$
\tot_{\FH_1^\dg} (\CT_n[1],s_N) = S_{\CC[x]}^\bullet \left(\CC[x] \stackrel{x^N}\longrightarrow \CC[x] \right) \cong \spec \ \CC[x]/(x^N).
$$
The Soergel category $\SBim_1$ has a unique $\CC[x]$ bimodule, namely $\1 = \CC[x,y]/(x-y)$, and the corresponding object in the dg category $(\SBim_1,d_N)$ is given by:
$$
\1 = \left[ \CC[x,y]\xtofrom[x-y]{(W(x)-W(y))/(x-y)}\CC[x,y] \right] 
$$
where $W(x)=\frac{x^{N+1}}{N+1}$. One can eliminate $y$ and rewrite the above
$$
\1 = \left[ \CC[x]\xrightarrow{W'(x)=x^N}\CC[x] \right]
$$
from where it is clear that the categories $\tot_{\FH_1^\dg} (\CT_n[1],s_N)$ and $(\SBim_1,d_N)$ are equivalent.
 
\subsection{Differentials in affine charts}
\label{sub:affine diff}

Recall the affine charts $\FH_T \subset \FH_n$ defined in Subsection \ref{sub:defcharts}. In each of these, the vector space $\CC^n$ is endowed with a preferred basis $e_1,...,e_n$, which more abstractly means that the tautological bundle is trivialized:
$$
\CT_n|_{\FH_T} \cong \CO \cdot e_1 \oplus...\oplus \CO \cdot e_n
$$
The basis vectors are indexed by boxes $\sq$ in the Young diagram of $T$, and the torus $\torus$ acts on the basis vector $e^\sq$ by the character $z_\sq = q^at^b$ for any box $\sq = (a,b)$. We conclude that:
$$
\wedge^{\bullet}\CT_n^{\vee}|_{\FH_T} \cong \wedge(\xi_1,\ldots,\xi_n)
$$
where the equivariant weights of the symbols $\xi_\sq$ are given by $z_\sq^{-1} = q^{-a}t^{-b}$. Recall from Subsection \ref{sub:sections schemes} that to any polynomial $f\in \CC[x,y]$, we may associate a section of the tautological bundle given by:
\begin{equation}
\label{eqn:diff chart}
s_f|_{(X,Y,v)} = f(X,Y)v \in \CT_n|_{(X,Y,v)}
\end{equation}
We may dualize the above section to obtain $s_f:\CT_n^\vee \rightarrow \CO$, and in local coordinates this takes the form: 
\begin{equation}
\label{eqn:dual diff chart}
s_f(\xi_i)=[f(X,Y)v]_{i} = f(X,Y)_{i1}
\end{equation}
The local rings of the dg scheme $\tot_{\FH_n^\edg} (\CT_n[1],s_f)$ is then given by the Koszul complex associated with the first column of the matrix $f(X,Y)$.

\begin{lemma}
Suppose that $f=x^Ny^M$ and the diagram of $T$ contains the box with coordinates $(N,M)$.
Then the dg algebra $\wedge_{\FH_n^\edg} (\CT_n^\vee,s_f)$ is contractible in the local chart $\oFH_T$. 
\end{lemma}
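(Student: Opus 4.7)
On the affine chart $\oFH_T$ the tautological bundle is trivialized as $\CT_n|_{\oFH_T}\cong\bigoplus_{\sq\in T}\CO\cdot e_\sq$ (cf.\ the construction before Proposition~\ref{prop:chart}), so $\wedge^\bullet\CT_n^\vee|_{\oFH_T}$ is the free exterior algebra $\oCA_T\otimes\wedge(\xi_\sq)_{\sq\in T}$. By \eqref{eqn:dual diff chart} the differential $s_f$ acts by $s_f(\xi_\sq)=[X^NY^Mv]_\sq$, the $\sq$-component of $X^NY^Mv$ in the basis $\{e_\tau\}_{\tau\in T}$. Thus the lemma reduces to exhibiting one coordinate $[X^NY^Mv]_\sq$ which is a unit, for then the usual contracting homotopy of a Koszul complex on a unit will annihilate the identity.

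First I would single out the box $\bsq=(N,M)\in T$ (available by hypothesis) and compute $[X^NY^Mv]_{\bsq}$ at the fixed point $I_T$. The description of $I_T$ given immediately after Definition~\ref{def:chart} states that, in the basis $\{e_\sq\}_{\sq\in T}$, the operators act exactly as $X\cdot e_\sq=e_{\sq\to}$ and $Y\cdot e_\sq=e_{\sq\uparrow}$ (with the convention that $e_{\sq\to}=0$ when $\sq\to\notin T$, and analogously for $\uparrow$), and $v=e_{(0,0)}$. Since Young diagrams are downward closed, the intermediate boxes $(0,M),(1,M),\dots,(N-1,M)$ all lie in $T$, and iterated application gives
$$
X^NY^Mv\big|_{I_T}\;=\;X^N e_{(0,M)}\;=\;e_{(N,M)}.
$$
Hence $[X^NY^Mv]_{\bsq}=1$ modulo the maximal ideal $\fm_T$ of $I_T$, so $u:=[X^NY^Mv]_{\bsq}\in\oCA_T$ is a unit in the local ring. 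Either interpreting $\oFH_T$ as this local ring, or invoking principle \eqref{eqn:principletorus} to shrink $\oFH_T$ to the $\torus$-invariant open subset $\{u\neq 0\}$ (which still contains $I_T$), we may assume $u\in\oCA_T^\times$ outright.

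Finally I would exhibit an explicit null-homotopy. Define $h:\wedge^\bullet\CT_n^\vee\to\wedge^{\bullet+1}\CT_n^\vee$ by $h(\omega)=u^{-1}\,\xi_{\bsq}\wedge\omega$. Because $s_f$ is a (super)derivation of odd degree with $s_f(\xi_{\bsq})=u$,
$$
(s_f\circ h+h\circ s_f)(\omega)\;=\;s_f(u^{-1}\xi_{\bsq}\wedge\omega)+u^{-1}\xi_{\bsq}\wedge s_f(\omega)\;=\;u\cdot u^{-1}\omega\;=\;\omega,
$$
so $h$ is a contracting homotopy for the identity and $\bigl(\wedge^\bullet\CT_n^\vee,s_f\bigr)|_{\oFH_T}\simeq 0$.

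The only non-formal input is the identification of the $X$- and $Y$-action at the fixed point $I_T$ as the shift operators on the monomial basis indexed by $T$; the hypothesis that $(N,M)\in T$ enters precisely through the nonvanishing $X^NY^Mv|_{I_T}=e_{(N,M)}\neq 0$. I do not expect any genuine obstacle beyond bookkeeping of this evaluation; once it is in place, contractibility is the standard observation that a Koszul complex on a sequence containing a unit is null-homotopic.
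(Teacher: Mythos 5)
Your argument is essentially the paper's proof: you verify, by evaluating $X^NY^Mv$ at the fixed point $I_T$ where $X,Y$ act as the shift operators on the monomial basis, that the coefficient of $e_{(N,M)}$ lies in $1+\fm_T$, hence is a unit on the chart, and then conclude contractibility of the Koszul complex. You merely spell out the evaluation and the contracting homotopy more explicitly than the paper does, but the key step (invertibility of $s_f(\xi_{(N,M)})$ on $\oFH_T$) is identical.
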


\begin{proof}
Suppose that $\sq = (N,M)$in $T$. Using \eqref{eqn:chartx}--\eqref{eqn:charty}, one can prove that $(X^NY^M)(v)\in e_\sq + \fm_T$, where $\fm_T$ is the maximal ideal in the local ring $\oCA_T = \CC[\oFH_T]$. Therefore, $s_f(\xi_\sq)=1$ is invertible in \eqref{eqn:dual diff chart}, and this implies that the Koszul complex of $s_f$ is contractible.
\end{proof}

\begin{corollary}
\label{cor: kill projectors}
Suppose that  the diagram of $T$ has more than $N$ columns.
Then the homology of the categorified projector $\CP_T$ with respect to $d_N$ vanishes.
\end{corollary}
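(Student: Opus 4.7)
The plan is to deduce the corollary directly from the preceding Lemma, using Conjectures~\ref{conj:homfly} and \ref{conj:differentials} to transport the statement from the geometric to the algebraic side. First, I would translate the hypothesis on columns into a box condition: saying that the Young diagram of $T$ has more than $N$ columns means precisely that its bottom row contains the box $\sq=(N,0)$. Applying the preceding Lemma to the polynomial $f = x^N = x^N y^0$, the dg algebra $(\wedge^\bullet \CT_n^\vee, s_{N|0})$ is contractible when restricted to the affine chart $\oFH_T \subset \FH_n^\edg$.

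Next, I would invoke Conjecture~\ref{conj:homfly} together with the geometric description of projectors from Section~\ref{sec:equiv} and Section~\ref{sec:local}. These identify the projector $\CP_T = \iota^*(\CP_T^{\mathrm{geo}})$ as an object whose image under $\widetilde{\iota}_*$ is concentrated on $\oFH_T$, with endomorphisms and Hom space from $\1$ both modules over the dg algebra $\CA_T(\CC) \otimes \bigl(\wedge^\bullet \CT_n^\vee\big\vert_{\oFH_T}\bigr)$. Under Conjecture~\ref{conj:differentials}, equipping an object of $\SBim_n$ with the Khovanov--Rozansky differential $d_N$ corresponds, on the geometric side, to passing from $\tot_{\FH_n^\edg} \CT_n[1]$ to $\tot_{\FH_n^\edg}(\CT_n[1], s_{N|0})$, that is, to endowing the factor $\wedge^\bullet \CT_n^\vee$ with the contraction differential $d_{s_{N|0}}$.

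Putting these two ingredients together, the $d_N$-homology of $\CP_T$ is computed by $\CA_T(\CC) \otimes \bigl(\wedge^\bullet \CT_n^\vee\big\vert_{\oFH_T},\, d_{s_{N|0}}\bigr)$. Since the second tensor factor is contractible by the Lemma, and $\CA_T(\CC)$ is free over the ground ring, the tensor product is contractible as a dg module, so the homology vanishes.

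The main obstacle is conceptual rather than computational: once the conjectural identifications of Conjectures~\ref{conj:1}, \ref{conj:homfly}, and \ref{conj:differentials} are in place, the corollary follows in one line from the preceding Lemma. A fully unconditional proof would therefore require establishing that the projectors constructed by Elias--Hogancamp really are supported (in the appropriate sense) on $\oFH_T$ and that $d_N$ matches the contraction with $s_{N|0}$ under the functor $\widetilde{\iota}_*$; for the intended application in this section we only need the compatibility on the local chart $\oFH_T$, so the statement is in fact weaker than the full conjectural package.
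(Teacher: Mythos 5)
Your proof matches the paper's intended argument: the Corollary is stated immediately after the Lemma with no separate proof precisely because it is meant to follow exactly as you describe, by taking $f = x^N$ (so that ``more than $N$ columns'' is equivalent to $(N,0) \in T$), applying the Lemma to contract $\bigl(\wedge^\bullet\CT_n^\vee|_{\oFH_T},\,d_{s_{N,0}}\bigr)$, and transporting back to $K^b(\SBim_n)$ through Conjectures~\ref{conj:homfly} and~\ref{conj:differentials}. You are also right to flag the conditional nature of this deduction; the Remark following the Corollary in the paper supplies an independent, unconditional confirmation via Becker's result that $(B_w,d_N)\simeq 0$ for the relevant Soergel bimodules.
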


\begin{remark}
In \cite[Theorem 4]{Becker} it is proved that $(B_w,d_N)\cong 0$, if the Robinson-Shensted tableau of $w$ has more than $N$ columns.
One can prove that Soergel bimodules $B_w$ with this property generate a tensor subcategory of $\SBim_n$, and all categorified projectors $\CP_T$ belong to this subcategory, provided that $T$ has more than $N$ columns. Therefore  $(\CP_T,d_N)\cong 0$ in agreement with Corollary \ref{cor: kill projectors}. 
\end{remark}

For $T=(1,\ldots,1)$, the differential corresponding to $x^N$ can be written very explicitly. 
 
\begin{proposition}
In the chart $\oFH_{(1,\ldots,1)}$ the differential $d_N$ is given by the equation
\begin{equation}
\label{eqn:dN}
d_N(\xi_1+z\xi_2+\ldots+z^{n-1}\xi_{n-1})=(u_1+zu_2+\ldots+z^{n-1}u_{n})^{N}\ \mod z^n, 
\end{equation}
where $u_1,\ldots,u_n$ are local coordinates and $z$ is a formal parameter. 
\end{proposition}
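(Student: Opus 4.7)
The plan is to recognize the chart $\oFH_{(1,\ldots,1)}$ as the spectrum of polynomials in a single formal variable modulo $z^n$, under which the matrices $X,Y$ become concrete multiplication operators. From this identification, formula \eqref{eqn:dN} will follow directly from the definition of $d_N$ as contraction with the section $s_{N,0}$.

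First, I will unwind the local coordinates on $\oFH_{(1,\ldots,1)}$ from Example \ref{ex:antisymmetric}. In that chart, $Y$ satisfies $Ye_i = e_{i+1}$ (with $e_{n+1}=0$) and the whole matrix $X$ is determined from $Xe_1 = u_1 e_1 + u_2 e_2 + \ldots + u_n e_n$ by the relation $[X,Y]=0$, giving the lower-triangular Toeplitz matrix with entries $X_{i+k-1,\,i} = u_k$. The key step is then to introduce the trivialization
\[
\CT_n|_{\oFH_{(1,\ldots,1)}} \ \xrightarrow{\ \sim \ }\ \frac{\CC[z]}{z^n}\otimes \oCA_T, \qquad e_i \ \longmapsto \ z^{i-1},
\]
under which $Y$ becomes multiplication by $z$ and $X$ becomes multiplication by
$p(z) := u_1 + u_2 z + \ldots + u_n z^{n-1}$, both taken modulo $z^n$. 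The cyclic vector $v = e_1$ corresponds to $1$.

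Second, I will apply \eqref{eqn:diff chart} for $f = x^N$: the section $s_N$ of $\CT_n$ equals $X^N v$, which under the identification above is exactly $p(z)^N \bmod z^n$. By \eqref{eqn:dual diff chart}, the contraction differential satisfies
\[
d_N(\xi_i) \ =\ [X^N v]_i \ =\ \bigl[\text{coefficient of } z^{i-1} \text{ in } p(z)^N\bmod z^n\bigr].
\]
Summing these identities weighted by $z^{i-1}$ for $i=1,\ldots,n$ (where $z$ is now treated as a formal bookkeeping variable) gives precisely \eqref{eqn:dN}.

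There is essentially no obstacle beyond choosing coordinates carefully and being consistent between the columns-of-$X^N v$ description of $s_N$ in \eqref{eqn:dual diff chart} and the generating-function form in \eqref{eqn:dN}. The only mild care needed is to verify that the trivialization of $\CT_n$ by $\{e_i\}$ used here agrees, up to the correct equivariant weights, with the trivialization $\xi_i$ of $\CT_n^\vee$ from Subsection \ref{sub:affine diff}, so that the map $\xi_i \mapsto (X^N v)_i$ is literally the differential on $\wedge^\bullet \CT_n^\vee$ induced by $s_{N,0}$ in Conjecture \ref{conj:diff}. Once that identification is in place, \eqref{eqn:dN} is simply the generating-function repackaging of ``take the first column of $X^N$ and read off its entries''.
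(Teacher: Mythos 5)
Your proof is correct and takes essentially the same approach as the paper: both identify $X$ with the lower-triangular Toeplitz matrix $\sum_k u_k B^{k-1}$ (multiplication by $p(z) \bmod z^n$ under the trivialization $e_i\mapsto z^{i-1}$), observe $B^n=0$, and read off the first column of $X^N v$ as the coefficients of $p(z)^N \bmod z^n$. Your version just makes the $\CC[z]/z^n$-module identification more explicit than the paper's one-line argument.
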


\begin{proof} Indeed, in the chart $\oFH_{(1,\ldots,1)}$ one has $X=u_1+Bu_2+\ldots+B^{n-1}u_n$, where $B$ is the  $n\times n$ Jordan block. Clearly, $B^n=0$ and the first column of $X^N$ contains first $n$ coefficients of the polynomial $(u_1+zu_2+\ldots+z^{n-1}u_{n})^{N}$. 
\end{proof}

As a corollary, we get the following result. 

\begin{proposition}
Assuming Conjecture \ref{conj:differentials},
the $\fsl_N$ homology of the $n$-th symmetric categorified Jones-Wenzl projector is isomorphic to
the Koszul homology of the differential \eqref{eqn:dN}.
\end{proposition}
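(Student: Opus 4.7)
The plan is to combine three earlier ingredients — all of which have been established (or assumed) prior to this statement — into a short chain of identifications. First, by Theorem \ref{thm:symm antisymm} we have a canonical isomorphism
$$
\End(P_{(1,\ldots,1)}) \;\simeq\; \CC[u_1,\ldots,u_n] \otimes \wedge^{\bullet}(\xi_1,\ldots,\xi_n),
$$
with the prescribed bigradings. By Proposition \ref{prop:chart} (applied in the chart indexed by the tableau $T=(1,\ldots,1)$) this right hand side is precisely the dg ring of functions on $\tot_{\oFH_{(1,\ldots,1)}} \CT_n[1]$, so we have identified the graded vector space whose $d_N$-homology we must compute.

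Second, I would invoke Conjecture \ref{conj:differentials}. The commutative square \eqref{eqn:conj differentials} asserts that the differential $d_N$ of \cite{RasDiff} on the Soergel side is intertwined by $\widetilde{\iota}_*$ with the differential on $\wedge^{\bullet}\CT_n^{\vee}$ given by contraction with the equivariant section $s_{N} = s_{N|0}$ of $\CT_n$ constructed in Subsection \ref{sub:sections schemes}. Combined with the first step, this shows that the $\fgl_N$-homology of $P_{(1,\ldots,1)}$ is the hypercohomology of $\tot_{\FH_n^{\dg}}(\CT_n[1], s_N)$ localized at the torus-fixed point $I_{(1,\ldots,1)}$. (The support claim needed here — that under $\widetilde{\iota}_*$ the projector $P_{(1,\ldots,1)}$ corresponds to the categorified structure sheaf of this fixed point — is built into the conjectural dictionary $\iota^{*}(\CP_T) = P_T$ used throughout Section \ref{sec:local}.)

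Third, the preceding proposition computes the restriction of $s_N$ to the chart $\oFH_{(1,\ldots,1)}$ explicitly: acting on $\wedge^{\bullet}(\xi_1,\ldots,\xi_n)$ over $\CC[u_1,\ldots,u_n]$ it is given by \eqref{eqn:dN}. Substituting this into the hypercohomology computation from the previous paragraph, the $\fgl_N$-homology of $P_{(1,\ldots,1)}$ is identified with the Koszul homology of the differential \eqref{eqn:dN}, which is what we wanted.

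The only delicate point — and hence the part I would write out carefully rather than leaving implicit — is the passage in the second step from ``$d_N$ on Soergel bimodules'' to ``contraction with $s_N$ on the chart $\oFH_{(1,\ldots,1)}$''. Here one must check that the commutative diagram of Conjecture \ref{conj:differentials} is compatible with taking endomorphism algebras of the projector, i.e.\ that $\End(P_{(1,\ldots,1)}, d_N)$ on the Soergel side is sent to the $s_N$-deformation of $\End(\widetilde{\iota}_*P_{(1,\ldots,1)})$ on the geometric side. Assuming Conjecture \ref{conj:differentials} as stated (so that $\iota_N$ is a monoidal functor in the derived sense), this compatibility is automatic, and no further computation beyond assembling the three ingredients above is needed.
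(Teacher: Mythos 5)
Your proof is correct and fleshes out exactly the chain of identifications that the paper leaves implicit when it simply declares the statement a corollary of the preceding proposition. The three ingredients you list — (i) the identification of $\End(P_{(1,\ldots,1)})$ with $\CC[u_1,\ldots,u_n]\otimes\wedge^\bullet(\xi_1,\ldots,\xi_n)$, (ii) the translation of $d_N$ on the Soergel side into contraction with the equivariant section $s_N$ on the geometric side via Conjecture~\ref{conj:differentials}, and (iii) the explicit local-coordinate computation of $s_N$ in the chart $\oFH_{(1,\ldots,1)}$ — are precisely what the paper's reasoning relies on, and your flagging of the compatibility of the conjectural commutative square with passing to endomorphism algebras is the right thing to be careful about (and, as you note, it is built into the requirement that $\iota_N$ be a monoidal, birational morphism in the sense of Definitions~\ref{def:morphism} and \ref{def:birational}).

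Two small remarks for polish rather than correctness. First, you cite Theorem~\ref{thm:symm antisymm}, which is an unconditional theorem (via Abel--Hogancamp); that is legitimate, but it is not strictly needed here, because the hypotheses already grant you the conjectural framework of Section~\ref{sec:local} (in particular Conjecture~\ref{conj:homfly} and Example~\ref{ex:antisymmetric}), under which $\End(P_{(1,\ldots,1)})$ is identified with $\oCA_{(1,\ldots,1)}\otimes\wedge^\bullet\CT_n^\vee|_{\oFH_{(1,\ldots,1)}}$ directly. Second, the precise source for the trivialization $\wedge^\bullet\CT_n^\vee|_{\oFH_{(1,\ldots,1)}}\cong\wedge(\xi_1,\ldots,\xi_n)$ and the coordinate ring $\oCA_{(1,\ldots,1)}\cong\CC[u_1,\ldots,u_n]$ is Subsection~\ref{sub:affine diff} together with Example~\ref{ex:antisymmetric}, rather than Proposition~\ref{prop:chart} itself, which concerns the inductive presentation of $\CE_n$. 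Neither of these affects the validity of the argument.
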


This description of $d_N$ indeed agrees with the ones in \cite{GL,GOR,GORS}, and the homology is quite involved. Indeed, its Poincar\'e series for $n\to\infty$ deforms the character of the $(2,2N+1)$ minimal model for the Virasoro algebra.  Extensive computer experiments \cite{GL,GOR} support this conjecture for $N=2$ and $N=3$. See also \cite{Hog2} for recent developments for $N=2$.

The homology of all projectors on two and three strands with respect to $d_N$ were described in \cite{GL}. One can check that they agree with the general framework of this paper.





%




\section{Appendix}
\label{sec:appendix}

\subsection{Dg algebras}
\label{sub:dg algebras}

A vector space $V$ will be called dg (short for ``differential graded") if it comes endowed with a grading:
$$
V = \bigoplus_{n\in \ZZ} V^i
$$
and a differential $d : V^\bullet \rightarrow V^{\bullet+1}$ such that $d^2=0$. A vector $v \in V$ is called homogeneous if $v\in V^i$ for some integer $i$. If this is the case, then we will write $\deg v = i$. 

\begin{definition}
\label{def:dg algebra}

A \textbf{dg algebra} $A^\bullet$ is a dg vector space concentrated in non-positive degrees ($A^n=0$ for $n>0$), which is endowed with a multiplication that preserves the grading:
$$
A^i \cdot A^j \subset A^{i+j} \qquad \forall \ i,j \in \NN_0
$$
and the differential via the graded Leibniz rule:
\begin{equation}
\label{eqn:leibniz}
d(a \cdot a') = (d a) \cdot a' + (-1)^{\deg a} a \cdot (da') \qquad \forall \ a,a'\in A
\end{equation}
We impose the usual axioms on the dg algebra $A^\bullet$, such as associativity and unit $1 \in A^0$. 

\end{definition}

All the dg algebras in this paper will be commutative, in the sense that:
\begin{equation}
\label{eqn:dg commutative}
a \cdot a' = (-1)^{(\deg a)(\deg a')} a' \cdot a \qquad \forall \ a,a'\in A^\bullet
\end{equation}
We will write $H^0(A)$ for the 0--th cohomology of $A^\bullet$, which is a usual commutative algebra. All the dg algebras studied in this paper will be finitely generated over $H^0(A)$. 

\begin{definition}
\label{def:dg module}

A \textbf{dg module} $M^\bullet$ for a dg algebra $A^\bullet$ is a dg vector space  $M^\bullet$ with a map:
$$
A^\bullet \otimes M^\bullet \longrightarrow M^\bullet
$$
which is associative, preserves the grading, and satisfies the graded Leibniz rule (i.e. \eqref{eqn:leibniz} with $a'$ replaced by $m$). Note that all the cohomologies $H^i(M^\bullet)$ are modules for $H^0(A^\bullet)$. 

\end{definition}

When the grading will not be particularly crucial, we may simplify notation by writing $A = A^\bullet$ and $M = M^\bullet$. We will only studied the derived category $A$--modules:
$$
A\text{--Mod} = \Big\{\text{dg modules }M \curvearrowleft A \Big\}/\text{quasi--isomorphism}
$$
When the dg algebra $A$ is finitely generated over $H^0(A)$, we will call an object of $A\text{--Mod}$ \textbf{finitely presented} if all its cohomologies have this property over $H^0(A)$. Then we write:
$$
A\text{--mod} \subset A\text{--Mod}
$$
for the full subcategory of finitely presented modules. The category of dg modules behaves much like that of usual modules, but with certain particular features. First of all is the existence of the grading shift:
$$
M^\bullet[1] = M^{\bullet+1}
$$
Given two $A$--modules $M$ and $M'$, one can define the space of degree preserving homomorphisms between them as $\Hom_A(M,M')$. But it is more naturally to consider instead:
\begin{equation}
\label{eqn:hom dg}
\Hom^\bullet_A(M,M') = \bigoplus_{n\in \ZZ} \Hom_A(M,M'[n])
\end{equation}
which is actually a dg vector space with respect to:
$$
d(f) = d \circ f - (-1)^n f \circ d \qquad \forall \ f:M \rightarrow M'[n]
$$
The spaces \eqref{eqn:hom dg} make $A$--Mod and $A$--mod into \textbf{dg categories}, which just means a category whose Hom spaces are dg vector spaces. We may inquire about the ordinary categories:
\begin{equation}
\label{eqn:homotopy category}
H^0(A\text{--Mod}) \qquad \text{and} \qquad H^0(A\text{--mod})
\end{equation}
whose Hom spaces are, by definition, the 0--th cohomologies of \eqref{eqn:hom dg}. Because the zero--cycles of \eqref{eqn:hom dg} are degree and differential preserving maps $f:M \rightarrow M'$, while the zero--boundaries are homotopies between such maps, we conclude that \eqref{eqn:homotopy category} is nothing but the homotopy category of $A$--modules. So the dg category $A$--mod supersedes the homotopy category.

\subsection{Symmetric and exterior algebras}
\label{sub:sym ext}

There will be two main examples of dg algebras, both associated to a vector space $V$. The first is the \textbf{symmetric algebra}:
\begin{equation}
\label{eqn:symm}
S V = \bigoplus_{d=0}^\infty S^d V
\end{equation}
concentrated in degree 0 and with trivial differential, and the \textbf{exterior algebra}:
\begin{equation}
\label{eqn:ext}
\wedge V = \bigoplus_{d=0}^\infty \wedge^d V
\end{equation}
situated in degrees $...,-2,-1,0$ and with trivial differential. By definition, the spaces \eqref{eqn:symm} and \eqref{eqn:ext} are quotients of the tensor algebra of $V$ by the relations $v\otimes v' \mp v' \otimes v$. Therefore, they are both particular cases of the symmetric algebra of a dg vector space:
\begin{equation}
\label{eqn:symm ext}
S V^\bullet := \left(\bigoplus_{n=0}^\infty V^\bullet \otimes ... \otimes V^\bullet \right) \Big/ \left( v \otimes v' - (-1)^{(\deg v)(\deg v')} v' \otimes v \right)
\end{equation}
which inherits the differential from $V^\bullet$:
$$
d(v_1 \otimes ... \otimes v_k) = \sum_{i=1}^k (-1)^{\deg v_1 + ... + \deg v_{i-1}} \cdot v_1 \otimes ... \otimes v_{i-1} \otimes d(v_i) \otimes v_{i+1} \otimes ... \otimes v_k
$$
By the very definition, \eqref{eqn:symm ext} is a commutative dg algebra, which is concentrated in non-positive degrees as long as the original dg vector space $V^\bullet$ is. In particular, when the dg vector space is concentrated in degree 0 (respectively -1), we obtain \eqref{eqn:symm} (respectively \eqref{eqn:ext}.

\begin{example}
\label{ex:important}

A particularly important case of the construction \eqref{eqn:symm ext} is when:
$$
V^\bullet = \left[ M \stackrel{s}\longrightarrow N \right]
$$
is concentrated in degrees $-1$ and 0. Then we have:
$$
S V^\bullet = \left[ ... \stackrel{d_s}\longrightarrow \wedge^2 M \otimes S N \stackrel{d_s}\longrightarrow M \otimes S N \stackrel{d_s}\longrightarrow S N \right]
$$
in degrees $...,-2,-1,0$, with differential given by:
\begin{equation}
\label{eqn:ds}
d_s(m_1 \wedge... \wedge m_k \otimes n) = (-1)^{k-1} \sum_{i=0}^k m_1 \wedge ... \wedge m_{i-1} \wedge m_{i+1} \wedge ... \wedge m_k \otimes s(m_i)n
\end{equation}
for all $m_1,...,m_k \in M$ and $n \in S N$.

\end{example}

More generally, suppose that $A$ is a dg algebra and $M$ is a dg module for $A$. Define:
$$
S_{A} M^\bullet = SM^\bullet \Big/ \left( am \otimes m' - m \otimes a m' \right)
$$
which will also be a dg module for $A$. The formalism above, as well as Example \ref{ex:important}, apply.

\subsection{Affine dg schemes}
\label{sub:affine dg schemes}

Dg schemes can be defined as spectra of dg algebras with respect to the \'{e}tale topology, as detailed in \cite{B}. We will not need the full theory, and instead follow the original definition of Kontsevich. 

\begin{definition}
\label{def:dg scheme}

If $X$ is an scheme with structure sheaf $\CO_X$, an \textbf{affine dg} scheme supported on $X$ is a sheaf $\CA$ of dg algebras, concentrated in non-positive degrees, such that $\CO_X = H^0(\CA)$. 

\end{definition}

We will write $\spec \ \CA$ for the affine dg scheme associated to $\CA$, to match this situation with that of usual schemes. Philosophically, the approach of Definition \ref{def:dg scheme} can be summarized by saying that we ignore topological subtleties of dg schemes, and simply endow them with the topology coming from $\CO_X$. The natural definition of quasi-coherent sheaves is: 
$$
\qcoh(\spec \ \CA) = \CA\text{--Mod} = \frac {\Big\{ \CP \in \qcoh(X) \text{ endowed with a dg module structure for }\CA \Big\}}{\text{quasi--isomorphism}}
$$
All of the dg schemes in this paper will be of finite type, meaning that $\CA$ is finitely generated over $\CO_X = H^0(\CA)$. Since this is the case, it is natural to define coherent-sheaves as the full subcategory:
$$
\CA\text{--mod} = \coh(\spec \ \CA) \subset \qcoh(\spec \ \CA)
$$
consisting of dg modules whose cohomology groups are coherent sheaves over $\CO_X = H^0(\CA)$.

\begin{example}
\label{ex:koszul}

Suppose that $\CA = S_X[\CN \stackrel{s}\rightarrow \CO_X]$ is the Koszul complex associated to a coherent sheaf $\CN$ and a co-section $s$. Explicitly, we have:
$$
\CA = \left[ ... \stackrel{d_s}\longrightarrow \wedge^2 \CN \stackrel{d_s}\longrightarrow \CN \stackrel{d_s}\longrightarrow \CO_X \right]
$$
The structure sheaf $\CO_X$ situated in degree 0, as in Example \ref{ex:important}, upgraded to the situation of modules. If the co-section $s$ is regular, then it is well-known that the Koszul complex is acyclic, and the dg algebra $\CA$ becomes isomorphic to the usual commutative algebra $\CO_X/s$. In this case, the dg scheme is simply the subscheme of $X$ cut out by the section $s$.

However, in general it may be that the section $s$ is not regular (for example, $s$ could be 0). In this case, the dg algebra $\CA = \wedge^\bullet \CN$ has 0 differential but non-trivial grading. Explicitly:
$$
\CA\text{--mod} = \frac {\Big\{\text{graded coherent }\CO_X \curvearrowright \CP^\bullet \text{ together with } \CN \otimes \CP^\bullet \stackrel{\lambda}\rightarrow \CP^{\bullet-1} \text{ such that } \lambda \circ \lambda = 0\Big\}}{\text{quasi--isomorphism}}
$$
In particular, if $\CN \cong \CO_X^{\oplus n}$ is a free module, the choice of the datum $\lambda$ corresponds to $n$ commuting degree $-1$ endomorphisms of $\CP$. 

\end{example}

\begin{example}
\label{ex:affine dg}

In general, the affine dg schemes we will encounter will combine the previous example with the case of polynomial rings over ordinary algebras. Specifically, we will have:
$$
\CA = S_X[\CM \stackrel{s}\longrightarrow \CN] = \left[ ... \stackrel{d_s}\longrightarrow \wedge^2 \CM \otimes S_X\CN \stackrel{d_s}\longrightarrow \CM \otimes S_X\CN \stackrel{d_s}\longrightarrow S_X\CN \right]
$$
where $\CM \stackrel{s}\rightarrow \CN$ is a map of coherent sheaves of $X$. The differential $d_s$ is given by \eqref{eqn:ds}, and the grading has $\wedge^i\CM \otimes S\CN$ sitting in degree $-i$. But note that there is an extra grading on the algebra $\CA$, given by placing $\wedge^i \CM \otimes S^j \CN$ in degree $i+j$. We will write this as:
$$
\CA^{\bullet,*} = \bigoplus_{i,j \geq 0} \CA^{-i,i+j} = \bigoplus_{i,j \geq 0} \wedge^i \CM \otimes S^j \CN
$$
Since the $* = i+j$ grading is preserved by the differential $d_s$, it descends to a grading on the cohomology groups. For example, when the morphism $s$ is regular (i.e. when the Koszul complex $\CA$ is acyclic in negative degrees), the $\bullet$ grading collapses, and the $*$ grading matches the usual polynomial grading on the symmetric power $S^*_X (\CN/\CM)$. 

\end{example}

\subsection{Projective dg bundles}
\label{sub:proj dg}

We do not wish to define projective dg schemes in complete generality, but instead focus on projectivizations of dg vector bundles $\CV^\bullet$ on a space $X$.

\begin{definition}
\label{def:prog dg space}

A projective dg bundle is defined through its category of coherent sheaves:
$$
\coh(\proj \ S_X \CV^\bullet) = \frac {\left\{\text{graded }S^*_X\CV^\bullet \text{ dg modules} \right\}}{\left( S^{*}\CV^\bullet/S^{* > 0}\CV^\bullet \right) \cong 0}
$$

\end{definition}

Let us make two remarks: first of all, an object in $\coh(\proj \ S_X \CV^\bullet)$ has two gradings. The first comes from the power $*$ of the symmetric power, and the second comes from the dg grading on $\CV^\bullet$. Secondly, the difference between a projectivization and the affine cone $\spec \ S_X\CV^\bullet$ is the same as in the classical case: there is, in the derived category of the former, an additional quasi-isomorphism between the structure sheaf of the zero section and the zero module.

\begin{example}
\label{ex:proj dg}

As in Example \ref{ex:proj dg}, let us study the case when $\CV^\bullet = [\CM \stackrel{s}\longrightarrow \CN]$ is a two step complex of vector bundles, concentrated in degrees $-1$ and $0$. In this case, we have a map:
\begin{equation}
\label{eqn:support}
\includegraphics{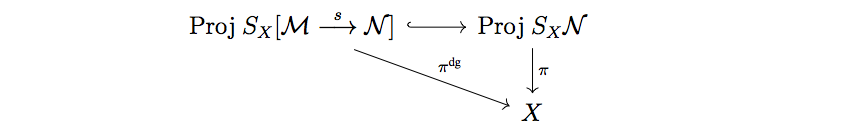}
\end{equation}
where the map $\pi$ is an actual projective bundle since $\CN$ is a vector bundle on $X$. The symbol $\hookrightarrow$ emulates closed embeddings of schemes, because we tautologically have:
\begin{equation}
\label{eqn:coherent dg}
\coh \left( \proj \ S_X [\CM \stackrel{s}\longrightarrow \CN] \right) \cong 
\end{equation}
$$
\cong \Big\{\text{coherent sheaves on }\proj \ S_X  \CN \text{ endowed with a dg action of }\wedge^\bullet[\pi^*\CM(-1) \stackrel{s}\rightarrow \CO_{\proj \ S_X  \CN}] \Big\}
$$
With this in mind, we think of $\proj \ S_X [\CM \stackrel{s}\longrightarrow \CN]$ as the dg subscheme of $\proj \ S_X  \CN$ cut out by the cosection $s$ of the vector bundle $\pi^*\CM(-1)$.

\end{example}

Our main Example \ref{ex:proj dg} should be interpreted as a dg version of the familiar notion of projective bundles $\proj \ S_X\CV \stackrel{\pi}\longrightarrow X$, where $\CV$ is a rank $n$ locally free sheaf of $X$. In this case, recall the following formulas:
$$
\pi_*(\CO(k)) = S^k\CV \qquad \quad \qquad \text{ concentrated in degree 0}
$$
$$
\pi_*(\CO(-k)) = S^{k-n} \CV^\vee \otimes \wedge^{\text{top}} \CV^\vee \quad \text{concentrated in degree }n-1
$$
for all $k\in \NN$, where $\pi_*$ denotes the derived pull-back. The second equality follows from the first one, together with \textbf{relative Serre duality}:
\begin{equation}
\label{eqn:serre scheme}
R^\bullet \pi_*(\CA) = R^{\bullet - n + 1} \pi_{*}(\CA^\vee \otimes \wedge^\text{top} \CV(-n))^\vee
\end{equation}
for all $\CA \in D^b(\coh(\proj \ S_X\CV))$. We now prove a similar formula in the dg setting

\begin{proposition}
\label{prop:canonical} 

In the notation of Example \ref{ex:proj dg}, suppose $\text{rank }\CM = m$ and $\text{rank }\CN = n$. Then:
\begin{equation}
\label{eqn:serre dg scheme}
R^\bullet \pi^\edg_*(\CA) = R^{\bullet - n + m + 1} \pi^\edg_{*}\left(\CA^\vee \otimes \frac {\wedge^\emph{top}\CN(-n)}{\wedge^\emph{top}\CM(-m)}\right)^\vee
\end{equation}
for all $\CA \in D^b(\coh(\proj \ S_X [\CM \stackrel{s}\longrightarrow \CN] ))$

\end{proposition}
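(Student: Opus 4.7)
The plan is to factor $\pi^\dg = \pi\circ j$ via the diagram \eqref{eqn:triangle}, where $\pi: Y := \proj S_X\CN \to X$ is the classical projective bundle (of relative dimension $n-1$) and $j: Z := \proj S_X[\CM\to\CN] \hookrightarrow Y$ is the dg closed immersion determined by the section $s\in\Gamma(Y,\pi^*\CM^\vee(1))$ corresponding to the composition $\pi^*\CM\to\pi^*\CN\twoheadrightarrow\CO_Y(1)$. Relative Serre duality for $\pi^\dg$ will then be assembled from two ingredients: a Grothendieck-type duality formula for the dg embedding $j$, which I will prove directly using the Koszul resolution, and classical relative Serre duality for the projective bundle $\pi$.

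For the first ingredient, by construction the sheaf $j_*\CO_Z$ on $Y$ is quasi-isomorphic to the Koszul complex
\[
\bigl[\wedge^m\pi^*\CM(-1) \xrightarrow{\iota_s} \cdots \xrightarrow{\iota_s} \pi^*\CM(-1) \xrightarrow{\iota_s} \CO_Y\bigr]
\]
in degrees $[-m,0]$, with differentials contracting with $s$. Applying $\RHom_Y(-,\CO_Y)$ reverses the arrows and takes termwise duals, producing a second complex in degrees $[0,m]$ whose top term is $\wedge^m\pi^*\CM^\vee(1) = \pi^*\det\CM^{-1}\otimes\CO_Y(m)$. This dualized complex is itself the Koszul complex of the dual section, which resolves $j_*(\det N_{Z/Y})[-m]$, where $N_{Z/Y} = \pi^{\dg*}\CM^\vee\otimes\CO^\dg(1)$ is the (virtual) normal bundle of $j$. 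Together with the projection formula $j_*(G)\otimes H\cong j_*(G\otimes j^*H)$, this identification yields
\[
(j_*\CA)^\vee \cong j_*\bigl(\CA^\vee\otimes\pi^{\dg*}\det\CM^{-1}\otimes\CO^\dg(m)\bigr)[-m].
\]

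For the second ingredient, classical relative Serre duality for the smooth projective bundle $\pi$ of relative dimension $n-1$, with $\omega_\pi = \pi^*\det\CN\otimes\CO_Y(-n)$, reads
\[
R\pi_*(F)^\vee \cong R\pi_*\!\left(F^\vee\otimes\pi^*\det\CN\otimes\CO_Y(-n)\right)[n-1].
\]
Substituting $F = j_*\CA$, inserting the expression for $(j_*\CA)^\vee$ obtained above, and absorbing the twist $\pi^*\det\CN\otimes\CO_Y(-n)$ through $j_*$ into $\pi^{\dg*}\det\CN\otimes\CO^\dg(-n)$ via the projection formula and $j^*\pi^* = \pi^{\dg*}$, the various line-bundle twists combine into the virtual canonical bundle
\[
\omega_{\pi^\dg} \;:=\; \det\CN\otimes\det\CM^{-1}\otimes\CO(m-n) \;=\; \frac{\det\CN(-n)}{\det\CM(-m)},
\]
while the homological shifts combine to $(n-1) + (-m) = n-m-1$. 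The conclusion is
\[
R\pi^\dg_*(\CA)^\vee \cong R\pi^\dg_*\!\left(\CA^\vee\otimes\frac{\det\CN(-n)}{\det\CM(-m)}\right)[n-m-1],
\]
which upon passage to cohomology is the asserted formula \eqref{eqn:serre dg scheme}. As sanity checks: taking $\CM=0$ recovers classical relative Serre duality on $\proj S_X\CN$; and in the setting of Proposition \ref{prop:serre}, where $n-m=1$ and the determinants of $\CM$ and $\CN$ coincide, the formula collapses to $\omega_{\pi^\dg}=\CL_{n+1}^{-1}$ with trivial shift, in agreement with \eqref{eqn:serre}.

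The principal technical point to verify --- and the main potential obstacle --- is that the Grothendieck duality step above makes sense for the \emph{dg} closed immersion $j$ rather than for an honest one, since in general the section $s$ need not be regular in the classical sense. Here the hypothesis \eqref{eqn:avramov} is essential: it ensures that the Koszul complex displayed above is a genuine locally free resolution of $j_*\CO_Z$ on the smooth scheme $Y$, so that all of the derived-category manipulations used (dualization of a complex of vector bundles, projection formula for $j_*$, and $R\pi_*$ along the smooth morphism $\pi$) reduce to standard homological algebra on $Y$ and require no additional dg-specific machinery.
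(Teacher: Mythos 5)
Your proof is correct and follows essentially the same computation as the paper's: both reduce the claim to (i) the identification $R\pi^\dg_*(\CA) = R\pi_*(\CA\otimes K)$ where $K = \wedge^\bullet[\pi^*\CM(-1)\to\CO_Y]$, (ii) the self-duality of the Koszul complex up to twist by $\det\CM^\vee\otimes\CO(m)$ and shift by $m$, and (iii) classical relative Serre duality for the smooth projective bundle $\pi$; you just package (ii) as Grothendieck duality for the dg embedding $j$ before invoking (iii), whereas the paper applies (ii) and (iii) to $\CA\otimes K$ in one stroke. One inaccuracy to note: the closing appeal to hypothesis \eqref{eqn:avramov} is unnecessary and somewhat misplaced. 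That assumption lives in Subsection \ref{sub:one}, where it is used to identify the dg scheme cut out by a section with an honest scheme. In the present Proposition no regularity of $s$ is required: by Example \ref{ex:proj dg}, the Koszul complex $K$ \emph{is} the definition of $j_*\CO_Z$ (not a resolution of some a priori sheaf), and the whole argument takes place with complexes of vector bundles on the genuine smooth scheme $\proj S_X\CN$, so the statement holds unconditionally.
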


\begin{proof} Implicitly in equation \eqref{eqn:coherent dg}, one has the equation:
$$
R^\bullet \pi^\dg_*(\CA) = R^\bullet \pi_* \left(\CA \otimes \wedge^\bullet[\pi^*\CM(-1) \stackrel{s}\rightarrow \CO] \right)
$$
Applying \eqref{eqn:serre scheme} to the right hand side, we obtain
$$
R^\bullet \pi^\dg_*(\CA) = R^{\bullet-n+1} \pi_{*} \left(\CA^\vee \otimes \wedge^\bullet \left[\pi^*\CM(-1) \stackrel{s}\rightarrow \CO \right]^\vee \otimes \wedge^\text{top} \CN^\vee(-n) \right)^\vee
$$
It is easy to see that $\wedge^\bullet \left[\pi^*\CM(-1) \stackrel{s}\rightarrow \CO\right]^\vee = \wedge^{\bullet+m} \left[\pi^*\CM(-1) \stackrel{s}\rightarrow \CO\right] \otimes \wedge^{\text{top}}\CM^\vee(m)$, hence:
$$
R^\bullet \pi^\dg_*(\CA) = R^{\bullet-n+m+1} \pi_{*} \left(\CA^\vee \otimes \wedge^\bullet \left[\pi^*\CM(-1) \stackrel{s}\rightarrow \CO\right] \otimes \frac {\wedge^{\text{top}}\CN(-n)}{\wedge^{\text{top}}\CM(-m)} \right)^\vee
$$
which equals $R^{\bullet - n + m + 1} \pi^\dg_{*}\left(\CA^\vee \otimes \frac {\wedge^\text{top}\CN(-n)}{\wedge^\text{top}\CM(-m)}\right)^\vee$ by another application of \eqref{eqn:coherent dg}. 

\end{proof}

\end{document}